\documentclass{amsart}
\usepackage{amsmath ,amssymb ,amsthm, mathrsfs, stmaryrd, dsfont}
\usepackage{bm}
\usepackage[all]{xy}
\usepackage[pdftex]{graphicx} 
\usepackage[pdftex ,pdfborder={0 0 0}]{hyperref} 
\usepackage{tikz, tikz-cd}
\usepackage{shadethm}
\usepackage{graphicx, xcolor}
\usepackage{url}
\DeclareMathAlphabet\mathbfcal{OMS}{cmsy}{b}{n}
\DeclareMathAlphabet{\mathpzc}{OT1}{pzc}{m}{it}

\theoremstyle{definition}
\newtheorem{defin}{Definition}[section]

\newtheorem{thm}[defin]{Theorem}

\newtheorem{lem}[defin]{Lemma}
\newtheorem{prop}[defin]{Proposition}
\newtheorem{cor}[defin]{Corollary}
\newtheorem*{cor*}{Corollary}

\newtheorem*{thmA}{Theorem A}
\newtheorem*{thmB}{Theorem B}

\theoremstyle{remark}
\newtheorem{rem}[defin]{Remark}
\newtheorem{eg}[defin]{Example}


\newcommand{\Ric}{\mathrm{Ric}}
\newcommand{\cFut}{\check{\mathrm{F}}\mathrm{ut}}
\newcommand{\Fut}{\mathrm{Fut}}
\newcommand{\cmu}{\bm{\check{\mu}}}

\newcommand{\id}{\mathrm{id}}

\newcommand{\D}{\mathfrak{D}}
\newcommand{\Vol}{\mathrm{Vol}}

\title[Equivariant calculus and $\mu$K-stability]{Equivariant calculus on $\mu$-character and $\mu$K-stability of polarized schemes}

\author{Eiji Inoue}

\address{RIKEN, iTHEMS, 2-1 Hirosawa, Wako, Saitama 351-0198, Japan. }

\email{eiji.inoue@riken.jp}

%

\begin{document}

\begin{abstract}
We introduce and study $\mu$K-stability of polarized schemes with respect to general test configurations as an algebro-geometric aspect of the existence of $\mu$-cscK metrics which is introduced in the paper \cite{Ino2} as a framework unifying the frameworks of K\"ahler--Ricci solitons and cscK metrics. 
This article consists of two ingredients. 

On one hand, we develop a foundational framework concerning `derivative of relative equivariant intersection', which we call \textit{equivariant calculus}. 
A core claim in equivariant calculus is a convergence result for some infinite series in equivariant cohomology, which is obtained by \textit{relative equivariant intersections}. 
Our proof is based on some observations on deRham--Cartan model of equivariant locally finite homology. 
This framework furnishes a language to describe $\mu$K-stability. 
We in particular conclude the $\mu$K-semistability of $\mu$-cscK manifolds with respect to general test configurations. 

On the other hand, we introduce an equivariant character $\cmu^\lambda$ called \textit{$\mu$-character} for equivariant family of polarized schemes, motivated by the $\mu$-volume functional introduced in the paper \cite{Ino2} as a generalization of Tian--Zhu's functional in the theory of K\"ahler--Ricci soliton. 
The equivariant derivative of the $\mu$-character derive $\mu$-Futaki invariant for general test configuration, and furthermore, it also produces an analogue of the equivariant first Chern class of CM line bundle for family of polarized schemes, which is irrational and hence cannot be realized as a $\mathbb{Q}$-line bundle in our general $\mu$K-stability setup. 
\end{abstract}

\maketitle

\tableofcontents

\section{Introduction}

K-stability is a central theme in recent studies of algebraic geometry, which was introduced by \cite{Tia1, Don} as an algebro-geometric aspect of the existence of cscK metrics on K\"ahler manifolds. 
Odaka--Wang's intersection formula \cite{Oda1, Wang1} of Donaldson--Futaki invariant 
\begin{equation}
\mathrm{DF} (\mathcal{X}, \mathcal{L}) = (K_{\bar{\mathcal{X}}/\mathbb{P}^1}. \bar{\mathcal{L}}^{\cdot n}) - \frac{n}{n+1} \frac{(K_X. L^{\cdot (n-1)})}{(L^{\cdot n})} (\bar{\mathcal{L}}^{\cdot (n+1)}) 
\end{equation}
is a foundation of these studies. 

The author introduced in \cite{Ino2} a variant of cscK metrics, called $\mu$-cscK metrics, which encloses both cscK metrics and K\"ahler--Ricci solitons. 
The notion is motivated by the moment map picture for K\"ahler--Ricci soliton \cite{Ino1} and hence admits a variant of K-stability, which we call $\mu$K-stability. 
The aim of this paper is to introduce an equivariant intersection formula of $\mu$-Futaki invariant and to establish a foundation of $\mu$K-stability, as an algebro-geometric aspect of the existence of $\mu$-cscK metrics. 

We exhibit two main theorems: the $\mu$K-semistability of $\mu$-cscK manifolds with respect to general test configurations (Theorem A) and the construction of an equivariant characteristic class for equivariant family of polarized schemes which generalizes the equivariant Chern class of CM line bundle for the usual K-stability (Theorem B). 
The construction is applied in \cite{Ino5} to show the analytic moduli space of Fano manifolds with K\"ahler--Ricci soliton constructed in \cite{Ino1} is algebraic. 
We will pursue this in the article \cite{Ino6} in preparation. 

While Theorem A and B are presented as main theorems, the main achievement of this article is rather the establishment of equivariant calculus. 
Indeed, Theorem A is just reduced to Lahdili's result \cite{Lah} on the boundedness of weighted Mabuchi functional, which is the key analysis of the result. 
In the subsequent studies \cite{Ino3, Ino4}, we propose another approach for Theorem A which does not rely on the boundedness of weighted Mabuchi functional. 
This article provides a basic language for these studies. 

Before going on, we caution the sign $\pm$ in our equivariant calculus is quite sensitive to various conventions on group action and equivariant cohomology from the very beginning. 
We learned from \textit{H\"ansel and Gretel} that the simplest way not to get lost in the deep forest (of sign convention) is to drop pebbles along the way you come from your home. 
This is the reason we begin with clarifying all these basic `trivial' conventions in section \ref{sign convention}. 

Throughout this article, we use equivariant cohomology theory, especially the Cartan model of equivariant deRham cohomology / current homology. 
We arranged Appendix for the readers who are not familiar with this material, but it also plays a role in the sign clarification. 
The readers familiar to this material are also recommended to look over this section. 

\subsection{Backgrounds and key ingredients}

\subsubsection{Convention on spaces}

We only consider schemes of locally finite type over $\mathbb{C}$ and often identify these with complex spaces (not necessarily reduced nor irreducible), for which we have natural candidates for cohomology and its dual theory with \textit{$\mathbb{R}$-coefficient}: singular cohomology and locally finite homology (or sheaf cohomology and Borel--Moore homology). 
In the construction of (relative) exponential equivariant intersection, we discuss some convergence of equivariant cohomology classes in the real topology. 
To show the convergence, we reduce the problem to smooth manifolds by the resolution of singularities and make use of the Cartan model of equivariant deRham cohomology \& current homology. 
This is the reason for our current assumption on schemes. 
Our arguments work also for complex spaces, however, we are rather interested in algebraic aspects of $\mu$K-stability, so we would formulate things in an algebraic way and explain the complex analytic case in remarks. 

We denote by $\mathbb{G}_m$ the multiplicative group $\mathbb{C}^\times$ and by $\mathbb{A}^1$ the affine space $\mathbb{C}$ endowed with the right $\mathbb{G}_m$-action $z.t = t z$. 
To distinguish, we denote by $\mathbb{A}^1_-$ the affine space $\mathbb{C}$ endowed with the reverse right $\mathbb{G}_m$-action $z. t = t^{-1} z$ and by $0_-$ its origin $0 \in \mathbb{C}$, which we use when compactifying test configuration. 
We identify $\mathbb{P}^1$ with the quotient of $\mathbb{A}^1 \sqcup \mathbb{A}^1_-$ by the identification map $r: \mathbb{A}^1_- \setminus \{ 0_- \} \to \mathbb{A}^1 \setminus \{ 0 \}: z \mapsto z^{-1}$, so that we have a $\mathbb{G}_m$-action on $\mathbb{P}^1$ and natural $\mathbb{G}_m$-equivariant inclusions $\mathbb{A}^1, \mathbb{A}^1_- \subset \mathbb{P}^1$. 

For an algebraic torus $T = N \otimes \mathbb{G}_m$, we put $\mathfrak{t} := N \otimes \mathbb{R}$ and identify it with the Lie algebra of the maximal compactum $T_{\mathrm{cpt}} = N \otimes U (1) \subset T$ via $\xi \mapsto \frac{d}{dt}|_{t=0} \exp (t 2\pi \sqrt{-1} \xi)$, where $\exp: N \otimes \mathbb{C} \to T$ is induced by $\mathbb{C} \to \mathbb{G}_m: z \mapsto e^z$. 
In particular, for $\mathbb{G}_m = \mathbb{Z} \otimes \mathbb{G}_m$, the positive generator $1 \in \mathbb{Z}$ assigns $\eta \in \mathfrak{u} (1)$: $\eta = \frac{d}{dt}|_{t=0} \exp 2\pi \sqrt{-1} t$. 
We denote by $\eta^\vee \in \mathfrak{u} (1)^\vee$ the dual basis: $\langle \eta^\vee, a \eta \rangle = a$. 

\subsubsection{Weighted scalar curvature and $\mu$-cscK metric}

We briefly explain some basic notions around $\mu$-cscK metric. 
The author introduced $\mu$-cscK metric in the last paper \cite{Ino2} to establish an inclusive framework enclosing both cscK metric and K\"ahler--Ricci soliton. 
The notion of $\mu$-cscK metric is motivated by the moment map picture for K\"ahler--Ricci soliton observed in \cite{Ino1}. 
Meanwhile, Lahdili \cite{Lah} also considered a generalization of Donaldson--Fujiki type moment map picture and introduced weighted scalar curvature as a far extensive framework, which includes $\mu$-scalar curvature as a special case in the framework. 
The moment map picture on weighted scalar curvature yields a version of Yau--Tian--Donaldson conjecture which states that the existence of weighted scalar curvature on a given manifold must be equivalent to a proper notion of `weighted K-stability'. 
Lahdili introduced a differential geometric weighted Futaki invariant for test configurations with \textit{smooth} total spaces and proved that every weighted cscK manifold has non-negative weighted Futaki invariants for all smooth test configurations. 
Since we reduce our Theorem A to his result \cite{Lah}, we begin with his framework. 

Let $X$ be a compact K\"ahler manifold with a Hamiltonian action of a closed torus $T_{\mathrm{cpt}}$. 
Let $\omega$ be a $T_{\mathrm{cpt}}$-invariant K\"ahler metric and $\mu: X \to \mathfrak{t}^\vee$ be a moment map. 
Since $X$ is compact, the moment polytope $P = \mu (X)$ (and even the measure $\mu_* (\omega^n/n!)$ on $\mathfrak{t}^\vee$ supported on $P$) depends only on the equivariant cohomology class $L_T := {^\hbar \Phi}^{-1} [\omega + \hbar \mu] \in H^2_T (X, \mathbb{R})$ (cf. \cite[Section 2.3--2.4]{GGK}). 
Here ${^\hbar \Phi}^{-1}: {^\hbar H^2_{\mathrm{dR}, T_{\mathrm{cpt}} }} (X) \to H^2_T (X, \mathbb{R})$ is the isomorphism extending the Chern--Weil isomorphism $S \mathfrak{t}^\vee \to H^2_T (\mathrm{pt}, \mathbb{R})$ scaled by $\hbar \in \mathbb{R}^\times$. 
See Appendix for the construction of these equivariant cohomologies and section \ref{sign convention} for the sign convention on the Chern--Weil isomorphism. 

For a smooth positive function $v$ on $P$, Lahdili \cite{Lah} defines the \textit{weighted scalar curvature} $s_v (\omega)$ by 
\begin{equation} 
s_v (\omega) := s (\omega) \cdot (v \circ \mu^\omega) + \Delta_\omega (v \circ \mu^\omega) - \frac{1}{2} \sum_{1 \le i, j \le k} J\xi_i (\mu^\omega_{\xi_j}) \cdot (\frac{\partial^2 v}{\partial x^i \partial x^j} \circ \mu^\omega). 
\end{equation}
Note here we follow K\"ahlerian convention on the scalar curvature $s (\omega) = g^{i \bar{\jmath}} R_{i \bar{\jmath}} = n \frac{\Ric (\omega) \wedge \omega^{n-1}}{\omega^n}$, so it is the half of the Riemannian scalar curvature. 

When $v$ is of the form $v (x) = \tilde{v} (\langle x, \xi \rangle)$ with some smooth positive function $\tilde{v}$ on $\mathbb{R}$ and $\xi \in \mathfrak{t}$, we can simplify it as 
\begin{align*} 
s_v (\omega) 
&= s (\omega) \cdot (\tilde{v} \circ \mu^\omega_\xi) + \big{(} \Delta_\omega \mu^\omega_\xi \cdot (\tilde{v}' \circ \mu^\omega_\xi) - (\nabla \mu^\omega_\xi, \nabla \mu^\omega_\xi) \cdot (\tilde{v}'' \circ \mu^\omega_\xi) \big{)} - \frac{1}{2} J\xi (\mu^\omega_{\xi}) \cdot (\tilde{v}'' \circ \mu^\omega_\xi) 
\\
&= s (\omega) \cdot (\tilde{v} \circ \mu^\omega_\xi) + \Delta_\omega \mu^\omega_\xi \cdot (\tilde{v}' \circ \mu^\omega_\xi) + \frac{1}{2} J\xi (\mu^\omega_{\xi}) \cdot (\tilde{v}'' \circ \mu^\omega_\xi). 
\end{align*}
For $\tilde{v} = e^{\hbar t}$, we derive the \textit{${^\hbar \check{\mu}}$-scalar curvature} ${^\hbar s}_\xi (\omega)$: 
\begin{equation} 
s_v (\omega)  = \big{(} s (\omega) + \hbar \Delta \mu_\xi + \frac{\hbar^2}{2} J \xi (\mu_\xi) \big{)} \cdot e^{\hbar \mu_\xi} =: {^\hbar s_\xi} (\omega) \cdot e^{\hbar \mu_\xi} 
\end{equation}
The $\mu$-scalar curvature introduced in \cite{Ino2} is nothing but the ${^\hbar \check{\mu}}$-scalar curvature for $\hbar = -2$: for $\theta_\xi = -2 \mu_\xi$ and $\xi^J = J \xi + \sqrt{-1} \xi$, we have $\sqrt{-1} \bar{\partial} \theta_\xi = i_{\xi^J} \omega$ and 
\[ {^{-2} s_\xi^\lambda} (\omega) = s (\omega) + \bar{\Box} \theta_\xi + \bar{\Box} \theta_\xi - \xi^J (\theta_\xi) - \lambda \theta_\xi. \]
We introduce the parameter $\hbar \in \mathbb{R}^\times$ as it helps to clarify the sign convention in our equivariant calculus. 

For a real number $\lambda \in \mathbb{R}$, we introduce the following variant 
\begin{equation} 
{^\hbar s^\lambda_\xi} (\omega) := {^\hbar s_\xi} (\omega) - \lambda \mu_\xi 
\end{equation}
and call it \textit{${^\hbar \check{\mu}^\lambda_\xi}$-scalar curvature}. 
We call a K\"ahler metric $\omega$ a \textit{${^\hbar \check{\mu}^\lambda_\xi}$-cscK metric} if ${^\hbar s^\lambda_\xi} (\omega)$ is a constant. 
Since moment maps are unique modulo constant, the notion of ${^\hbar \check{\mu}^\lambda_\xi}$-cscK metric is independent of the choice of the moment maps. 

The constant 
\begin{equation} 
{^\hbar \bar{s}^\lambda_\xi} := \int_X {^\hbar s^\lambda_\xi} (\omega) e^{\hbar \mu_\xi} \omega^n \Big{/} \int_X e^{\hbar \mu_\xi} \omega^n 
\end{equation}
depends only on $\lambda, \xi \in \mathfrak{t}$ and the equivariant cohomology class $L_T = {^\hbar \Phi^{-1}} [\omega + \hbar \mu]$. 
For $\lambda = 0$, it depends only on $\lambda, \xi \in \mathfrak{t}$ and the cohomology class $L = [\omega] \in H^2 (X, \mathbb{R})$ rather than $L_T$. 

It is observed in \cite{Ino1} that the ${^{-2} \check{\mu}^\lambda_\xi}$-cscK metric in a K\"ahler class $L$ satisfying $\lambda L = -2\pi K_X$ is equivalent to K\"ahler--Ricci soliton: $\Ric (\omega) - L_{\xi^J} \omega = \lambda \omega$. 

\subsubsection{$\mu$-Futaki invariant and $\mu$-volume functional for vectors}
\label{mu-Futaki and mu-volume for vectors}

Based on the moment map picture for ${^\hbar \check{\mu}}$-scalar curvature \cite{Ino1, Ino2, Lah}, we consider the following \textit{${^\hbar \check{\mu}}^\lambda_\xi$-Futaki invariant}: 
\begin{equation} 
\label{mu-Futaki invariant}
{^\hbar \cFut^\lambda_\xi} (\zeta) = \int_X ({^\hbar s^\lambda_\xi} (\omega) - {^\hbar \bar{s}^\lambda_\xi}) \cdot \hbar \mu_\zeta e^{\hbar \mu_\xi} \omega^n \Big{/} \int_X e^{\hbar \mu_\xi} \omega^n. 
\end{equation}
for $\eta \in \mathfrak{t}$. 
It depends only on the K\"ahler class $L = [\omega]$ and hence vanishes if there exists a ${^\hbar \check{\mu}^\lambda_\xi}$-cscK metric in $L$. 

In \cite{Ino2}, we studied the \textit{${^\hbar \mu^\lambda}$-volume functional} ${^\hbar \Vol}^\lambda: \mathfrak{t} \to \mathbb{R}$ 
\begin{equation} 
\log {^\hbar \Vol^\lambda} (\xi) := \int_X {^\hbar s^\lambda_\xi} (\omega) e^{\hbar \mu_\xi} \omega^n \Big{/} \int_X e^{\hbar \mu_\xi} \omega^n +\lambda \log \int_X e^{\hbar \mu_\xi} \omega^n. 
\end{equation}
This depends only on the cohomology class $L$ and is characterized up to constant by the property 
\begin{equation}
D_\xi (\log {^\hbar \Vol^\lambda}) (\zeta) = \frac{d}{dt}\Big{|}_{t=0} \log {^\hbar \Vol^\lambda} (\xi + t\zeta) = {^\hbar \cFut^\lambda_\xi} (\zeta). 
\end{equation}
If there is a ${^\hbar \mu^\lambda_\xi}$-cscK metric, then the vector $\xi$ must be a critical point of ${^\hbar \mathrm{Vol}^\lambda}$. 

The functional generalizes Tian--Zhu's volume functional $\int_X e^{\theta_\xi} \omega^n$ introduced in the study of K\"ahler--Ricci soliton \cite{TZ}. 
Indeed, since 
\[ {^\hbar \bar{s}^\lambda_\xi} = \frac{{^\hbar (- 2\pi K_X^T - \lambda L_T. e^{L_T}; \xi)}}{{^\hbar (e^{L_T}; \xi)}} + \lambda n, \]
we have ${^\hbar \bar{s}^\lambda_\xi} = \lambda n$ when $\lambda L_T = 2\pi c_{T, 1} (X)$, which is equivalent to say that $\lambda L = -2 \pi K_X$ and $\mu$ is normalized as $\int_X \mu e^h \omega^n = 0$ by the Ricci potential $h: \mathrm{Ric} (\omega) - \lambda \omega = \sqrt{-1} \partial \bar{\partial} h$ (see Example \ref{Equivariant Chern class of canonical bundle}). 

It is proved in \cite{Ino2} that the $\mu$-volume functional always admits a minimizer and the critical points are unique for $\lambda \ll 0$, while always \textit{not} unique for $\lambda \gg 0$. 
Studying the second variation of the functional, we can also prove the set of vectors associated to some ${^\hbar \check{\mu}^\lambda}$-cscK metric is (empty or) finite for each $\lambda \le 0$. 
The author speculates the critical points are unique whenever $\lambda \le 0$. 

We can express the $\mu$-volume functional using the following equivariant intersections: 
\begin{gather*}
\int_X (s (\omega) + \hbar \bar{\Box} \mu_\xi) e^{\hbar \mu_\xi} \omega^n/n! = \int_X (\Ric_\omega + \hbar \bar{\Box} \mu_\xi) e^{\omega + \hbar \mu_\xi} = - 2 \pi \cdot {^\hbar (K_X^T. e^{L_T}; \xi)}, 
\\
\int_X (n + \hbar \mu_\xi) e^{\hbar \mu_\xi} \omega^n/n! = \int_X (\omega + \hbar \mu_\xi) e^{\omega + \hbar \mu_\xi} = {^\hbar (L_T. e^{L_T}; \xi)}, 
\\
\int_X e^{\hbar \mu_\xi} \omega^n/n! = \int_X e^{\omega + \hbar \mu_\xi} = {^\hbar (e^{L_T}; \xi)}. 
\end{gather*}
This is the key observation for this study. 
These equalities follow from the isomorphism (\ref{singular to Cartan isomorphism}) and Example \ref{Equivariant Chern class of canonical bundle}. 
Now we explain the notations. 

\subsubsection{Equivariant intersection}

Let $G$ be an almost connected Lie group, for which we have a maximal compact subgroup. 
Let $X$ be a locally compact Hausdorff topological space with a continuous $G$-action. 
We denote by $H^*_G (X)$ the equivariant cohomology and by $H^{\mathrm{lf}, G}_* (X)$ the equivariant locally finite homology, which are constructed as in section \ref{section: Equivariant singular cohomology and locally finite homology}. 
When we would emphasize that $\alpha \in H^{\mathrm{lf}, G}_* (X, \mathbb{R})$ and $\beta \in H^*_G (X, \mathbb{R})$ denote $G$-equivariant classes, we write it as $\alpha^G, \beta_G$, respectively. 
We denote by $\alpha_{\langle p \rangle} \in H^{\mathrm{lf}, G}_{2p} (X, \mathbb{R})$ and $\beta^{\langle q \rangle} \in H^{2q}_G (X, \mathbb{R})$ the projection to the even degree. 

Let $\varpi: \mathcal{X} \to B$ be a proper continuous $G$-equivariant map of $G$-spaces. 
For $\alpha \in H^{\mathrm{lf}, G}_p (\mathcal{X}, \mathbb{R})$ and $\beta_i \in H^{q_i}_G (\mathcal{X}, \mathbb{R})$ ($i = 1, \ldots, k$), we define its \textit{relative equivariant intersection} by the equivariant pushforward of the cap product: 
\begin{equation}
{_B (\alpha. \beta_1. \dotsb. \beta_k)} := \varpi_* (\alpha \frown \beta_1 \smile \dotsb \smile \beta_k) \in H^{\mathrm{lf}, G}_{p - q_1 - \dotsb - q_k} (B, \mathbb{R}). 
\end{equation}
When $B$ is a topological manifold with an orientation preserving $G$-action, we denote its Poincare dual by 
\begin{equation} 
(\alpha. \beta_1. \dotsb. \beta_k)_B \in H_G^{\dim_{\mathbb{R}} B + q_1 + \dotsb + q_k - p} (B, \mathbb{R}). 
\end{equation}

More generally, we can consider the equivariant intersection for an element $\beta = (\beta^{\langle q/2 \rangle})_{q=0}^\infty$ in the formal completion of equivariant cohomology $\hat{H}_G (\mathcal{X}, \mathbb{R}) = \prod_{q=0}^\infty H^q_G (\mathcal{X}, \mathbb{R})$: 
\[ (\alpha. \beta)_B := ((\alpha. \beta^{\langle q/2 \rangle})_B)_{q=0}^\infty \in \hat{H}_G (B, \mathbb{R}). \]
In our study of $\mu$K-stability, we are especially interested in the equivariant intersection for the exponential class 
\[ \beta = e^{\mathcal{L}} = (\frac{1}{k!} \mathcal{L}^{\smile k})_{k=0}^\infty \in \hat{H}^{\mathrm{even}}_G (\mathcal{X}, \mathbb{R}) := \prod_{k=0}^\infty H^{2k}_G (\mathcal{X}, \mathbb{R}) \]
with $\mathcal{L} \in H^2_G (\mathcal{X}, \mathbb{R})$. 

\subsubsection{The Chern--Weil isomorphism and equivariant intersection as function}

Now let $\hbar \in \mathbb{R}^\times$ be a non-zero real number. 
Depending on the context, it is convenient to select different $\hbar$. 
For instance, it is convenient to choose $\hbar = -2$ in the context of K\"ahler--Ricci soliton and $\mu$-cscK metric since the notions are described in a K\"ahlerian convention: we use the $\bar{\partial}$-Hamiltonian potential $\theta_\xi = -2 \mu_\xi$ rather than $\mu_\xi$. 
In the subsequent studies \cite{Ino3, Ino4}, we choose $\hbar = 1$, whose sign is crucial for the results. 
In this article, we do not choose any specific $\hbar$. 
See section \ref{sign convention} for various sign convention. 

Fix a maximal compact Lie subgroup $K \subset G$. 
Consider a smooth manifold $M$ with a smooth $K$-action. 
As explained in section \ref{section: Cartan model of equivariant cohomology and locally finite homology}, we can construct the Cartan model of equivariant cohomology ${^\hbar H^*_{\mathrm{dR}, K}} (M)$ from a chain complex $(\Omega^*_K (M), {^\hbar d_K})$ consisting of $K$-equivariant differential forms. 
Here the boundary operator ${^\hbar d_K}$ (and hence its kernel and image) depends on $\hbar \in \mathbb{R}^\times$. 
For another $\hbar' \in \mathbb{R}^\times$, there is a natural isomorphism of functors: 
\[ (\hbar'/\hbar)_*: {^\hbar H^*_{\mathrm{dR}, K}} \xrightarrow{\sim} {^{\hbar'} H^*_{\mathrm{dR}, K}}. \]

Beware equivariant form is not a $K$-invariant form nor a $K$-basic form. 
For instance, for a $K$-invariant $2$-form $\omega$ and a $K$-equivariant smooth map $\mu: M \to \mathfrak{k}^\vee$ satisfying $- d\mu_\xi = i_{\xi^\#} \omega$ for every $\xi \in \mathfrak{k}$, the formal sum $\omega + \hbar \mu$ gives a ${^\hbar d_K}$-closed $K$-equivariant $2$-form: $({^\hbar d_K} (\omega + \hbar \mu)) (\xi) = d\omega + \hbar (d\mu_\xi + i_{\xi^\#} \omega) = 0$. 
Thus we can assign its $K$-equivariant cohomology class $[\omega + \hbar \mu] \in {^\hbar H^2_{\mathrm{dR}, K}} (M)$. 

When $K$ acts on $M$ via a $G$-action on $M$, we have isomorphisms 
\begin{align}
\label{singular to Cartan isomorphism}
{^\hbar \Phi_K^M} 
&: H^*_G (M, \mathbb{R}) \to {^\hbar H_{\mathrm{cur}, K}^*} (M), 
\\
{^\hbar \Phi^K_M}
&: H^{\mathrm{lf}, G}_* (M, \mathbb{R}) \to {^\hbar H^{\mathrm{cur}, K}_*} (M)
\end{align} 
from the Borel construction of equivariant cohomology / locally finite homology to the Cartan models of equivariant deRham cohomology / current homology. 
We often abbreviate these as ${^\hbar \Phi}$. 
For $\alpha \in H^*_G (M, \mathbb{R})$, we put 
\[ {^\hbar \alpha} := {^\hbar \Phi} (\alpha) \in {^\hbar H^*_{\mathrm{dR}, K}} (M). \]
In particular, we put 
\begin{align*}
{^\hbar (\alpha. \beta_1. \dotsb. \beta_k)_B} := {^\hbar \Phi} (\alpha. \beta_1. \dotsb . \beta_k)_B \in {^\hbar H^{\mathrm{cur}, K}_*} (B) 
\end{align*} 
when $B$ is a smooth manifold. 

When $B$ is a point, we abbreviate these as $(\alpha. \beta_1. \dotsb. \beta_k), {^\hbar (\alpha. \beta_1. \dotsb. \beta_k)}$, respectively. 
We have a natural identification ${^\hbar H_{\mathrm{dR}, K}^{2k}} (\mathrm{pt}) = (S^k \mathfrak{k}^\vee)^K$ by the very construction of the Cartan model (see section \ref{section: Cartan model of equivariant cohomology and locally finite homology}), so that we can identify the equivariant intersection ${^\hbar (\alpha. \beta_1. \dotsb. \beta_k)}$ for $\alpha \in H^{2p}_G (X, \mathbb{R}), \beta_i \in H^{\mathrm{lf}, G}_{2q_i} (X, \mathbb{R})$ with a $K$-invariant degree $q_1 + \dotsb +q_k -p$ polynomial function on $\mathfrak{k}$. 
Under this identification, the isomorphism $(\hbar'/\hbar)_*: (S^* \mathfrak{k}^\vee)^K \to (S^* \mathfrak{k}^\vee)^K$ is given by $((\hbar'/\hbar)_* f) (\xi) = f (\hbar'/\hbar \cdot \xi)$ for $\xi \in \mathfrak{k}$. 
We denote by ${^\hbar (\alpha. \beta_1. \dotsb. \beta_k; \xi)} \in \mathbb{R}$ the evaluation of ${^\hbar (\alpha. \beta_1. \dotsb. \beta_k)}$ at $\xi \in \mathfrak{k}$, for which we have ${^\hbar (\alpha. \beta_1. \dotsb. \beta_k; \xi)} = {^{\hbar'} (\alpha. \beta_1. \dotsb. \beta_k; (\hbar'/\hbar) \xi)}$. 

The sign convention of the Chern--Weil isomorphism ${^\hbar \Phi}^{-1}: (S \mathfrak{k}^\vee)^K \to H^*_G (\mathrm{pt}, \mathbb{R})$, which depends on $\hbar$, is crucial for the output ${^\hbar (\alpha. \beta_1. \dotsb. \beta_k; \xi)}$. 
We review this in section \ref{sign convention} by comparing two constructions of equivariant Chern classes. 

\subsubsection{The beauty of the Cartan model}
\label{The beauty of the Cartan model}

Consider a smooth $T_{\mathrm{cpt}} \times K$-action on a smooth manifold $M$. 
If $T_{\mathrm{cpt}}$ acts trivially on $M$, we have the decomposition
\begin{equation} 
\label{Cartan Kunneth}
{^\hbar H^*_{\mathrm{dR}, T_{\mathrm{cpt}} \times K}} (M) = S^* \mathfrak{t}^\vee \otimes {^\hbar H^*_{\mathrm{dR}, K}}, 
\end{equation}
which corresponds to the K\"unneth decomposition 
\begin{equation}
\label{Kunneth}
H^*_{T \times G} (X) = H^* (BT \times (X \times_G EG)) = H^*_T (\mathrm{pt}) \otimes H^*_G (X). 
\end{equation}
The Cartan model of equivariant cohomology has such decomposition at chain level, even when the $T$-action is non-trivial: 
\begin{equation} 
\Omega_{T_{\mathrm{cpt}} \times K}^* (M) = (S^* (\mathfrak{t} \times \mathfrak{k})^\vee \otimes \Omega^* (M))^{T_{\mathrm{cpt}} \times K} = S^* \mathfrak{t}^\vee \otimes \Omega_K^* (M)^{T_{\mathrm{cpt}}}, 
\end{equation}
where we put $\Omega_K^* (M)^{T_{\mathrm{cpt}}} := (S^* \mathfrak{k} \otimes \Omega^* (X)^{T_{\mathrm{cpt}}})^K \subset \Omega_K^* (M)$. 
We note the boundary map ${^\hbar d_{T_{\mathrm{cpt}} \times K}}$ is not compatible with this decomposition when the $T_{\mathrm{cpt}}$-action is non-trivial. 
We have a similar decomposition for equivariant currents which is compatible with the chain level proper pushforward map. 

In particular, we have the following \textit{chain-level evaluation map}. 
For $\xi \in \mathfrak{t}$, we have the following (non-linear) map
\begin{align}
\label{rho evaluation}
\rho^T_\xi: \Omega_{T_{\mathrm{cpt}} \times K}^k (M) = \bigoplus_{2p+q = k} S^p \mathfrak{t}^\vee \otimes \Omega_K^q (M)^{T_{\mathrm{cpt}}} 
&\to \bigoplus_{0 \le q \le k} \Omega^q_K (M)
\\ \notag
 \sum_{2p + q = k} \varphi_q 
&\mapsto \sum_{2p+q=k} \varphi_{q, \xi}, 
\end{align}
where we identify $\varphi_q \in S^p \mathfrak{t}^\vee \otimes \Omega_K^q (M)^{T_{\mathrm{cpt}}}$ with a degree $p$ homogeneous polynomial map $\mathfrak{t} \to \Omega_K^q (M)^{T_{\mathrm{cpt}}}$. 
The map is $T_{\mathrm{cpt}} \times K$-equivariant and is compatible with the chain-level proper pushforward map for equivariant currents (See section \ref{section: Cartan model of equivariant locally finite homology (current homology)}). 

When the $T_{\mathrm{cpt}}$-action is trivial, the ${^\hbar d}$-closedness/exactness is preserved by the evaluation map. 
In this case, the evaluation map descends to the evaluation map on the equivariant cohomology 
\begin{equation} 
{^\hbar \rho^T_\xi}: {^\hbar H^k_{\mathrm{dR}, T_{\mathrm{cpt}} \times K}} (M) \to \bigoplus_{0 \le q \le k} {^\hbar H^q_{\mathrm{dR}, K}} (M)
\end{equation} 
defined in a similar way using the decomposition (\ref{Cartan Kunneth}). 
We are especially interested in the following truncation: 
\begin{equation} 
\label{derivation}
\D^q_{\hbar. \xi} := q! \cdot t_q \circ {^\hbar \Phi_G^{-1}} \circ {^\hbar \rho^T_\xi} \circ {^\hbar \Phi_{T \times G}}: H^*_{T \times G} (M, \mathbb{R}) \to H^{2q}_G (M, \mathbb{R}), 
\end{equation}
where $t_q: H^*_G (M) \to H^{2q}_G (M)$ denotes the projection. 
We put $\D_{\hbar. \xi} := \D^1_{\hbar. \xi}$. 
We have $\D_{\hbar'. \xi}^q \alpha = \D_{\hbar. (\hbar'/\hbar) \xi}^q \alpha$, so the notation $\D_{\hbar. \xi}^q \alpha$ is not confusing. 
See also section \ref{Equivariant cohomology class of class epsilon ell}. 

When the $T_{\mathrm{cpt}}$-action is non-trivial, the map $\rho^T_\xi$ \textit{does not preserve the ${^\hbar d}$-closedness of equivariant forms}. 
This is the reason we cannot express the characteristic class $\D_\xi \cmu^\lambda$ in Theorem B by a simple proper pushforward of an equivariant cohomology class, different from the case of the equivariant cohomology class of CM line bundle.

\subsubsection{Key ingredient: Convergence result for equivariant intersection}

A scheme $X$ is called \textit{pure $n$-dimensional} if $\dim_{\mathbb{C}} Z = \dim_{\mathbb{C}} Z^\circ = n$ for every irreducible component $Z$ of $X$, where $Z^\circ$ denotes the smooth part of the reduction of $Z$. 

Let $G$ be an algebraic group and $X$ be a scheme with an algebraic $G$-action, which we call $G$-scheme. 
For a pure $n$-dimensional $G$-scheme $X$, we denote by $H^{\mathrm{alg}, G}_{2k} (X, \mathbb{R})$ the subspace of $H^{\mathrm{lf}, G}_{2k} (X, \mathbb{R}) = H^{\mathrm{lf}}_{2 \dim_\mathbb{C} B_{2n-2k} G + 2k} (E_{2n-2k} G \times_G X, \mathbb{R})$ spanned by $(\dim_\mathbb{C} B_{2n-2k} G + k)$-cycles on $E_{2n-2k} G \times_G X$ under the cycle map (\ref{equivariant cycle map}). 

Here the scheme $E_{2n-2k} G$ is taken as in section \ref{section: equivariant locally finite homology}, i.e. a $G$-invariant Zariski open set contained in the subset $\{ v \in V ~|~ \mathrm{Stab} (v) = 1 \}$ of a $G$-representation $V$ such that $\dim_{\mathbb{C}} (V \setminus E_{2n-2k} G) \ge n-k+1$. 
The subspace $H^{\mathrm{alg}, G}_{2k} (X, \mathbb{R})$ is independent of the choice of such $E_{2n-2k} G$ by \cite{EG1}. 
Beware that in general $H^{\mathrm{alg}, G}_{2k} (X, \mathbb{R})$ is larger than the subspace $\{ \sum_i a_i [E_{2n-2k} G \times_G Z_i ] ~|~ Z_i \subset X: G\text{-invariant divisor} \}$ spanned by $G$-equivariant fundamental classes of $G$-\textit{invariant} divisors on $X$. 

The following result is incorporated in the definition of $\mu$-Futaki invariant for test configurations and hence is the key ingredient of Theorem A. 

\begin{thm}
\label{convergence of absolute equivariant intersection}
Let $X$ be a pure $n$-dimensional proper scheme with an algebraic $T$-action. 
For any $L \in H^2_T (X, \mathbb{R})$ and $\alpha \in H^{\mathrm{alg}, T}_{2n-2} (X, \mathbb{R})$ (or $\alpha \in H^{\mathrm{lf}, T}_{2n} (X, \mathbb{R})$), the infinite series 
\[ {^\hbar (\alpha. e^L; \xi)} := \sum_{k=0}^\infty \frac{1}{k!} {^\hbar (\alpha. L^{\cdot k}; \xi)} \]
is compactly absolutely convergent. 
\end{thm}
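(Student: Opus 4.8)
The plan is to reduce the convergence of this exponential series to a single, uniformly convergent integral over a compact smooth manifold, where the infinite series becomes nothing but the power-series expansion of $e^{\hbar \mu_\xi}$ for a bounded smooth function $\mu_\xi$. The two asserted cases ($\alpha$ of homological degree $2n-2$ or $2n$) will be handled uniformly by the same mechanism; the properness of $X$, ensuring compactness of the smooth model, is the feature that makes everything bounded.

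First I would pass to a smooth model. By bilinearity of the equivariant intersection in $\alpha$ it suffices to treat $\alpha$ represented by a single $T$-invariant cycle (the fundamental class of $X$ itself in the top-dimensional case). Applying equivariant resolution of singularities to the supporting subvariety, I obtain a smooth proper (hence compact) manifold $\pi \colon \tilde{X} \to X$ carrying a $T$-action, a $T_{\mathrm{cpt}}$-invariant closed $2$-form $\omega$, and a smooth moment map $\mu \colon \tilde{X} \to \mathfrak{t}^\vee$ with $-d\mu_\xi = i_{\xi^\#}\omega$, so that $\omega + \hbar\mu$ is a ${^\hbar d_K}$-closed representative of $\pi^*\,{^\hbar \Phi}(L)$ in the Cartan model. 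The projection formula for the equivariant proper pushforward, which by the remarks in section \ref{The beauty of the Cartan model} is compatible with the chain-level pushforward on equivariant currents, then reduces ${^\hbar (\alpha. L^{\cdot k}; \xi)}$ to an intersection computed entirely on the compact smooth $\tilde{X}$.

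Next I would write down the integral representation. Using the identification ${^\hbar H^{2k}_{\mathrm{dR}, K}}(\mathrm{pt}) = (S^k \mathfrak{k}^\vee)^K$ together with the chain-level evaluation map $\rho^T_\xi$, the number ${^\hbar (\alpha. L^{\cdot k}; \xi)}$ equals the integral over $\tilde{X}$ of the top-degree component of $(\omega + \hbar\mu_\xi)^{k}$, where $\mu_\xi$ is now a genuine smooth real-valued function on the compact manifold $\tilde{X}$, hence bounded. As $\xi$ ranges over a compact subset $C \subset \mathfrak{t}$, these functions are uniformly bounded, say $|\hbar\mu_\xi| \le M_C$. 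Writing $m := \dim_{\mathbb{C}} \tilde{X}$, the degree-$2m$ part of $(\omega + \hbar\mu_\xi)^{k}$ is $\binom{k}{m}(\hbar\mu_\xi)^{k-m}\omega^{m}$, so the $\tfrac{1}{k!}$-weighted partial sums are dominated termwise by the scalar exponential series for $e^{M_C}$ times the fixed smooth form $\tfrac{1}{m!}|\omega^{m}|$. They therefore converge uniformly on $\tilde{X}$, uniformly for $\xi \in C$, and interchanging the sum with the integral over the compact $\tilde{X}$ gives $\sum_k \tfrac{1}{k!}\,{^\hbar (\alpha. L^{\cdot k}; \xi)} = \int_{\tilde{X}} [\, e^{\hbar\mu_\xi}\, e^{\omega} \,]_{\mathrm{top}}$, which is finite and continuous in $\xi$; this is precisely compact absolute convergence.

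I expect the main obstacle to be the reduction step rather than the analytic estimate. Specifically, one must verify that the algebraic class $\alpha \in H^{\mathrm{alg}, T}_{2n-2}(X)$ --- which the text emphasizes is strictly larger than the span of fundamental classes of invariant divisors --- is faithfully represented after resolution, and that the compatibility of $\rho^T_\xi$ with the proper pushforward on equivariant currents genuinely yields the stated integral against a smooth representative $\omega + \hbar\mu$ on the compact model. Once the problem is transported to a compact manifold with $\mu_\xi$ bounded, the uniform (and hence absolute, compactly uniform) convergence follows by domination as above.
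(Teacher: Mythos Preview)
Your overall strategy matches the paper's: reduce to the smooth case via equivariant resolution and the projection formula, then express each term as an integral of equivariant forms over a compact manifold, where the convergence becomes a routine domination by the scalar exponential series. The analytic estimate you sketch is exactly what the paper does (see Proposition~\ref{absolute intersection}).

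The gap you flag in your last paragraph is real, and it is the one non-routine step. The paper does \emph{not} attempt to represent $\alpha$ by $T$-invariant cycles on $X$ and resolve their supports. Instead, it resolves $X$ itself to obtain a smooth $\beta\colon \tilde{X} \to X$ and invokes Lemma~\ref{div-surj}: for any proper surjective equivariant morphism of pure $n$-dimensional $G$-schemes, the pushforward $\beta_*\colon H^{\mathrm{alg}, G}_{2n-2}(\tilde{X},\mathbb{R}) \to H^{\mathrm{alg}, G}_{2n-2}(X,\mathbb{R})$ is surjective. The proof of that lemma reduces immediately to the trivial group by passing to the Borel approximation $E_l G \times_G X$, where equivariant algebraic cycles become ordinary cycles; this is precisely how the ``larger than invariant divisors'' issue evaporates. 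Hence any $\alpha$ lifts to some $\tilde{\alpha}$ on the smooth $\tilde{X}$, and $(\alpha. e^L) = (\tilde{\alpha}. e^{\beta^* L})$ by the projection formula. On the smooth $\tilde{X}$ the paper then takes the Poincar\'e dual of $\tilde{\alpha}$, a ${^\hbar d_K}$-closed equivariant $2$-form $A + \hbar\nu$, and computes $\int_{\tilde{X}} (A + \hbar\nu_\xi)(\Omega + \hbar\mu_\xi)^k$; two top-degree terms survive (one from $A$, one from $\nu_\xi$), and each is dominated as you describe. This sidesteps entirely the question of whether $\alpha$ is spanned by invariant divisors.
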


The result is well-known for smooth $X$ as we can compute the equivariant intersection by equivariant integration of equivariant differential form. 
As for this absolute intersection, we can easily reduce the problem to the smooth case by equivariant resolution and equivariant projection formula. 
Compared to this, the following convergence result on relative equivariant intersection is less obvious. 

\begin{thm}
Let $\mathcal{X}$ be a pure dimensional $T \times G$-scheme, $B$ be a smooth $G$-variety and $\varpi: \mathcal{X} \to B$ be a $T \times G$-equivariant proper morphism of schemes. 
For $\mathcal{L} \in H^2_{T \times G} (\mathcal{X}, \mathbb{R})$ and $\alpha \in H^{\mathrm{alg}, T \times G}_{2 \dim \mathcal{X} -2} (\mathcal{X}, \mathbb{R})$, the following infinite series 
\[ \D^q_{\hbar. \xi} (\alpha. e^{\mathcal{L}})_B := \sum_{k=0}^\infty \frac{1}{k!} \D^q_{\hbar. \xi} (\alpha. \mathcal{L}^{\cdot k})_B \]
is compactly absolute-convergent in $H^{2q}_G (B, \mathbb{R})$. 
\end{thm}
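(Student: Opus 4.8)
The plan is to reduce to a smooth total space, pass to the Cartan model, and then control the whole series by a single fiber integration in which the non-closed evaluation $\rho^T_\xi$ is performed only on $B$, where $T$ acts trivially. First I would pass to a smooth model: since $\alpha\in H^{\mathrm{alg},T\times G}_{2\dim\mathcal{X}-2}(\mathcal{X},\mathbb{R})$ is represented by algebraic cycles, an equivariant resolution of singularities together with the equivariant projection formula reduces the statement to the case where $\mathcal{X}$ is a smooth $T\times G$-variety proper over the smooth $B$ — exactly the reduction already used for the absolute Theorem \ref{convergence of absolute equivariant intersection}. On such a model every class has a Cartan representative, $\varpi_*$ is computed by fiber integration of equivariant currents, and by Section \ref{The beauty of the Cartan model} the chain-level evaluation $\rho^T_\xi$ is multiplicative and commutes with $\varpi_*$.

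Next I would use this commutation to pull the $T$-evaluation across the pushforward. Choosing Cartan representatives $\tilde\alpha,\tilde{\mathcal{L}}$ on $\mathcal{X}$, each term reads
\[ \D^q_{\hbar.\xi}(\alpha.\mathcal{L}^{\cdot k})_B = q!\cdot t_q\cdot {^\hbar \Phi_G^{-1}}\,\varpi_*\Big(\rho^T_\xi\tilde\alpha\frown \tfrac{1}{k!}(\rho^T_\xi\tilde{\mathcal{L}})^k\Big). \]
Writing $\rho^T_\xi\tilde{\mathcal{L}} = \hbar\mu^T_\xi + \mathcal{L}_G$, where $\mu^T_\xi$ is a smooth function (linear in $\xi$) and $\mathcal{L}_G$ is the degree-two $G$-equivariant part (independent of $\xi$), the degree-zero factor $\hbar\mu^T_\xi$ contributes no form- or $\mathfrak{k}$-polynomial degree, so after projecting to the fixed output degree $2q$ on $B$ only finitely many powers of $\mathcal{L}_G$ can occur, while the surplus factors of $\hbar\mu^T_\xi$ reassemble an exponential via $\sum_{k\ge j}\tfrac{1}{k!}\binom{k}{j}(\hbar\mu^T_\xi)^{k-j} = \tfrac{1}{j!}e^{\hbar\mu^T_\xi}$. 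Thus, degree by degree, the series collapses to
\[ \D^q_{\hbar.\xi}(\alpha.e^{\mathcal{L}})_B = q!\cdot t_q\cdot {^\hbar \Phi_G^{-1}}\,\varpi_*\big(\rho^T_\xi\tilde\alpha\frown e^{\hbar\mu^T_\xi}\,e^{\mathcal{L}_G}\big), \]
in which $e^{\mathcal{L}_G}$ truncates to a polynomial in each fixed degree.

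The convergence estimate is then immediate from properness. On the compact fibers of $\varpi$ the function $\mu^T_\xi$ is bounded by $M(\xi)=\sup|\hbar\mu^T_\xi|$, continuous in $\xi$, and $\tilde\alpha,\mathcal{L}_G$ are bounded in every $C^m$-norm over the relevant compact set. Bounding the degree-$2q$ part of the $k$-th term by $\tfrac{1}{k!}\sum_{j}\binom{k}{j}M(\xi)^{k-j}C^{j}$, with $j$ ranging over the finite admissible set and $C$ a uniform bound on $\mathcal{L}_G$, and summing over $k$ yields a bound of shape $e^{M(\xi)}\cdot(\text{finite})$, uniform for $\xi$ in any compact subset of $\mathfrak{t}$ and for $\zeta\in\mathfrak{k}$ on compacta. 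This is precisely compact absolute convergence of the Cartan representatives, hence convergence of the series in $H^{2q}_G(B,\mathbb{R})$.

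The main obstacle is the one flagged in Section \ref{The beauty of the Cartan model}: because $T$ acts non-trivially on $\mathcal{X}$, the evaluation $\rho^T_\xi$ does \emph{not} preserve ${^\hbar d}$-closedness there, so I cannot substitute $\xi$ into $\tilde{\mathcal{L}}$ on $\mathcal{X}$ and manipulate closed forms. The whole argument therefore hinges on performing the evaluation on $B$ — where $T$ acts trivially, so $\D^q_{\hbar.\xi}$ descends to cohomology — and transporting it to $\mathcal{X}$ only through the chain-level identity $\rho^T_\xi\circ\varpi_* = \varpi_*\circ\rho^T_\xi$: the pushed-forward integrand need not be closed on $\mathcal{X}$, yet its fiber integral is a genuine closed form on $B$. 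The one delicate technical point is to legitimize interchanging the infinite sum over $k$ with the degree projection in order to reassemble $e^{\hbar\mu^T_\xi}$ inside each fixed degree; this is secured by the dominated absolute bound above, which also recovers Theorem \ref{convergence of absolute equivariant intersection} as the special case $B=\mathrm{pt}$.
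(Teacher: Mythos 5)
Your reduction to a smooth total space, the splitting $\mu=\mu^T+\mu^K$ in the Cartan model, the reassembly of $e^{\hbar\mu^T_\xi}$ degree by degree, and the Fr\'echet-type bound $e^{M(\xi)}\cdot(\text{finite})$ on compacta all match the paper's argument for Theorem \ref{derivative of relative equivariant intersection}. But there is a genuine gap at your very last step: from ``compact absolute convergence of the Cartan representatives'' you conclude ``hence convergence of the series in $H^{2q}_G(B,\mathbb{R})$.'' That implication is not automatic, and it is precisely the point the paper isolates as the hard part. What you have is convergence of a series of closed equivariant \emph{currents} on $B$ (the pushforwards $\varpi_*(\cdots)$, which exist only as currents since $\varpi$ is not a submersion after resolution) in the weak/quotient topology of the chain complex. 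To deduce convergence of their \emph{classes}, you need the form-to-homology pushforward map to be continuous with respect to the unique Hausdorff topology on the finite-dimensional space $H^{2q}_G(B,\mathbb{R})$. This requires knowing that the quotient topology on the equivariant current homology ${^\hbar H^{\mathrm{cur},K}_*}(B)$ — i.e. closed currents modulo the image of the boundary operator — is itself Hausdorff, equivalently that the space of exact equivariant currents is closed. Quotient topologies of this kind fail to be Hausdorff in general (Dolbeault cohomology is the standard counterexample), and when they fail, a series of closed currents can converge while the induced classes do not, or converge to the wrong class.

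The paper closes this gap with Theorem \ref{Hausdorffness of equivariant cohomology}: the non-equivariant case follows from de Rham's theorem (Proposition \ref{deRham}, exact currents are cut out by closed conditions), and the equivariant case is reduced to it by a spectral-sequence argument for double complexes of topological vector spaces. Only then do Corollary \ref{continuity of push} and Lemma \ref{Frechet to Banach} convert your chain-level absolute bounds into absolute convergence in $H^{2q}_G(B,\mathbb{R})$. So the delicate point is not, as you suggest, the interchange of the sum over $k$ with the degree projection (your dominated bound does handle that); it is the continuity of the passage from chain level to homology, which your ``hence'' silently assumes and which costs the paper an entire section to establish.
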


Though we restrict our interest to schemes over $\mathbb{C}$ to show the convergence of this infinite series, the construction of the infinite series itself is rather universally designed in the sense that it does not rely on any specific construction of equivariant cohomology (see section \ref{Equivariant cohomology class of class epsilon ell}). 
Indeed, we can generalize the construction to schemes over any field, using for instance equivariant Chow cohomology with $\mathbb{R}$-coefficient instead of using equivariant singular cohomology (thanks to the projective space bundle formula $A (\mathbb{P}_k^N \times X) \cong A (\mathbb{P}_k^N) \otimes A (X)$).  

Now we explain our plan of the proof. 
First of all, for a resolution $\beta: X' \to X$, the morphism $\varpi' = \varpi \circ \beta: X' \to B$ is not a submersion, so the fibrewise integration of equivariant differential form is no longer equivariant differential form. 
Thus we make use of less familiar equivariant current and its pushforward. 

We can find equivariant differential forms $\varphi_k$ on $X'$ such that $c_k = {^\hbar \Phi^{-1}} [\int_{X'/B} \varphi_k]$. 
We naturally expect the convergence of the infinite series $\varphi = \sum_{k=0}^\infty \varphi_k$ implies the convergence of the infinite series $\sum_{k=0}^\infty c_k = \sum_{k=0}^\infty {^\hbar \Phi^{-1}} [\int_{X'/B} \varphi_k]$ of the pushforward homology classes in the equivariant current homology of $B$, but this is not so obvious. 
To show the convergence of $\sum_{k=0}^\infty [\int_{X'/B} \varphi_k]$, we must show the continuity of \textit{form-to-homology} pushforward map. 
Since the \textit{form-to-current} pushforward map is continuous, it suffices to show the topology on the equivariant current homology of the base $B$ induced from the construction of equivariant current homology coincides with the unique Hausdorff topology. 
In general, quotient spaces fail to be Hausdorff. 

Fortunately, we have the following affirmative answer. 
This is a key ingredient in Theorem B. 
The key idea is to reduce the problem to the non-equivariant case by computing the topology on the equivariant current homology via spectral sequence. 

\begin{thm}
\label{Hausdorffness of equivariant cohomology}
Let $K$ be a compact Lie group and $B$ be a smooth manifold with a smooth $K$-action. 
Then the quotient topology on the equivariant current homology ${^\hbar H_k^{\mathrm{cur}, K}} (B, \mathbb{R})$ induced from the weak topology on the space of equivariant currents $(\mathcal{D}')_k^K (B)$ is Hausdorff for every $k \in \mathbb{Z}$. 
\end{thm}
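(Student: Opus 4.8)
The plan is to reduce the statement to a closed-range property of the equivariant boundary operator and then to the non-equivariant de Rham theory via the filtration by symmetric-algebra degree. First recall that for a weakly continuous linear map $u$ of topological vector spaces the quotient $\mathrm{target}/\operatorname{im} u$ is Hausdorff if and only if $\operatorname{im} u$ is closed; since the cycles $\ker {^\hbar d_K}$ form a closed subspace (the kernel of a weakly continuous map), the homology ${^\hbar H^{\mathrm{cur},K}_k}(B)$ is Hausdorff in the quotient topology exactly when ${^\hbar d_K}\colon (\mathcal{D}')^K_{k+1}(B)\to(\mathcal{D}')^K_k(B)$ has weakly closed range. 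So I would restate the theorem as: the equivariant boundary operator has weakly closed range in every degree.

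I would next exploit the chain-level Künneth description of the Cartan model. In a fixed total degree one has the \emph{finite} decomposition $(\mathcal{D}')^K_k(B)=\bigoplus_{2i+j=k}(S^i\mathfrak{k}\otimes(\mathcal{D}')_j(B))^K$, the sum being finite because $0\le j\le\dim B$ forces $i\le k/2$; here $S^i\mathfrak{k}$ is finite-dimensional, so each summand is a closed subspace of finitely many copies of the ordinary currents $(\mathcal{D}')_j(B)$ and thus a complete, reflexive, nuclear space in the weak topology. On this bigrading ${^\hbar d_K}=\partial+\hbar\,c$, where $\partial$ is the ordinary de Rham boundary (fixing the degree $i$ in $S\mathfrak{k}$) and $c$ is the contraction term transpose to $\iota_{\xi^\#}$ (strictly lowering $i$). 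Hence the increasing filtration by $i$ is a finite filtration by closed subcomplexes whose associated graded differential is exactly $\partial$, and the associated graded complex is, in each $S^i\mathfrak{k}$-slot, the ordinary current complex $((\mathcal{D}')_\bullet(B),\partial)$ tensored with a finite-dimensional space and restricted to $K$-invariants.

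The classical de Rham theory supplies the key non-equivariant input: the regularization operators furnish weakly continuous $R,A$ on currents with $\mathrm{id}-R=\partial A+A\partial$ and $R$ valued in smooth currents, and these show that $\partial$ on $(\mathcal{D}')_\bullet(B)$ has weakly closed range, its homology being the Hausdorff space $H^{\mathrm{lf}}_\bullet(B,\mathbb{R})$. Therefore the $E_1$-page of the spectral sequence of the filtration is Hausdorff in every degree. Because the filtration is finite and by closed subspaces, and because an extension of a Hausdorff topological vector space by a Hausdorff one (with closed kernel) is again Hausdorff, the Hausdorffness of ${^\hbar H^{\mathrm{cur},K}_\bullet}(B)$ would follow once one knows that every differential $d_r$ of the spectral sequence again has closed range, so that every page $E_r$ stays Hausdorff.

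The main obstacle is precisely this last point: closed range is \emph{not} automatic for a continuous linear map between nuclear Fréchet (or DF) spaces, so it cannot be obtained by soft functional analysis and must be extracted from the geometry; equivalently, it is the closedness of the images of the connecting maps in the long exact sequences attached to the two-step pieces of the filtration. I would resolve it by transporting the de Rham homotopy to the equivariant complex: averaging the regularization kernel over the compact group $K$ makes $R$ and $A$ $K$-equivariant, and extending them $S\mathfrak{k}$-linearly gives weakly continuous operators $\mathcal{R},\mathcal{A}$ on $(\mathcal{D}')^K_\bullet(B)$ commuting with $\partial$; the only failure of $\mathrm{id}-\mathcal{R}={^\hbar d_K}\mathcal{A}+\mathcal{A}\,{^\hbar d_K}$ is a correction coming from the commutator of $\mathcal{A}$ with the contraction $c$, and this commutator strictly lowers the filtration degree $i$. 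A downward induction on $i$, which terminates because the filtration is finite, then corrects the homotopy term by term and produces a genuine weakly continuous contraction of $(\mathcal{D}')^K_\bullet(B)$ onto the smooth equivariant Cartan complex, on which the boundary operator has closed range in each degree — finite-dimensional cohomology when $B$ is compact, and closed range in general by the same de Rham input applied slotwise. This yields the closed range of ${^\hbar d_K}$, hence the theorem. Alternatively, one may run the entire argument on the predual complex of equivariant compactly supported forms, where the spaces are nuclear Fréchet, establish Hausdorffness of equivariant de Rham cohomology there, and transfer to currents by the closed range theorem for transposes.
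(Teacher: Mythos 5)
You set the problem up correctly: Hausdorffness of the quotient topology on ${^\hbar H^{\mathrm{cur}, K}_k} (B)$ is equivalent to weak closedness of $\mathrm{im} ({^\hbar d_K})$ (since the cycles already form a weakly closed subspace), the filtration of $(\mathcal{D}')^K_k (B)$ by the $S^* \mathfrak{k}^\vee$-degree is finite in each total degree with associated graded differential the slotwise de Rham boundary, and, most importantly, you correctly identify the crux: closed range does not pass from the associated graded to the filtered complex by soft functional analysis. Your middle step is genuinely different from the paper's proof: averaging de Rham's regularization $R, A$ over $K$ (legitimate, since pushforward by the diffeomorphisms of the action commutes with $\partial$), extending it slotwise, and correcting the homotopy by a finite perturbation induction against the filtration-lowering contraction term is a plausible route to a weakly continuous retraction of the equivariant current complex onto the smooth Cartan complex (granting orientability of $B$, which the embedding of forms into currents requires and which the paper's argument never needs).

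The genuine gap is the endgame. Your closing claim — that the boundary operator on the smooth equivariant complex has "closed range in general by the same de Rham input applied slotwise" — is invalid: the differential there is $d + \hbar \delta$ and $\delta$ mixes the $S^* \mathfrak{k}^\vee$-slots, so this is exactly the filtration-propagation problem you yourself ruled out solving softly two paragraphs earlier; the retraction transports the problem from currents to forms, it does not solve it. Even in the compact case your argument is not complete as stated: finite-dimensional cohomology plus the open mapping theorem gives closedness of the image in the \emph{Fr\'echet} topology on smooth forms, but the transfer back to currents (taking $\pi \sigma = \lim_j {^\hbar d_K'} (\pi \tau_j)$ and concluding exactness) needs closedness in the \emph{weak current} topology, which is coarser; bridging this requires exhibiting the image as an annihilator of compactly supported equivariant test forms, i.e. an equivariant analogue of de Rham's Theorem 17 --- essentially the statement being proved. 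What you are missing is the finiteness mechanism the paper's proof runs on: staying with the current double complex, it assumes $E_1^{p,q} = (S^p \mathfrak{k}^\vee \otimes H^{\mathrm{cur}}_{b-(p+q)} (B))^K$ is finite-dimensional and Hausdorff (Hausdorffness from de Rham's theorem; finite-dimensionality holds in the intended setting where $B$ is an algebraic variety), and then \emph{every} subspace of every later page is automatically closed, so Hausdorffness propagates through all pages using only the separation lemma (an extension of a Hausdorff topological vector space by a Hausdorff one is Hausdorff) and the fact that the comparison maps $E^{m,n}_{r+1} \to \mathrm{Ker} (d_r)/\mathrm{Im} (d_r)$ are continuous bijections in the one direction that matters --- no closed-range or open-mapping theorem is ever invoked. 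Without an analogue of that finiteness input, your proposal does not prove the non-compact case, which is the one the paper actually uses (e.g. $B = \mathbb{A}^1$); with it, your approach could likely be repaired, but that repair still has to address the weak-versus-Fr\'echet issue above.
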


More basically, while the pushforward equivariant currents $\int_{X'/B} \varphi_k$ on $B$ become equivariantly closed, the integrands $\varphi_k$ itself are \textit{not} equivariantly closed. 
By this non-closedness, we cannot understand $\varphi_k$ in a way that makes sense independent of this specific construction, the Cartan model, of equivariant cohomology, contrary to the construction of $c_k$. 

The following diagram illustrates our approach for the proof of convergence. 
\[ \begin{tikzcd}
{\color{gray} \sum_{k=0}^\infty \frac{1}{k!} (A + \hbar \nu) \wedge (\Omega + \hbar \mu)^k} \ar[gray]{r} \ar[gray]{d} 
& {\color{gray} \sum_{k=0}^\infty \varphi_k = \varphi } \ar[gray]{dd}
\\
\sum_{k=0}^\infty \frac{1}{k!} (\alpha \frown \mathcal{L}^{\smile k}) \ar{d} 
& 
\\
(\alpha. e^{\mathcal{L}})_B = \sum_{k=0}^\infty \frac{1}{k!} (\alpha. \mathcal{L}^{\cdot k})_B \ar{r}{\D_{\hbar. \xi}}
& \sum_{k=0}^\infty \varphi_k {\color{gray} = {^\hbar \Phi^{-1}} [\int_{X'/B} \varphi] }
\end{tikzcd} \]

\subsubsection{Equivariant $\mathbb{Q}$-line bundle and relative positivity}

Our proofs on convergence results for relative equivariant intersections work for complex spaces and for general equivariant cohomology class $L \in H^2_G (X, \mathbb{R})$. 
On the other hand, in the proof of the naturality in Theorem B, we make use of equivariant Grothendieck--Riemann--Roch theorem, for which we need to restrict $L$ to some algebraic/holomorphic classes. 
We prepare some notations for this class. 

For a scheme $X$ over $\mathbb{C}$, we put 
\[ NS (X) := \{ c_1 (L) ~|~ L: \text{ algebraic line bundle over } X \} \subset H^2 (X, \mathbb{Z}) \]
and $NS (X, \Bbbk) := NS (X) \otimes \Bbbk$ for $\Bbbk = \mathbb{Q}, \mathbb{R}$. 
For a morphism $f: X \to S$ of schemes, we call an element $\alpha \in NS (X)$ \textit{relatively ample (resp. relatively semiample, relatively big)} if there exists some algebraic line bundle $L$ and $m \in \mathbb{N}$ for which $m\alpha = c_1 (L)$ and $L$ is $f$-ample (resp. $f^* f_* L \to L$ is surjective, $(L|_Z)^{\dim Z} > 0$ for every $s \in S$ and irreducible component $Z$ of $f^{-1} (s)$). 
For our sake, we may safely replace the relatively semiample condition with the weaker fibrewise semiample condition. 

Let $X$ be a $G$-scheme. 
For a $G$-equivariant algebraic line bundle $L$ on $X$, we can assign the equivariant first Chern class $c_{G, 1} (L) \in H^2_G (X, \mathbb{Z})$. 
We put 
\[ NS_G (X) := \{ c_{G, 1} (L) ~|~ L: G\text{-equivariant alg. line bundle over } X \} \subset H^2_G (X, \mathbb{Z}) \]
and $NS_G (X, \Bbbk) := NS_G (X) \otimes \Bbbk$. 
We denote the forgetful map $NS_G (X) \to NS (X)$ by $c_1$. 

\begin{rem}
We may use analytic objects: for a complex space $X$, one may use the subgroup $NS_G^{\mathrm{hol}} (X)$ consisting of the equivariant first Chern classes of holomorphic line bundles. 
\end{rem}

By a \textit{$G$-equivariant $\mathbb{Q}$-line bundle} on a $G$-complex space $X$, we mean a $G$-equivariant Neron--Severi class $L \in NS_G (X, \mathbb{Q}) \subset H^2_G (X, \mathbb{Q})$. 
When we would emphasize that $L$ denotes a $G$-equivariant class, we write it as $L_G$. 
For a morphism $f: X \to S$, we call a $G$-equivariant $\mathbb{Q}$-line bundle $L$ \textit{relatively ample (resp. relatively semiample, relatively big)} if the associated class $c_1 (L) \in NS (X, \mathbb{Q})$ is so. 

A pair $(X, L)$ of a pure dimensional projective $G$-scheme $X$ and an ample (resp. semiample and big) $G$-equivariant $\mathbb{Q}$-line bundle $L$ is called a \textit{$G$-polarized scheme (resp. $G$-semipolarized scheme)}. 
When $X$ is reduced and irreducible, we call it a $G$-polarized variety. 

For a $T$-equivariant $\mathbb{Q}$-line bundle $L$, a \textit{lift} of a one parameter subgroup $\Lambda: \mathbb{G}_m \to \mathrm{Aut}_T (X)$ to $L$ is an equivariant $\mathbb{Q}$-line bundle $L_{T \times \mathbb{G}_m} \in NS_{T \times \mathbb{G}_m} (X)$ which is mapped to $L \in NS_T (X)$ by the restriction to the subgroup $T \subset T \times \mathbb{G}_m$. 
Lifts to $L$ are unique modulo the image of $NS_{\mathbb{G}_m} (\mathrm{pt}) \to NS_{T \times \mathbb{G}_m} (X)$ if it exists. 

\subsubsection{Test configuration}
\label{section: Test configuration}

We describe basic notions on test configuration so that we can readily generalize notions to more general K\"ahler classes which do not lie in $NS_T (X, \mathbb{R})$. 
Having said that, as we already noted, we restrict our interest to the class $NS_T (X, \mathbb{R})$ in this article. 

\begin{defin}
Let $T$ be an algebraic torus and $(X, L)$ be a $T$-semipolarized scheme. 
A $T$-equivariant \textit{test configuration} $(\mathcal{X}, \mathcal{L})$ of $(X, L)$ consists of the following data: 
\begin{itemize}
\item A $T \times \mathbb{G}_m$-scheme $\mathcal{X}$ with a $T \times \mathbb{G}_m$-equivariant projective flat morphism $\varpi: \mathcal{X} \to \mathbb{A}^1$, where we define the $T \times \mathbb{G}_m$-action on the base $\mathbb{A}^1$ by $z. (t, \tau) = z \tau$ for $z \in \mathbb{A}^1$ and $(t, \tau) \in T \times \mathbb{G}_m$. 

\item A $T \times \mathbb{G}_m$-equivariant $\mathbb{Q}$-line bundle $\mathcal{L} \in NS_{T \times \mathbb{G}_m} (\mathcal{X}, \mathbb{Q})$ on $\mathcal{X}$ which is relatively semiample and relatively big. 

\item A $T \times \mathbb{G}_m$-equivariant isomorphism $\varphi: X \times (\mathbb{A}^1 \setminus \{ 0 \}) \xrightarrow{\sim} \mathcal{X} \setminus \mathcal{X}_0$ over the base with $\varphi^* \mathcal{L} = p_X^* L \in NS_{T \times \mathbb{G}_m} (X \times (\mathbb{A}^1 \setminus \{ 0 \}), \mathbb{Q}) \cong NS_T (X, \mathbb{Q})$, which we often abbreviate. 
\end{itemize}
We call a test configuration $(\mathcal{X}, \mathcal{L})$ \textit{ample} if $\mathcal{L}$ is relatively ample over $\mathbb{A}^1$. 
\end{defin}

For a test configuration $(\mathcal{X}, \mathcal{L})$, we construct its compacification $(\bar{\mathcal{X}}, \bar{\mathcal{L}})$ by the quotient of $\mathcal{X} \sqcup X \times \mathbb{A}^1_-$ under the identification map $\varphi \circ (\id_X \times r): (X \times \mathbb{A}^1_- \setminus \{ 0_- \}) \to \mathcal{X} \setminus \mathcal{X}_0$. 
The extended $\mathbb{Q}$-line bundle $\bar{\mathcal{L}} \in H^2_{T \times \mathbb{G}_m} (\bar{\mathcal{X}}, \mathbb{Q})$ is cohomologically characterized by the property $\bar{\mathcal{L}}|_{\mathcal{X}} = \mathcal{L}, \bar{\mathcal{L}}|_{X \times \mathbb{A}^1_-} = p_X^* L$, where we can use the following equivariant Mayer--Vietoris short exact sequence 
\[ 0 \to H^2_{T \times \mathbb{G}_m} (\bar{\mathcal{X}}) \to H^2_{T \times \mathbb{G}_m} (\mathcal{X}) \oplus H^2_{T \times \mathbb{G}_m} (X \times \mathbb{A}^1_-) \to H^2_{T \times \mathbb{G}_m} (X \times \mathbb{P}^1 \setminus \{ 0, 0_- \}) \to 0. \]
Here we can identify $H^*_{T \times \mathbb{G}_m} (X \times (\mathbb{P}^1 \setminus \{ 0, 0_- \}))$ with $H^*_T (X)$ and $H^*_{T \times \mathbb{G}_m} (X \times \mathbb{A}^1_-)$ with $H^*_T (X \times \mathrm{pt}_{\mathbb{G}_m}) = H^*_T (X) \otimes H^2 (\mathbb{C}P^\infty)$ and hence $H^*_{T \times \mathbb{G}_m} (X \times \mathbb{A}^1_-) \to H^*_{T \times \mathbb{G}_m} (X \times \mathbb{P}^1 \setminus \{ 0, 0_- \})$ is surjective. 

Though $\bar{\mathcal{L}}$ is mere relatively ample for ample $(\mathcal{X}, \mathcal{L})$, we can find a related test configuration $(\mathcal{X}, \mathcal{L}_c)$ with ample $\bar{\mathcal{L}}_c$ by twisting the weight of $\mathcal{L}$ as follows. 
The Poincare dual $[\mathrm{pt}]_{\mathbb{G}_m} \in H^2_{\mathbb{G}_m} (\mathbb{A}^1)$ of the equivariant fundamental class $[\mathrm{pt}]^{\mathbb{G}_m} \in H^{\mathrm{lf}, \mathbb{G}_m}_0 (\mathbb{A}^1)$ generates $H^2_{\mathbb{G}_m} (\mathbb{A}^1) \cong \mathbb{Z}$. 
The pullback of $[\mathrm{pt}]_{\mathbb{G}_m}$ along $\mathcal{X} \to \mathbb{A}^1$ is the equivariant fundamental class of the central fibre $[\mathcal{X}_0]_{\mathbb{G}_m} \in H^2_{T \times \mathbb{G}_m} (\mathcal{X})$. 
Since the restriction of $[\mathcal{X}_0]_{\mathbb{G}_m}$ to $\mathcal{X} \setminus \mathcal{X}_0$ is zero, for $c \in \mathbb{Q}$, $(\mathcal{X}, \mathcal{L}_c = \mathcal{L} + c. [\mathcal{X}_0]_{\mathbb{G}_m})$ gives another test configuration. 
The extended class $[\mathrm{pt}]_{\mathbb{G}_m} \in H^2_{\mathbb{G}_m} (\mathbb{P}^1)$ is ample, so by taking sufficiently large $c > 0$, we can make $\bar{\mathcal{L}}_c$ ample for ample $(\mathcal{X}, \mathcal{L})$. 

For a lift $L_{T \times \mathbb{G}_m}$ of a one parameter subgroup $\Lambda: \mathbb{G}_m \to \mathrm{Aut}_T (X)$, we can assign a $T$-equivariant test configuration $(X \times \mathbb{A}^1, p_X^* L_{T \times \mathbb{G}_m})$. 
We call it the ($T$-equivariant) \textit{product configuration} associated to $L_{T \times \mathbb{G}_m}$. 

Two test configurations $(\mathcal{X}, \mathcal{L}), (\mathcal{X}', \mathcal{L}')$ of $(X, L)$ are called \textit{equivalent} if the isomorphism $\varphi' \circ \varphi^{-1} : \mathcal{X} \setminus \mathcal{X}_0 \to \mathcal{X}' \setminus \mathcal{X}_0$ extends to an isomorphism $\mathcal{X} \setminus Z \cong \mathcal{X}' \setminus Z'$ for some $T$-invariant closed subschemes $Z, Z'$ of codimension greater than one. 
When $X$ is normal, equivalent test configurations give the same normalization, however, the reverse implication is not true. 
In terms of Boucksom--Jonsson's non-archimedean framework, the latter weaker equivalence is more essential. 
For a normal $X$, we say test configurations $(\mathcal{X}, \mathcal{L}), (\mathcal{X}', \mathcal{L}')$ of $(X, L)$ are \textit{pluripotentially equivalent} if the normalizations are naturally isomorphic to each other. 

\subsubsection{$\mu$-Futaki invariant and $\mu$K-stability}

Now we define the $\mu$-Futaki invariant for general test configurations using the equivariant intersection constructed in Theorem \ref{convergence of absolute equivariant intersection}. 

\begin{defin}[$\mu$-Futaki invariant]
Let $(X, L)$ be a $T$-polarized pure $n$-dimensional scheme. 
Fix parameters $\lambda \in \mathbb{R}$ and $\xi \in \mathfrak{t}$.
For a $T$-equivariant test configuration $(\mathcal{X}, \mathcal{L})$ of $(X, L)$, we define its \textit{$\check{\mu}^\lambda_{\hbar. \xi}$-Futaki invariant} by 
\begin{align*}
\cFut^\lambda_{\hbar. \xi} (\mathcal{X}, \mathcal{L}) 
&:= 2 \pi \frac{ {^\hbar (\kappa_{\bar{\mathcal{X}}/\mathbb{P}^1}^T. e^{\bar{\mathcal{L}}_T}; \xi)} \cdot {^\hbar (e^{L_T}; \xi)}  - {^\hbar (\kappa_X^T. e^{L_T}; \xi)} \cdot {^\hbar (e^{\bar{\mathcal{L}}_T}; \xi)} }{{^\hbar (e^{L_T}; \xi)}^2}
\\
& \qquad + \lambda \left[ \frac{ {^\hbar (\bar{\mathcal{L}}_T. e^{\bar{\mathcal{L}}_T}; \xi)} \cdot {^\hbar (e^{L_T}; \xi)} - {^\hbar (L_T. e^{L_T}; \xi)} \cdot {^\hbar (e^{\bar{\mathcal{L}}_T}; \xi)} }{{^\hbar (e^{L_T};\xi )}^2} - \frac{{^\hbar (e^{\bar{\mathcal{L}}_T}; \xi)}}{{^\hbar (e^{L_T}; \xi)}} \right]. 
\end{align*}
\end{defin}

Here $\kappa^T_X \in H^{\mathrm{lf}, T}_{2n-2} (X, \mathbb{Q})$ (resp. $\kappa^T_{\bar{\mathcal{X}}/\mathbb{P}^1} \in H^{\mathrm{lf}, T}_{2n} (\bar{\mathcal{X}}, \mathbb{Q})$) denotes the $T$-equivariant (resp. relative) canonical classes defined in Definition \ref{equivariant canonical class}, which is derived from the equivariant homology todd class $\tau_X^T (\mathcal{O}_X)$ studied in \cite{EG2}. 
These are well-defined for general (non-reduced, non-irreducible, non-normal) schemes and naturally appear in the equivariant Grothendieck--Riemann--Roch theorem. 
When $X$ is normal, the non-equivariant $\kappa_X$ coincides with the homology class of the canonical divisor $K_X$. 
In the equivariant case, even if $X$ is normal, $\kappa_X^T$ is not generally obtained from an \textit{invariant} $\mathbb{Q}$-divisor on $X$, but only from an \textit{equivariant} $\mathbb{Q}$-divisor, which is a divisor on the Borel construction $X \times_T ET$. 
Here one must recall the construction of equivariant locally finite homology. 
See section \ref{section: Equivariant singular cohomology and locally finite homology}. 

We will observe the relation to the differential geometric definition: we have 
\[ \hbar \cdot \cFut^\lambda_{\hbar .\xi} (\mathcal{X}, \mathcal{L}) = {^\hbar \cFut^\lambda_\xi} (\eta) \]
for the product test configuration associated to a lift $L_{T \times \mathbb{G}_m}$ of a $\mathbb{G}_m$-action on $X$ compatible with $T$. 
It is actually independent of the choice of the lift: it only depends on the $\mathbb{G}_m$-action on $X$. 
See Corollary \ref{algebraic mu-Futaki and differential mu-Futaki}. 

Since we formulated $\mu^\lambda_\xi$-cscK metric in \cite{Ino2} using $\bar{\partial}$-Hamiltonian potential $\theta_\xi = -2\mu_\xi$, the following convention fits into the convention of \cite{Ino2}: 
\begin{equation} 
\Fut^\lambda_\xi (\mathcal{X}, \mathcal{L}) := 2 \cFut^\lambda_{-2. \xi} (\mathcal{X}, \mathcal{L}). 
\end{equation}



\begin{rem}
We note the $\mu$-Futaki invariant is expressed by $T$-equivariant intersections, rather than $T \times \mathbb{G}_m$-equivariant intersections. 
The $\mathbb{G}_m$-action to the base direction is incorporated in the compactification $(\bar{\mathcal{X}}, \bar{\mathcal{L}})$. 

The $\mu$-Futaki invariant is invariant under the shift of the lift of the $\mathbb{G}_m$-action on $\mathcal{X}$ to $\mathcal{L}$: 
\[ \cFut^\lambda_{\hbar. \xi} (\mathcal{X}, \mathcal{L} + c. [\mathcal{X}_0]_{\mathbb{G}_m}) = \cFut^\lambda_{\hbar. \xi} (\mathcal{X}, \mathcal{L}). \]
Thus by shifting, we may restrict our interest to test configurations $(\mathcal{X}, \mathcal{L})$ with ample $\bar{\mathcal{L}}$. 
When computing equivariant intersection by the integration of equivariant differential form, we should be aware the associated moment map $\mu$ in $\Omega + \hbar \mu \in {^\hbar \Phi} (\mathcal{L})$ is replaced with $\mu - c. \eta^\vee$ under the shit $\mathcal{L} \mapsto \mathcal{L} + c. [\mathcal{X}_0]_{\mathbb{G}_m}$. 
\end{rem}

Now we define the $\mu$K-stability of a $T$-polarized scheme in the usual way. 

\begin{defin}[$\mu$K-stability]
We call a $T$-(semi)polarized scheme $(X, L)$ 
\begin{itemize}
\item \textit{${^\hbar \mu^\lambda_\xi}$K-semistable} if $\cFut_{\hbar. \xi}^\lambda (\mathcal{X}, \mathcal{L}) \ge 0$ for every test configuration $(\mathcal{X}, \mathcal{L})$ of $(X, L)$. 

\item \textit{$\mu^\lambda_\xi$K-polystable} if it is $\mu^\lambda_\xi$K-semistable and we have $\cFut_{\hbar. \xi}^\lambda (\mathcal{X}, \mathcal{L}) = 0$ for ample test configurations $(\mathcal{X}, \mathcal{L})$ only when $(\mathcal{X}, \mathcal{L})$ is equivalent to some product configuration. 

\item \textit{$\mu^\lambda_\xi$K-stable} if it is $\mu^\lambda_\xi$K-polystable and $\mathrm{Aut}_T^0 (X, L) = T$. 
\end{itemize}
\end{defin}

We will see when $X$ is normal, we may replace the equivalence condition in $\mu^\lambda_\xi$K-polystability with the pluripotential equivalence. 
Namely, if $(X, L)$ is a $\mu^\lambda_\xi$K-semistable normal variety and $(\mathcal{X}, \mathcal{L})$ is pluripotentially equivalent to a product configuration with $\cFut_{\hbar. \xi}^\lambda (\mathcal{X}, \mathcal{L}) = 0$, then $(\mathcal{X}, \mathcal{L})$ is equivalent to a product. 

\subsubsection{How to compute $\mu$-Futaki invariant}

Similarly as Donaldson--Futaki invariant, we can compute the $\mu$-Futaki invariant at least in three ways: (a) by integrating equivariant closed forms representing equivariant cohomology classes, (b) by following the definition of equivariant intersection and (c) by computing the equivariant index. 

For example, the equivariant intersection ${^\hbar (\kappa^T_{\bar{\mathcal{X}}/\mathbb{P}^1}. e^{\bar{\mathcal{L}}_T}; \xi)}$ can be computed as follows. 
We note $\kappa^T_{\bar{\mathcal{X}}/\mathbb{P}^1} \in H^{\mathrm{lf}, T}_{2n} (\bar{\mathcal{X}}, \mathbb{Q})$. 

(a) Suppose $\mathcal{X}$ is smooth, then we can compute ${^\hbar (\kappa^T_{\bar{\mathcal{X}}/\mathbb{P}^1}. e^{\bar{\mathcal{L}}_T}; \xi)}$ using equivariant forms $\Omega + \hbar \mu \in {^\hbar \Phi} (\bar{\mathcal{L}}_T)$ and $A + \hbar \nu \in {^\hbar \Phi} ( -c_{T, 1} (\bar{\mathcal{X}}) + c_1 (\mathbb{P}^1))$: 
\begin{align*} 
{^\hbar (\kappa^T_{\bar{\mathcal{X}}/\mathbb{P}^1}. e^{\bar{\mathcal{L}}_T}; \xi)} 
&= \int_{\bar{\mathcal{X}}} (A + \hbar \nu_{\xi}) e^{\Omega + \hbar \mu_{\xi}} 
\\
&= \int_{\bar{\mathcal{X}}} e^{\mu_{\hbar \xi}} \frac{A \wedge \Omega^n}{n!} + \int_{\bar{\mathcal{X}}} \nu_{\hbar \xi} e^{\mu_{\hbar \xi}} \frac{\Omega^{n+1}}{(n+1)!}. 
\end{align*}
Here $\Omega$ and $A$ are differential $2$-forms on $\bar{\mathcal{X}}$ and $\mu$ and $\nu$ are smooth maps from $\bar{\mathcal{X}}$ to the dual $\mathfrak{t}^\vee$: moment maps. 
For $\xi \in \mathfrak{t}$, we denote by $\mu_\xi, \nu_\xi: \bar{\mathcal{X}} \to \mathbb{R}$ the composition with $\langle \cdot , \xi \rangle: \mathfrak{t}^\vee \to \mathbb{R}$. 

(b) Take a basis $\{ \chi_i \}_{i=1}^r$ of the character lattice $M$ of $T$. 
Put $E_l^i \mathbb{G}_m := \mathbb{C}_{\chi_i}^{l+1} \setminus 0$ and $E_l T := \prod_{i=1}^r E_l^i \mathbb{G}_m$, where $\mathbb{C}_{\chi_i}$ denotes the representation associated to the character $\chi_i$: $\mathbb{G}_m$ acts on $\mathbb{C}_\chi$ from the right by $z. t = \chi (t) z$. 
For $l$ sufficiently large compared with $k$, the equivariant intersection $(\kappa^T_{\bar{\mathcal{X}}/\mathbb{P}^1}. \bar{\mathcal{L}}_T^{\cdot k})$ is identified with a degree $2(k-n)$ cohomology class on (a finite dimensional approximation of) the classifying space $B_l T := E_l T/T = \prod_{i=1}^r E_l^i \mathbb{G}_m/\mathbb{G}_m = \prod_{i=1}^r \mathbb{P}^l$ by our construction of equivariant locally finite homology (see section \ref{section: Equivariant singular cohomology and locally finite homology}). 
Under the isomorphism $\mathbb{Q} [x_1, \ldots, x_r] \cong H^{2*} (B_l T, \mathbb{Q}): x_i \mapsto p_i^* c_1 (\mathcal{O} (-1))$, we can identify $(\kappa^T_{\bar{\mathcal{X}}/\mathbb{P}^1}. \bar{\mathcal{L}}_T^{\cdot k})$ with a degree $2 (k-n)$ homogeneous polynomial $p (x_1, \ldots, x_r)$. 
Then the evaluation ${^\hbar (\kappa^T_{\bar{\mathcal{X}}/\mathbb{P}^1}. \bar{\mathcal{L}}_T^{\cdot k}; \xi)}$ for a vector $\xi = \sum \xi_i \eta_i \in \mathfrak{t}$ expressed with the dual basis $\{ \eta_i \}_{i=1}^r$ of $\{ \chi_i \}_{i=1}^r$ is computed by substituting $\hbar \xi_i \in \mathbb{R}$ into $x_i$. 

We recall $(\kappa^T_{\bar{\mathcal{X}}/\mathbb{P}^1}. \bar{\mathcal{L}}_T^{\cdot k}) \in H^{2 (k-n)} (B_l T)$ is constructed as follows. 
We denote by $\mathscr{L}_l$ a line bundle over $\bar{\mathcal{X}} \times_T E_l T$ obtained as the descent of the $T$-equivariant line bundle $p_{\bar{\mathcal{X}}}^* \bar{\mathcal{L}}$ over $\bar{\mathcal{X}} \times E_l T$. 
Suppose $\bar{\mathcal{X}}$ is normal, then we can similarly construct a divisor $\mathscr{K}_l$ on $\bar{\mathcal{X}} \times_T E_l T$ by considering the descent of the rank one reflexive sheaf $p_{\bar{\mathcal{X}}}^* \omega_{\bar{\mathcal{X}}/\mathbb{P}^1}$. 
Then the equivariant intersection $(\kappa^T_{\bar{\mathcal{X}}/\mathbb{P}^1} .\bar{\mathcal{L}}_T^{\cdot k})$ is the pushforward of the cycle $\mathscr{K}_l \frown c_1 (\mathscr{L}_l)^{\smile k}$ along the proper morphism $\bar{\mathcal{X}} \times_T E_l T \to B_l T$. 
The output is independent of the choice of $l \gg k$ (cf. \cite{EG1}). 

Finally, the equivariant intersection ${^\hbar (\kappa^T_{\bar{\mathcal{X}}/\mathbb{P}^1}. e^{\bar{\mathcal{L}}_T}; \xi)}$ is computed as the infinite sum 
\[ {^\hbar (\kappa^T_{\bar{\mathcal{X}}/\mathbb{P}^1}. e^{\bar{\mathcal{L}}_T}; \xi)} = \sum_{k=0}^\infty \frac{1}{k!} {^\hbar (\kappa^T_{\bar{\mathcal{X}}/\mathbb{P}^1}. \bar{\mathcal{L}}_T^{\cdot k}; \xi)}. \]

(c) Let us shift the lift of $\mathbb{G}_m$-action on $\mathcal{X}$ to $\mathcal{L}$ so that $\bar{\mathcal{L}}$ is ample. 
Then as shown in the proof of Proposition \ref{equivariant intersection and DH measure}, we have the following asymptotic expansion thanks to the equivariant Riemann--Roch theorem: 
\[ \int_{\mathfrak{t}^\vee} e^{- \langle x, \hbar \xi \rangle} \nu_k (x) = {^\hbar (e^{\bar{\mathcal{L}}_T}; \xi )} k^n - \frac{1}{2} {^\hbar (\kappa^T_{\bar{\mathcal{X}}}. e^{\bar{\mathcal{L}}_T}; \xi )} k^{n-1} + O (k^{n-2}) \]
with the measure 
\[ \nu_k = \sum_{m \in M} \dim H^0 (\bar{\mathcal{X}}, \bar{\mathcal{L}}^{\otimes k})_m . \delta_{k^{-1} m} \]
on $\mathbb{R}$, wehre $\delta_x$ denotes the Dirac measure at $x \in \mathbb{R}$. 
Therefore, by computing the decomposition $H^0 (\bar{\mathcal{X}}, \bar{\mathcal{L}}^{\otimes k}) = \bigoplus_{m \in M} H^0 (\bar{\mathcal{X}}, \bar{\mathcal{L}}^{\otimes k})_m$, we can compute equivariant intersections ${^\hbar (e^{\bar{\mathcal{L}}_T}; \xi )},  {^\hbar (\kappa^T_{\bar{\mathcal{X}}}. e^{\bar{\mathcal{L}}_T}; \xi )}$, in principle. 

\vspace{2mm}
In practice, it seems rather hard to compute $\mu$-Futaki invariant explicitly based on these methods. 
Not only that, to compute $\mu$-Futaki invariant, we must fix a vector $\xi \in \mathfrak{t}$ involved in the invariant, however, for almost all $\xi$ in $\mathfrak{t}$, the $\mu^\lambda_\xi$-Futaki invariant for product configurations does not vanish, and hence there exists some product configuration with negative $\mu^\lambda_\xi$-Futaki invariant. 
It is known by \cite[Theorem B]{Ino2} that for every $\lambda \in \mathbb{R}$ there exists some $\xi$ for which the $\mu$-Futaki invariant for product configuration vanishes (and such $\xi$ is unique for $\lambda \ll 0$), however, it is characterized in a transcendental way. 
In other words, we know \textit{there exists some invariant} by which we can detect the existence of $\mu^\lambda$-cscK metrics, however, the invariant is not explicitly given unless the candidate vector is explicitly written. 

Nevertheless, the notions of $\mu$-Futaki invariant and $\mu$K-stability are important in theoretic arguments. 
For instance, there is an application \cite{Ino5} of the results of this article to the study of moduli space of Fano manifolds with K\"ahler--Ricci solitons. 
See also the upcoming article \cite{Ino6}. 

Notably, modified K-stability ($\mu$K-stability for Fano manifolds with respect to equivariant special degenerations) can be effectively checked in some special case as in \cite{DaSz, Del}. 
In their arguments, the large symmetry of $(X, L)$ helps to reduce the problem to the $\mu$K-stability with respect to product configurations, for which the existence of the candidate $\xi$ is already known by Tian--Zhu's volume minimization argument \cite{TZ}. 
This volume minimization perspective will be pursued in \cite{Ino4}, where we reduce the $\mu$K-stability to the maximization problem of $\mu$-character for test configurations, which we call $\mu$-entropy. 
The $\mu$-entropy releases us from the problem of detecting the candidate vector $\xi$. 

\subsection{Main results: application of equivariant calculus}

Now we explain our main results: Theorem A and B. 

\subsubsection{$\mu$K-semistability of $\mu$-cscK manifolds}

First of all, we can compare our $\mu$-Futaki invariant with the following established Futaki invariants (Proposition \ref{comparison}): 
\begin{itemize}
\item Donaldson--Futaki invariant: $\cFut_0^\lambda (\mathcal{X}, \mathcal{L})$ is equivalent to the Donaldson--Futaki invariant $\mathrm{DF} (\mathcal{X}, \mathcal{L})$ for every test configuration $(\mathcal{X}, \mathcal{L})$ of a polarized scheme $(X, L)$. 

\item Modified Futaki invariant (cf. \cite{Xio, BW}): Suppose $(X, L)$ is a $\mathbb{Q}$-Fano variety with $L = -\lambda^{-1} K_X$ for $\lambda > 0$. 
Then $\cFut^{2\pi \lambda}_\xi (\mathcal{X}, \mathcal{L})$ is equivalent to the modified Futaki invariant $\mathrm{MFut}_\xi (\mathcal{X}, \mathcal{L})$ for every test configuration $(\mathcal{X}, \mathcal{L})$ of $(X, L)$ with $\mathcal{L} = -\lambda^{-1} K_{\mathcal{X}/\mathbb{A}^1}$. 
\end{itemize}

We also see that our definition of $\mu$-Futaki invariant is equivalent to Lahdili's definition of weighted Futaki invariant for smooth test configurations in our $\mu$-framework. 
Lahdili \cite{Lah} proved the weighted K-semistability of weighted cscK manifolds with respect to \textit{smooth test configurations} by establishing the boundedness of weighted Mabuchi functional and the slope formula along smooth rays. 
In terms of our equivariant intersection formula, we can understand the slope formula for $\mu$-Mabuchi functional along smooth rays is a simple consequence of equivariant Stokes theorem. 

We enhance his result to $\mu$K-semistability with respect to \textit{general test configurations}, which illustrates our definition of $\mu$-Futaki invariant for general test configuration is an appropriate one. 

\begin{thmA}
If a smooth polarized manifold $(X, L)$ admits a $\mu^\lambda_\xi$-cscK metric in the K\"ahler class $c_1 (L)$, then $(X, L)$ is $\mu^\lambda_\xi$K-semistable with respect to general test configurations. 
Namely, the $\mu^\lambda_\xi$-Futaki invariant is non-negative for every $T$-equivariant test configuration. 
\end{thmA}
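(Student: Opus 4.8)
The plan is to dispose of \emph{smooth} test configurations first via Lahdili's analysis, and then reduce the general case to the smooth one by an equivariant resolution, the reduction being an application of the equivariant calculus developed above.

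For the \emph{smooth} case I would argue as follows. By the comparison result (Proposition \ref{comparison}), for a test configuration $(\mathcal{X},\mathcal{L})$ with smooth total space our $\cFut^\lambda_{\hbar. \xi}$ coincides with Lahdili's weighted Futaki invariant in the $\mu$-framework. Attaching to $(\bar{\mathcal{X}},\bar{\mathcal{L}})$ the associated smooth ray of $T_{\mathrm{cpt}}$-invariant K\"ahler metrics, the key identity is the \emph{slope formula}: the derivative at infinity of the $\mu^\lambda_\xi$-Mabuchi functional along this ray equals $\cFut^\lambda_{\hbar. \xi}(\mathcal{X},\mathcal{L})$. As stressed in the introduction this is just equivariant Stokes: writing the invariant as the pushforward to a point of $(A+\hbar\nu)\wedge e^{\Omega+\hbar\mu}$ over $\bar{\mathcal{X}}$ and integrating by parts along the fibres of $\bar{\mathcal{X}}\to\mathbb{P}^1$ produces exactly the boundary term computing the slope. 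Since $(X,L)$ carries a $\mu^\lambda_\xi$-cscK metric, that metric minimizes the $\mu^\lambda_\xi$-Mabuchi functional, so by \cite{Lah} the functional is bounded below; a functional bounded below has non-negative slope at infinity, whence $\cFut^\lambda_{\hbar. \xi}\ge 0$ for every smooth test configuration.

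The heart of the proof is the reduction of an arbitrary $T$-equivariant test configuration to this case. Given $(\mathcal{X},\mathcal{L})$ I would first twist $\mathcal{L}\mapsto \mathcal{L}+c\,[\mathcal{X}_0]_{\mathbb{G}_m}$ so that $\bar{\mathcal{L}}$ is ample (this leaves $\cFut^\lambda_{\hbar. \xi}$ unchanged by the Remark), and pass to the normalization, invoking an analogue of the monotonicity of the Futaki invariant under normalization so that it suffices to treat normal $\bar{\mathcal{X}}$; note that since $X$ is smooth, all singularities of $\bar{\mathcal{X}}$ sit over $0\in\mathbb{P}^1$. I would then choose a $T\times\mathbb{G}_m$-equivariant resolution $r:\tilde{\mathcal{X}}\to\bar{\mathcal{X}}$ and compare intersections on the two spaces. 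For the exponential factors the equivariant projection formula together with $r_*[\tilde{\mathcal{X}}]=[\bar{\mathcal{X}}]$ gives ${^\hbar(e^{r^*\bar{\mathcal{L}}_T};\xi)}={^\hbar(e^{\bar{\mathcal{L}}_T};\xi)}$ and likewise for the $\bar{\mathcal{L}}_T.e^{\bar{\mathcal{L}}_T}$ term. For the canonical factor I would use the covariance of the equivariant homology Todd class under proper pushforward, $r_*\tau^T_{\tilde{\mathcal{X}}}(\mathcal{O}_{\tilde{\mathcal{X}}})=\tau^T_{\bar{\mathcal{X}}}(Rr_*\mathcal{O}_{\tilde{\mathcal{X}}})$, which for a resolution with $Rr_*\mathcal{O}_{\tilde{\mathcal{X}}}=\mathcal{O}_{\bar{\mathcal{X}}}$ yields $r_*\kappa^T_{\tilde{\mathcal{X}}/\mathbb{P}^1}=\kappa^T_{\bar{\mathcal{X}}/\mathbb{P}^1}$ in the relevant degree and hence ${^\hbar(\kappa^T_{\tilde{\mathcal{X}}/\mathbb{P}^1}.e^{r^*\bar{\mathcal{L}}_T};\xi)}={^\hbar(\kappa^T_{\bar{\mathcal{X}}/\mathbb{P}^1}.e^{\bar{\mathcal{L}}_T};\xi)}$. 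Assembling the pieces gives $\cFut^\lambda_{\hbar. \xi}(\tilde{\mathcal{X}},r^*\bar{\mathcal{L}})=\cFut^\lambda_{\hbar. \xi}(\mathcal{X},\mathcal{L})$. Finally, since $r^*\bar{\mathcal{L}}$ is only relatively semiample and big, I would perturb: choosing an effective exceptional $E$ (supported over $0\in\mathbb{P}^1$) with $-E$ relatively ample, $\tilde{\mathcal{L}}_\epsilon:=r^*\bar{\mathcal{L}}-\epsilon E$ is ample for small $\epsilon$ and defines a smooth \emph{ample} test configuration, so $\cFut^\lambda_{\hbar. \xi}(\tilde{\mathcal{X}},\tilde{\mathcal{L}}_\epsilon)\ge 0$ by the smooth case; letting $\epsilon\to 0$ and using continuity of $\cFut^\lambda_{\hbar. \xi}$ in the polarization — the compact-absolute convergence of Theorem \ref{convergence of absolute equivariant intersection} — yields $\cFut^\lambda_{\hbar. \xi}(\mathcal{X},\mathcal{L})=\lim_{\epsilon\to 0}\cFut^\lambda_{\hbar. \xi}(\tilde{\mathcal{X}},\tilde{\mathcal{L}}_\epsilon)\ge 0$.

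I expect the main obstacles to be exactly the two points where the equivariant calculus is indispensable. First, controlling the Todd-class correction $r_*\tau^T_{\tilde{\mathcal{X}}}(\mathcal{O})-\tau^T_{\bar{\mathcal{X}}}(\mathcal{O})$ coming from the (possibly non-rational) singularities of $\bar{\mathcal{X}}$, so that the resolution step either preserves $\cFut^\lambda_{\hbar. \xi}$ or moves it in the favourable direction; and second, justifying the $\epsilon\to 0$ limit, i.e.\ that the \emph{form-to-homology} pushforward underlying $\cFut^\lambda_{\hbar. \xi}$ is continuous in $\mathcal{L}$, which rests on the Hausdorffness of the equivariant current homology (Theorem \ref{Hausdorffness of equivariant cohomology}) and on the convergence results that form the technical core of the paper.
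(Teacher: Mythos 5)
Your overall architecture --- Lahdili's analysis for the smooth case, then reduction of a general test configuration by normalization, equivariant resolution, perturbation of the polarization, and a continuity argument --- is essentially the paper's (Theorem \ref{fundamental lemma} plus Proposition \ref{comparison}), but there is one genuine gap: you never arrange the central fibre to be \emph{reduced}. The analytic input, Lahdili's semistability theorem, is a statement about smooth test configurations with ample $\bar{\mathcal{L}}$ \emph{and reduced central fibre}: the slope formula identifying the derivative at infinity of the $\mu^\lambda_\xi$-Mabuchi functional along the smooth ray attached to $(\mathcal{X},\mathcal{L})$ with $\cFut^\lambda_{\hbar. \xi}(\mathcal{X},\mathcal{L})$ holds as an equality only in that case; for non-reduced $\mathcal{X}_0$ the slope acquires a correction term supported on $\mathcal{X}_0 - \mathcal{X}_{0,\mathrm{red}}$, and neither \cite{Lah} nor this paper proves the weighted analogue of the inequality that would let you conclude from boundedness alone. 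Your resolution of $\bar{\mathcal{X}}$ followed by the perturbation $\tilde{\mathcal{L}}_\epsilon = r^*\bar{\mathcal{L}} - \epsilon E$ produces a smooth ample test configuration whose central fibre is in general non-reduced, so the smooth case you established does not apply to it. The paper closes exactly this hole in the proof of Theorem \ref{fundamental lemma} (2) by a semistable-reduction step: the finite base change $m_d(z)=z^d$ (reduced fibre theorem, cf.\ \cite{KKMS}), followed by normalization and a further resolution, yields reduced central fibre; the bookkeeping this requires is that $\cFut^\lambda_{\hbar. \xi}$ gets multiplied by $d$ under the base change (via naturality of $\D_{\hbar. \xi}\cmu^\lambda_{T \times \mathbb{G}_m}$ and $m_d^* \varphi_d^\# \bm{x} = d.\bm{x}$) and can only decrease under normalization (Proposition \ref{projection formula for equivariant intersection} (2)). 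Without this step the reduction does not reach the class of test configurations covered by Lahdili's theorem.

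A secondary, fixable defect: your identity $r_*\kappa^T_{\tilde{\mathcal{X}}/\mathbb{P}^1}=\kappa^T_{\bar{\mathcal{X}}/\mathbb{P}^1}$ is justified by $Rr_*\mathcal{O}_{\tilde{\mathcal{X}}}=\mathcal{O}_{\bar{\mathcal{X}}}$, which is the rational-singularities hypothesis and is false for general normal $\bar{\mathcal{X}}$; you flag this as an obstacle but leave it open. In fact no hypothesis on the singularities is needed: choose the equivariant resolution to be an isomorphism away from a codimension-two subscheme (possible precisely because $\bar{\mathcal{X}}$ is normal). Then $r_!\mathcal{O}_{\tilde{\mathcal{X}}} - \mathcal{O}_{\bar{\mathcal{X}}} = [r_*\mathcal{O}_{\tilde{\mathcal{X}}}/\mathcal{O}_{\bar{\mathcal{X}}}] + \sum_{i\ge 1}(-1)^i[R^i r_*\mathcal{O}_{\tilde{\mathcal{X}}}]$ is supported in codimension $\ge 2$ (the first term vanishes by normality and Zariski's main theorem), so the equivariant Grothendieck--Riemann--Roch theorem gives equality of the degree-$(\dim\mathcal{X}-1)$ components of the pushed-forward Todd classes, hence of the classes $\kappa$. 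This is exactly the content of Proposition \ref{projection formula for equivariant intersection} (3), so the obstacle you anticipate is not real --- but as written your step is unjustified, and for the normalization map (which is finite, not an isomorphism, in codimension one) you need the inequality version, Proposition \ref{projection formula for equivariant intersection} (2), whose sign depends on the positivity result of Proposition \ref{absolute intersection}.
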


We recall that for a Fano manifold $X$, a K\"ahler metric $\omega$ in the K\"ahler class $c_1 (X)$ is $\mu^{2\pi}_\xi$-cscK metric if and only if it is K\"ahler--Ricci soliton with respect to the vector field $\xi$: $\mathrm{Ric} (\omega) - L_{\xi^J} \omega = 2\pi \omega$. 
It is known by \cite{BW} and \cite{Xio} that a Fano manifold with a K\"ahler--Ricci soliton is modified K-semistable (even modified K-polystable) with respect to \textit{special degenerations}. 
We note that even in this case Theorem A (and the definition of $\mu$-Futaki invariant) is new as general singular test configurations are not special degenerations. 

To reduce Theorem A to Lahdili's result on weighted K-semistability for smooth test configurations, we establish basics on absolute equivariant intersection in section \ref{absolute equivariant intersection theory} and show the following basics on $\mu$-Futaki invariant. 
Since we have a projection formula for the equivariant intersection, we can show this in a similar way as in \cite{BHJ1} and \cite{DR} for the usual K-stability. 

\begin{thm}[Theorem \ref{fundamental lemma}: Intersection theoretic properties]
We have the following equivalences. 
\begin{enumerate}
\item A $T$-polarized normal variety $(X, L)$ is $\mu^\lambda_\xi$K-semistable (resp. $\mu^\lambda_\xi$K-polystable, $\mu^\lambda_\xi$K-stable) with respect to general test configurations iff it is $\mu^\lambda_\xi$K-semistable (resp. $\mu^\lambda_\xi$K-polystable, $\mu^\lambda_\xi$K-stable) with respect to normal test configurations. 

\item A $T$-polarized manifold $(X, L)$ is $\mu^\lambda_\xi$K-semistable with respect to general test configurations iff it is $\mu^\lambda_\xi$K-semistable with respect to smooth test configurations with reduced central fibres and ample $\mathcal{L}$. 
\end{enumerate}
\end{thm}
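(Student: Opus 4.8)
The plan is to establish both equivalences by reducing general test configurations to the more restrictive classes (normal, or smooth with reduced central fibre) while controlling the change in $\mu$-Futaki invariant. The essential tool is the projection formula for relative equivariant intersection, which lets us compare the invariant before and after a birational modification. I would treat the two statements separately, but in each case the strategy is: given an arbitrary test configuration, produce a dominating test configuration in the better class, and show the $\mu$-Futaki invariant either stays equal or can only decrease under the passage.

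For statement (1), I would first normalize a given test configuration $(\mathcal{X}, \mathcal{L})$ to obtain $(\mathcal{X}^\nu, \mathcal{L}^\nu)$ with the normalization map $\beta \colon \mathcal{X}^\nu \to \mathcal{X}$. Since $X$ is normal, $\beta$ is an isomorphism over $\mathcal{X} \setminus \mathcal{X}_0$, and the equivariant canonical classes $\kappa^T$ and the pushforward of $e^{\mathcal{L}^\nu}$ are related by $\beta_*$; by the equivariant projection formula all the $T$-equivariant intersections ${^\hbar (\kappa^T_{\bar{\mathcal{X}}/\mathbb{P}^1}. e^{\bar{\mathcal{L}}_T}; \xi)}$ and ${^\hbar (\bar{\mathcal{L}}_T. e^{\bar{\mathcal{L}}_T}; \xi)}$ are preserved, since $\beta_* \kappa^T_{\mathcal{X}^\nu} = \kappa^T_{\mathcal{X}}$ for the homology Todd-class-defined canonical classes (this is where the freedom to work with non-normal schemes in Definition \ref{equivariant canonical class} pays off). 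Hence $\cFut^\lambda_{\hbar. \xi}(\mathcal{X}^\nu, \mathcal{L}^\nu) = \cFut^\lambda_{\hbar. \xi}(\mathcal{X}, \mathcal{L})$, which gives the semistable equivalence immediately. The polystable and stable refinements then follow from matching the equality-and-equivalence conditions, using that normalization does not change pluripotential-equivalence class and invoking the remark after the definition that pluripotential equivalence suffices for normal $X$.

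For statement (2), I would combine two reductions. Starting from an arbitrary test configuration of the \emph{manifold} $(X, L)$, I first pass to the normalization as above (so the invariant is unchanged), then apply an equivariant resolution of singularities $\beta \colon \mathcal{X}' \to \mathcal{X}^\nu$ to make the total space smooth, and finally perform a semistable-reduction-type base change together with a further resolution to arrange that the central fibre is reduced and $\mathcal{L}'$ is relatively ample (using the twist $\mathcal{L}_c = \mathcal{L} + c.[\mathcal{X}_0]_{\mathbb{G}_m}$ from the discussion of compactification, which leaves $\cFut^\lambda_{\hbar.\xi}$ invariant by the Remark). Under the resolution $\beta$, the relative canonical class changes by an effective exceptional divisor supported over the central fibre, so $\beta_* \kappa^T_{\mathcal{X}'/\mathbb{P}^1} = \kappa^T_{\mathcal{X}^\nu/\mathbb{P}^1} + (\text{exceptional})$; by the projection formula the exceptional contribution is a $T$-equivariant intersection of an effective cycle over the central fibre with $e^{\bar{\mathcal{L}}_T}$, which one controls in sign. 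The point is that the correction has the \emph{right sign} so that passing to the smooth model can only decrease (not increase) the $\mu$-Futaki invariant, whence non-negativity on smooth reduced test configurations forces non-negativity on all of them.

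The main obstacle I anticipate is the sign control of the exceptional term in the resolution step of part (2). For ordinary K-stability this is exactly the observation that a resolution adds an effective relative boundary whose intersection with a relatively nef class is nonnegative; here the weight $e^{\bar{\mathcal{L}}_T}$ and the insertion of $e^{\hbar\mu_\xi}$ mean the relevant quantity is an integral of $e^{\hbar\mu_\xi}$ against an effective measure, so positivity is not automatic from nefness alone but from the \emph{positivity of the weight} $e^{\hbar\mu_\xi} > 0$ combined with relative semiampleness of $\mathcal{L}$. Making this precise requires expressing ${^\hbar(\text{exceptional}. e^{\bar{\mathcal{L}}_T}; \xi)}$ via the Duistermaat--Heckman-type measure (as in the expansion of Proposition \ref{equivariant intersection and DH measure}) and checking the exceptional divisor contributes a nonnegative integral; this is the computation whose sign I would verify most carefully, following the template of \cite{BHJ1} and \cite{DR} but with the extra weighting by $e^{\hbar\mu_\xi}$ that distinguishes the $\mu$-setting.
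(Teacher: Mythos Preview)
Your overall plan---dominate a general test configuration by one in the restricted class and control the change in $\cFut^\lambda_{\hbar.\xi}$ via projection formula---matches the paper, but you have the equality/inequality bookkeeping backwards, and this creates a genuine gap.

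In part (1) you assert $\beta_* \kappa^T_{\mathcal{X}^\nu} = \kappa^T_{\mathcal{X}}$. This is false in general: by the computation behind Proposition~\ref{projection formula for equivariant intersection}, one has $\beta_* \kappa^T_{\mathcal{X}^\nu} = \kappa^T_{\mathcal{X}} - 2\sum_i m_i [D_i]^T$ with $m_i \ge 0$ the lengths of $\beta_*\mathcal{O}_{\mathcal{X}^\nu}/\mathcal{O}_{\mathcal{X}}$ along divisorial components $D_i$ of the non-normal locus in $\mathcal{X}_0$. So normalization gives only the inequality $\cFut^\lambda_{\hbar.\xi}(\mathcal{X}^\nu,\nu^*\mathcal{L}) \le \cFut^\lambda_{\hbar.\xi}(\mathcal{X},\mathcal{L})$ (this is exactly the ``right sign'' computation you flag at the end, and it belongs here rather than at the resolution step). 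The inequality is enough for the semistability equivalence; the polystability/stability claim then needs the observation that the passage to the ample model $(\mathcal{X}^{\mathrm{amp}},\mathcal{L}^{\mathrm{amp}})$ is an isomorphism away from codimension two, hence preserves $\cFut^\lambda_{\hbar.\xi}$ exactly.

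In part (2) the roles are reversed. Once $\mathcal{X}$ is normal, an equivariant resolution $\beta:\tilde{\mathcal{X}}\to\mathcal{X}$ is an isomorphism in codimension one, so Proposition~\ref{projection formula for equivariant intersection}(3) gives \emph{equality} $\cFut^\lambda_{\hbar.\xi}(\tilde{\mathcal{X}},\beta^*\mathcal{L}) = \cFut^\lambda_{\hbar.\xi}(\mathcal{X},\mathcal{L})$; there is no exceptional correction to control. The inequality in the paper's argument appears only \emph{after} the base change $z\mapsto z^d$ needed for reduced central fibre: the base-changed total space $\tilde{\mathcal{X}}_d$ is typically non-normal, and it is the normalization $\tilde{\mathcal{X}}_d^\nu \to \tilde{\mathcal{X}}_d$ that produces $\cFut^\lambda_{\hbar.\xi}(\tilde{\mathcal{X}}_d^\nu,\cdot) \le d\cdot\cFut^\lambda_{\hbar.\xi}(\mathcal{X},\mathcal{L})$ (the factor $d$ coming from naturality of $\D_{\hbar.\xi}\cmu^\lambda$ under $\varphi_d:\tau\mapsto\tau^d$). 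You do not isolate this step. Finally, you are missing the perturbation argument: $\nu_d^*\beta^*\mathcal{L}$ is only semiample, so one sets $\tilde{\mathcal{L}}_\epsilon := \nu_d^*(\beta^*\mathcal{L} - \epsilon\,\Sigma^{T\times\mathbb{G}_m})$ for an effective $\beta$-anti-ample exceptional divisor $\Sigma$ to get ampleness, and concludes by continuity of $\cFut^\lambda_{\hbar.\xi}$ as $\epsilon\to 0$.
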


\subsubsection{Extremal limit $\lambda \to -\infty$}

By \cite[Theorem B]{Ino2}, for $\lambda \ll 0$ there exists a vector $\xi_\lambda$ for which $\cFut^\lambda_{\hbar. \xi_\lambda}$ vanishes for product configurations. 
Moreover, by \cite[Theorem D (1)]{Ino2}, we have $\lambda \xi_\lambda \to \xi_{\mathrm{ext}}$ for which the relative Futaki invariant $\cFut^\dagger_{\hbar. \xi_{\mathrm{ext}}}$ vanishes for product configurations. 

Using equivariant intersection, the relative Donaldson--Futaki invariant for test configuration (cf. \cite{Sze}) can be expressed as 
\begin{align*} 
\cFut^\dagger_{\hbar. \xi} (\mathcal{X}, \mathcal{L}) 
&= \frac{2\pi}{(L^{\cdot n})} \Big{(} \mathrm{DF} (\mathcal{X}, \mathcal{L}) - \frac{ {^\hbar (\bar{\mathcal{L}}^{\cdot n+2}; \xi)} }{(n+1)(n+2)}  + \frac{(\bar{\mathcal{L}}^{\cdot n+1})}{n+1} \frac{ {^\hbar (L^{\cdot n+1}; \xi)} }{(n+1) (L^{\cdot n})} \Big{)} 
\\
&= \frac{2\pi}{(L^{\cdot n})} \Big{(} \mathrm{DF} (\mathcal{X}, \mathcal{L}) + n! \int_{\mathfrak{t}^\vee \times \mathbb{R}} t (\langle x, \hbar \xi \rangle - \underline{\langle x, \hbar \xi \rangle}) \mathrm{DH}_{(\mathcal{X}, \mathcal{L})} \Big{)}, 
\end{align*}
where we put $\underline{\langle x, \hbar \xi \rangle} := \int_{\mathfrak{t}^\vee} \langle x, \hbar \xi \rangle \mathrm{DH}_{(X, L)} / \int_{\mathfrak{t}^\vee} \mathrm{DH}_{(X, L)}$. 
See Corollary \ref{equivariant intersection and DH measure, cor} and \ref{equivariant intersection on test configuration and DH measure, cor} for the last equality. 
It is shown by \cite{SS} that the relative Futaki invariant of test configurations is non-negative (relatively K-semistable) when $(X, L)$ admits an extremal metric. 
They moreover prove the relative K-polystability. 

\begin{thm}
For a general test configuration $(\mathcal{X}, \mathcal{L})$, we have 
\[ \lim_{\lambda \to -\infty} \cFut^\lambda_{\hbar. \xi_\lambda} (\mathcal{X}, \mathcal{L}) = \cFut^\dagger_{\hbar. \xi_{\mathrm{ext}}} (\mathcal{X}, \mathcal{L}). \]
\end{thm}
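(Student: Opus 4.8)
The plan is to exploit the fact that $\xi_\lambda \to 0$ as $\lambda \to -\infty$ (since $\lambda\xi_\lambda \to \xi_{\mathrm{ext}}$ stays bounded while $\lambda\to-\infty$) and to Taylor-expand the $\check{\mu}^\lambda$-Futaki invariant in the vector variable around $\xi = 0$. I would write the defining formula as $\cFut^\lambda_{\hbar.\xi}(\mathcal{X},\mathcal{L}) = \mathrm{I}(\xi) + \lambda\,\mathrm{II}(\xi)$, where $\mathrm{I}$ is the curvature term carrying the prefactor $2\pi$ and $\mathrm{II}$ is the bracketed volume-derivative term; both depend only on $\xi$ and the fixed test configuration, not on $\lambda$. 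By Theorem \ref{convergence of absolute equivariant intersection} each equivariant intersection ${^\hbar(\alpha.e^{L_T};\xi)}$ and ${^\hbar(\alpha.e^{\bar{\mathcal{L}}_T};\xi)}$ is a compactly absolutely convergent power series in $\xi$, hence a real-analytic (indeed entire) function of $\xi \in \mathfrak{t}$; since $L$ is big we have ${^\hbar(e^{L_T};0)} = (L^{\cdot n})/n! \neq 0$, so all denominators are nonvanishing near the origin and both $\mathrm{I}$ and $\mathrm{II}$ are real-analytic on a fixed neighbourhood of $0$.

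First I would treat $\mathrm{I}$ by continuity: $\mathrm{I}(\xi_\lambda) \to \mathrm{I}(0)$, and a dimension count (only the top-degree intersection survives on a point at $\xi=0$) identifies $\mathrm{I}(0)$ with $\tfrac{2\pi}{(L^{\cdot n})}\mathrm{DF}(\mathcal{X},\mathcal{L})$ through the Odaka--Wang formula, matching the leading term of $\cFut^\dagger_{\hbar.\xi_{\mathrm{ext}}}$. The decisive point is the behaviour of $\lambda\,\mathrm{II}(\xi_\lambda)$. The same dimension count yields the Euler-type relations ${^\hbar(L_T.e^{L_T};0)} = n\,{^\hbar(e^{L_T};0)}$ and ${^\hbar(\bar{\mathcal{L}}_T.e^{\bar{\mathcal{L}}_T};0)} = (n+1)\,{^\hbar(e^{\bar{\mathcal{L}}_T};0)}$, and substituting these into the bracket shows that $\mathrm{II}(0) = 0$. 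Hence $\mathrm{II}(\xi) = D_0\mathrm{II}(\xi) + O(|\xi|^2)$, with $D_0\mathrm{II}$ the differential at the origin and, by Taylor's theorem, a remainder that is $O(|\xi|^2)$ uniformly on the fixed neighbourhood.

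Combining these, since $|\xi_\lambda| = O(|\lambda|^{-1})$ we obtain $\lambda\,\mathrm{II}(\xi_\lambda) = D_0\mathrm{II}(\lambda\xi_\lambda) + \lambda\cdot O(|\xi_\lambda|^2) = D_0\mathrm{II}(\lambda\xi_\lambda) + O(|\lambda|^{-1})$, so letting $\lambda\to-\infty$ with $\lambda\xi_\lambda\to\xi_{\mathrm{ext}}$ gives $\lambda\,\mathrm{II}(\xi_\lambda) \to D_0\mathrm{II}(\xi_{\mathrm{ext}})$ by continuity of the linear functional $D_0\mathrm{II}$. It then remains to identify $D_0\mathrm{II}(\xi_{\mathrm{ext}})$ with the two remaining terms of $\cFut^\dagger_{\hbar.\xi_{\mathrm{ext}}}$. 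Expanding ${^\hbar(e^{L_T};\xi)}$ and ${^\hbar(e^{\bar{\mathcal{L}}_T};\xi)}$ to first order, the degree-one coefficients are $\tfrac{1}{(n+1)!}{^\hbar(L^{\cdot n+1};\xi)}$ and $\tfrac{1}{(n+2)!}{^\hbar(\bar{\mathcal{L}}^{\cdot n+2};\xi)}$; substituting these together with the degree-zero values into the first-order expansion of $\mathrm{II}$ expresses $D_0\mathrm{II}(\xi)$ as a linear combination of ${^\hbar(\bar{\mathcal{L}}^{\cdot n+2};\xi)}$ and ${^\hbar(L^{\cdot n+1};\xi)}$ with exactly the coefficients appearing in the intersection formula for $\cFut^\dagger$.

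The main obstacle is this final bookkeeping rather than any deep analytic difficulty: one must track the normalization constants ($2\pi$, the parameter $\hbar$, and the scaling of $\xi_{\mathrm{ext}}$ fixed by $\lambda\xi_\lambda\to\xi_{\mathrm{ext}}$ in \cite[Theorem D(1)]{Ino2}) so that the signs and factors in $D_0\mathrm{II}(\xi_{\mathrm{ext}})$ match the $\cFut^\dagger$ formula on the nose. The genuinely analytic steps—real-analyticity and locally uniform remainder control from the compact absolute convergence of Theorem \ref{convergence of absolute equivariant intersection}, together with the vanishing $\mathrm{II}(0)=0$—are comparatively routine, and none of them requires smoothness of $\mathcal{X}$, so the argument applies to general test configurations.
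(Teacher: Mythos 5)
Your proposal is correct and takes essentially the same route as the paper's proof: the same decomposition $\cFut^\lambda_{\hbar.\xi} = \mathrm{I}(\xi) + \lambda\,\mathrm{II}(\xi)$, continuity of $\mathrm{I}$ at $\xi = 0$ giving the Donaldson--Futaki term, and linearization of $\mathrm{II}$ at the origin evaluated on $\lambda\xi_\lambda \to \xi_{\mathrm{ext}}$, computed from the same first-order Taylor coefficients of the equivariant intersections. The only cosmetic differences are that the paper packages your Taylor-plus-remainder step as a one-sided chain rule along the reparametrization $\xi^\tau = \xi_{\tau^{-1}}$ (so that $\lim_{\lambda\to-\infty}\lambda\,\mathrm{II}(\xi_\lambda) = \frac{d}{d\tau}\big|_{\tau=-0}\mathrm{II}(\xi^\tau)$), and that you make explicit the vanishing $\mathrm{II}(0)=0$ via the Euler-type relations, which the paper uses only implicitly before carrying out the final coefficient computation you leave as bookkeeping.
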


\begin{cor}
If $(X, L)$ is not relatively K-semistable, then the K\"ahler class $c_1 (L)$ does not admit any $\mu^\lambda$-cscK metrics for every $\lambda \ll 0$. 
\end{cor}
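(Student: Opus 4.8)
The plan is to argue by contraposition, feeding a destabilizing test configuration through the extremal-limit theorem proved just above and then applying the $\mu$K-semistability of $\mu$-cscK manifolds (Theorem A), with the uniqueness of the minimizing vector of the $\mu$-volume functional for $\lambda \ll 0$ (recalled from \cite[Theorem B, D]{Ino2}) supplying the final reduction.

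First I would unwind the hypothesis: ``$(X, L)$ is not relatively K-semistable'' means, by the convention of \cite{SS} recalled above, that there exists a single test configuration $(\mathcal{X}, \mathcal{L})$ of $(X, L)$ with $\cFut^\dagger_{\hbar. \xi_{\mathrm{ext}}} (\mathcal{X}, \mathcal{L}) < 0$. Applying the preceding extremal-limit theorem to this one test configuration yields
\[ \lim_{\lambda \to -\infty} \cFut^\lambda_{\hbar. \xi_\lambda} (\mathcal{X}, \mathcal{L}) = \cFut^\dagger_{\hbar. \xi_{\mathrm{ext}}} (\mathcal{X}, \mathcal{L}) < 0, \]
so there is a threshold $\lambda_0 \in \mathbb{R}$ such that $\cFut^\lambda_{\hbar. \xi_\lambda} (\mathcal{X}, \mathcal{L}) < 0$ for every $\lambda < \lambda_0$. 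For each such $\lambda$ the single test configuration $(\mathcal{X}, \mathcal{L})$ already witnesses that $(X, L)$ fails to be $\mu^\lambda_{\xi_\lambda}$K-semistable.

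Next I would invoke Theorem A in contrapositive form: a polarized manifold carrying a $\mu^\lambda_{\xi_\lambda}$-cscK metric in $c_1(L)$ is $\mu^\lambda_{\xi_\lambda}$K-semistable, so the failure of semistability for $\lambda < \lambda_0$ forces $c_1(L)$ to admit \emph{no} $\mu^\lambda_{\xi_\lambda}$-cscK metric for these $\lambda$. To upgrade this from the distinguished vector $\xi_\lambda$ to \emph{all} vectors, I would use the characterization that any $\xi \in \mathfrak{t}$ carrying a $\mu^\lambda_\xi$-cscK metric must be a critical point of ${^\hbar \Vol^\lambda}$ (equivalently, $\cFut^\lambda_{\hbar. \xi}$ vanishes on all product configurations, which is exactly the defining property of $\xi_\lambda$), together with the uniqueness of the critical point for $\lambda \ll 0$ from \cite{Ino2}. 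Thus for $\lambda$ below $\lambda_1 := \min(\lambda_0, \lambda_{\mathrm{uniq}})$, where $\lambda_{\mathrm{uniq}}$ is the uniqueness bound of \cite{Ino2}, the only admissible soliton vector is $\xi_\lambda$, which we have already excluded, giving the claim for all $\lambda < \lambda_1$.

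The only delicate point — and the step I would guard most carefully — is precisely this last reduction. Theorem A, being indexed by a fixed vector, only rules out a metric associated to the specific $\xi_\lambda$; without the $\lambda \ll 0$ uniqueness of critical points one could not exclude a $\mu^\lambda_\zeta$-cscK metric attached to some other $\zeta \in \mathfrak{t}$. It is exactly here that the extremal regime $\lambda \to -\infty$ is essential, since the uniqueness of the minimizer of ${^\hbar \Vol^\lambda}$ is only available for $\lambda$ sufficiently negative; the limit theorem then ensures that the destabilizing sign of $\cFut^\dagger_{\hbar. \xi_{\mathrm{ext}}}$ is transmitted to $\cFut^\lambda_{\hbar. \xi_\lambda}$ for all such $\lambda$.
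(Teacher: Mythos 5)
Your proof is correct and is essentially the argument the paper intends: the corollary is stated as an immediate consequence of the preceding limit theorem together with Theorem A, and your use of the uniqueness of the critical vector of ${^\hbar \Vol^\lambda}$ for $\lambda \ll 0$ (from \cite{Ino2}) to pass from the distinguished vector $\xi_\lambda$ to an arbitrary soliton vector $\zeta$ is precisely the point implicit in the paper's ``Equivalently'' reformulation. Your forward (contrapositive-of-the-contrapositive) organization versus the paper's sequence formulation is an immaterial difference; the ingredients and their roles coincide.
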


Equivalently, if there exists a sequence $\lambda_i \in \mathbb{R}$ diverging to $-\infty$ for which the K\"ahler class $c_1 (L)$ admits $\mu^{\lambda_i}$-cscK metrics, then $(X, L)$ is relatively K-semistable. 

The Yau--Tian--Donaldson conjecture for extremal metric predicts that there exists an extremal metric in $c_1 (L)$ when $(X, L)$ is relatively K-polystable. 
We proved in Theorem D (3) of \cite{Ino2} that if there exists an extremal metric then there exists a $\mu^\lambda$-cscK metric for $\lambda \ll 0$ in the same K\"ahler class. 
It is interesting to see if the weaker relative K-semistability assumption implies the existence of $\mu^\lambda$-cscK metrics for $\lambda \ll 0$ and also if the the limit of $\mu^\lambda$-cscK metrics gives a relative K-polystable degeneration for $(X, L)$. 

\subsubsection{$\mu$-character for family of polarized schemes}

We will observe an equivariant intersection expression of the minus log of the $\mu$-volume functional in section \ref{Equivariant cohomological interpretation}. 
This observation motivates us to introduce the following characteristic class. 

\begin{defin}[$\mu$-character]
\label{mu-character definition}
Let $\varpi: \mathcal{X} \to B$ be a proper flat family of schemes. 
Let $\mathcal{L} = \mathcal{L}_G \in NS_G (\mathcal{X}, \mathbb{R})$ be a relatively semiample and big class. 
Then we can define the following formal equivariant cohomology classes in $\hat{H}^{\mathrm{even}}_G (B, \mathbb{R})$: 
\begin{align}
\cmu_G (\mathcal{X}/B, \mathcal{L})
&:=2\pi (\kappa^G_{\mathcal{X}/B}. e^{\mathcal{L}_G})_B \cdot (e^{\mathcal{L}_G})_B^{-1} 
\\ 
\bm{\check{\sigma}}_G (\mathcal{X}/B, \mathcal{L}) 
&:= (\mathcal{L}_G. e^{\mathcal{L}_G})_B \cdot (e^{\mathcal{L}_G})_B^{-1} - \log (e^{\mathcal{L}_G})_B
\\
\cmu^\lambda_G (\mathcal{X}/B, \mathcal{L}) 
&:= \cmu_G (\mathcal{X}/B, \mathcal{L}) + \lambda \bm{\check{\sigma}}_G (\mathcal{X}/B, \mathcal{L}) . 
\end{align}
We call this the \textit{$\mu$-character} of the $G$-equivariant family $(\mathcal{X}/B, \mathcal{L})$. 
\end{defin}

The elements $(e^{\mathcal{L}_G})_B^{-1}, \log (e^{\mathcal{L}_G})_B$ are defined by (\ref{formal series of equivariant cohomology class}), for which we need the positivity of the degree $0$ part $(e^{\mathcal{L}})_B^{\langle 0 \rangle} = \frac{1}{n!} (\mathcal{L}_b^{\cdot n}) \in \mathbb{R}$. 
This is guaranteed by the assumption on $\mathcal{L}$. 

For $c_G \in H^2_G (B, \mathbb{R})$, we have 
\begin{align*}
(e^{\mathcal{L}_G + \varpi^* c_G})_B 
&= (e^{\mathcal{L}_G})_B \smile e^{c_G}, 
\\
((\mathcal{L}_G + \varpi^* c_G). e^{\mathcal{L}_G + \varpi^* c_G})_B 
&= ((e^{\mathcal{L}_G})_B + (\mathcal{L}_G. e^{\mathcal{L}_G})_B) \smile e^{c_G}, 
\\
(\kappa^G_{\mathcal{X}/B}. e^{\mathcal{L}_G + \varpi^* c_G})_B 
&= (\kappa^G_{\mathcal{X}/B}. e^{\mathcal{L}_G})_B \smile e^{c_G}, 
\end{align*}
so that the $\mu$-character is invariant under the replacement $\mathcal{L}_G \mapsto \mathcal{L}_G + \varpi^* c_G$. 

\subsubsection{Relative construction}

Now we explain the relative ($=$ family) construction of $\mu$-Futaki invariant. 
Throughout this paper, $B$ denotes a connected smooth variety with an algebraic action of an algebraic group $G$, for which we have the equivariant Poincare duality. 
We identify $H^0_G (B, \mathbb{R})$ with $\mathbb{R}$ in the natural way. 

\begin{thmB}
\label{relative Dmu}
Fix parameters $\lambda \in \mathbb{R}$ and $\xi \in \mathfrak{t}$. 
Then there exists an equivariant characteristic class 
\[ \D_{\hbar. \xi} \cmu^\lambda_{T \times G} (\mathcal{X}/B, \mathcal{L}) = \D_{\hbar. \xi} \cmu_{T \times G} (\mathcal{X}/B, \mathcal{L}) + \lambda \D_{\hbar. \xi} \bm{\check{\sigma}}_{T \times G} (\mathcal{X}/B, \mathcal{L}) \] 
valued in $NS_G (B, \mathbb{R})$ associated to a $T \times G$-equivariant family $(\mathcal{X}/B, \mathcal{L})$ of polarized schemes over a smooth $G$-variety $B$ with the trivial $T$-action which enjoys the following properties. 
\begin{enumerate}
\item Naturality: Suppose we have a morphism $\varphi: G' \to G$ of algebraic group and a $G'$-equivariant morphism $f: B' \to B$ from a smooth $G'$-variety $B'$. 
Let $(\mathcal{X}'/B', \mathcal{L}')$ be the $T \times G'$-equivaraint family given by the base change of $(\mathcal{X}/B, \mathcal{L})$ along $f$. 
Then we have 
\[ \D_{\hbar. \xi} \cmu^\lambda_{T \times G'} (\mathcal{X}'/B', \mathcal{L}') = f^* \varphi^\# \D_{\hbar. \xi} \cmu^\lambda_{T \times G} (\mathcal{X}/B, \mathcal{L}). \]

\item $\mu$-Futaki invariant: When the family $(\mathcal{X}/\mathbb{A}^1, \mathcal{L}) \circlearrowleft \mathbb{G}_m$ is a $T$-equivariant test configuration, then we have 
\[ \D_{\hbar. \xi} \cmu^\lambda_{T \times \mathbb{G}_m} (\mathcal{X}/\mathbb{A}^1, \mathcal{L}) = -\cFut^\lambda_{\hbar. \xi} (\mathcal{X}, \mathcal{L}). \bm{x} \in H^2_{\mathbb{G}_m} (\mathbb{A}^1, \mathbb{R}), \]
where $\bm{x}$ denotes the generator of $H^2_{\mathbb{G}_m} (\mathbb{A}^1, \mathbb{Z})$ corresponding to $c_1 (\mathcal{O} (-1)) \in H^2 (\mathbb{C}P^\infty, \mathbb{Z})$ under the identification $H^2_{\mathbb{G}_m} (\mathbb{A}^1, \mathbb{Z}) = H^2 (\mathbb{C}P^\infty, \mathbb{Z})$. 

\item CM line bundle: For $\xi =0$ ($\mathfrak{t} = 0$), we have 
\[ \D_{\hbar. 0} \cmu^\lambda_G (\mathcal{X}/B, \mathcal{L}) = \frac{2 \pi}{(L^{\cdot n})} c_{G, 1} (\mathrm{CM} (\mathcal{X}/B, \mathcal{L})) \]
for the CM line bundle $\mathrm{CM} (\mathcal{X}/B, \mathcal{L})$, independent of $\lambda \in \mathbb{R}$. (cf. \cite{PT})

\item Regularity: $\D_{\hbar. \xi} \cmu^\lambda_{T \times G} (\mathcal{X}/B, \mathcal{L})$ gives the following real analytic map: 
\[ \mathfrak{t} \to NS_G (B, \mathbb{R}): \xi \mapsto \D_{\hbar. \xi} \cmu^\lambda_{T \times G} (\mathcal{X}/B, \mathcal{L}). \]  
\end{enumerate}
\end{thmB}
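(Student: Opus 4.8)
The plan is to define $\D_{\hbar.\xi}\cmu^\lambda_{T\times G}(\mathcal{X}/B,\mathcal{L})$ by applying the evaluation-and-truncation operator $\D_{\hbar.\xi}=\D^1_{\hbar.\xi}$ of (\ref{derivation}) directly to the formal class $\cmu^\lambda_{T\times G}(\mathcal{X}/B,\mathcal{L})\in\hat{H}^{\mathrm{even}}_{T\times G}(B,\mathbb{R})$ of Definition \ref{mu-character definition}. Because $T$ acts trivially on $B$, the K\"unneth decomposition (\ref{Cartan Kunneth}) holds and $\rho^T_\xi$ descends to cohomology, so $\D_{\hbar.\xi}$ is well defined on $B$; the relative convergence theorem above guarantees that $(\kappa^{T\times G}_{\mathcal{X}/B}.e^{\mathcal{L}})_B$, $(\mathcal{L}.e^{\mathcal{L}})_B$ and $(e^{\mathcal{L}})_B$ all converge and that $(e^{\mathcal{L}})_B^{\langle 0\rangle}=(L^{\cdot n})/n!>0$ is invertible, so the quotients and logarithm in $\cmu^\lambda$ make sense. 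First I would record that $\D_{\hbar.\xi}$ is a \emph{derivation}: evaluating the $S^*\mathfrak{t}^\vee$-factor at $\xi$ is a ring homomorphism and extracting the degree-$2$ part obeys a Leibniz rule against the degree-$0$ evaluation, so $\D_{\hbar.\xi}$ of a product, inverse or logarithm of convergent formal classes reduces to a real-coefficient combination of the degree-$2$ pushforwards $\D^1_{\hbar.\xi}(\kappa^{T\times G}_{\mathcal{X}/B}.\mathcal{L}^{\cdot k})_B$ and $\D^1_{\hbar.\xi}(\mathcal{L}^{\cdot k})_B$. Each of these is the class of a divisor on $B$, hence lies in $NS_G(B,\mathbb{R})$, the (generically irrational) real coefficients being the fibre intersection numbers ${^\hbar(e^{L_T};\xi)}$, etc., evaluated at $\xi$; this establishes the valuedness.

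Regularity (4) is then immediate: each $\D^1_{\hbar.\xi}(\alpha.\mathcal{L}^{\cdot k})_B$ is polynomial in $\xi$, the series converge compactly absolutely, and the quotient and logarithm operations are real-analytic wherever $(e^{\mathcal{L}})_B^{\langle 0\rangle}\neq 0$; hence $\xi\mapsto\D_{\hbar.\xi}\cmu^\lambda$ is real-analytic. For the CM line bundle (3) I would specialize to $\mathfrak{t}=0$, where $\D_{\hbar.0}$ is simply the degree-$2$ truncation $t_1$. Using that the degree-$m$ part of $(\mathcal{L}.e^{\mathcal{L}})_B$ equals $(m+n)$ times that of $(e^{\mathcal{L}})_B$, a short formal computation shows the degree-$2$ part of $\bm{\check{\sigma}}_G$ vanishes, so the $\lambda$-term drops out and $\D_{\hbar.0}\cmu^\lambda_G$ is independent of $\lambda$; the surviving term $2\pi\,t_1[(\kappa^G_{\mathcal{X}/B}.e^{\mathcal{L}})_B(e^{\mathcal{L}})_B^{-1}]$ is exactly the intersection formula for $\frac{2\pi}{(L^{\cdot n})}c_{G,1}(\mathrm{CM}(\mathcal{X}/B,\mathcal{L}))$ (cf. \cite{PT}).

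The identification with the $\mu$-Futaki invariant (2) is the computational core. Here $B=\mathbb{A}^1$ and $H^2_{\mathbb{G}_m}(\mathbb{A}^1,\mathbb{R})=\mathbb{R}\bm{x}$, so $\D_{\hbar.\xi}\cmu^\lambda_{T\times\mathbb{G}_m}$ is a single real multiple of $\bm{x}$, whose coefficient I would read off by the $\mathbb{G}_m$-equivariant pushforward to a point. The key is that this pushforward factors through the compactification $\bar{\mathcal{X}}\to\mathbb{P}^1$: over $\mathbb{A}^1_-$ the family is the trivial product, so the $\mathbb{P}^1$-integral collapses to the $\mathbb{A}^1$-contribution and converts the base-direction term produced by $\D_{\hbar.\xi}$ into the relative intersections ${^\hbar(\kappa^T_{\bar{\mathcal{X}}/\mathbb{P}^1}.e^{\bar{\mathcal{L}}_T};\xi)}$ and ${^\hbar(\kappa^T_X.e^{L_T};\xi)}$. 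Applying the Leibniz rule to the quotient $2\pi(\kappa.e^{\mathcal{L}})_B(e^{\mathcal{L}})_B^{-1}+\lambda\bm{\check{\sigma}}_G$ then reproduces termwise the quotient structure of $\cFut^\lambda_{\hbar.\xi}(\mathcal{X},\mathcal{L})$, with the fibre factors ${^\hbar(e^{L_T};\xi)}$ and ${^\hbar(e^{\bar{\mathcal{L}}_T};\xi)}$ appearing as degree-$0$ evaluations; tracking signs through the conventions of section \ref{sign convention} yields the claimed $-\cFut^\lambda_{\hbar.\xi}(\mathcal{X},\mathcal{L}).\bm{x}$.

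Naturality (1) follows from base-change compatibility of every ingredient: $\D_{\hbar.\xi}$ is functorial since it is built from the Cartan model, and relative equivariant intersection and proper pushforward commute with the equivariant base change $f^*\varphi^\#$. The crucial input is the compatibility of the equivariant canonical class $\kappa^{T\times G}_{\mathcal{X}/B}$ under base change, which I would derive from the equivariant Grothendieck--Riemann--Roch theorem applied to the homology Todd class $\tau^{T\times G}(\mathcal{O}_{\mathcal{X}})$ (this is where algebraicity of $\mathcal{L}$ is used). I expect this to be the main obstacle: controlling $\tau^{T\times G}(\mathcal{O}_{\mathcal{X}})$ and its relative twist under a possibly non-flat base change, and reconciling the result with the $\smile e^{c_G}$ shift identities for $\cmu$, carries the real technical weight. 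A second subtlety, already flagged in \S\ref{The beauty of the Cartan model}, is hidden in the construction itself: since the $T$-action on $\mathcal{X}$ is nontrivial, $\rho^T_\xi$ does not preserve ${^\hbar d}$-closedness, so $\D_{\hbar.\xi}\cmu^\lambda$ cannot be realized as the pushforward of a single closed equivariant class, and one must argue through the convergent series rather than a closed representative.
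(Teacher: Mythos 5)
Your proposal is correct and follows essentially the same route as the paper: you define $\D_{\hbar.\xi}\cmu^\lambda_{T\times G}(\mathcal{X}/B,\mathcal{L})$ by applying $\D_{\hbar.\xi}$ to the formal $\mu$-character, justify convergence and regularity by the class-$\varepsilon^\infty$ theorem for relative equivariant intersections (proved via the Cartan model and continuity of the form-to-homology pushforward), handle quotients and logarithms by the Leibniz/chain rules, obtain naturality from the equivariant Grothendieck--Riemann--Roch base-change property of $(\kappa_{\mathcal{X}/B}.e^{\mathcal{L}})_B$, prove the $\mu$-Futaki identity by compactifying to $\mathbb{P}^1$ and using triviality over $\mathbb{A}^1_-$ together with the localization formula, and get the CM statement from the degree-two truncation with the formal vanishing of $\D_{\hbar.0}\bm{\check{\sigma}}_G$. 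The only points the paper works out in more detail than you indicate are the Tonelli--Fubini arguments underlying the Leibniz/chain rules for the convergent series and the Knudsen--Mumford-expansion computation identifying the degree-two part with $c_{G,1}(\mathrm{CM}(\mathcal{X}/B,\mathcal{L}))$, both of which fit within your outline.
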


We sketch the idea of the construction in section \ref{Equivariant cohomological interpretation}. 
By our construction of $\D_{\hbar. \xi} \cmu^\lambda_{T \times G} (\mathcal{X}/B, \mathcal{L})$, the base change property (1) is derived from the base change property of the equivariant intersection $(\kappa_{\mathcal{X}/B}. e^{\mathcal{L}})_B$. 
While the base change property for relative canonical class $K_{\mathcal{X}/B}$ is in general recognized as being involved with a mildness of singularities of families, the base change property for the equivariant intersection $(\kappa_{\mathcal{X}/B}. e^{\mathcal{L}})_B$ is just a consequence of the equivariant Grothendieck--Riemann--Roch theorem (cf. \cite{EG2}), which works for general schemes.

\subsubsection*{Acknowledgement}

This article was mostly written when I visited Kyoto University from Apr. 2019 to Mar. 2020. 
I am grateful to my second advisor Yuji Odaka for his hospitality, advices and comments on this article. 
I thank friends in Kyoto University for their hospitality. 
I am also grateful to my advisor Shigeharu Takayama for his constant support. 
This work is supported by JSPS KAKENHI Grant Number 18J22154, MEXT, Japan.

\section{Preliminaries for equivariant calculus}

\subsection{Notations and sign conventions}
\label{sign convention}

We build our equivariant calculus upon equivariant cohomology. 
It is quite sensitive to sign conventions on group action and equivariant cohomology. 
Here we clarify all these sign conventions in order to work within a consistent rule. 
See also Appendix for the sign conventions in the equivariant cohomologies $H_G (X, \mathbb{R}), {^\hbar H_{\mathrm{dR}, K}} (X)$ and the equivariant Chern class $c_G (E)$. 

\subsubsection*{$\bullet$ Fundamental vector field and right group action}

For a Lie group $G$, its Lie algebra $\mathfrak{g}$ consists of the left invariant vector fields on $G$. 
The exponential map $\exp: \mathfrak{g} \to G$ is given by the time one map of the one parameter subgroup associated to $\xi \in \mathfrak{g}$: $\frac{d}{dt}|_{t=0} (g \cdot \exp t \xi) = \xi_g$. 
The adjoint right action $\mathfrak{g} \circlearrowleft G$ is characterized by $\exp t (\xi. g) = g^{-1} \cdot \exp t \xi \cdot g$ and the coadjoint right action $\mathfrak{g}^\vee \circlearrowleft G$ is given by $\langle \mu. g, \xi \rangle = \langle \mu, \xi. g^{-1} \rangle$. 
We have $[\xi, \zeta] = L_\xi \zeta = \frac{d}{dt}|_{t=0} \xi. \exp t \zeta$. 

Unless specifically mentioned, we consider only right group actions. 
If we have a right action, then we can induce the left action by putting $g. x := x. g^{-1}$, which gives a functor from the category of right actions to left actions. 
When the group $T$ is abelian, we can also think of a right action as a left action by putting $t. x = x. t$, but we must beware of this as it may reverse various signs on equivariant cohomology. 
Indeed, this does not extend to a functor for non-abelian actions. 

If we have a right $G$-action on a vector bundle $E$ over a $G$-space $X$, the induced right $G$-action on the dual vector bundle $E^\vee$ is given by $\langle \mu. g, \xi \rangle = \langle \mu, \xi. g^{-1} \rangle$. 
The right action on the space of sections $\Gamma (X, E)$ of a $G$-equivariant vector bundle $E$ is given by $(s. g ) (x):= s (x. g^{-1}). g$. 

Let $G$ be a Lie group and $M$ be a manifold with a smooth $G$-action. 
The induced right action on the space of vector fields $\mathfrak{X} (M)$ is given by $V. g = (R_g)_* V$ for $R_g (x) = x.g$. 
Similarly, the right action on $\Omega^* (M)$ is given by $\varphi. g = R_{g^{-1}}^* \varphi$. 
The fundamental vector field $\xi^\#$ on $M$ is given by the differential of the flow $\mathrm{Exp}^t_\xi = R_{\exp t \xi}$. 
The map $\mathfrak{g} \to \mathfrak{X} (M): \xi \mapsto \xi^\#$ is $G$-equivariant as $\mathrm{Exp}^t_{\xi. g} (x) = \mathrm{Exp}^t_\xi (x. g^{-1}). g$ and hence is compatible with the Lie brackets $[\xi, \zeta]^\# = [\xi^\#, \zeta^\#]$ thanks to the formula $[\xi^\#, \zeta^\#] = \frac{d}{dt}|_{t=0} (\mathrm{Exp}^t_\zeta)_* \xi^\# = \frac{d}{dt}|_{t=0} \xi^\#. \exp t \zeta$. 
Note if we consider a left action on $M$, the fundamental vector field must be reversed as ${^\# \xi} = (d/dt)|_{t=0} (\exp (-t \xi). x)$: for $\xi_\# = - {^\# \xi} = (d/dt)|_{t=0} (\exp t \xi. x)$, we have $[\xi_\#, \zeta_\#] = - [\xi, \zeta]_\#$. 

\subsubsection*{$\bullet$ Hamiltonian vector field and momemt map}

Let $(M, \omega)$ be a symplectic manifold. 
For $f \in C^\infty (M)$, we define the associated Hamiltonian vector field $X_f$ by $i_{X_f} \omega = -df$, rather than $i_{X_f} \omega = df$. 
For $f, g \in C^\infty (M)$, the Poisson bracket $\{ f, g \} := \omega (X_f, X_g)$ is compatible with the Lie bracket: $[X_f, X_g] = X_{\{ f, g \}}$ under this sign convention. 

For a $G$-invariant $\omega$, we call a map $\mu: M \to \mathfrak{g}^\vee$ a moment map if it is $G$-equivariant and satisfies $i_{\xi^\#} \omega = - d\mu_\xi$ for every $\xi \in \mathfrak{g}$ where we put $\mu_\xi := \langle \mu, \xi \rangle$. 

\subsubsection*{$\bullet$ Chern--Weil isomorphism}

Let $G$ be an almost connected locally compact group and $K$ be a maximal compact group. 
There is a natural isomorphism of functors on the category of manifolds with smooth $G$-action
\begin{equation}
{^\hbar \Phi}: H^*_G (-, \mathbb{R}) \xrightarrow{\sim} {^\hbar H^*_{\mathrm{dR}, K}}. 
\end{equation}
The isomorphism is given by the composition of three isomorphisms, one of which is reversed from the original chain level construction. 
The isomorphisms are related by $(\hbar'/\hbar)_* \circ {^\hbar \Phi} = {^{\hbar'} \Phi}$. 

For instance, the equivariant cohomology class 
\[ {^\hbar \Phi^{-1}} [\omega + \hbar \mu] \in H^2_G (X, \mathbb{R}) \] 
is independent of $\hbar$: ${^{\hbar'} \Phi^{-1}} [\omega + \hbar' \mu] = {^{\hbar'} \Phi^{-1}} (\hbar'/\hbar)_* [\omega + \hbar \mu] = {^\hbar \Phi^{-1}} [\omega + \hbar \mu]$. 
This example illustrates the map $(\hbar'/\hbar)_*$ is not the simple scalar multiplication $[\omega + \hbar] \mu \mapsto \hbar'/\hbar \cdot [\omega + \hbar \mu]$. 

Substituting $M = \mathrm{pt}$, we obtain the Chern--Weil isomorphism 
\[ {^\hbar \Phi^{-1} }: (S^* \mathfrak{k}^\vee)^K \xrightarrow{\sim} H^*_G (\mathrm{pt}, \mathbb{R}). \]
We must clarify the sign of this isomorphism, which depends on $\hbar \in \mathbb{R}^\times$ involved in the construction of ${^\hbar H^*_{\mathrm{dR}, K}}$. 
We will compare equivariant Chern classes for this sake. 

As explained in Appendix, there are dual constructions of these functors: equivariant locally finite homology $H^{\mathrm{lf}, G}_* (-, \mathbb{R})$, equivariant current homology ${^\hbar H^{\mathrm{cur}, K}_*}$ and a natural isomorphism 
\[ {^\hbar \Phi}: H^{\mathrm{lf}, G}_* (-, \mathbb{R}) \xrightarrow{\sim} {^\hbar H^{\mathrm{cur}, K}_*}. \]
Similarly, we have a natural identification ${^\hbar H^{\mathrm{cur}, K}_p} (\mathrm{pt}) = (S^{-p} \mathfrak{k}^\vee)^K$ by the construction, where we put $S^{-p} \mathfrak{k}^\vee = 0$ for $p > 0$. 

We have $H_{\mathbb{G}_m}^* (\mathrm{pt}, \mathbb{R}) = H^* (\mathbb{C}P^\infty, \mathbb{R})$ by the construction. 
The latter can be identified with the polynomial ring $\mathbb{R} [\bm{x}]$ by assigning $\bm{x}$ to the first Chern class $c_1 (\mathcal{O} (-1))$ of the tautological line bundle $\mathcal{O} (-1)$ over $\mathbb{C}P^\infty$. 
(In our convention, the tautological line bundle $\mathcal{O} (-1)$ is associated to the weight $-1$ line bundle $\mathbb{C}_{-1}$ over the point. ) 
On the other hand, we can identify ${^\hbar H^*_{\mathrm{dR}, U (1)}} (\mathrm{pt}) = S^* \mathfrak{u} (1)^\vee$ with the polynomial ring $\mathbb{R} [\eta^\vee]$ under the identification $\mathfrak{u} (1)^\vee = \mathbb{R}. \eta^\vee$. 
Then the map ${^\hbar \Phi}: \mathbb{R} [\bm{x}] \to \mathbb{R} [\eta^\vee]$ is written explicitly as 
\begin{equation} 
\label{Chern--Weil sign}
{^\hbar \Phi} (\bm{x}) = \hbar \eta^\vee. 
\end{equation}
To see this, we compare two constructions of equivariant Chern classes. 

\subsubsection*{$\bullet$ Equivariant Chern class}

We recall that the equivariant Chern class $c_G (E)$ in the equivariant singular cohomology $H^*_G (X, \mathbb{Z})$ is defined to be the usual Chern class of the associated descent bundle $E_G = EG \times_G E$ over $X_G = EG \times_G X$. 
Here we note $G$ acts on $E$ from the right. 
This affects the sign convention on the equivariant Chern class, which we will see in example. 

On the other hand, the equivariant first Chern class ${^\hbar c_{K, 1}} (E) = {^\hbar \Phi} (c_{K, 1} (E))$ in the Cartan model ${^\hbar H^*_{\mathrm{dR}, K}} (M)$ is represented by an equivariant form constructed by using a $K$-invariant connection form $\vartheta \in \Omega^1 (P, \mathfrak{u} (1))$ on the principal $U (1)$-bundle $p: P \to M$ associated to $L$. 
A $K$-equivariant $2$-form $\omega + \hbar \mu$ satisfying $p^* \omega. \eta = -d \vartheta$ and $p^* \mu_\xi. \eta = -i_{\xi^\#} \vartheta$ for every $\xi \in \mathfrak{k}$ represents ${^\hbar c_{K, 1}} (E)$. 
We note the right $K$-action on $P = L^\times/\mathbb{R}_+ = (L \setminus M) / \langle v \sim \lambda v ~|~ \lambda > 0 \rangle$ is given by $[v]. g = [v. g]$ induced from the right $K$-action on $L$, which should not be confused with the $U (1)$-action $[v]. e^{\sqrt{-1} \theta} = [e^{\sqrt{-1} \theta} \cdot v]$ especially when the group $K$ is $U (1)$. 

This construction is justified by the following uniqueness on equivariant Chern class. 
We give a proof in section \ref{section: Cartan model of equivariant cohomology and locally finite homology} for the readers convenience. 

\begin{lem}
\label{equivariant Chern uniqueness}
The equivariant first Chern class for line bundles over smooth manifolds is characterized by the following properties. 
Namely, if a characteristic class $L_G \mapsto \tilde{c} (L_G) \in H^2_G (M, \mathbb{R})$ satisfies the following, then it coincides with $c_{G, 1} (L)$. 
\begin{enumerate}
\item For every $G$-equivariant smooth map $f: M \to N$, $\tilde{c} (f^* L_G) = f^* \tilde{c} (L_G)$. 

\item For every group homomorphism $\varphi: H \to G$, $\tilde{c} (L_H) = \varphi^\# \tilde{c} (L_G)$. 

\item For the trivial group $G = \{ 1 \}$, $\tilde{c} (L_G) = c_1 (L)$. 
\end{enumerate}
\end{lem}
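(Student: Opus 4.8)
The plan is to show that $\tilde{c}$ must coincide with $c_{G,1}$ (which itself manifestly satisfies (1)--(3), by functoriality of the Borel construction and the definition of the equivariant Chern class) by reducing an arbitrary pair $(M,L)$ to one universal computation, using only the three listed properties. First I would reduce to the tautological bundle on a projectivized representation: realizing $L$ as $L\cong f^*\mathcal{O}_{\mathbb{P}(V)}(-1)$ for a suitable $G$-representation $V$ and a $G$-equivariant map $f:M\to\mathbb{P}(V)$ (obtained from enough $G$-equivariant sections, equivalently an equivariant embedding), property (1) gives $\tilde{c}(L)=f^*\tilde{c}(\mathcal{O}(-1))$ and $c_{G,1}(L)=f^*c_{G,1}(\mathcal{O}(-1))$, so it suffices to treat $\mathcal{O}_{\mathbb{P}(V)}(-1)$. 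I expect this first step to be the main obstacle: producing the equivariant classifying map for a general, possibly non-compact, smooth $G$-manifold requires passing to the finite-dimensional approximations $E_lG\times_G M$ of the Borel construction and invoking equivariant homotopy invariance, whereas everything afterwards is a formal manipulation of the axioms on the compact model $\mathbb{P}(V)$.

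Next I would reduce the group and localize. Restricting along the inclusion $\iota:T\hookrightarrow K\hookrightarrow G$ of a maximal torus inside a maximal compact subgroup, property (2) gives $\tilde{c}(L_T)=\iota^\#\tilde{c}(L_G)$ and likewise for $c_{G,1}$; since $H^2_G(-,\mathbb{R})\cong H^2_K(-,\mathbb{R})\hookrightarrow H^2_T(-,\mathbb{R})$, the map $\iota^\#$ is injective, so it is enough to compare the two classes $T$-equivariantly. By equivariant formality of $\mathbb{P}(V)$ the fixed-point restriction $H^2_T(\mathbb{P}(V),\mathbb{R})\hookrightarrow\bigoplus_p H^2_T(\{p\},\mathbb{R})$ is injective, hence it suffices to match the restrictions to the $T$-fixed points $p$. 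As $\mathcal{O}(-1)$ restricts on the fixed line $p$ to the corresponding weight line $\mathbb{C}_{\chi_p}$, property (1) reduces the whole comparison to that of $\tilde{c}(\mathbb{C}_\chi)$ and $c_{T,1}(\mathbb{C}_\chi)$ over a point.

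Finally I would normalize. Property (2) applied to the character $\chi:T\to U(1)$ yields $\tilde{c}(\mathbb{C}_\chi)=\chi^\#\tilde{c}(\mathbb{C}_1)$ and $c_{T,1}(\mathbb{C}_\chi)=\chi^\#c_{U(1),1}(\mathbb{C}_1)$, so only the single coefficient of $\tilde{c}(\mathbb{C}_1)\in H^2_{U(1)}(\mathrm{pt},\mathbb{R})=\mathbb{R}\cdot\bm{x}$ remains to be pinned down. I would compute it on $\mathbb{P}^1=\mathbb{P}(\mathbb{C}_0\oplus\mathbb{C}_{-1})$: writing $\tilde{c}(\mathcal{O}(-1))=b\,\pi^*\bm{x}+c\cdot c_{U(1),1}(\mathcal{O}(-1))$ in the $H^2_{U(1)}(\mathrm{pt},\mathbb{R})$-basis of $H^2_{U(1)}(\mathbb{P}^1,\mathbb{R})$, the forgetful map $\iota^\#$ for $\{1\}\to U(1)$ (property (2)) together with property (3) sends this to $c\cdot c_1(\mathcal{O}(-1))$, which must equal $c_1(\mathcal{O}(-1))$, forcing $c=1$; restricting to the fixed point where $\mathcal{O}(-1)=\mathbb{C}_0$ is trivial gives $0=\tilde{c}(\mathbb{C}_0)=b\,\bm{x}$, forcing $b=0$ (here $\tilde{c}$ of a trivial equivariant line bundle vanishes, by (2) applied to $G\to\{1\}$ combined with (3)). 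Thus $\tilde{c}(\mathcal{O}(-1))=c_{U(1),1}(\mathcal{O}(-1))$, and restricting to the remaining fixed point gives $\tilde{c}(\mathbb{C}_1)=c_{U(1),1}(\mathbb{C}_1)$, which propagates back through the reductions to the desired identity $\tilde{c}=c_{G,1}$.
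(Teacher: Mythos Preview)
Your proposal is correct and follows the same broad arc as the paper --- reduce to $\mathcal{O}(-1)$ on a projectivized representation via an equivariant classifying map, then cut down the group, then normalize on $\mathbb{P}^1$ --- but the second and third steps take a genuinely different route. The paper, after obtaining the $U(V)$-equivariant classifying map (via Peter--Weyl on the compact model), splits $U(V)\cong SU(V)\times U(1)$: for the $SU(V)$ factor it uses $(\mathfrak{su}^\vee)^{SU}=0$, so the forgetful map $H^2_{SU(V)}(\mathbb{P}(V))\to H^2(\mathbb{P}(V))$ is already injective and property~(3) alone finishes; for the residual $U(1)$ factor it restricts to a point and then invokes a further homomorphism $U(1)\to SU(2)$ acting on $\mathbb{P}^1$ to feed back into the $SU$ case just handled. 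Your approach instead restricts to a maximal torus (via the injectivity $H^2_K\hookrightarrow H^2_T$) and then appeals to equivariant formality of $\mathbb{P}(V)$ to localize to the $T$-fixed locus; your final normalization reads off the two coefficients of $\tilde{c}(\mathcal{O}(-1))$ in a basis of $H^2_{U(1)}(\mathbb{P}^1)$ directly from properties~(2) and~(3). Your route is the more standard equivariant-cohomology argument and arguably more transparent at the endgame; the paper's route is more elementary in that it avoids the localization theorem entirely, trading it for the ad~hoc but pleasant observation that $H^2_{SU}(\mathrm{pt},\mathbb{R})=0$.

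One small point to tighten: in your localization step you write the target as $\bigoplus_p H^2_T(\{p\},\mathbb{R})$, implicitly assuming isolated $T$-fixed points. In general the fixed locus of $\mathbb{P}(V)$ is $\bigsqcup_\chi \mathbb{P}(V_\chi)$, one projective subspace per distinct weight, so the injective restriction only lands in $\bigoplus_\chi H^2_T(\mathbb{P}(V_\chi))$. This is harmless: on each component the $T$-action is trivial, so $H^2_T(\mathbb{P}(V_\chi))=H^2(\mathbb{P}(V_\chi))\oplus\mathfrak{t}^\vee$, and you can either (i) further restrict to one point per component, or (ii) note that the $T$-structure on $\mathcal{O}(-1)|_{\mathbb{P}(V_\chi)}$ is pulled back via $\chi:T\to U(1)$ from the scalar $U(1)$-action, reducing via property~(2) exactly as you do in step~5. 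Either way the argument lands on $\tilde{c}(\mathbb{C}_1)$ as intended.
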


We easily check that ${^\hbar \Phi^{-1}} [\omega + \hbar \mu] \in H^2_K (M, \mathbb{R})$ constructed as above satisfies the first two conditions. 
The last property, $[\omega] = c_1 (L)$ for $\omega$ satisfying $\varpi^* \omega. \eta = -d \vartheta$, is well-known as Chern--Weil construction. 
We can check the sign $-d\vartheta$ is appropriate by the following example. 

\begin{eg}[Chern form]
\label{Chern form}
Let $L = \mathcal{O} (-1)$ be the tautological line bundle over $\mathbb{P}^1$. 
The associated principal $U(1)$-bundle $P$ is identified with 
\[ S^3 = \{ (z, w) \in \mathbb{C}^2 ~|~ |z|^2 + |w|^2 = 1 \} \] 
endowed with the $U (1)$-action $(z, w). e^{\sqrt{-1} \theta} = (e^{\sqrt{-1} \theta} \cdot z, e^{\sqrt{-1} \theta} \cdot w)$. 
For $(z, w) = (e^{\rho_1 + \sqrt{-1} \theta_1}, e^{\rho_2 + \sqrt{-1} \theta_2})$, the fundamental vector field $\eta^\#$ on $P$ is given by $2\pi (\partial/\partial \theta_1 + \partial/\partial \theta_2)$. 

We can easily check 
\[ \vartheta = (2\pi)^{-1} (e^{2\rho_1} d\theta_1 + e^{2\rho_2} d \theta_2). \eta = (2\pi)^{-1} \mathrm{Re} (\sqrt{-1} z d \bar{z} + \sqrt{-1} w d \bar{w}). \eta \] 
gives a connection on $P$. 
We then get $d\vartheta = (2\pi)^{-1} (\sqrt{-1} d z \wedge d \bar{z} + \sqrt{-1} dw \wedge d \bar{w}). \eta$. 
It follows that  
\[ \omega = - \frac{\sqrt{-1}}{2\pi} (d z \wedge d \bar{z} + dw \wedge d \bar{w}) = - \frac{\sqrt{-1}}{2\pi} \frac{d\zeta \wedge d \bar{\zeta}}{(1 + |\zeta|^2)^2}  \]
for the coordinate $\zeta = z/w$ on $\mathbb{A}^1 \subset \mathbb{P}^1$. 
Therefore, we conclude $[\omega] = [-d \vartheta/\eta] = c_1 (\mathcal{O} (-1))$ as desired. 
\end{eg}

The following example illustrates a relation of the weight of $\mathbb{G}_m$-action on the fibre $L_x$ of a fixed point $x$ and the value of the moment map $\mu (x)$ normalized by $[\omega + \hbar \mu] = {^\hbar c_{\mathbb{G}_m, 1}} (L)$. 

\begin{eg}[Weight and moment image]
\label{weight and moment value}
Consider the $SU (2)$-action on $\mathbb{P}^1$ and $L = \mathcal{O} (-1) = \mathrm{Bl}_o \mathbb{C}^2$ derived from the $SU (2)$-representation $\mathbb{C}^2$ and a $U (1)$-action via the group homomorphism $\varphi: U (1) \to SU (2): e^{\sqrt{-1} \theta} \mapsto \mathrm{diag} (e^{p \sqrt{-1} \theta}, e^{q\sqrt{-1} \theta})$ for $p, q \in \mathbb{Z}$. 

The connection $\vartheta$ in Example \ref{Chern form} is $SU (2)$-invariant. 
Since the fundamental vector field $\eta_\varphi^\#$ on the principal $U(1)$-bundle $P$ is $2\pi (p \partial/\partial \theta_1 + q \partial/\partial \theta_2)$, the moment map $\mu: \mathbb{P}^1 \to \mathfrak{u} (1)_\varphi^\vee$ is given by $\varpi^* \mu_\eta = - i_{\eta_\varphi^\#} \vartheta/\eta = - (p|z|^2 +q |w|^2)$ on $P$. 

Now the point $(0:1) \in \mathbb{P}^1$ is $U (1)_\varphi$-invariant and $U (1)_\varphi$ acts on its fibre $L_{(0:1)} = \mathbb{C}$ by $z. t = t^q z$. 
On the other hand, we have $\mu (0:1) = -q \eta^\vee$ by the above observation. 

In particular, we get 
\[ {^\hbar c_{U (1), 1}} (\mathbb{C}_m) = - \hbar m \eta^\vee \] 
for the $U (1)$-equivariant line bundle $\mathbb{C}_m = \mathbb{C}$ over the point whose action is given by the weight $q$: $z. t = t^m z$. 
\end{eg}

Now we compute $c_{\mathbb{G}_m, 1} (\mathbb{C}_m) \in H^2_{\mathbb{G}_m} (\mathrm{pt}, \mathbb{R})$ and show the explicit formula (\ref{Chern--Weil sign}) of Chern--Weil isomorphism. 

\begin{eg}[Sign of Chern--Weil isomorphism]
Consider a torus $T = (\mathbb{G}_m)^{\times k}$. 
The principal $T$-bundle $(\mathbb{C}^\infty \setminus \{ 0 \})^{\times k} \to  (\mathbb{C}P^\infty)^{\times k}$ serve as the classifying space of $T$, so we may identify $\mathrm{pt}_T = ET \times_T \mathrm{pt}$ with $(\mathbb{C}P^\infty)^{\times k}$. 
Let $p_i: \mathrm{pt}_T \to \mathbb{C}P^\infty$ be the projection to the $i$-th component, then we can identify $H^*_T (\mathrm{pt}, \mathbb{R})$ with $\mathbb{R} [\bm{x}_1, \ldots, \bm{x}_k]$ by assigning $p_i^* c_1 (\mathcal{O} (-1))$ to $\bm{x}_i$. 
On the other hand, we have ${^\hbar H^*_{\mathrm{dR}, T}} (\mathrm{pt}) = \mathbb{R} [\eta^\vee_1, \ldots, \eta^\vee_k]$ from the construction. 
For a character $\chi = t_1^{m_1} \dotsb t_k^{m_k}: T \to \mathbb{G}_m$, we would compare the equivariant first Chern classes of the line bundle $\mathbb{C}_\chi = \mathbb{C}$ over the point $\mathrm{pt}$ endowed with the $T$-action $w. t = \chi (t) w$. 

We check the line bundle $L_{\chi, T}$ over $\mathrm{pt}_T$ associated to $\mathbb{C}_\chi$ is written explicitly as $\mathcal{O} (\chi) := p_1^* \mathcal{O} (m_1) \otimes \dots \otimes p_k^* \mathcal{O} (m_k)$. 
Indeed, by definition, we have 
\[ L_{\chi, T} := (\mathbb{C}^\infty \setminus \{ 0 \})^{\times k} \times_{\mathbb{G}_m} \mathbb{C} = (\mathbb{C}^\infty \setminus \{ 0 \})^{\times k} \times \mathbb{C}/\sim \]
with the equivalence relation 
\[ ((z^1_l)_{l=1}^\infty, \ldots, (z^k_l)_{l=1}^\infty, w) \sim ((t_1 z^1_l)_{l=1}^\infty, \ldots, (t_l z^k_l)_{l=1}^\infty, t_1^{m_1} \dotsb t_k^{m_k} w). \]
Then the following assignment gives a well-defined isomorphism $L_{t_1^{-1}, T}^{-m_1} \otimes \dotsb \otimes L_{t_k^{-1}, T}^{-m_k} \xrightarrow{\sim} L_{\chi, T}$ of line bundles: 
\[ ([((z^i_l)_{l=1}^\infty)_{i=1}^k, w_1]_1, \ldots,  [((z^i_l)_{l=1}^\infty)_{i=1}^k, w_k]_k) \mapsto [((z^i_l)_{l=1}^\infty)_{i=1}^k, w_1^{-m_1} \dotsb w_k^{-m_k}]. \]
On the other hand, it is easily seen that $L_{t_i^{-1}, T} = p_i^* L_{t^{-1}, \mathbb{G}_m}$, where the $T$-action on the latter bundle is given via $t_i^{-1}: T \to \mathbb{G}_m$. 
Thus it suffices to check $L_{t^{-1}, \mathbb{G}_m} = \mathcal{O} (-1)$ over $\mathbb{C}P^\infty$. 
Recall the tautological line bundle $\mathcal{O} (-1)$ is the subbundle $\{ ([z_l]_{l=1}^\infty, (\tau z_l)_{l=1}^\infty) \in \mathbb{C}P^\infty \times \mathbb{C}^\infty ~|~ \tau \in \mathbb{C} \}$ by definition. 
Since the following assignment gives a well-defined isomorphism $L_{t^{-1}, \mathbb{G}_m} \xrightarrow{\sim} \mathcal{O} (-1)$: 
\[ [(z_l)_{l=1}^\infty, w] \mapsto ([z_l]_{l=1}^\infty, (w z_l)_{l=1}^\infty) \]
we obtain the claim. 
Therefore, we have $c_{T, 1} (\mathbb{C}_\chi) = -(m_1 \bm{x}_1 + \dotsb + m_k \bm{x}_k)$. 

On the other hand, $\vartheta = (2\pi)^{-1} d\theta. \eta$ gives a $T$-invariant connection on $P = U (1)$. 
Since $\eta_i^\# = \chi_* \eta_i = m_i \eta$, we have $\mu_{\eta_i} = -i_{\eta_i^\#} \vartheta /\eta = - m_i$. 
Thus we get ${^\hbar c_{K, 1}} (\mathbb{C}_\chi) = - \hbar (m_1 \eta_1^\vee + \dotsb + m_k \eta_k^\vee) = - \hbar \chi$. 
\end{eg}

We note whether group acts from the left or right is important in the above computation. 
Recall for a $G$-equivariant vector bundle $E$ over $X$, the descent bundle 
\[ E_G := EG \times_G E = EG \times E/\sim \]
is constructed with the equivalence relation $\sim$: $(e, x) \sim (e. g, x. g)$. 
If we consider a left action on $E$ and $X$ while considering the right action on $EG$, then the equivalence relation $\stackrel{\mathrm{left}}{\sim}$ for $E_G^{\mathrm{left}} = EG \times E/\stackrel{\mathrm{left}}{\sim}$ must be defined as $(e, x) \stackrel{\mathrm{left}}{\sim} (e. g, g^{-1}. x)$, otherwise it does not give an equivalence relation for non-abelian $G$. 
Then if we consider the line bundle $\mathbb{C}^\chi$ endowed with the \textit{left $T$-action} $t. z = \chi (t) z$, the adjoint bundle $L_T^\chi = (\mathbb{C}^\infty \setminus \{ 0 \})^{\times k} \times \mathbb{C}/\stackrel{l}{\sim}$ is reversed as 
\[ ((z^1_l)_{l=1}^\infty, \ldots, (z^k_l)_{l=1}^\infty, w) \stackrel{l}{\sim} ((t_1 z^1_l)_{l=1}^\infty, \ldots, (t_l z^k_l)_{l=1}^\infty, t_1^{-m_1} \dotsb t_k^{-m_k} w). \]
Thus we have $L_T^\chi = L_{T, -\chi} = L_{T, \chi}^{-1}$. 

\begin{eg}[Equivariant Chern class of canonical bundle]
\label{Equivariant Chern class of canonical bundle}
Let $\omega$ be a $K$-invariant K\"ahler metric on a K\"ahler manifold $X$ and $\mu: X \to \mathfrak{k}^\vee$ be a moment map with resepct to $\omega$. 
Then we can check 
\[ {^\hbar c_{K, 1}} (X) = - {^\hbar c_{K, 1}} (K_X) = \frac{1}{2\pi} [\Ric (\omega) + \hbar \bar{\Box} \mu] \]
by computing the Chern connection $\vartheta$ on the principal bundle of $\det TX$ induced by the metric $\det \omega$. 
Indeed, since we have $i_{\xi^\#} \Ric (\omega) = d i_{\xi^\#} d^c \log \det \omega = - d \bar{\Box} \mu_\xi$, $\bar{\Box} \mu$ is a moment map for $\Ric (\omega)$. 
One can also check the normalization of the moment map by examples. 
\end{eg}

\subsubsection*{$\bullet$ Localization formula}

\begin{eg}[Localization formula]
\label{localization formula}
The $\mathbb{G}_m$-action on $\mathbb{P}^1$ has two fixed points: $i_0 (\mathrm{pt}) = 0 \in \mathbb{A} \subset \mathbb{P}^1$ and $i_{0_-} (\mathrm{pt}) = 0_- \in \mathbb{A}_- \subset \mathbb{P}^1$. 
For every $u \in H^2_{\mathbb{G}_m} (\mathbb{P}^1, \mathbb{R})$, we have the following localization formula: 
\begin{equation} 
\int_{\mathbb{P}^1} u. \bm{x} = i_{0_-}^* u - i_0^* u \in H^2_{\mathbb{G}_m} (\mathrm{pt}, \mathbb{R}) = \mathbb{R}. \bm{x}. 
\end{equation}

The localization formula is a paraphrase of Stokes theorem in this setup. 
We explain this to check the sign in the localization formula, which is crucial for our equivariant calculus. 

Pick an equivariant $2$-form $\omega + \hbar \mu$ in the cohomology class ${^\hbar u}$. 
Using the polar coordinate $e^{\rho + \sqrt{-1} \theta}$ of $\mathbb{A}^1 \setminus \{ 0 \} \subset \mathbb{P}^1$, we can write $\omega|_{\mathbb{A}^1 \setminus \{ 0 \}} = f (\rho) d\rho \wedge d\theta = d (\int_{-\infty}^\rho f (t) dt. d \theta)$ for some $f$. 
With this expression, we have $d\mu_\eta = - i_{\eta^\#} \omega = 2\pi f (\rho) d \rho$ as $\eta = 2 \pi \frac{\partial}{\partial \theta}$. 
Then we can express the function $\mu_\eta$ as 
\[ \mu_\eta (z) = 2 \pi \int_{-\infty}^{\log |z|} f (\rho) d\rho + \mu_\eta (0). \]
Applying Stokes theorem, we compute
\begin{align*} 
\int_{\mathbb{P}^1} \omega 
&= \lim_{\rho \to \infty} \left( \int_{|z| = e^\rho} \int_{-\infty}^\rho f (t) dt. d \theta - \int_{|z| = e^{-\rho}} \int_{-\infty}^{-\rho} f (t) dt. d \theta \right)
\\
&= 2 \pi \int_{-\infty}^\infty f (t) dt
\\
& = \mu_\eta (0_-) - \mu_\eta (0). 
\end{align*}
\end{eg}

We can check the sign by Example \ref{weight and moment value}: $\int_{\mathbb{P}^1} (\omega + \hbar \mu) = \int_{\mathbb{P}^1} \omega = -1$, $i_0^* (\omega + \hbar \mu) = \hbar \eta^\vee$, $i_{0_-}^* (\omega +\hbar \mu) = 0$. 

See \cite[Appendix C. 7]{GGK} for a more general localization formula. 

\subsubsection*{$\bullet$ Duistermaat--Heckman measure}

Let $X$ be a proper scheme with a $T$-action and $L_T$ be a semiample and big $T$-equivariant line bundle over $X$. 
We define the \textit{Duistermaat--Heckman measure} of $(X, L_T)$ on $\mathfrak{t}^\vee$ by 
\begin{equation}
\mathrm{DH}_{(X, L_T)} := \lim_{k \to \infty} k^{-n} \sum_{m \in M} \dim H^0 (X, L^{\otimes k})_{m} . \delta_{k^{-1} m}. 
\end{equation}
When $X$ is smooth, we have $(-\mu)_* (\omega^n/n!) = \mathrm{DH}_{(X, L_T)}$ for the moment map $\mu: X \to \mathfrak{t}^\vee$ with ${^\hbar \Phi^{-1}} [\omega + \hbar \mu] = L_T$. 
Note the sign of the moment map must be reversed as we witness in Proposition \ref{equivariant intersection and DH measure}. 

\subsection{Key observations for intersection theoretic expression}

\subsubsection{On the intersection formula of Donaldson--Futaki invariant}
\label{section: Odaka--Wang intersection formula}

Before studying $\mu$K-stability, we observe a simple equivariant cohomological interpretation of Odaka--Wang's intersection formula. 
The Futaki invariant for general test configurations is introduced by Donaldson \cite{Don} as a generalization of differential geometric Futaki invariant introduced in \cite{Fut}, using coefficients in Hilbert and weight polynomial given by equivariant Riemann--Roch theorem. 
Odaka \cite{Oda} and Wang \cite{Wang} find an intersection theoretic expression of Donaldson--Futaki invariant via Donaldson's definition. 

Here we derive Odaka--Wang's intersection formula for product configurations directly from the differential Futaki invariant, using the localization formula in Example \ref{localization formula}. 
This observation helps to understand the equivariant cohomological expression of the equivariant first Chern class of CM line bundle for smooth family and provides us an inspiration on the construction of the characteristic class $\D_{\hbar. \xi} \cmu^\lambda (\mathcal{X}/B, \mathcal{L})$ for general family. 

Let $X$ be a compact K\"ahler manifold with a holomorphic $\mathbb{G}_m$-action and $L_{\mathbb{G}_m} \in H^2_{\mathbb{G}_m} (X, \mathbb{R})$ be a lift of a K\"ahler class $L \in H^2 (X, \mathbb{R})$. 
The equivariant deRham class ${^\hbar L_{U (1)}} = {^\hbar \Phi} (L_{\mathbb{G}_m}) \in {^\hbar H^2_{\mathrm{dR}, U (1)}} (X)$ is represented by $\omega + \hbar \mu$ with some K\"ahler metric $\omega$ and a moment map $\mu$. 

As in section \ref{section: Test configuration}, we consider the product test configuration $(X \times \mathbb{A}^1, p_X^* L_{\mathbb{G}_m})$. 
Let $(\bar{\mathcal{X}}, \bar{\mathcal{L}})$ denote its compactification. 
Let $i_0, i_{0_-}: \mathrm{pt} \to \mathbb{P}^1$ be the maps $i_0 (\mathrm{pt}) = 0, i_{0_-} (\mathrm{pt}) = 0_-$ and $j_0, j_{0_-}: X \to \bar{\mathcal{X}}$ be the natural isomorphisms to the fibres over $0, 0_-$, respectively. 

Recall the Futaki invariant is defined as 
\[ {^\hbar \cFut} (\zeta) := \int_X \hat{s} (\omega) \hbar \mu_\zeta \omega^n \Big{/} \int_X \omega^n. \]
Many literatures choose the convention $\hbar = -2$. 
Under the identification ${^\hbar H^2_{\mathrm{dR}, U (1)}} (\mathrm{pt}) = \mathfrak{u} (1)^\vee = \mathbb{R}. \eta^\vee$, we regard ${^\hbar \cFut} = {^\hbar \cFut} (\eta). \eta^\vee$ as an element of ${^\hbar H^2_{\mathrm{dR}, U (1)}} (\mathrm{pt})$. 
Now we see that the following. 

\begin{lem}[Intersection formula]
In the setting as above, the equivariant cohomology class ${^\hbar \cFut} \in {^\hbar H^2_{\mathrm{dR}, U (1)}} (\mathrm{pt})$ coincides with the pullback of the following equivariant cohomology class along $i_0^*: {^\hbar H^2_{\mathrm{dR}, U (1)}} (\mathbb{P}^1) \to {^\hbar H^2_{\mathrm{dR}, U (1)}} (\mathrm{pt})$: 
\[ -\frac{2 \pi}{(L^{\cdot n})} \Big{(} {^\hbar (} K_{\bar{\mathcal{X}}/\mathbb{P}^1}^{ \mathbb{G}_m }. \bar{\mathcal{L}}_{ \mathbb{G}_m }^{\cdot n} )_{\mathbb{P}^1} + \frac{n}{n+1} \frac{(-K_X. L^{\cdot (n-1)})}{(L^{\cdot n})} {^\hbar (} \bar{\mathcal{L}}_{ \mathbb{G}_m }^{\cdot (n+1)})_{\mathbb{P}^1} \Big{)} \in {^\hbar H^2_{\mathrm{dR}, U (1) }} (\mathbb{P}^1). \] 

As a consequence, we obtain the following well-known intersection formula: 
\[ {^\hbar \cFut} (\eta) = \hbar \cdot \frac{2 \pi}{(L^{\cdot n})} \Big{(} (K_{\bar{\mathcal{X}}/\mathbb{P}^1} . \bar{\mathcal{L}}^{\cdot n}) + \frac{n}{n+1} \frac{(- K_X . L^{\cdot (n-1)})}{(L^{\cdot n})} (\bar{\mathcal{L}}^{\cdot (n+1)}) \Big{)}. \]
\end{lem}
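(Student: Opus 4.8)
The plan is to pull everything back to the single fibre $X$ sitting over $0 \in \mathbb{P}^1$ and to match the differential-geometric Futaki integral with the fixed-point restriction $i_0^*$ of the two relative equivariant intersections, working entirely in the Cartan model.

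First I would fix Cartan representatives on the product total space $\bar{\mathcal{X}} = X \times \mathbb{P}^1$ with its diagonal $\mathbb{G}_m$-action. Represent $\bar{\mathcal{L}}_{\mathbb{G}_m}$ by the pullback $p_X^*(\omega + \hbar \mu)$, and use Example \ref{Equivariant Chern class of canonical bundle} to represent the relative canonical class $\kappa^{\mathbb{G}_m}_{\bar{\mathcal{X}}/\mathbb{P}^1}$ — which for a product equals $p_X^*\kappa_X^{\mathbb{G}_m}$ and restricts to $\kappa_X^{\mathbb{G}_m}$ on each fibre — by $-\tfrac{1}{2\pi}(\Ric(\omega) + \hbar \bar{\Box}\mu)$, carrying the crucial sign $\kappa = -c_1$. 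Both relative equivariant intersections are equivariant pushforwards along $\varpi$, so in the Cartan model they are the fibre integrals $\int_{\bar{\mathcal{X}}/\mathbb{P}^1}$ of the corresponding products of these forms.

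Next I would compute $i_0^*$ by the identity $i_0^* \varpi_*(-) = \int_{X_0} j_0^*(-)$, restricting the integrand to the fibre $X_0$ and integrating. Since $j_0^* \bar{\mathcal{L}} = L_{\mathbb{G}_m}$ and $j_0^* \kappa_{\bar{\mathcal{X}}/\mathbb{P}^1} = \kappa_X^{\mathbb{G}_m}$, this yields
\[ i_0^*(\kappa^{\mathbb{G}_m}_{\bar{\mathcal{X}}/\mathbb{P}^1}. \bar{\mathcal{L}}^n)_{\mathbb{P}^1} = -\tfrac{1}{2\pi}\int_X (\Ric + \hbar \bar{\Box}\mu)(\omega + \hbar\mu)^n, \qquad i_0^*(\bar{\mathcal{L}}^{n+1})_{\mathbb{P}^1} = \int_X (\omega + \hbar\mu)^{n+1}, \]
both read off in ${^\hbar H^2_{\mathrm{dR}, U(1)}}(\mathrm{pt}) = \mathbb{R}\eta^\vee$. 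Extracting the coefficient of $\hbar$, only the form-degree-$2n$ parts survive: the first produces $n \int_X \mu_\eta\, \Ric \wedge \omega^{n-1} + \int_X \bar{\Box}\mu_\eta\, \omega^n$ and the second produces $(n+1)\int_X \mu_\eta\, \omega^n$. The matching then rests on two elementary facts: $\int_X \bar{\Box}\mu_\eta\, \omega^n = 0$ by integration by parts, which removes the moment-map normalization ambiguity from the canonical term, and $s(\omega)\omega^n = n\, \Ric(\omega) \wedge \omega^{n-1}$ together with $\bar{s} = 2\pi(-K_X. L^{n-1})/(L^n)$, which rewrites ${^\hbar\cFut}(\eta) = \int_X (s - \bar{s})\hbar\mu_\eta \omega^n / \int_X \omega^n$ in exactly the same two integrals. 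The consistency check I would run first is that the factor $n+1$ is precisely what makes the $\kappa$-term and the $\bar{\mathcal{L}}^{n+1}$-term combine into a normalization-independent quantity: under $\mu \mapsto \mu + c$ the two contributions change by canceling amounts. This identifies $i_0^*$ of the claimed class with ${^\hbar\cFut}(\eta)\eta^\vee$, and the displayed ``well-known'' formula then follows by pulling out the factor $\hbar$ and re-reading the fixed-point contributions through the localization formula of Example \ref{localization formula}, $\int_{\mathbb{P}^1} u. \bm{x} = i_{0_-}^* u - i_0^* u$.

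I expect the only genuine difficulty to be bookkeeping rather than conceptual. The delicate points are choosing consistent Cartan representatives — in particular the sign $\kappa = -c_1$ and the exact normalization of the Ricci moment map $\bar{\Box}\mu$ dictated by Example \ref{Equivariant Chern class of canonical bundle} — isolating which combined form-degree/polynomial-degree piece survives both the fibre integration and the projection onto $H^2_{U(1)}(\mathrm{pt})$, and tracking the $\pm$ and $2\pi$ conventions that the paper repeatedly flags as sensitive. Because the degenerate representative $p_X^*(\omega+\hbar\mu)$ makes the \emph{absolute} equivariant intersections over $\bar{\mathcal{X}}$ vanish for a product, the whole content lives in the relative classes and their fixed-point restrictions, so I would be especially careful that the evaluation is carried out on the fibre over $0$ and not via a naive absolute integration.
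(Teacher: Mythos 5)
Your handling of the \emph{first} claim is essentially the paper's own proof: rewrite ${^\hbar \cFut}(\eta)$ as fibre integrals, use $i_0^*\varpi_* = \int_{X_0} j_0^*$, take the representative $-\tfrac{1}{2\pi}[\Ric(\omega)+\hbar\bar{\Box}\mu]$ of $j_0^*{^\hbar c_{U(1),1}}(K_{\bar{\mathcal{X}}/\mathbb{P}^1})$ from Example \ref{Equivariant Chern class of canonical bundle}, and kill $\int_X \bar{\Box}\mu_\eta\,\omega^n$ by integration by parts. Up to a harmless constant slip ($\bar{s} = 2\pi n\,(-K_X.L^{\cdot(n-1)})/(L^{\cdot n})$, with the factor $n$), that part goes through.

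The genuine gap is your model of the compactification, and it destroys the \emph{second} claim. You set $\bar{\mathcal{X}} = X\times\mathbb{P}^1$ with the diagonal action and represent $\bar{\mathcal{L}}_{\mathbb{G}_m}$ by $p_X^*(\omega+\hbar\mu)$. But the compactification of section \ref{section: Test configuration} is the twisted (associated-bundle) one: by construction the added chart is $X\times\mathbb{A}^1_-$ with $\mathbb{G}_m$ acting \emph{trivially} on the $X$-factor, and $\bar{\mathcal{L}}|_{X\times\mathbb{A}^1_-} = p_X^*L$ carries the \emph{trivial} $\mathbb{G}_m$-lift. Consequently $j_{0_-}^*{^\hbar c_{U(1),1}}(\bar{\mathcal{L}}) = c_1(L)\oplus 0.\eta^\vee$ and $j_{0_-}^*{^\hbar c_{U(1),1}}(K_{\bar{\mathcal{X}}/\mathbb{P}^1}) = c_1(K_X)\oplus 0.\eta^\vee$, whereas your class restricts over $0_-$ to the nontrivial lift $[\omega+\hbar\mu]$. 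For the first claim the discrepancy is invisible, since only $j_0^*$ enters; for the second it is fatal. The paper's proof needs $i_{0_-}^*$ of both relative intersection classes to \emph{vanish} --- which follows exactly from the triviality of the lifts at $0_-$ --- so that the localization formula $\int_{\mathbb{P}^1}u.\hbar\eta^\vee = i_{0_-}^*u - i_0^*u$ of Example \ref{localization formula} becomes $i_0^*u = -\int_{\mathbb{P}^1}u.\hbar\eta^\vee$, and the right-hand side is precisely $\hbar$ times the absolute intersection numbers $(K_{\bar{\mathcal{X}}/\mathbb{P}^1}.\bar{\mathcal{L}}^{\cdot n})$ and $(\bar{\mathcal{L}}^{\cdot(n+1)})$ appearing in the statement. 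With your pullback class those absolute numbers all vanish --- you observe this yourself --- while $i_{0_-}^*u = i_0^*u \neq 0$, so localization degenerates to $0=0$ and the displayed formula would assert ${^\hbar\cFut}(\eta)=0$, which is false whenever the Futaki character is nonzero. Your closing remark that ``the whole content lives in the relative classes'' is exactly backwards: the content of the second claim lies in the absolute intersection numbers, and they are nonzero precisely because $\bar{\mathcal{L}}$ is \emph{not} a pullback from $X$ --- the trivialization of the lift over $0_-$ is the whole point of compactifying.
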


\begin{proof}
By the construction of $\bar{\mathcal{L}}$ and by the Grothendieck--Riemann--Roch (or by explicit computations), we have 
\begin{align*} 
j_0^* {^\hbar c_{U (1), 1}} (\bar{\mathcal{L}}) 
&= {^\hbar L_{U (1)}} = [\omega + \hbar \mu], 
\\
j_0^* {^\hbar c_{U (1), 1}} (K_{\bar{\mathcal{X}}/\mathbb{P}^1}) &= {^\hbar c_{U (1), 1}} (K_X) = -\frac{1}{2\pi} [\Ric_\omega + \hbar \bar{\Box} \mu]. 
\end{align*}
Then we compute 
\begin{align*}
{^\hbar \cFut} 
&= \int_X \hat{s} (\omega) \hbar \mu_\eta ~\omega^n .\eta^\vee = n \int_X \hbar \mu \Ric (\omega) \wedge \omega^{n-1} - \bar{s} \int_X \hbar \mu \omega^n
\\
&= \int_X (\Ric (\omega) + \hbar \bar{\Box} \mu) \wedge (\omega + \hbar \mu)^n - \frac{\bar{s}}{n+1} \int_X (\omega + \hbar \mu)^{n+1}
\\
&= -2\pi \int_X \Big{(} j_0^* {^\hbar c_{U (1), 1}} (K_{\bar{\mathcal{X}}/\mathbb{P}^1}) \wedge (j_0^* {^\hbar c_{U (1), 1}} (\bar{\mathcal{L}}))^n 
\\
&\qquad \qquad \qquad + \frac{n}{n+1} \frac{- K_X. L^{ \cdot (n-1)}}{L^{\cdot n}} (j_0^* {^\hbar c_{U (1), 1}} (\bar{\mathcal{L}}))^{n+1} \Big{)}
\\
&=-2 \pi i_0^* \Big{(} {^\hbar (} K_{\bar{\mathcal{X}}/\mathbb{P}^1}^{ \mathbb{G}_m }. \bar{\mathcal{L}}_{ \mathbb{G}_m }^{\cdot n} )_{\mathbb{P}^1} + \frac{n}{n+1} \frac{(-K_X. L^{\cdot (n-1)})}{(L^{\cdot n})} {^\hbar (}\bar{\mathcal{L}}_{ \mathbb{G}_m }^{\cdot (n+1)})_{\mathbb{P}^1} \Big{)}, 
\end{align*}
which proves the first claim. 

For the second claim, we employ the localization formula in Example \ref{localization formula}: $\int_{\mathbb{P}^1} u. \hbar \eta^\vee =  i_{0_-}^* u - i_0^* u$ for $u \in {^\hbar H^2_{U (1)}} (\mathbb{P}^1)$ (see Example \ref{localization formula}). 
Since 
\begin{align*}
j_{0_-}^* {^\hbar c_{U (1), 1}} (\bar{\mathcal{L}}) 
&= c_1 (L) \oplus 0. \eta^\vee
\\
j_{0_-}^* {^\hbar c_{U (1), 1}} (K_{\bar{\mathcal{X}/\mathbb{P}^1}}) 
&= c_1 (K_X) \oplus 0. \eta^\vee
\end{align*}
in $H^2 (X) \oplus \mathfrak{u} (1)^\vee$, we have 
\begin{gather*}
i^*_{0_-} {^\hbar (} \bar{\mathcal{L}}_{ \mathbb{G}_m }^{\cdot (n+1)})_{\mathbb{P}^1} = ((L \oplus 0. \eta^\vee)^{\cdot n+1}) = 0. \eta^\vee, 
\\
i_{0_-}^* {^\hbar (} K_{\bar{\mathcal{X}}/\mathbb{P}^1}^{ \mathbb{G}_m }. \bar{\mathcal{L}}_{ \mathbb{G}_m }^{\dot n})_{\mathbb{P}^1} = ((K_X \oplus 0). (L \oplus 0. \bm{x})^{\cdot n}) = 0. \eta^\vee, 
\end{gather*} 
so that $i_{0_-}^* \Big{(} {^\hbar (} K_{\bar{\mathcal{X}}/\mathbb{P}^1}^{ \mathbb{G}_m } . \bar{\mathcal{L}}_{ \mathbb{G}_m }^{\cdot n})_{\mathbb{P}^1} + \frac{n}{n+1} \frac{(-K_X. L^{\cdot (n-1)})}{(L^{\cdot n})} {^\hbar (} \bar{\mathcal{L}}_{ \mathbb{G}_m }^{\cdot (n+1)})_{\mathbb{P}^1} \Big{)} = 0$. 
Thus we compute 
\begin{align*} 
{^\hbar \cFut} 
&=- 2 \pi \cdot i_0^* \Big{(} {^\hbar (} K_{\bar{\mathcal{X}}/\mathbb{P}^1}^{\mathbb{G}_m} . \bar{\mathcal{L}}_{\mathbb{G}_m}^{\dot n})_{\mathbb{P}^1} + \frac{n}{n+1} \frac{(- K_X . L^{\cdot (n-1)})}{(L^{\cdot n})} {^\hbar (} \bar{\mathcal{L}}_{\mathbb{G}_m}^{\cdot (n+1)})_{\mathbb{P}^1} \Big{)} 
\\
&= 2 \pi \cdot \int_{\mathbb{P}^1} \Big{(} {^\hbar (} K_{\bar{\mathcal{X}}/\mathbb{P}^1}^{\mathbb{G}_m} . \bar{\mathcal{L}}_{\mathbb{G}_m}^{\cdot n})_{\mathbb{P}^1} + \frac{n}{n+1} \frac{(- K_X . L^{\cdot (n-1)})}{(L^{\cdot n})} {^\hbar (} \bar{\mathcal{L}}_{\mathbb{G}_m}^{\smile (n+1)} )_{\mathbb{P}^1} \Big{)}. \hbar \eta^\vee 
\\
&= 2\pi \Big{(} ( K_{\bar{\mathcal{X}}/\mathbb{P}^1}. \bar{\mathcal{L}}^{\cdot n}) + \frac{n}{n+1} \frac{(- K_X . L^{\cdot (n-1)})}{(L^{\cdot n})} ( \bar{\mathcal{L}}^{\cdot (n+1)}) \Big{)}. \hbar \eta^\vee. 
\end{align*}
\end{proof}

This observation also yields the following: for a $G$-equivariant proper smooth morphism $\varpi: \mathcal{X} \to B$ between smooth varieties and a relatively ample $G$-equivariant line bundle $\mathcal{L}$ over $\mathcal{X}$, the equivariant cohomology class 
\[ \mathrm{CM}_G (\mathcal{X}/B, \mathcal{L}) = ( K_{\bar{\mathcal{X}}/B}^G. \bar{\mathcal{L}}_G^{\cdot n} )_B + \frac{n}{n+1} \frac{(-K_X. L^{\cdot (n-1)})}{(L^{\cdot n})} ( \bar{\mathcal{L}}_G^{\cdot (n+1)})_B \]
enjoys the following property: for every one parameter subgroup $\mathbb{G}_m \subset G$ and every $\mathbb{G}_m$-fixed point $i_b (\mathrm{pt}) = b \in B$, we have 
\[ {^\hbar \cFut}_{(\mathcal{X}_b, \mathcal{L}_b)} = - \frac{2\pi}{(L^{\cdot n})} {^\hbar \Phi} ( i_b^* \mathrm{CM}_G (\mathcal{X}/B, \mathcal{L})) \in {^\hbar H^2_{\mathrm{dR}, U (1)}} (\mathrm{pt}). \]
The smoothness assumption on the morphism is not essential as we have the equivariant Grothendieck--Riemann--Roch theorem for general proper flat morphism, while we currently need the smoothness of $B$ for the Poincare duality. 

\subsubsection{Equivariant cohomological interpretation of $\mu$-Futaki invariant}
\label{Equivariant cohomological interpretation}

Here we explain how to derive the $\mu$-Futaki invariant for test configurations from the differential geometric definition of $\mu$-Futaki invariant. 

As we remarked in section \ref{mu-Futaki and mu-volume for vectors}, we can express the $\mu$-volume functional by the equivariant intersection: 
\begin{align*} 
\log {^\hbar \mathrm{Vol}^\lambda} (\xi) 
&= \Big{(} \int_X (s (\omega) + \hbar \bar{\Box} \mu_\xi) e^{\hbar \mu_\xi} \omega^n - \lambda \int_X (n+ \hbar \mu_\xi) e^{\hbar \mu_\xi} \omega^n \Big{)} \Big{/} \int_X e^{\hbar \mu_\xi} \omega^n 
\\
&\qquad + \lambda \log \Big{(} \frac{1}{n!} \int_X e^{\hbar \mu_\xi} \omega^n \Big{)} + \lambda n + \lambda \log n!
\\
&= - 2\pi \frac{{^\hbar (\kappa_X^T. e^{L_T}; \xi)}}{{^\hbar (e^{L_T}; \xi)}} - \lambda \Big{(} \frac{{^\hbar (L_T. e^{L_T}; \xi)}}{{^\hbar (e^{L_T}; \xi)}} - \log {^\hbar (e^{L_T}; \xi)} \Big{)} + \log (n! e^n)^\lambda. 
\end{align*} 
We introduce the following variant 
\begin{align*} 
{^\hbar \cmu^\lambda_{(X, \omega)} } (\xi) 
&:= - \log \frac{{^\hbar \mathrm{Vol}^\lambda} (\xi)}{(n! e^n)^\lambda}  
\\
&= 2\pi \frac{{^\hbar (\kappa_X^T. e^{L_T}; \xi)}}{{^\hbar (e^{L_T}; \xi)}} + \lambda \Big{(} \frac{{^\hbar (L_T. e^{L_T}; \xi)}}{{^\hbar (e^{L_T}; \xi)}} - \log {^\hbar (e^{L_T}; \xi)} \Big{)}. 
\end{align*}

Since ${^\hbar \cmu^\lambda}$ differs from $-\log {^\hbar \mathrm{Vol}^\lambda}$ only by a constant, we have 
\begin{equation}
\label{mu to Fut}
(D_{\xi} {^\hbar \cmu^\lambda_{(X, \omega)} }) (\zeta) = \frac{d}{dt}\Big{|}_{t=0} {^\hbar \cmu^\lambda_{(X, \omega)}} (\xi + t \zeta) = - {^\hbar \cFut^\lambda_\xi} (\zeta) = - \hbar \cdot \cFut^\lambda_{\hbar. \xi} (\zeta). 
\end{equation}
One can think of this as giving an equivariant cohomological interpretation of $\mu$-Futaki invariant. 
We would like to realize the derivative $D_\xi$ in a more equivariant cohomological way so that we can apply this idea to the relative construction $\D_{\hbar. \xi} \cmu^\lambda_{T \times G} (\mathcal{X}/B, \mathcal{L})$. 
This is realized by the operator $\D_{\hbar. \xi}$ we explained in section \ref{The beauty of the Cartan model}. 
We will observe this in Proposition \ref{derivative}. 

More precisely, recall the equivariant intersection ${^\hbar (K_X^T. e^{L_T})}: \mathfrak{t} \to \mathbb{R}$ is constructed from the formal equivariant intersection
\[ (K_X^T. e^{L_T}) := \sum_{p=0}^\infty \frac{1}{p!} (K_X^T. L_T^{\cdot p}) = \sum_{k=0}^\infty \frac{1}{(n-1+k)!} (K_X^T. L_T^{\cdot n -1 + k}), \]
where $(K_X^T. L_T^{\cdot n -1 + k}) \in H^{2k}_T (\mathrm{pt})$. 
Since $(e^{L_T})^{\langle 0 \rangle} = (L^{\cdot n})/n!$ of $(e^{L_T})$ is non-zero, we can realize the following equivariant cohomology class in $\hat{H}^{\mathrm{even}}_T (\mathrm{pt}, \mathbb{Q})$: 
\begin{equation} 
\label{absolute mu character}
\cmu^\lambda_T (X, L) := 2\pi (K_X^T. e^{L_T}) \cdot (e^{L_T})^{-1} + \lambda \Big{(} (L_T. e^{L_T}) \cdot (e^{L_T})^{-1} - \log (e^{L_T}) \Big{)}. 
\end{equation}
The elements $(e^{L_T})^{-1}, \log (e^{L_T})$ are defined by (\ref{formal series of equivariant cohomology class}). 
Definition \ref{mu-character definition} is a direct generalization of this class to the relative case. 

Now we can realize ${^\hbar \Phi^{-1}} (D_\xi {^\hbar \cmu^\lambda_{(X, \omega)}}) = {^\hbar \Phi^{-1}} ({^\hbar \cFut^\lambda_\xi}) \in H^2_T (\mathrm{pt}, \mathbb{R})$ from the formal equivariant cohomology class $\cmu^\lambda_T (X, L)$, rather than the functional ${^\hbar \cmu^\lambda_{(X, \omega)}}$, as follows. 
We observe, using $f (\xi, \zeta) := {^\hbar \cmu^\lambda_{(X, \omega)}} (\xi + \zeta)$ on $\mathfrak{t} \times \mathfrak{t}$, the differential of ${^\hbar \cmu^\lambda_{(X, \omega)}}$ at $\xi$ is expressed as the differential of $f (\xi, \cdot)$ at the origin $0$. 
The pullback $\cmu^\lambda_{T \times T} (X, L)$ of $\cmu^\lambda_T (X, L) $ along $T \times T \to T: (s, t) \mapsto st$ corresponds to this $f$. 
We then apply the operator $\D_{\hbar. \xi}$ to $\cmu^\lambda_{T \times T} (X, L)$. 

We recall the operator $\D_{\hbar. \xi}$ is defined on $H^{\mathrm{even}}_{T \times G} (B, \mathbb{R})$ rather than on its completion $\hat{H}^{\mathrm{even}}_{T \times G} (B, \mathbb{R})$, in which the characteristic class $\cmu^\lambda_{T \times G} (\mathcal{X}/B, \mathcal{L})$ lives. 
For $\alpha \in H^{\mathrm{even}}_{T \times G} (B, \mathbb{R})$, $\D_{\hbar. \xi}^q \alpha$ turns into an infinite series of equivariant cohomology classes, so we must show its convergence. 
This leads us to the study in section \ref{derivation on equivariant cohomology class} and \ref{equivariant calculus}. 

\subsection{Pushforward in equivariant locally finite homology}

\subsubsection{Continuity of the form-to-homology pushforward map}
\label{section: spectral}

Now we show Theorem \ref{Hausdorffness of equivariant cohomology}. 
We will use this result to the equivariant calculus we study in section \ref{equivariant calculus}, which is the essential part of Theorem B. 

Let $B$ be a smooth real manifold. 
The space of equivariant currents 
\[ (\mathcal{D}')_k^K (B) = \Big{(} \bigoplus_{q-2p=k} S^p \mathfrak{k}^\vee \otimes \mathcal{D}'_q (B) \Big{)}^K \] 
is endowed with the weakest topology which makes the maps 
\[ (\mathcal{D}')_k^K (B) \to \mathbb{R} : \sigma \mapsto \langle \sigma_\xi ,\varphi \rangle \] 
continuous for every $\xi \in \mathfrak{k}$ and $\varphi \in \bigoplus_{p \ge 0} \mathcal{D}^{k+2p} (B)$. 
Here we identify elements of $(\mathcal{D}')_k^K (B)$ with $K$-equivariant polynomial maps $\Sigma: \mathfrak{k} \to \bigoplus_i \mathcal{D}'_{k+2p} (B)$ of degree bounded by $(\dim_\mathbb{R} B - k)/2$ in a natural way. 
The boundary map $\partial^K_k: (\mathcal{D}')_k^K (B) \to (\mathcal{D}')_{k-1}^K (B)$ and the chain-level pushforward map $f_*: (\mathcal{D}')_k^K (X) \to (\mathcal{D}')_k^K (B)$ for a proper smooth map $f: X \to B$ is continuous with respect to this topology. 

We would show the quotient topology on the current homology ${^\hbar H^{\mathrm{cur}, K}_k} (B)$ induced from $(\mathcal{D}')_k^K (B)$ is Hausdorff, even when $B$ is non-compact. 
This implies the following \textit{form-to-homology pushforward map} is continuous for every $K$-equivariant proper $C^\infty$-map $f: X \to B$ 
\begin{equation} 
\label{push}
f_*: \Omega^{n+k}_K (X) \cap (f_*)^{-1} \mathrm{Ker} ({^\hbar \partial^K_{b-k}}) \to {^\hbar H_{b-k}^{\mathrm{cur}, K}} (B)
\end{equation}
with respect to the Fr\'echet topology on $\Omega^{n+k}_K (X)$ and the unique Hausdorff topology on the finite dimensional space ${^\hbar H^{\mathrm{cur}, K}_{b-k}} (B)$. 
Here we put $b := \dim_{\mathbb{R}} B$ and $n := \dim_{\mathbb{R}} X - \dim_{\mathbb{R}} B$. 
Note the Hausdorff topology on ${^\hbar H^{\mathrm{cur}, K}_{b-k}} (B)$ is a priori unrelated to the quotient topology on ${^\hbar H_{b-k}^{\mathrm{cur}, K}} (B)$ induced from the construction. 
(For instance, Dolbeault cohomology is known to be a non-Hausdorff cohomology theory in general. )
It is evident the above map is continuous with respect to the latter topology. 

We compute the topology on ${^\hbar H^{\mathrm{cur}, K}_{b-k}} (B)$ using the spectral sequence of topological vector spaces derived from the double complex of the Cartan model. 
The following de Rham's theorem is the key in this computation. 

\begin{prop}
\label{deRham} \cite[Chapter IV, Theorem 17']{deR}
A $q$-current $\sigma \in \mathcal{D}'_k (B)$ is exact if and only if $\langle \sigma, \varphi \rangle = 0$ for every closed compactly supported $C^\infty$-form $\varphi \in \mathcal{D}^k (B)$. 
\end{prop}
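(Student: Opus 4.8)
The statement is an instance of de Rham duality between the complex of compactly supported smooth forms $(\mathcal{D}^\bullet, d)$ and the complex of currents $(\mathcal{D}'_\bullet, \partial)$, paired by $\langle \partial \tau, \varphi \rangle = \langle \tau, d\varphi \rangle$. The plan is to read ``exact'' as membership in $\operatorname{im}(\partial \colon \mathcal{D}'_{k+1}(B) \to \mathcal{D}'_k(B))$ and ``annihilating closed forms'' as annihilating the subspace $Z^k := \ker(d \colon \mathcal{D}^k(B) \to \mathcal{D}^{k+1}(B))$, and then to match these two conditions by a closed-range argument. The forward implication is immediate: if $\sigma = \partial\tau$ and $d\varphi = 0$, then $\langle\sigma,\varphi\rangle = \langle\tau, d\varphi\rangle = 0$. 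For the converse I would first note that the hypothesis already forces $\sigma$ to be closed, since every exact form $d\psi$ (with $\psi \in \mathcal{D}^{k-1}(B)$) is in particular closed, so $\langle \partial\sigma, \psi\rangle = \langle\sigma, d\psi\rangle = 0$ for all $\psi$, i.e.\ $\partial\sigma = 0$.

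For the substantive converse, I would argue by duality in the locally convex space $\mathcal{D}'_k(B)$ equipped with the weak topology of the pairing (the topology introduced in Section \ref{section: spectral}). One always has $\operatorname{im}(\partial) \subseteq (Z^k)^\perp$, and the bipolar theorem identifies the annihilator $(Z^k)^\perp$ with the weak-$*$ closure of $\operatorname{im}(\partial)$, the latter being the range of the adjoint of $d$. Thus the hypothesis places $\sigma$ in $\overline{\operatorname{im}(\partial)}$, and the entire content of the theorem is the upgrade from this closure to $\operatorname{im}(\partial)$ itself; equivalently, that the range of $\partial$ is already weak-$*$ closed. By the closed-range principle this is equivalent to $d \colon \mathcal{D}^k(B) \to \mathcal{D}^{k+1}(B)$ being a topological homomorphism with closed range.

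Granting the closed-range property, the conclusion is quick: $d$ induces a topological isomorphism $\mathcal{D}^k(B)/Z^k \xrightarrow{\sim} \operatorname{im}(d)$ onto the closed subspace $\operatorname{im}(d) \subseteq \mathcal{D}^{k+1}(B)$. Since $\sigma$ annihilates $Z^k$, it descends to a continuous functional on $\mathcal{D}^k(B)/Z^k$, hence via this isomorphism to a continuous linear functional $\ell$ on $\operatorname{im}(d)$, characterized by $\ell(d\varphi) = \langle\sigma,\varphi\rangle$. I would then invoke the Hahn--Banach theorem for locally convex spaces to extend $\ell$ to a continuous functional $\tau \in \mathcal{D}'_{k+1}(B)$, and conclude $\langle\partial\tau,\varphi\rangle = \langle\tau, d\varphi\rangle = \ell(d\varphi) = \langle\sigma,\varphi\rangle$, i.e.\ $\sigma = \partial\tau$.

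The main obstacle is precisely the closed-range/homomorphism property of $d$ on the LF-spaces of compactly supported forms; for noncompact $B$ the cohomology $H^\bullet_c(B)$ may be infinite-dimensional, so one cannot shortcut this by finite-dimensionality. This is exactly the analytic heart supplied by de Rham's regularization operators $R_\epsilon$ together with a homotopy $A_\epsilon$ satisfying $R_\epsilon - \mathrm{id} = \partial A_\epsilon + A_\epsilon \partial$ (and $[R_\epsilon, \partial] = 0$), which smooth an arbitrary current to one represented by a form while staying within its homology class. An equivalent, more constructive route would replace the Hahn--Banach step by regularizing $\sigma$ to a smooth closed current $R_\epsilon\sigma$ homologous to $\sigma$, observing that $R_\epsilon\sigma$ still annihilates closed compactly supported forms, and then invoking Poincar\'e duality between $H^\bullet(B)$ and $H^\bullet_c(B)$ to see that the closed form representing $R_\epsilon\sigma$ is exact; the closed-range input reappears there as the nondegeneracy of that pairing.
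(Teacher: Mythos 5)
The paper offers no proof of this proposition to compare against: it is quoted as a black box from de Rham's book \cite[Chapter IV, Theorem 17']{deR}, and its only role in the paper is to make the corollary that $\partial\mathcal{D}'_{k+1}(B)$ is weakly closed, hence that $H^{\mathrm{cur}}_k(B)$ is Hausdorff. What you have done is reconstruct de Rham's theorem, and your reconstruction is essentially sound: the forward implication is the trivial adjunction, and you correctly locate the entire analytic content of the converse in a closed-range property of $d$ on the LF-spaces $\mathcal{D}^{\bullet}(B)$ — which, as you say, cannot be bypassed by soft duality because $H^{\bullet}_c(B)$ need not be finite dimensional. Of your two routes, I would make the second (regularization plus Poincar\'e duality) the primary one, since the abstract closed-range principle for LF-spaces and their duals is itself delicate, whereas the constructive route closes the argument with only two citable inputs. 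It also admits a simplification you nearly have in hand: there is no need to analyze how $R_\epsilon$ interacts with the pairing, because your own first observation gives $\partial\sigma=0$, so the homotopy formula yields $R_\epsilon\sigma-\sigma=\partial(A_\epsilon\sigma)$, and $\partial(A_\epsilon\sigma)$ annihilates closed forms by the trivial direction; hence $R_\epsilon\sigma$ still annihilates them. The smooth closed $(n-k)$-form $\alpha$ representing $R_\epsilon\sigma$ then pairs to zero with every class in $H^k_c(B)$, so $\alpha=d\beta$ by the injectivity of $H^{n-k}(B)\to (H^k_c(B))^{\vee}$ — a statement provable at the level of forms by Mayer--Vietoris and a limit argument, so there is no circularity with the current-theoretic statement being proved — and finally $\sigma=R_\epsilon\sigma-\partial(A_\epsilon\sigma)$ is exact. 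Modulo citing de Rham's regularization operators $R_\epsilon, A_\epsilon$, which is fair given that the paper cites de Rham for the whole proposition, this is a complete and correct argument.
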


\begin{cor}
The quotient topology on the current homology $H_k^{\mathrm{cur}} (B)$ is Hausdorff. 
\end{cor}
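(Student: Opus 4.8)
The plan is to reduce Hausdorffness to a closedness statement about the boundary subspace and then invoke de Rham's theorem (Proposition \ref{deRham}) to identify the boundaries as a weakly closed set. Recall the standard fact from the theory of topological vector spaces: for a linear subspace $W$ of a topological vector space $V$, the quotient $V/W$ with the quotient topology is Hausdorff if and only if $W$ is closed in $V$. Here $H_k^{\mathrm{cur}}(B) = \ker(\partial_k)/\mathrm{im}(\partial_{k+1})$ carries the quotient of the subspace topology on the cycles $\ker(\partial_k) \subset \mathcal{D}'_k(B)$, where $\partial_k \colon \mathcal{D}'_k(B) \to \mathcal{D}'_{k-1}(B)$. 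Since $\mathrm{im}(\partial_{k+1}) \subset \ker(\partial_k)$, it suffices to prove that $\mathrm{im}(\partial_{k+1})$ is closed in $\mathcal{D}'_k(B)$ for the weak topology; closedness then descends to the subspace $\ker(\partial_k)$, because the trace of a closed set on a subspace is closed there.

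First I would make explicit the weakly closed description of the boundaries. By the very definition of the weak topology on $\mathcal{D}'_k(B)$, for each fixed form $\varphi \in \mathcal{D}^k(B)$ the evaluation $\ell_\varphi \colon \mathcal{D}'_k(B) \to \mathbb{R}$, $\sigma \mapsto \langle \sigma, \varphi \rangle$, is continuous, so its kernel $\ker(\ell_\varphi)$ is closed. De Rham's theorem (Proposition \ref{deRham}) asserts precisely that a current $\sigma \in \mathcal{D}'_k(B)$ is exact if and only if $\langle \sigma, \varphi \rangle = 0$ for every closed compactly supported form $\varphi \in \mathcal{D}^k(B)$. In other words,
\[ \mathrm{im}(\partial_{k+1}) = \bigcap_{\varphi} \ker(\ell_\varphi), \]
where the intersection runs over all closed $\varphi \in \mathcal{D}^k(B)$. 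As an intersection of closed subspaces, $\mathrm{im}(\partial_{k+1})$ is closed.

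Combining the two steps, $\mathrm{im}(\partial_{k+1})$ is a closed subspace of the cycles $\ker(\partial_k)$, and hence the quotient $H_k^{\mathrm{cur}}(B)$ is Hausdorff. Modulo the cited de Rham theorem there is essentially no computation to perform.

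I expect the only delicate point—more a conceptual caution than a genuine obstacle—to be the reason one cannot argue naively. The image of a continuous linear map between non-Banach topological vector spaces need not be closed in general, so one cannot simply assert that $\mathrm{im}(\partial_{k+1})$ is closed merely because it is the image of the continuous operator $\partial_{k+1}$. De Rham's theorem is exactly the input that circumvents this: it re-expresses the boundaries as an intersection of kernels of continuous functionals, which is automatically closed. This is also the reason the equivariant statement (Theorem \ref{Hausdorffness of equivariant cohomology}) does not follow formally and instead requires the spectral sequence computation reducing it to this non-equivariant corollary.
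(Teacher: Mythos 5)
Your proof is correct and is essentially identical to the paper's: both identify the exact currents $\partial\mathcal{D}'_{k+1}(B)$ with the intersection $\bigcap_{\varphi \in \mathcal{D}^k(B),\, d\varphi = 0} \mathrm{Ker}(\langle \cdot, \varphi\rangle)$ via de Rham's theorem (Proposition \ref{deRham}), conclude that the boundaries are weakly closed, and deduce Hausdorffness of the quotient. Your write-up merely makes explicit the standard facts the paper leaves implicit (Hausdorffness of $V/W$ iff $W$ closed, and passing closedness to the subspace of cycles).
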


\begin{proof}
Since $\partial \mathcal{D}_{k+1}' (B) = \bigcap_{\varphi \in \mathcal{D}^k (B), d \varphi = 0} \mathrm{Ker} (\langle \cdot, \varphi \rangle)$, the space of exact $k$-currents $\partial \mathcal{D}_{k+1}' (B) \subset \mathcal{D}_k (B)$ is a closed subset of $\mathcal{D}_k (B)$. 
This shows the claim. 
\end{proof}

We prepare two lemmas. 

\begin{lem}
\label{separation lemma}
Let $V_1$ be a topological vector space and $V_2$ be a Hausdorff topological vector space. 
Suppose there is a continuous map $p: V_1 \to V_2$ such that the induced topology on the subspace $V_0 := p^{-1} (0)$ is Hausdorff, then $V_1$ is also Hausdorff. 
\end{lem}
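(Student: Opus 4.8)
The plan is to reduce everything to the standard fact that a topological vector space is Hausdorff precisely when $\{0\}$ is a closed subset, equivalently when $\overline{\{0\}} = \bigcap_{U} U = \{0\}$, the intersection running over all neighborhoods $U$ of the origin (this last equality is legitimate because $V_1$ is a topological vector space, so closure of $\{0\}$ is computed by translation invariance). Thus it suffices to show that for every $x \in V_1$ with $x \neq 0$ there exists a neighborhood of $0$ in $V_1$ not containing $x$. Once this is produced for every such $x$, we conclude $\overline{\{0\}} = \{0\}$, and hence that $V_1$ is Hausdorff.

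To build the separating neighborhood I would split into two cases according to the value $p(x) \in V_2$. If $p(x) \neq 0$, then since $V_2$ is Hausdorff the origin is closed in $V_2$, so there is an open neighborhood $W \ni 0$ in $V_2$ with $p(x) \notin W$; by continuity of $p$ the preimage $p^{-1}(W)$ is an open neighborhood of $0$ in $V_1$ that excludes $x$, which is exactly what we want. This case uses nothing beyond continuity of $p$ and Hausdorffness of the target.

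The remaining case is $p(x) = 0$, i.e. $x \in V_0 \setminus \{0\}$. Here I would invoke the hypothesis that the subspace topology on $V_0$ is Hausdorff: the origin is closed in $V_0$, so there is a relatively open neighborhood $O$ of $0$ in $V_0$ with $x \notin O$. Writing $O = U \cap V_0$ for some open $U \ni 0$ in $V_1$, I claim $U$ already excludes $x$: were $x \in U$, then $x \in U \cap V_0 = O$ since $x \in V_0$, contradicting $x \notin O$. Hence $U$ is the desired neighborhood, and combining the two cases establishes $\overline{\{0\}} = \{0\}$.

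There is no serious obstacle here, as the lemma is elementary. The only point demanding care, and what I would flag as the crux, is the lifting step in the second case: the relatively open separating set $O \subset V_0$ must be promoted to a genuine neighborhood $U$ in $V_1$, and one must verify that this $U$ still avoids $x$; this works precisely because $x$ itself lies in $V_0$, so that $x \in U$ would force $x \in O$. Everything else is bookkeeping around the equivalence between Hausdorffness and closedness of $\{0\}$ for topological vector spaces.
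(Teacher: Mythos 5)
Your proof is correct, but it takes a genuinely different route from the paper's. The paper argues through the quotient: it sets $W := \overline{\{0\}} \subset V_1$, observes that $V_0 = p^{-1}(0)$ is a \emph{closed} linear subspace (preimage of a closed point under the continuous $p$, using Hausdorffness of $V_2$), hence the quotient topological vector space $V_1/V_0$ is Hausdorff; since the closure of $\{0\}$ in $V_1/V_0$ is $(W+V_0)/V_0$, this forces $W \subset V_0$, and then $W \subset W \cap V_0 = \{0\}$ because $V_0$ is Hausdorff. You instead separate points directly, splitting on the value of $p(x)$: pulling back a separating neighborhood from $V_2$ when $p(x) \neq 0$, and lifting a relatively open separating set from $V_0$ to $V_1$ when $x \in V_0 \setminus \{0\}$. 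Your route is more elementary and slightly more general: it bypasses quotient theory entirely (no need for the openness of the quotient map or for the fact that a topological vector space modulo a closed subspace is Hausdorff), it never uses that $V_0$ is closed, and it does not even require $V_0$ to be a linear subspace --- only that $0 \in V_0$. The paper's route has the advantage of mirroring how the lemma is consumed: in the spectral sequence argument the space playing the role of $V_2$ literally is a quotient of $V_1$, and the closedness of $V_0$ is isolated as the key consequence of continuity. Two small repairs to your write-up: (i) both of your cases tacitly use $p(0)=0$, i.e.\ that $p$ is linear --- this is the intended hypothesis (the paper calls $V_0$ a subspace and applies the lemma to linear maps), but it should be said; (ii) in each case the separating set should be justified by closedness of the singleton $\{x\}$ (resp.\ $\{p(x)\}$), which holds because Hausdorff implies $T_1$, e.g.\ $O = V_0 \setminus \{x\}$ and $W = V_2 \setminus \{p(x)\}$; closedness of the \emph{origin}, which you cite, by itself only produces a neighborhood of the wrong point.
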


\begin{proof}
The topological vector space $V_1$ is Hausdorff iff the origin $\{ 0 \} \subset V_1$ is closed. 
The closure $W := \overline{\{ 0 \}}$ in $V_1$ is a linear subspace of $V_1$ and $W \cap V_0 = \{ 0 \}$ as $V_0$ is Hausdorff. 
The closure of $\{ 0 \}$ in the quotient space $V_1/V_0$ coincides with $(W + V_0)/V_0$. 
On the other hand, as $V_2$ is Hausdorff and $p$ is continuous, $V_0 = p^{-1} (0)$ is a closed subspace of $V_1$. 
It follows that the quotient $V_1/V_0$ is Hausdorff, so that we have $\{ 0 \} = \overline{\{ 0 \}} = (W + V_0)/V_0$. 
This proves $W \subset W \cap V_0 = \{ 0 \}$. 
\end{proof}

\begin{lem}
\label{quotient topological vector space}
Let $V$ be a topological vector space, and $W$ and $W'$ be its subspaces. 
Then the natural linear bijection $(V/W)/(W'/(W \cap W')) \to V/(W+W')$ is a homeomorphism of topological vector spaces, where we identify $W'/(W \cap W')$ with a subspace of $V/W$ by the injection $W'/W \cap W' \to V/W$. 
\end{lem}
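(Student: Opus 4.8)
The plan is to treat the asserted map as the topological refinement of the algebraic third isomorphism theorem and to deduce that it is a homeomorphism purely from the behaviour of quotient maps in topological vector spaces, avoiding any direct comparison of neighbourhood bases.

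First I would fix notation. Write $\pi \colon V \to V/W$ for the quotient map; its restriction to $W'$ has kernel $W \cap W'$, so it identifies $W'/(W\cap W')$ with the subspace $\pi(W') = (W+W')/W$ of $V/W$, which is exactly how the domain $(V/W)/(W'/(W\cap W'))$ is formed. Let $\rho \colon V/W \to (V/W)/\pi(W')$ be the further quotient and set $q := \rho \circ \pi$, while $p \colon V \to V/(W+W')$ denotes the single quotient onto $V/(W+W')$. A direct check gives $\ker q = \pi^{-1}(\pi(W')) = W+W' = \ker p$, and the third isomorphism theorem supplies the linear bijection $\Phi$ with $\Phi \circ q = p$; this is precisely the natural map in the statement. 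Everything then reduces to showing that $\Phi$ is a homeomorphism.

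The key topological input is that for any subspace $U$ of a topological vector space $E$ the quotient map $\sigma \colon E \to E/U$ is open: if $O \subseteq E$ is open, then $\sigma^{-1}(\sigma(O)) = O + U = \bigcup_{u \in U}(O+u)$ is open, so $\sigma(O)$ is open. Hence $\pi$ and $\rho$ are open, and therefore $q = \rho \circ \pi$ is a continuous open surjection, as is $p$. A continuous open surjection is a quotient map, i.e.\ its codomain carries the final topology, so both $q$ and $p$ exhibit their targets as quotients of $V$ by one and the same linear equivalence relation, whose classes are the cosets of $W+W'$.

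It then follows formally that $\Phi$ is a homeomorphism: since $q$ is a quotient map and $\Phi \circ q = p$ is continuous, $\Phi$ is continuous; since $p$ is a quotient map and $\Phi^{-1} \circ p = q$ is continuous, $\Phi^{-1}$ is continuous. I do not expect a genuine obstacle here, as the statement is a standard fact; the only point requiring discipline is to resist comparing the two quotient topologies by hand, and instead to invoke openness of quotient maps of topological vector spaces together with the fact that continuous open surjections are automatically quotient maps, after which the universal property closes the argument with no computation. Note that closedness of $W$, $W'$, or $W\cap W'$ is never used, so the lemma holds for arbitrary, possibly non-closed, subspaces, which is what is needed for the non-Hausdorff situations entering Lemma \ref{separation lemma} and the computation of the topology on equivariant current homology.
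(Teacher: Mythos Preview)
Your proof is correct and follows essentially the same approach as the paper: both reduce to the universal property of quotient topologies, deducing continuity of $\Phi$ and $\Phi^{-1}$ from continuity of their compositions with the quotient maps $q$ and $p$. The only cosmetic difference is that you verify $q=\rho\circ\pi$ is a quotient map via openness of linear quotient maps, whereas the paper simply applies the universal property of $\pi$ and then of $\rho$ in succession; these amount to the same thing (and indeed composition of quotient maps is already a quotient map, so the openness detour is not needed).
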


\begin{proof}
By definition of quotient topology, $V \to V/(W+W')$ is continuous. 
Then $V/W \to V/(W +W')$ is continuous by the universality of quotient topology, hence $(V/W)/(W'/(W \cap W')) \to V/(W+W')$ is continuous again by the universality of quotient topology. 

On the other hand, $V \to V/W \to (V/W)/(W'/(W \cap W'))$ is continuous by definition. 
Then the inverse map $V/(W+W') \to (V/W)/(W'/(W \cap W'))$ is continuous by the universality of quotient topology. 
\end{proof}

\begin{prop}
Let $(\{ C^{p, q} \}_{p \ge 0, q \ge 0}, \delta, d)$ be a first quadrant double complex of (Hausdorff) topological vector spaces with continuous derivatives $\delta, d$. 
Suppose the $E_1$-page of the associated spectral sequence 
\begin{align*} 
E_1^{m, n} = \frac{ C^{m,n} \cap \mathrm{Ker} d }{ d C^{m, n-1} }
\end{align*}
is finite dimensional and the induced topology is Hausdorff for every $m, n$. 
Then the cohomology $H^k (\prod_{p + q = \bullet} C^{p, q}, \delta + d)$ of the total complex is a finite dimensional Hausdorff topological vector space with respect to the quotient topology induced from $\prod_{p+q=k} C^{p, q}$. 
\end{prop}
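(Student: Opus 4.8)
The plan is to run the spectral sequence of the filtered total complex and to track topologies at every page, the crucial point being that finite-dimensionality alone does \emph{not} force Hausdorffness (a quotient by a non-closed subspace is finite-dimensional yet non-Hausdorff), so the topology must genuinely be carried through the whole computation rather than recovered at the end.

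First I would dispose of finiteness. Since the double complex lies in the first quadrant, for a fixed total degree $k$ only the finitely many bidegrees $(p,k-p)$ with $0\le p\le k$ occur, so $\prod_{p+q=k}C^{p,q}=\bigoplus_{p+q=k}C^{p,q}$ is a finite direct sum and hence, being a finite direct sum of Hausdorff spaces, is itself Hausdorff. The column filtration $F^pT^\bullet=\prod_{p'\ge p}C^{p',\bullet-p'}$ gives the usual spectral sequence with $E_1^{m,n}=(C^{m,n}\cap\ker d)/dC^{m,n-1}$ and first differential $d_1$ induced by $\delta$. Starting from the hypothesis that each $E_1^{m,n}$ is finite-dimensional and Hausdorff, an induction on the page $r$ shows that every $E_r^{m,n}$ is finite-dimensional and Hausdorff: on a finite-dimensional Hausdorff space $d_r$ is automatically continuous and its kernel and image are closed subspaces, so $E_{r+1}^{m,n}=\ker d_r/\mathrm{im}\,d_r$ is again finite-dimensional and Hausdorff, where Lemma \ref{quotient topological vector space} is used to identify the iterated subquotient topology with the honest one-step quotient topology. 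Because the differentials vanish on each bidegree for $r$ large in the first quadrant, $E_\infty^{p,q}$ is finite-dimensional and Hausdorff, and the finite filtration $H^k=F^0H^k\supseteq\cdots\supseteq F^{k+1}H^k=0$ with $\mathrm{gr}^pH^k\cong E_\infty^{p,k-p}$ shows $H^k$ is finite-dimensional.

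For Hausdorffness I would argue along the filtration by downward induction on $p$, proving that each $F^pH^k$ is Hausdorff in the subspace topology inherited from the quotient topology on $H^k$. The base case $F^{k+1}H^k=0$ is trivial. For the inductive step I apply the separation Lemma \ref{separation lemma} to the continuous quotient map $\pi_p\colon F^pH^k\to \mathrm{gr}^pH^k=F^pH^k/F^{p+1}H^k$: its kernel $F^{p+1}H^k$ is Hausdorff by the inductive hypothesis, so it remains only to know that $\mathrm{gr}^pH^k$ is Hausdorff, whence $F^pH^k$ is Hausdorff, and taking $p=0$ gives the claim for $H^k$. To see that $\mathrm{gr}^pH^k$ is Hausdorff I exhibit a continuous injection into a space already known to be Hausdorff. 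The leading-term projection $F^pZ^k\to C^{p,k-p}$ lands in $C^{p,k-p}\cap\ker d$ and composes with $C^{p,k-p}\cap\ker d\to E_1^{p,k-p}$ to a continuous map whose image consists of permanent cocycles; following it by the continuous quotient onto $E_\infty^{p,k-p}$ yields a continuous map $F^pZ^k\to E_\infty^{p,k-p}$ that kills coboundaries and $F^{p+1}Z^k$, hence factors as the standard isomorphism $\mathrm{gr}^pH^k\xrightarrow{\ \sim\ }E_\infty^{p,k-p}$. Since this target is Hausdorff and the factored map is a continuous bijection, and a continuous injection into a Hausdorff space forces the source to be Hausdorff, $\mathrm{gr}^pH^k$ is Hausdorff, completing the induction.

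The main obstacle is exactly this topological bookkeeping in the passage from the pages $E_r$ to the abutment: the quotient topology that $\mathrm{gr}^pH^k$ receives from $H^k$ is a priori unrelated to the manifestly Hausdorff subquotient topology on $E_\infty^{p,k-p}$, so one must produce an honest continuous comparison between them rather than appeal to the (false in general) implication that finite-dimensional spaces are Hausdorff. Reconciling the iterated subquotient topology of the spectral sequence with one-step quotient descriptions is precisely the role of Lemma \ref{quotient topological vector space}, while the separation Lemma \ref{separation lemma} is what lets the Hausdorffness of the graded pieces be reassembled up the filtration. Once continuity of the comparison map $\mathrm{gr}^pH^k\to E_\infty^{p,k-p}$ is secured, the remaining steps are formal.
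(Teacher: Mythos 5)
Your overall strategy is the same as the paper's: the same column filtration, the same two lemmas, and the same page-by-page induction, and that induction is indeed sound --- every $E_r^{m,n}$, hence every $E_\infty^{p,q}$, is finite dimensional and Hausdorff in its natural chain-level quotient topology, for exactly the reason you give. The genuine gap is in the comparison step, in the sentence ``hence factors as the standard isomorphism $\mathrm{gr}^pH^k\xrightarrow{\ \sim\ }E_\infty^{p,k-p}$''. Your map $F^pZ^k\to E_\infty^{p,k-p}$ is continuous and does factor set-theoretically through $\mathrm{gr}^pH^k$, but the universal property of quotient topologies makes the factored map continuous only when $\mathrm{gr}^pH^k$ carries the quotient topology induced from $F^pZ^k$. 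Your downward induction with Lemma \ref{separation lemma} needs Hausdorffness of $\mathrm{gr}^pH^k$ in a different, coarser topology: the quotient of the subspace topology that $F^pH^k$ inherits from $H^k$, since that is the topology in which the kernel of $\pi_p$ must be tested. The identity map from the first topology to the second is continuous but in general not a homeomorphism, and Hausdorffness of a finer topology never implies Hausdorffness of a coarser one. So continuity of your comparison map in the topology you actually need is unproven, and the induction does not close. You should be aware that the paper's own proof stumbles at the same kind of point, namely at the kernel hypothesis of Lemma \ref{separation lemma}: its induction supplies Hausdorffness of $H^k_{l+1}$ in the quotient topology from $C^k_{l+1}\cap\mathrm{Ker}(\delta+d)$, which is finer than the subspace topology induced from $H^k_l$ that the lemma requires.

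Moreover, this is not a repairable technicality: with only the stated hypotheses the proposition is false. Let $H$ be a separable Hilbert space with orthonormal basis $(e_n)_{n\ge1}$, let $C^{0,0}=H$, let $C^{0,1}=X':=\{y\in H:\sum_n n^2y_n^2<\infty\}$ equipped with the norm of $H$, let $C^{1,0}=\mathbb{R}$, and set all other $C^{p,q}=0$. Take $d\colon C^{0,0}\to C^{0,1}$, $d(\sum_n x_ne_n)=\sum_n (x_n/n)e_n$, a continuous bijection onto $X'$ whose inverse is discontinuous, take $\delta\colon C^{0,0}\to C^{1,0}$, $\delta(x)=\sum_n x_n/n$, and let all other differentials vanish. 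Every $E_1^{m,n}$ is $0$ or $\mathbb{R}$, so the hypotheses hold; but $B^1=\{(dx,\delta x):x\in H\}=\{(y,\textstyle\sum_n y_n):y\in X'\}$ is a proper dense subspace of $Z^1=T^1=X'\oplus\mathbb{R}$, because $z^{(N)}=\frac1N\sum_{n\le N}e_n$ tends to $0$ in $X'$ while $\sum_n z^{(N)}_n=1$, so $(0,1)\in\overline{B^1}\setminus B^1$. Hence $H^1\cong\mathbb{R}$ carries the indiscrete quotient topology and is not Hausdorff, contradicting the conclusion. The failure mode is precisely the one your argument (and the paper's) cannot exclude: the $E_1$-page is blind to non-strictness of the vertical differential --- here $d$ has zero kernel and zero cokernel, so $E_1$ cannot detect that $d^{-1}$ is discontinuous --- yet it is exactly this non-strictness, transported by $\delta$, that destroys Hausdorffness of the abutment. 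Any correct proof must therefore use input beyond finite dimensionality and Hausdorffness of $E_1$, for instance strictness (openness onto the image) of the columnwise differential; this is the kind of additional property one would need to verify for the complexes of currents to which the paper applies this proposition.
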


\begin{proof}
Let us recall the usual argument of spectral sequence. 
We put $C^k_l := \prod_{p+q = k, p \ge l} C^{p,q}$ and 
\begin{align*} 
E^{l,k-l}_r 
&:= \frac{ C^k_l \cap (\delta + d)^{-1} C^{k+1}_{l+r} }
{ \Big{(} C^k_l \cap (\delta + d) C^{k-1}_{l+1-r} \Big{)} + \Big{(} C^k_{l+1} \cap (\delta + d)^{-1} C^{k+1}_{l+r} \Big{)} }.
\end{align*} 

We firstly note 
\begin{align*} 
E_1^{m, n} 
&= \frac{ C^{m+n}_m \cap (\delta + d)^{-1} C^{m+n+1}_{m+1} }{ (C^{m+n}_m \cap (\delta + d) C^{m+n-1}_m) + (C^{m+n}_{m+1} \cap (\delta + d)^{-1} C^{m+n+1}_{m+1}) }
\\
&= \frac{ \prod_{p+q=m+n, p \ge m+1} C^{p,q} \times (C^{m, n} \cap \mathrm{Ker} d) }{ \prod_{p+q=m+n-1, p \ge m} (\delta C^{p,q} + d C^{p,q}) + \prod_{p+q = m+n, p \ge m+1} C^{p,q} \times \{ 0 \}} 
\\
&= \frac{ \prod_{p+q=m+n, p \ge m+1} C^{p,q} \times (C^{m, n} \cap \mathrm{Ker} d) }{ \prod_{p+q = m+n, p \ge m+1} C^{p,q} \times d C^{m, n-1} } 
\\
&= \frac{ C^{m, n} \cap \mathrm{Ker} d }{ d C^{m, n-1} } 
\end{align*}
as topological vector space, where the last equality holds by Lemma \ref{quotient topological vector space}. 

There is a decreasing filtration 
\[ H^k (\prod_{p + q = \bullet} C^{p, q}, \delta + d) = H^k_0 \supset \dotsb \supset H^k_1 \supset \dotsb \supset H^k_k \supset 0 \] 
on the cohomology $H^k (\prod_{p + q = \bullet} C^{p, q}, \delta + d)$ of the total complex derived from the decreasing filtration $\{ C^k_l \cap \mathrm{Ker} (\delta + d) \}_{l=0}^k$
of $\mathrm{Ker} (\delta + d)$. 

Now we consider the quotient topology on each $H^k_l$ induced from the subspace $C^k_l \cap \mathrm{Ker} (\delta + d)$ of the product $\prod_{p+q = k} C^{p,q}$ (endowed with the product topology). 
By Lemma \ref{separation lemma}, it suffices to show the quotient topology on 
\[ E^{l,k-l}_\infty := H^k_l/H^k_{l+1} = \frac{ (C_l^k \cap \mathrm{Ker} (\delta +d) )/(C_l^k \cap \mathrm{Im} (\delta+ d)) }{ (C_{l+1}^k \cap \mathrm{Ker} (\delta +d) )/(C_{l+1}^k \cap \mathrm{Im} (\delta+ d)) } \] 
induced from $H^k_l$ is Hausdorff. 
By the usual fact on spectral sequence, we can (algebraically) compute the quotient vector space $E^{l,k-l}_\infty$ by computing the cohomologies of $E_r$-pages $E^{l,k-l}_r$ successively. 
We must see the successive computation of $E_r$-page also detects the Hausdorffness. 

By Lemma \ref{quotient topological vector space}, we have a linear bijective homeomorphism 
\begin{equation}
\label{E-infinity} 
E^{l,k-l}_\infty \to \frac{ C^k_l \cap \mathrm{Ker} (\delta + d) }{ \Big{(} C^k_l \cap \mathrm{Im} (\delta + d) \Big{)} + \Big{(} C^k_{l+1} \cap \mathrm{Ker} (\delta + d) \Big{)} }, 
\end{equation}
so that it suffices to show that the right hand side is Hausdorff. 

We consider the quotient topology on $E^{l, k-l}_r$ induced from $C^k_l \cap (\delta + d)^{-1} C^{k+1}_{l+r}$. 
Then since $E^{l, k-l}_r$ for $r > \max (l, k-l)$ coincides with the right hand side in (\ref{E-infinity}) as topological vector spaces, the Hausdorffness of $E^{l, k-l}_\infty$ follows from that of $E^{l, k-l}_r$ for $r > \max (l, k-l)$. 

Recall the usual argument of spectral sequence: we have a linear map $d^{m,n}_r: E^{m,n}_r \to E_r^{m+r, n-r+1}$ such that $d^{m+r, n-r+1}_r \circ d^{m,n}_r = 0$ and also have a linear bijection $E^{m,n}_{r+1} \to \mathrm{Ker} (d^{m,n}_r)/\mathrm{Im} (d^{m-r, n+r-1})$. 
The linear map $d^{l,k-l}_r: E^{l,k-l}_r \to E_r^{l+r, (k-l)-r+1}$ is induced from the continuous map 
\[ \delta +d: C^k_l \cap (\delta+d)^{-1} C^{k+1}_{l+r} \to C^{k+1}_{l+r} \cap (\delta+d)^{-1} C^{k+2}_{l+2r} \] 
and the linear bijection $E^{m,n}_{r+1} \to \mathrm{Ker} (d^{m,n}_r)/\mathrm{Im} (d^{m-r, n+r-1})$ is induced from the continuous inclusion 
\[ C^k_l \cap (\delta + d)^{-1} C^k_{l+r+1} \hookrightarrow C^k_l \cap (\delta + d)^{-1} C^k_{l+r}, \]
so that these maps are continuous linear bijection, while we do not state here the continuity of the inverse map as there is no open mapping theorem for general topological vector spaces. 
Thanks to the direction of the continuous bijection $E^{m,n}_{r+1} \to \mathrm{Ker} (d^{m,n}_r)/\mathrm{Im} (d^{m-r, n+r-1})$ and Lemma \ref{separation lemma}, $E^{m,n}_{r+1}$ is Hausdorff when the quotient topology on $\mathrm{Ker} (d^{m,n}_r)/\mathrm{Im} (d^{m-r, n-1+r})$ induced from $E^{m,n}_r$ is Hausdorff. 

Now our assumption that $E^{m,n}_1$ are finite dimensional Hausdorff spaces implies every subspace of $E^{m,n}_1$ is closed, so that $E^{m,n}_2$ are again finite dimensional Hausdorff spaces by the above general argument. 
Running the induction, we conclude that $E^{m,n}_r$ are finite dimesnional Hausdorff spaces for every $r \ge 1$, and so are the spaces $E^{m,n}_\infty$. 
\end{proof}

\begin{prop}
The quotient topology on the equivariant current homology ${^\hbar H_k^{\mathrm{cur}, K}} (B)$ induced from the weak topology on the space of equivariant currents $(\mathcal{D}')_k^K (B)$ is Hausdorff for every $k \in \mathbb{Z}$. 
\end{prop}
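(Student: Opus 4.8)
The plan is to exhibit ${^\hbar H_k^{\mathrm{cur}, K}} (B)$ as a single total-degree summand of the cohomology of a first quadrant double complex of topological vector spaces, and then to feed this into the spectral-sequence Proposition established above, whose required input — Hausdorffness of the vertical $E_1$-page — is supplied by the de Rham Corollary following Proposition~\ref{deRham}. I would fix once and for all some $\hbar \in \mathbb{R}^\times$, the choice being immaterial since the comparison maps $(\hbar'/\hbar)_*$ are homeomorphisms, and write $b := \dim_{\mathbb{R}} B$.

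First I would split the equivariant boundary. Along the decomposition $(\mathcal{D}')^K_k (B) = \big( \bigoplus_{q-2p=k} S^p \mathfrak{k}^\vee \otimes \mathcal{D}'_q (B) \big)^K$, the operator ${^\hbar \partial^K}$ decomposes into two continuous pieces: the current boundary $\partial$, which fixes the factor $S^p \mathfrak{k}^\vee$ and lowers the current dimension $q$ by one, and the contraction term dual to $i_{\xi^\#}$, which raises both $p$ and $q$ by one. The subtle point is that these two pieces move both indices, so the naive bigrading is not a double complex; instead I would reindex by $(P, Q) := (p,\ p + b - q)$, which lands in the first quadrant because $p \ge 0$ and $q \le b$, and set $C^{P,Q} := \big( S^P \mathfrak{k}^\vee \otimes \mathcal{D}'_{b - Q + P} (B) \big)^K$. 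Under this reindexing $\partial$ becomes the vertical differential $d\colon C^{P,Q} \to C^{P,Q+1}$ and the contraction term becomes the horizontal differential $\delta\colon C^{P,Q} \to C^{P+1,Q}$, so that $\delta + d = {^\hbar \partial^K}$ and the total degree $P + Q = b - k$ recovers exactly $(\mathcal{D}')^K_k (B)$ with its Cartan differential. Hence ${^\hbar H_k^{\mathrm{cur}, K}} (B)$ is the total cohomology in degree $b - k$, with its quotient topology.

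Next I would compute the vertical ($d = \partial$) cohomology. Since $P = p$ is constant along the vertical direction, the vertical complex is $S^p \mathfrak{k}^\vee \otimes (\mathcal{D}'_\bullet (B), \partial)$ restricted to $K$-invariants; as $K$ is compact, averaging against Haar measure is a continuous projection onto invariants, so taking invariants is exact and commutes with passing to homology. This gives
\[ E_1^{P,Q} = \big( S^P \mathfrak{k}^\vee \otimes H_{b-Q+P}^{\mathrm{cur}} (B) \big)^K. \]
The Corollary following Proposition~\ref{deRham} shows each $H_\bullet^{\mathrm{cur}} (B)$ is Hausdorff, and passing to a finite symmetric power and then to the closed subspace of $K$-invariants preserves Hausdorffness; moreover $H_\bullet^{\mathrm{cur}} (B) \cong H^{\mathrm{lf}}_\bullet (B, \mathbb{R})$ is finite dimensional, since $B$ is a smooth variety and hence has finite Betti numbers, so each $E_1^{P,Q}$ is finite dimensional. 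The hypotheses of the spectral-sequence Proposition are then satisfied, and it returns that the total cohomology ${^\hbar H_k^{\mathrm{cur}, K}} (B)$ is finite dimensional and Hausdorff for the quotient topology.

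I expect the main obstacle to be twofold. The conceptual core is packaged by the spectral-sequence Proposition, but its whole strength rests on the two facts it consumes: that the vertical $E_1$-page is Hausdorff and that it is finite dimensional. The Hausdorffness is precisely where the non-equivariant de Rham theorem enters — that exact currents cut out a \emph{closed} subspace — and the compactness of $K$ is exactly what lets this closedness descend to the $K$-invariant part; finite dimensionality is where the finite type of $B$ is genuinely used, since the closedness-of-subspaces step (via Lemma~\ref{separation lemma}) in the spectral-sequence Proposition fails without it. The other delicate point, already addressed by the reindexing above, is arranging the single operator ${^\hbar \partial^K}$ to split honestly into the two differentials of a \emph{first quadrant} double complex, so that the spectral-sequence machinery applies verbatim.
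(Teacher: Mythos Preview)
Your proposal is correct and follows essentially the same approach as the paper: reindex the Cartan complex of equivariant currents into a first quadrant double complex, compute the $E_1$-page as $(S^p\mathfrak{k}^\vee \otimes H^{\mathrm{cur}}_\bullet(B))^K$ via the non-equivariant de Rham corollary, and feed this into the spectral-sequence Proposition. Your reindexing $(P,Q)=(p,\,p+b-q)$ is in fact more carefully checked than the paper's terse ``reversed index'' statement, and your added justifications (exactness of $(-)^K$ via Haar averaging, finite Betti numbers of $B$) make explicit what the paper leaves implicit.
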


\begin{proof}
This follows by applying the above proposition to the double complex of Cartan model of equivariant current homology (with the reversed index) $E^{p, q}_0 = (S^p \mathfrak{k}^\vee \otimes \mathcal{D}'_{b -(p+q)} (B))^K$, whose assumption is confirmed by Proposition \ref{deRham} and the computation of $E_1$-term: 
\[ E^{p,q}_1 = (S^p \mathfrak{k}^\vee \otimes H^{\mathrm{cur}}_{b - (p+q)} (B))^K \]
as topological vector spaces.  
\end{proof}

\begin{cor}
\label{continuity of push}
The pushforward map (\ref{push}) is continuous with respect to the Fr\'echet topology on $\Omega^{n+k}_K (X)$ and the Hausdorff topology on ${^\hbar H^{\mathrm{cur}, K}_{b-k}} (B)$. 
\end{cor}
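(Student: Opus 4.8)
The plan is to display the map (\ref{push}) as the composite of a continuous form-to-current pushforward with the canonical projection onto homology, establishing continuity for the quotient topology, and then to upgrade this to continuity for the Hausdorff topology by invoking the preceding Proposition. First I would factor through the chain level. An equivariant form $\varphi \in \Omega^{n+k}_K(X)$ defines, via $\psi \mapsto \int_X \varphi \wedge \psi$, an equivariant current of homological degree $b-k$ on $X$, i.e. an element of $(\mathcal{D}')^K_{b-k}(X)$ (recall $\dim_{\mathbb{R}} X = b+n$). For each fixed $\xi \in \mathfrak{k}$ and each fixed compactly supported test form, the associated pairing depends only on the restriction of the coefficients of $\varphi$ to the compact support of that test form, hence is continuous for the Fr\'echet ($C^\infty$) topology; since the weak topology on $(\mathcal{D}')^K_{b-k}(X)$ is generated by exactly these pairings, the form-to-current embedding is continuous from the Fr\'echet topology to the weak topology. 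Composing with the chain-level proper pushforward $f_* : (\mathcal{D}')^K_{b-k}(X) \to (\mathcal{D}')^K_{b-k}(B)$, which is continuous for the weak topologies as recorded in section \ref{section: spectral}, I obtain a continuous form-to-current pushforward $\Omega^{n+k}_K(X) \to (\mathcal{D}')^K_{b-k}(B)$. Restricting to the subspace $\Omega^{n+k}_K(X) \cap (f_*)^{-1}\mathrm{Ker}({^\hbar \partial^K_{b-k}})$, where the pushed-forward current is a cycle, and postcomposing with the canonical projection $\mathrm{Ker}({^\hbar \partial^K_{b-k}}) \to {^\hbar H^{\mathrm{cur}, K}_{b-k}}(B)$, which is continuous by the very definition of the quotient topology, shows that (\ref{push}) is continuous when the target carries the quotient topology.

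The final, and only substantial, step is to identify the quotient topology with the Hausdorff one. By Theorem~\ref{Hausdorffness of equivariant cohomology} the quotient topology on ${^\hbar H^{\mathrm{cur}, K}_{b-k}}(B)$ is Hausdorff, and this space is finite dimensional; since every Hausdorff vector-space topology on a finite-dimensional real vector space is the unique Euclidean one, the quotient topology coincides with the Hausdorff topology, and continuity of (\ref{push}) into the Hausdorff topology follows at once. I expect the \emph{only} genuine obstacle to lie not in this Corollary but in the preceding Proposition that supplies Hausdorffness: a priori the quotient topology could be strictly coarser than the Euclidean topology (as indeed happens for non-Hausdorff cohomology theories such as Dolbeault cohomology), in which case continuity into the quotient topology would be strictly weaker and the desired conclusion would fail. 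Granting Theorem~\ref{Hausdorffness of equivariant cohomology}, everything reduces to the elementary fact on finite-dimensional topological vector spaces quoted above.
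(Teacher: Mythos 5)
Your proposal is correct and follows essentially the same route as the paper: the paper treats continuity into the quotient topology as evident (chain-level form-to-current map and proper pushforward are continuous, and the projection to homology is continuous by definition of the quotient topology), and then deduces the corollary from Theorem~\ref{Hausdorffness of equivariant cohomology} together with the uniqueness of Hausdorff vector-space topologies on finite-dimensional spaces. You have merely spelled out the "evident" chain-level factorization in more detail, which matches the paper's intended argument exactly.
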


We apply this continuity result to the key construction in section \ref{equivariant calculus}, together with the following lemma. 

\begin{lem}
\label{Frechet to Banach}
Let $V$ be a Fr\'echet space and $\{ \| \cdot \|_l \}_{l \in \mathbb{Z}_{\ge 0}}$ be a collection of seminorms on $V$ defining its Fr\'echet structure. 
Let $W$ be a Banach space and $F: V \to W$ be a continuous linear map. 
Let $\{ v_i \}_{i=0}^\infty \in V$ be a sequence such that $\sum_{i=0}^\infty \| v_i \|_l < \infty$ for every $l \ge 0$. 
Then the infinite series $\sum_{i=0}^\infty F (v_i)$ is absolutely convergent with respect to the norm of $W$. 
\end{lem}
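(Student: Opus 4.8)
The plan is to reduce the statement to a single \emph{seminorm estimate} for $F$ and then to compare series, invoking completeness of $W$ at the very end. Concretely, I would first show that the continuity of $F$, together with the locally convex structure of $V$, forces a bound of the form $\| F(v) \|_W \le C \, \| v \|_{l_0}$ for \emph{one} fixed index $l_0$ and one constant $C > 0$; once this is available, the conclusion follows immediately from the hypothesis $\sum_i \| v_i \|_{l_0} < \infty$.

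To obtain the seminorm estimate, I would first replace the defining family $\{ \| \cdot \|_l \}$ by the equivalent increasing family $\| \cdot \|_l' := \max_{j \le l} \| \cdot \|_j$, which generates the same topology on $V$ and still satisfies the finiteness hypothesis $\sum_i \| v_i \|_l' < \infty$ for every $l$ (a finite maximum of summable series is summable). With an increasing family in hand, a basic neighbourhood of $0$ in $V$ has the simple form $\{ v : \| v \|_{l_0}' < \varepsilon \}$. Since $F$ is continuous and linear, $F^{-1} (\{ w : \| w \|_W < 1 \})$ is a neighbourhood of $0$, hence contains such a basic neighbourhood for some index $l_0$ and some $\varepsilon > 0$. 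A standard homogeneity argument then yields $\| F(v) \|_W \le (2/\varepsilon) \, \| v \|_{l_0}'$ for all $v$: for $\| v \|_{l_0}' > 0$ one rescales $v$ to land just inside the neighbourhood, and the degenerate case $\| v \|_{l_0}' = 0$ is handled by letting the scaling parameter tend to infinity.

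Finally, with $C = 2/\varepsilon$ and $l_0$ as above, I would estimate
\[ \sum_{i=0}^\infty \| F(v_i) \|_W \le C \sum_{i=0}^\infty \| v_i \|_{l_0}' < \infty, \]
where the last inequality is the hypothesis applied to the index $l_0$. This shows that the series $\sum_i F(v_i)$ is absolutely convergent in the normed space $W$; since $W$ is complete, it is in particular convergent.

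The only genuinely non-routine point is the extraction of the single-seminorm bound, that is, the passage from ``continuous'' to ``bounded by one seminorm''. The main obstacle is therefore the fact that continuity a priori involves finitely many seminorms simultaneously; this is exactly what the reduction to an increasing (directed) family resolves, and it is the one place where the Fr\'echet / locally convex structure of $V$ is actually used. After that, everything is a comparison of series together with the completeness of $W$.
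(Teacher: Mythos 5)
Your proof is correct and follows essentially the same route as the paper: both rest on upgrading continuity of $F$ to a bound $\| F(v) \|_W \le C(\| v \|_0 + \dotsb + \| v \|_N)$ by finitely many seminorms (your single seminorm $\| \cdot \|_{l_0}' = \max_{j \le l_0} \| \cdot \|_j$ from the directed family is equivalent to this), after which the series comparison is immediate. The only difference is that the paper simply quotes this characterization of continuity as a standard fact, whereas you derive it from scratch via the rescaling argument.
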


\begin{proof}
Remember that a linear map $F: V \to W$ from Fr\'echet space to Banach space is continuous if and only if there exists a constant $C > 0$ and $N \in \mathbb{Z}_{\ge 0}$ such that 
\[ \| F (v) \|_W \le C (\| v \|_0 + \dotsb \| v \|_N) \]
for every $v \in V$. 
So the claim follows by 
\[ \sum_{i=0}^\infty \| F (v_i) \|_W \le C \sum_{l=0}^N \sum_{i=0}^\infty \| v_i \|_l < \infty. \]
\end{proof}

\subsubsection{Equivariant homology todd class and equivariant Grothendieck--Riemann--Roch theorem}
\label{equivariant homology todd class}

Here we recall the equivariant Grothendieck--Riemann--Roch theorem for algebraic schemes established by Edidin--Graham \cite{EG2} as equivariant version of \cite{Ful}. 
The equivariant Chow group $A^G_p (X)$ is studied in \cite{EG1} which is defined in the same way as the equivariant locally finite homology. 
The statement is as follows. 

\begin{thm}
For each algebraic group $G$ and each algebraic $G$-schemes $X$, we can assign a homomorphism
\[ \tau^G_X: K (\mathrm{Coh}^G (X)) \to \hat{A}^G_\mathbb{Q} (X) \]
from the Grothendieck group $K (\mathrm{Coh}^G (X))$ of $G$-equivariant algebraic coherent sheaves on $X$ to the $G$-equivariant Chow group $\hat{A}^G_\mathbb{Q} (X) = \prod_{p \in \mathbb{Z}} A^G_p (X) \otimes \mathbb{Q}$ of $\mathbb{Q}$-coefficient which enjoys the following properties: 
\begin{enumerate}
\item (Grothendieck--Riemann--Roch) For any $G$-equivariant proper morphism $f: X \to Y$ of algebraic schemes, we have $f_* \tau^G_X (\alpha) = \tau^G_Y (f_! \alpha)$ for every $\alpha \in K (\mathrm{Coh}^G (X))$. 
Here $f_! \alpha$ for an element $\alpha = [\mathcal{F}]$ represented by a $G$-equivariant coherent sheaf $\mathcal{F}$ denotes the element $\sum_i (-1)^i [R^i f_* \mathcal{F}]$ in $K (\mathrm{Coh}^G (Y))$, where the higher direct image sheaves $R^i f_* \mathcal{F}$ are $G$-linearized in a natural way. 

\item For every $\alpha \in K (\mathrm{Coh}^G (X))$ and $\beta \in K (\mathrm{Vect}^G (X))$, we have $\tau^G_X (\alpha \otimes \beta) = \tau^G_X (\alpha) \frown \mathrm{ch}_G (\beta)$. 

\item For any closed subscheme $Z \subset X$ of pure dimension $p$, we have $\tau^G_X (\mathcal{O}_Z)_{\langle p \rangle} = [Z]_G \in A_n^G (X)$. 

\item When $X$ is smooth, we have $\mathrm{PD}^X_G ((\tau^G_X (\mathcal{O}_X))_{\langle p \rangle}) = \mathrm{Td}_G^{n-p} (X)$. 

\item For a homomorphism $\varphi: H \to G$ of algebraic groups, we have $\varphi^\# \circ \tau^G_X = \tau^H_X \circ \varphi^\#$ for naturally defined maps $\varphi^\#: K (\mathrm{Coh}^G (X)) \to K (\mathrm{Coh}^H (X))$ and $\varphi^\#: \hat{A}^G_{\mathbb{Q}} (X) \to \hat{A}^H_{\mathbb{Q}} (X)$. 
\end{enumerate}
For $\alpha \in \hat{A}^G_\mathbb{Q} (X)$, we denote by $\alpha_{\langle p \rangle} \in A^G_p (X) \otimes \mathbb{Q}$ the degree $p$-component. 
\end{thm}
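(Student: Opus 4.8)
The plan is to recall and reduce the equivariant statement to Fulton's non-equivariant Grothendieck--Riemann--Roch theorem \cite{Ful} by means of the algebraic Borel construction of Edidin--Graham \cite{EG1, EG2}, carrying out the reduction one Chow degree at a time and then assembling the results into the completion $\hat{A}^G_\mathbb{Q}(X) = \prod_{p \in \mathbb{Z}} A^G_p(X) \otimes \mathbb{Q}$. The point is that each graded piece $A^G_p(X)$ is, by construction, an ordinary Chow group of a finite-dimensional mixed space, so the entire statement becomes a compatible family of non-equivariant Riemann--Roch assertions.

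First I would recall the finite-dimensional approximation underlying the equivariant Chow groups. For a fixed degree $p$, choose a $G$-representation $V$ and a $G$-invariant open $U \subseteq V$ on which $G$ acts freely with $\operatorname{codim}_V(V \setminus U)$ large relative to $\dim X - p$, and form the mixed quotient $X_G := X \times_G U$, the algebraic analogue of the Borel construction used throughout the paper. Since $G$ acts freely on $U$, the projection $X \times U \to X_G$ is a $G$-torsor, hence flat; therefore every $G$-equivariant coherent sheaf $\mathcal{F}$ descends by flat descent to a coherent sheaf $\mathcal{F}_G$ on $X_G$, giving a homomorphism $K(\mathrm{Coh}^G(X)) \to K(\mathrm{Coh}(X_G))$. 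I would then define the degree-$p$ part of $\tau^G_X(\alpha)$ to be the image of $\tau_{X_G}(\mathcal{F}_G)$ in $A^G_p(X) \otimes \mathbb{Q}$ under the defining identification with the corresponding Chow group of $X_G$.

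The step I expect to be the main obstacle is \emph{well-definedness}: one must show that the class $\tau^G_X(\alpha)_{\langle p \rangle}$ is independent of the approximation $(V, U)$ as soon as the codimension of the bad locus is large enough, so that the graded pieces glue to a genuine element of the completion $\hat{A}^G_\mathbb{Q}(X)$. The completion is essential here, because an equivariant class can be nonzero in arbitrarily negative degree, and only finitely many of them are computed correctly by any single approximation. I would settle independence by the standard double-fibration argument of \cite{EG1, EG2}: given two choices $(V, U)$ and $(V', U')$, compare them through $V \oplus V'$, using the projective-bundle formula and homotopy invariance of Chow groups together with the compatibility of $\tau$ with flat pullback, and check that the Riemann--Roch classes agree throughout the degree range controlled by the codimension bounds.

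Finally I would verify properties (1)--(5) by transporting each from the non-equivariant theorem on the mixed spaces. Property (1) follows by applying Fulton's GRR to the induced proper morphism $f_G \colon X_G \to Y_G$, using flat base change along the torsor to identify $(f_! \alpha)_G$ with $(f_G)_!(\mathcal{F}_G)$ and to match higher direct images with their descents. Property (2) is inherited from the non-equivariant module formula $\tau_{X_G}(\mathcal{F}_G \otimes \mathcal{E}_G) = \tau_{X_G}(\mathcal{F}_G) \frown \mathrm{ch}(\mathcal{E}_G)$, once $\mathrm{ch}_G(\beta)$ is identified with $\mathrm{ch}(\mathcal{E}_G)$ via descent. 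Property (3) is the top-dimensional normalization $\tau_{X_G}(\mathcal{O}_{Z_G}) = [Z_G] + (\text{lower terms})$, which descends to $[Z]_G$. Property (4) follows from Poincaré duality on the smooth $X_G$ together with the identification of $\mathrm{Td}(X_G)$ with the equivariant Todd class $\mathrm{Td}_G(X)$, matching $\mathrm{PD}^X_G$ of the degree-$p$ component with $\mathrm{Td}_G^{\,n-p}(X)$. Property (5) is handled by restricting a $G$-representation along $\varphi \colon H \to G$ to obtain a compatible map of Borel constructions $X_H \to X_G$, under which both $\varphi^\#$ on $K$-theory and on Chow groups are the evident descent maps, so the identity $\varphi^\# \circ \tau^G_X = \tau^H_X \circ \varphi^\#$ is the non-equivariant functoriality of $\tau$ read off these spaces.
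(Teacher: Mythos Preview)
The paper does not prove this theorem at all: it is stated as a recalled result, attributed to Edidin--Graham \cite{EG2} (with \cite{Ful} as the non-equivariant input), and no argument is supplied. Your outline is essentially the Edidin--Graham strategy---reduce to Fulton's Riemann--Roch on the mixed spaces $X_G = X \times_G U$ via finite-dimensional approximations, prove independence of the approximation by the double-fibration/homotopy-invariance argument, and read off properties (1)--(5) from their non-equivariant counterparts on $X_G$---so there is nothing to compare against in the paper, and your sketch is the standard and correct route.
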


There is also an analytic version of the result: \cite{Lev}. 

\begin{eg}
Let $f: \tilde{X} \to X$ be a $G$-equivariant proper morphism of pure dimensional $G$-schemes which is isomorphic away from a codimension $k+1$ subscheme of the target $X$. 
Namely, we assume there is a subscheme $Z \subset X$ of codimension $k+1$ such that the restriction $f^{-1} (X \setminus Z) \to X \setminus Z$ gives an isomorphism. 
Then we have $f_* (\tau^G_{\tilde{X}} (\mathcal{O}_{\tilde{X}}))_{\langle \dim \tilde{X} - i \rangle} = \tau^G_X (\mathcal{O}_X)_{\langle \dim X -i \rangle}$ for $i \le k$. 
Indeed, $f_! [\mathcal{O}_{\tilde{X}}] - [\mathcal{O}_X] = [f_* \mathcal{O}_{\tilde{X}}/\mathcal{O}_X] + \sum_{i \ge 1} (-1)^i [R^i f_* \mathcal{O}_{\tilde{X}}]$ is supported on $Z$ and hence $f_* (\tau^G_{\tilde{X}} (\mathcal{O}_{\tilde{X}}))_{\langle j \rangle} = (\tau^G_X (f_! [\mathcal{O}_{\tilde{X}}]))_{\langle j \rangle}$ for $j > \dim Z$. 

If $X$ has only rational singularities, we have $f_! \mathcal{O}_{\tilde{X}} = \mathcal{O}_X$ for any equivariant resolution $f: \tilde{X} \to X$, so that $\tau^G_X (\mathcal{O}_X) = f_* \tau^G_{\tilde{X}} (\mathcal{O}_{\tilde{X}}) = f_* ([\tilde{X}] \frown \mathrm{Td}_G (\tilde{X}))$. 
\end{eg}

Since $\mathrm{Td}_G (X)^{\langle 0 \rangle} = 1$, the equivariant Todd class $\mathrm{Td}_G (X) \in \hat{H}^{\mathrm{even}}_G (X, \mathbb{Q})$ for smooth $X$ is an invertible element with respect to the cup product. 

\begin{defin}
\label{equivariant canonical class}
For a pure $n$-dimensional algebraic $G$-scheme $X$, we define the \textit{equivariant canonical class} $\kappa^G_X \in H^{\mathrm{lf}, G}_{2n-2} (X, \mathbb{Q})$ by 
\begin{equation} 
\kappa^G_X := -2 cl^G (\tau^G_X (\mathcal{O}_X))_{\langle n-1 \rangle} 
\end{equation}
under the equivariant cycle map $cl^G: A^G_{n-1} (X) \to H^{\mathrm{lf}, G}_{2n-2} (X)$. 

For a relatively pure dimensional $G$-equivariant proper flat morphism $\varpi: \mathcal{X} \to B$ from an algebraic $G$-scheme $\mathcal{X}$ to a smooth $G$-variety $B$, we define the \textit{relative equivariant canonical class} $\kappa^G_{\mathcal{X}/B} \in H^{\mathrm{lf}, G}_{2 \dim \mathcal{X} - 2} (\mathcal{X}, \mathbb{Q})$ by 
\begin{align}
\kappa^G_{\mathcal{X}/B} 
&:= -2 cl^G (\tau^G_\mathcal{X} (\mathcal{O}_\mathcal{X}) \frown \varpi^* \mathrm{Td}_G (B)^{-1})_{\langle \dim \mathcal{X} -1 \rangle} 
\\ \notag
&= \kappa^G_\mathcal{X} - \varpi^* K^G_B,  
\end{align}
where we put $\varpi^* K^G_B := [\mathcal{X}] \frown \varpi^* c_{G,1} (B)$. 
\end{defin}

For a normal variety $X$, we denote by $K_X^G$ the locally finite homology class corresponding to the equivariant frist Chern class $c_{G, 1} (\omega_{X^\mathrm{reg}}) = - c_{G, 1} (X^\mathrm{reg}) \in H^2_G (X^\mathrm{reg}, \mathbb{Z})$ via the isomorphism $H^2_G (X^\mathrm{reg}) \cong H^{\mathrm{lf}, G}_{2n-2} (X^\mathrm{reg}) \cong H^{\mathrm{lf}, G}_{2n-2} (X)$. 
As we see in the above example, we have $\kappa^G_X = K_X^G$. 

\begin{cor}
\label{base change}
Let $\varpi: \mathcal{X} \to B$ be a relatively pure $n$-dimensional $G$-equivariant proper flat morphism from an algebraic $G$-scheme $\mathcal{X}$ to a smooth $G$-variety $B$. 
Let $\varphi: H \to G$ be a homomorphism of algebraic groups. 
For a $H$-equivariant morphism $f: B' \to B$ from a smooth $H$-variety $B'$, we put $\mathcal{X}' := \mathcal{X} \times_B B'$ and denote by $\hat{f}: \mathcal{X}' \to \mathcal{X}$ and $\varpi': \mathcal{X}' \to B'$ the projection to the first factor and the second factor, respectively. 

Then for any $G$-equivariant line bundle $\mathcal{L}$ on $\mathcal{X}$, we have 
\[ f^* \varphi^\# \big{(} \kappa^G_{\mathcal{X}/B}. \mathcal{L}_G^{\cdot (n + p-1)} \big{)}_B = \big{(} \kappa^H_{\mathcal{X}'/B'}. \hat{f}^* \varphi^\# \mathcal{L}_G^{\cdot (n + p-1)} \big{)}_{B'} \]
in $H_H^{2p} (B', \mathbb{Q})$. 

In other words, $(\kappa^G_{\mathcal{X}/B}. e^{\mathcal{L}_G})_B \in \hat{H}^{\mathrm{even}}_G (B, \mathbb{Q})$ is a characteristic class for equivariant polarized family. 
\end{cor}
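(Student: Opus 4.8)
The plan is to avoid base-changing the relative canonical class $\kappa_{\mathcal{X}/B}$ directly --- which, as the introduction notes, is the delicate part --- and instead to package the whole family of intersections $(\kappa^G_{\mathcal{X}/B}.\mathcal{L}_G^{\cdot k})_B$ into a single cohomological generating series on the base, to which the equivariant Grothendieck--Riemann--Roch theorem and flat base change apply transparently. Everything then reduces to naturality statements that are already recorded in the GRR theorem.

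First I would dispose of the group homomorphism $\varphi$. By property (5) of the GRR theorem $\varphi^\#$ commutes with $\tau$, and it manifestly commutes with $c_{G,1}$, with cup and cap products, with proper pushforward, and with Poincar\'e duality; since $f$ is $H$-equivariant via $\varphi$, its pullback $f^*$ intertwines the two groups. Hence it suffices to establish naturality of $\mathrm{ch}$ and of the pushforward across the Cartesian square, and then reinstate $\varphi^\#$ by these same naturalities at the end.

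Second, the reduction to the base. Writing $\mathrm{td}^G_{\mathcal{X}/B} := \tau^G_{\mathcal{X}}(\mathcal{O}_{\mathcal{X}}) \frown \varpi^* \mathrm{Td}_G(B)^{-1}$, so that $\kappa^G_{\mathcal{X}/B} = -2\, cl^G(\mathrm{td}^G_{\mathcal{X}/B})_{\langle \dim\mathcal{X}-1\rangle}$, I would apply GRR (property 1) to $\varpi$, the module property (2), and the smooth-base formula $\tau^G_B(\beta) = (\mathrm{Td}_G(B)\smile\mathrm{ch}_G(\beta))\frown[B]$ coming from (4) and (2), using the projection formula to cancel the factors $\varpi^*\mathrm{Td}_G(B)^{\pm 1}$. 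This yields, for every $m$,
\[
\mathrm{ch}_G(\varpi_! \mathcal{L}^{\otimes m}) = \mathrm{PD}_B\, \varpi_*\big(e^{m\,c_{G,1}(\mathcal{L})} \frown \mathrm{td}^G_{\mathcal{X}/B}\big) \in H^*_G(B,\mathbb{Q}).
\]
The right-hand side is a polynomial in $m$, and a degree count shows that its bihomogeneous part of bidegree $(m^k,\ \deg 2p)$ with $k = n+p-1$ selects precisely the $\langle\dim\mathcal{X}-1\rangle$-component of $\mathrm{td}^G_{\mathcal{X}/B}$, so that $(\kappa^G_{\mathcal{X}/B}.\mathcal{L}_G^{\cdot k})_B$ equals $-2\,k!$ times this part. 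This is exactly where the relative normalization by $\mathrm{Td}_G(B)^{-1}$ (rather than the absolute $\tau_{\mathcal{X}}(\mathcal{O}_{\mathcal{X}})$) earns its keep: it is the factor that will make the base Todd classes cancel under base change.

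Third, the base change itself. Because $\varpi$ is flat, the Cartesian square is Tor-independent, so $L\hat{f}^*\mathcal{O}_{\mathcal{X}} = \mathcal{O}_{\mathcal{X}'}$ and, by Tor-independent base change for the derived pushforward, $\varpi'_! \hat{f}^* \mathcal{L}^{\otimes m} = f^{!}\varpi_!\mathcal{L}^{\otimes m}$ in $K(\mathrm{Coh}^H(B'))$, where $f^{!}=Lf^*$ is unambiguous since $B,B'$ are smooth. As $\hat{f}^*\mathcal{L}^{\otimes m}$ is already an honest line-bundle pullback, the displayed identity holds verbatim on $B'$, and naturality of the Chern character on the smooth base, $\mathrm{ch}_H(Lf^*\varphi^\#\beta) = f^*\varphi^\#\mathrm{ch}_G(\beta)$, gives
\[
(\kappa^H_{\mathcal{X}'/B'}.\hat{f}^*\varphi^\#\mathcal{L}_G^{\cdot k})_{B'} = -2\,k!\, f^*\varphi^\#\big[\mathrm{ch}_G(\varpi_!\mathcal{L}^{\otimes m})\big]_{m^k,\deg 2p} = f^*\varphi^\#(\kappa^G_{\mathcal{X}/B}.\mathcal{L}_G^{\cdot k})_B,
\]
which is the assertion; the reformulation for $e^{\mathcal{L}_G}$ follows by assembling the identities over all $k$. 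The main obstacle I anticipate is concentrated in the second and third steps: verifying the clean K-theoretic flat base-change isomorphism $\varpi'_!\hat{f}^* \cong f^{!}\varpi_!$ purely from flatness of $\varpi$ over the two smooth bases (i.e. Tor-independence and well-definedness of $f^{!}=Lf^*$), and checking the bidegree bookkeeping by which the single prescribed degree piece defining $\kappa_{\mathcal{X}/B}$ is captured by the $(m^k,\deg 2p)$-part, so that the base Todd corrections cancel identically.
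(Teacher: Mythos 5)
Your proposal is correct and follows essentially the same route as the paper's proof: use equivariant Grothendieck--Riemann--Roch together with the projection formula and invertibility of $\mathrm{Td}_G(B)$ to identify $(\kappa^G_{\mathcal{X}/B}.\mathcal{L}_G^{\cdot(n+p-1)})_B$ (up to the factor $-2\,(n+p-1)!$) with the coefficient of $k^{n+p-1}$ in the degree-$2p$ part of the polynomial $k\mapsto \mathrm{ch}_G(\varpi_!\mathcal{L}^{\otimes k})$, and then conclude by naturality of $\mathrm{ch}$ and base change of $\varpi_!$ across the Cartesian square. The only differences are expository: you spell out the $\varphi^\#$-naturality and the Tor-independence underlying $f^*\varpi_!\mathcal{L}^{\otimes k}=\varpi'_!(\hat f^*\mathcal{L})^{\otimes k}$, which the paper states without comment (it instead records explicitly that smoothness of $B$ gives $K(\mathrm{Coh}^G)=K(\mathrm{Vect}^G)$, the point that makes $\mathrm{ch}_G(\varpi_!\mathcal{L}^{\otimes k})$ meaningful).
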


\begin{proof}
Since $B$ is smooth, we have $K (\mathrm{Coh}^G (X)) = K (\mathrm{Vect}^G (X))$ (cf. \cite{EG2}, \cite{Tho}). 
By the equivariant Grothendieck--Riemann--Roch theorem, we have 
\[ \varpi_* \tau_\mathcal{X}^G (\mathcal{L}^{\otimes k}) = \tau_B^G (\varpi_! \mathcal{L}^{\otimes k}) = \tau^G_B (\mathcal{O}_B) \frown \mathrm{ch}_G (\varpi_! \mathcal{L}^{\otimes k})  \]
in $\hat{H}^{\mathrm{lf}, G} (B, \mathbb{Q}) = \prod_{q \in \mathbb{Z}} H^{\mathrm{lf}, G}_q (B, \mathbb{Q})$. 
(We note when $\mathcal{L}$ is relatively ample, we do not need to assume the smoothness of $B$ as $\pi_! \mathcal{L}^{\otimes k} = \pi_* \mathcal{L}^{\otimes k}$ is a vector bundle for $k \gg 0$. )
Since $\mathrm{Td}_G (B) = \mathrm{PD}^B_G \tau^G_B (\mathcal{O}_B)$ is invertible in $\hat{H}^{\mathrm{even}}_G (B, \mathbb{Q})$ with respect to the cup product, we obtain 
\[ \mathrm{ch}^G (\varpi_! \mathcal{L}^{\otimes k}) = \varpi_\diamond \tau^G_\mathcal{X} (\mathcal{L}^{\otimes k}) \smile \mathrm{Td}_G (B)^{-1}, \]
where we put $\varpi_\diamond := \mathrm{PD}^B_G \circ \varpi_*: H^{\mathrm{lf}, G}_l (\mathcal{X}, \mathbb{Q}) \to H^{2\dim B - l}_G (B, \mathbb{Q})$. 
As 
\[ \varpi_\diamond \tau^G_\mathcal{X} (\mathcal{L}^{\otimes k}) = \varpi_\diamond (\tau^G_\mathcal{X} (\mathcal{O}_\mathcal{X}) \frown \mathrm{ch}_G (\mathcal{L}^{\otimes k})) = \varpi_\diamond (\tau^G_\mathcal{X} (\mathcal{O}_\mathcal{X}) \frown e^{k c_{G, 1} (\mathcal{L})}), \]
we compute 
\begin{align*}
\big{(} \mathrm{ch}^G (\varpi_! \mathcal{L}^{\otimes k}) \big{)}^{\langle \dim B-q \rangle} 
&= \sum_{i=0}^\infty \frac{k^i}{i!} \Big{(} \varpi_\diamond (\tau^G_\mathcal{X} (\mathcal{O}_\mathcal{X}) \frown c_{G, 1} (\mathcal{L})^{\smile i})) \smile \mathrm{Td}_G (B)^{-1} \Big{)}^{\langle \dim B-q \rangle} 
\\
&= \sum_{i=0}^{\dim \mathcal{X} - p} \frac{k^i}{i!} \varpi_\diamond \big{(} (\tau^G_\mathcal{X} (\mathcal{O}_\mathcal{X}) \frown \varpi^* \mathrm{Td}_G (B)^{-1} )_{\langle i + q \rangle} \frown c_{G, 1} (\mathcal{L})^{\smile i} \big{)}
\end{align*} 
by the projection formula. 
It follows that 
\[ \big{(} \kappa^G_{\mathcal{X}/B}. \mathcal{L}_G^{\cdot (n + p-1)} \big{)}_B = \varpi_\diamond (\kappa^G_{\mathcal{X}/B} \frown c_{G, 1} (\mathcal{L})^{\smile (n+p-1)})  \]
is the coefficient of degree $n+p-1$ component of the polynomial map $k \mapsto (\mathrm{ch}^G (\varpi_! \mathcal{L}^{\otimes k}))^{\langle p \rangle} \in H^{2p}_G (B, \mathbb{Q})$. 
Now the claim follows from
\[ f^* (\mathrm{ch}^G (\varpi_! \mathcal{L}^{\otimes k})) = \mathrm{ch}^G (f^* \varpi_! \mathcal{L}^{\otimes k}) = \mathrm{ch}^G (\varpi'_! (\hat{f}^* \mathcal{L})^{\otimes k}). \]
\end{proof}

Since the map $H^2_G (\mathcal{X}, \mathbb{R}) \to H^{2p}_G (B, \mathbb{R}): x \mapsto (\kappa^G_{\mathcal{X}/B}. x^{\cdot (n+p-1)})_B$ is homogeneous and hence continuous, we conclude the same result for $\mathcal{L}_G \in NS_G (\mathcal{X}, \mathbb{R})$. 

\section{Equivariant calculus on $\mu$-character}

\subsection{Derivation on equivariant cohomology}
\label{derivation on equivariant cohomology class}

\subsubsection{Equivariant cohomology class of class $\varepsilon^\ell$}
\label{Equivariant cohomology class of class epsilon ell}

Now we study generalities on the operator $\D_{\hbar. \xi}^q: H^*_{T \times G} (X, \mathbb{R}) \to H^{2q}_G (X, \mathbb{R})$ defined by (\ref{derivation}). 
We use this notion to construct the cohomological $\mu$-Futaki invariant $\D_{\hbar. \xi} \cmu^\lambda_{T \times G} (\mathcal{X}/B, \mathcal{L})$. 

Let $G$ be a topological group, $B$ be a connected topological space with a $T \times G$-action under which $B$ is $T$-equivariantly homotopically equivalent to a space with the trivial $T$-action. 
In this article, we are interested in the case when $T$ acts trivially on $B$. 
Then by the assumption on the $T$-action, we have the K\"unneth decomposition (\ref{Kunneth}). 
Similarly as (\ref{derivation}), we can define the operator $\D_{\hbar. \xi}: H^*_{T \times G} (X) \to H^{2q}_G (X)$ for general topological space $X$, for which we cannot consider the Cartan model ${^\hbar H_{\mathrm{dR}, T_{\mathrm{cpt}} \times K}}$: 
\[ \D^q_{\hbar. \xi}: H^*_{T \times G} (X, \mathbb{R}) = H^*_T (\mathrm{pt}) \otimes H^*_G (X) \xrightarrow{{^\hbar \Phi_T^{\mathrm{pt}}} \otimes \mathrm{id}} S^* \mathfrak{t}^\vee \otimes H^*_G (X) \xrightarrow{q! \cdot t_q \circ \rho_\xi^T} H^{2q}_G (X, \mathbb{R}), \]
where the evaluation map $\rho_\xi^T: S^p \mathfrak{t}^\vee \otimes H^{2q}_G (X, \mathbb{R}) \to H^{2q}_G (X, \mathbb{R})$ is defined similarly as (\ref{rho evaluation}). 
For $\alpha \in H^*_{T \times G} (X)$, we denote by $\alpha^{\langle p, q \rangle}$ the $H^{2p}_T (\mathrm{pt}) \otimes H^{2q}_G (B, \mathbb{R})$-component. 
Putting $\rho^T_{\hbar. \xi} = \rho_\xi^T \circ ({^\hbar \Phi_T^{\mathrm{pt}} \otimes \mathrm{id}})$, we have 
\[ \D^q_{\hbar. \xi} \alpha^{\langle k \rangle} = q! \cdot \rho_{\hbar. \xi}^T (\alpha^{\langle k-q, q \rangle}). \]


For an equivariant cohomology class of even degree $\alpha \in \hat{H}^{\mathrm{even}}_{T \times G} (B, \mathbb{R})$, we define the \textit{formal $q$-th derivative} $\hat{\D}^q \alpha \in \hat{H}^{\mathrm{even}}_T (\mathrm{pt}) \otimes H^{2k}_G (B, \mathbb{R})$ by 
\begin{equation} 
\hat{\D}^q \alpha := q! \sum_{p=0}^\infty \alpha^{\langle p, q \rangle} \in \hat{H}^{\mathrm{even}}_T (\mathrm{pt}) \otimes H^{2q}_G (B, \mathbb{R}). 
\end{equation}


\begin{defin}[Class $\varepsilon^\ell$]
\label{class epsilon ell}
For $\ell =0, 1, \ldots, \infty$, we say that $\alpha \in \hat{H}^{\mathrm{even}}_{T \times G} (B, \mathbb{R})$ is \textit{of class $\varepsilon^\ell$ around $0 \in \mathfrak{t}$} if for each $q = 0, \ldots, \ell$ (or $q < \infty$ when $\ell =\infty$) the infinite series 
\[ \sum_{k=0}^\infty \D_{\hbar. \xi}^q (\alpha^{\langle k \rangle}) = q! \cdot \sum_{p=0}^\infty \rho^T_{\hbar. \xi} (\alpha^{\langle p, q \rangle}) \] 
is uniformly absolutely convergent on a small neighbourhood of $0 \in \mathfrak{t}$ with respect to some (hence any) norm on $H^{2q}_G (B, \mathbb{R})$. 


In this case, the infinite series $\D_{\hbar. \xi}^q \alpha := \sum_{k=0}^\infty \D_{\hbar. \xi}^q (\alpha^{\langle k \rangle})$ gives a real analytic map from a small neighbourhood of $0 \in \mathfrak{t}$ to $H^{2q}_G (B, \mathbb{R})$ for every $q \le \ell$. 
We say that $\alpha$ is \textit{of class $\varepsilon^\ell$} if these real analytic maps extend to real analytic maps on the whole $\mathfrak{t}$. 
We denote by $\D_{\hbar. \xi}^q \alpha$ the value of this real analytic map at $\xi \in \mathfrak{t}$, for which the above infinite series may not converge. 
\end{defin}

We denote by 
\begin{equation}
\varepsilon^\ell_T (B_G) \subset \hat{H}^{\mathrm{even}}_{T \times G} (B, \mathbb{R})
\end{equation} 
the subset consisting of elements of class $\varepsilon^\ell$, which is a subring as we see later. 
The operator $\D^q_{\hbar. \xi}$ for $q \le \ell$ is defined on this class: 
\begin{equation} 
\D^q_{\hbar. \xi}: \varepsilon^\ell_T (B_G) \to H^{2q}_G (B, \mathbb{R}). 
\end{equation}

The following is clear from the definition. 

\begin{lem}
For $\alpha \in \hat{H}_{T \times G}^{\mathrm{even}} (B, \mathbb{R})$, a group homomorphism $\varphi: G' \to G$ and a $G'$-equivariant continuous map $f: B' \to B$, we have 
\begin{itemize}
\item $\hat{\D}^q (f^* \varphi^\# \alpha) = f^* \varphi^\# \hat{\D}^q \alpha$. 

\item For $\alpha \in \varepsilon^\ell_T (B_G)$, we have $f^* \varphi^\# \alpha \in \varepsilon^\ell_T (B'_{G'})$ and $\D_{\hbar. \xi}^q f^* \varphi^\# \alpha = f^* \varphi^\# \D_{\hbar. \xi}^q \alpha$. 
\end{itemize}
\end{lem}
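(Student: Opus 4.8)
The plan is to reduce both assertions to a single structural observation: the pullback $f^* \varphi^\#$ (induced by $\mathrm{id}_T \times \varphi \colon T \times G' \to T \times G$ together with $f \colon B' \to B$) respects the K\"unneth decomposition (\ref{Kunneth}) and acts there as $\mathrm{id}_{H^*_T(\mathrm{pt})} \otimes (f^* \varphi^\#)$. Indeed, since $T$ acts trivially on $B$ and $B'$, the Borel constructions factor as $B_{T \times G} = BT \times B_G$ and $B'_{T \times G'} = BT \times B'_{G'}$, and the map of Borel constructions induced by $(\mathrm{id}_T \times \varphi, f)$ is a product $\mathrm{id}_{BT} \times (B'_{G'} \to B_G)$. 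Taking cohomology, $f^* \varphi^\#$ fixes the $H^*_T(\mathrm{pt})$ tensor factor and acts as the ordinary $f^* \varphi^\#$ on the $H^*_G(B)$ factor. Writing $\alpha^{\langle p,q\rangle} = \sum_i u_i \otimes v_i$ with $u_i \in H^{2p}_T(\mathrm{pt})$ and $v_i \in H^{2q}_G(B)$, this gives $(f^* \varphi^\# \alpha)^{\langle p,q\rangle} = \sum_i u_i \otimes f^* \varphi^\# v_i$.

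Granting this, the first bullet is immediate: $\hat{\D}^q$ merely extracts the bidegree-$\langle \bullet, q\rangle$ components and rescales by $q!$, an operation on the $H^*_T(\mathrm{pt})$-grading that commutes with $\mathrm{id}_{H^*_T(\mathrm{pt})} \otimes (f^*\varphi^\#)$, so $\hat{\D}^q(f^*\varphi^\# \alpha) = f^*\varphi^\# \hat{\D}^q \alpha$. For the second bullet I first check termwise commutation with $\D^q_{\hbar.\xi}$. The operator $\rho^T_{\hbar.\xi} = \rho^T_\xi \circ ({^\hbar \Phi_T^{\mathrm{pt}}} \otimes \mathrm{id})$ acts only on the $S^*\mathfrak{t}^\vee$ (equivalently $H^*_T(\mathrm{pt})$) factor, evaluating a homogeneous polynomial at $\hbar\xi$; since $f^*\varphi^\#$ is linear and fixes that factor, on each bidegree it pulls the resulting scalars through, giving $f^*\varphi^\# \rho^T_{\hbar.\xi}(\alpha^{\langle p,q\rangle}) = \rho^T_{\hbar.\xi}((f^*\varphi^\# \alpha)^{\langle p,q\rangle})$. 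Hence $\D^q_{\hbar.\xi}\big((f^*\varphi^\# \alpha)^{\langle k\rangle}\big) = f^*\varphi^\# \D^q_{\hbar.\xi}(\alpha^{\langle k\rangle})$ for every $k$.

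It remains to handle convergence and the analytic extension. Because $H^{2q}_G(B)$ and $H^{2q}_{G'}(B')$ are finite dimensional, $f^*\varphi^\#$ is a bounded linear map, so $\|f^*\varphi^\# \rho^T_{\hbar.\xi}(\alpha^{\langle p,q\rangle})\| \le C \|\rho^T_{\hbar.\xi}(\alpha^{\langle p,q\rangle})\|$; thus the uniform absolute convergence near $0 \in \mathfrak{t}$ of $\sum_p \rho^T_{\hbar.\xi}(\alpha^{\langle p,q\rangle})$ (valid since $\alpha \in \varepsilon^\ell_T(B_G)$) is inherited by $\sum_p \rho^T_{\hbar.\xi}((f^*\varphi^\# \alpha)^{\langle p,q\rangle})$ for each $q \le \ell$. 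This shows $f^*\varphi^\# \alpha$ is of class $\varepsilon^\ell$ around $0$ and yields $\D^q_{\hbar.\xi}(f^*\varphi^\# \alpha) = f^*\varphi^\# \D^q_{\hbar.\xi}\alpha$ on a neighbourhood of $0$. Finally, $\xi \mapsto f^*\varphi^\# \D^q_{\hbar.\xi}\alpha$ is the composite of the bounded linear map $f^*\varphi^\#$ with a real analytic map on all of $\mathfrak{t}$, hence real analytic on $\mathfrak{t}$, and it agrees near $0$ with the real analytic map $\xi \mapsto \D^q_{\hbar.\xi}(f^*\varphi^\# \alpha)$; by the identity theorem on the connected space $\mathfrak{t}$ the two coincide everywhere. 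This certifies $f^*\varphi^\# \alpha \in \varepsilon^\ell_T(B'_{G'})$ together with the desired equality on all of $\mathfrak{t}$.

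The only genuinely non-formal point is the first one, namely pinning down that $f^*\varphi^\#$ acts as the identity on the $H^*_T(\mathrm{pt})$ factor of the K\"unneth decomposition; once the product structure of the Borel construction is invoked this is routine, and the remainder is bookkeeping together with the elementary continuity and real-analytic-continuation arguments. I expect no serious obstacle, the main care being to keep the two tensor factors cleanly separated throughout: the \emph{$T$-polynomial} direction on which $\rho^T_{\hbar.\xi}$ operates and the \emph{$G$-cohomology} direction on which $f^*\varphi^\#$ operates.
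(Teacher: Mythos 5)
Your proposal is correct, and it fills in exactly the reasoning the paper leaves implicit: the paper states this lemma with only the remark that it is ``clear from the definition,'' the intended content being precisely your observation that, since $T$ acts trivially on $B$, the Borel construction splits as $BT \times B_G$ so that $f^*\varphi^\#$ acts as $\mathrm{id} \otimes (f^*\varphi^\#)$ on the K\"unneth decomposition (\ref{Kunneth}), after which the bidegree bookkeeping, the inheritance of absolute convergence under the (finite-dimensional, hence bounded) linear map $f^*\varphi^\#$, and the uniqueness of real analytic continuation on $\mathfrak{t}$ give both bullets. No gap; your writeup is a faithful elaboration of the paper's one-line justification.
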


We note the Chern--Weil isomorphism ${^\hbar \Phi}: \hat{H}^{\mathrm{even}}_{T \times \mathbb{G}_m} (\mathrm{pt}, \mathbb{R}) \xrightarrow{\sim} \hat{S} (\mathfrak{t} \times \mathfrak{u} (1))^\vee$ preserves the decompositions 
\begin{align*}
\hat{H}^{\mathrm{even}}_{T \times \mathbb{G}_m} (\mathrm{pt}, \mathbb{R}) 
&= \prod_{k=0}^\infty \bigoplus_{p+q=k} H^{2p}_T (\mathrm{pt}, \mathbb{R}) \otimes H^{2q}_{\mathbb{G}_m} (\mathrm{pt}, \mathbb{R}), 
\\
\hat{S} (\mathfrak{t} \times \mathfrak{u} (1))^\vee
&= \prod_{k=0}^\infty \bigoplus_{p+q=k} S^p \mathfrak{t}^\vee \otimes S^q \mathfrak{u} (1)^\vee. 
\end{align*}
The latter decomposition is related to Taylor expansion, so we can compare $\D^q_{\hbar. } \alpha$ for $\alpha \in \varepsilon^\ell_T (\mathrm{pt}_{\mathbb{G}_m}) \subset \hat{H}^{\mathrm{even}}_{T \times \mathbb{G}_m} (\mathrm{pt})$ with the usual derivative as follows. 

Let $V, W$ be finite dimensional vector spaces. 
We put 
\begin{equation}
\bar{\varepsilon}^\ell_V (W) := C^\omega (V, \bigoplus_{q=0}^\ell S^q W^\vee). 
\end{equation}
For an element $u = (u_0, \ldots, u_\ell)$, we denote its component $u_q \in C^\omega (V, S^q W^\vee)$ by $D_W^q u$. 
We identify $\bar{\varepsilon}^\ell_V (W)$ with a subspace of $C^\omega (V \times W)$ by identifying $u \in \bar{\varepsilon}^\ell_V (W)$ with the following function 
\[ u (v, w) := \sum_{q=0}^\ell \frac{1}{q!} \langle (D_W^q u) (v), w \rangle, \]
where $\langle -, w \rangle: S^q W^\vee \to \mathbb{R}$ denotes the substitution of $w \in W$. 
Under this identification, we have 
\[ \langle (D_W^q u) (v),  w \rangle = \frac{d^q}{dt^q}\Big{|}_{t=0} u (v, t w). \]
By the Taylor expansion at $0$, we also identify $\bar{\varepsilon}^\ell_V (W)$ with a subspace of $\hat{S} V^\vee \otimes \bigoplus_{q=0}^\ell S^q W^\vee$. 
Under this identification, $u \in \bar{\varepsilon}^\ell_V (W)$ is identified with 
\begin{equation} 
\sum_{q=0}^\ell \frac{1}{q!} \sum_{p=0}^\infty \frac{1}{p!} D_0^p D_W^q u, 
\end{equation}
where $D_0^p f \in S^p V^\vee \otimes W'$ for $f: V \to W'$ is given by $\langle D_0^p f, v \rangle = \frac{d^p}{dt^p}|_{t=0} f (tv)$. 

For $\hat{u} = \sum_{k=0}^\infty \sum_{p+q=k} u^{\langle p, q \rangle} \in \hat{S} (V \times W)^\vee$, we define a truncation $\hat{u}^{\langle , \le \ell \rangle} \in \hat{S} V^\vee \otimes \bigoplus_{q=0}^\ell S^q W^\vee$ by 
\[ \hat{u}^{\langle , \le \ell \rangle} := \sum_{q=0}^\ell \sum_{p=0}^\infty u^{\langle p, q \rangle}. \]
Now we put 
\[ \varepsilon^\ell_V (W) := \{ \hat{u} \in \hat{S} (V \times W)^\vee ~|~ \hat{u}^{\langle , \le \ell \rangle} \in \bar{\varepsilon}^\ell_V (W) \}. \]

\begin{prop}
\label{derivative}
The Chern--Weil isomorphism ${^\hbar \Phi}: \hat{H}_{T \times \mathbb{G}_m} (\mathrm{pt}) \to \hat{S} (\mathfrak{t} \times \mathfrak{u} (1))^\vee$ induces an isomorphism 
\[ \varepsilon^\ell_T (\mathrm{pt}_{\mathbb{G}_m}) \xrightarrow{\sim} \varepsilon^\ell_{\mathfrak{t}} (\mathfrak{u} (1)). \]
For $\alpha \in \varepsilon^\ell_T (\mathrm{pt}_{\mathbb{G}_m})$ and $\xi \in \mathfrak{t}$, $\zeta \in \mathfrak{u} (1)$, we have 
\[ \langle {^\hbar \D^q_{\hbar. \xi} \alpha}, \zeta \rangle = \frac{d^q}{dt^q}\Big{|}_{t=0} ({^\hbar \alpha})^{\langle, \le \ell \rangle} (\xi, t \zeta) \]
for $q = 0, \ldots, \ell$. 
\end{prop}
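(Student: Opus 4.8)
The plan is to reduce the statement to a direct comparison of definitions, the only genuine geometric input being that ${^\hbar \Phi}$ respects the bigraded decompositions recorded just above the proposition. Write $f := {^\hbar \alpha} = {^\hbar \Phi} (\alpha)$ and $f^{\langle p, q \rangle} := {^\hbar \Phi} (\alpha^{\langle p, q \rangle}) \in S^p \mathfrak{t}^\vee \otimes S^q \mathfrak{u} (1)^\vee$, so that ${^\hbar \Phi}$ carries the $(p,q)$-component of $\alpha$ to $f^{\langle p, q \rangle}$ and is multiplicative on the K\"unneth factors. First I would compute ${^\hbar \Phi} (\D^q_{\hbar. \xi} \alpha)$ term by term. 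Unwinding $\D^q_{\hbar. \xi} \alpha^{\langle k \rangle} = q! \cdot \rho^T_{\hbar. \xi} (\alpha^{\langle k-q, q \rangle})$ with $\rho^T_{\hbar. \xi} = \rho^T_\xi \circ ({^\hbar \Phi^{\mathrm{pt}}_T} \otimes \mathrm{id})$ and applying ${^\hbar \Phi}$ to the remaining $\mathbb{G}_m$-factor, one gets for each $\zeta \in \mathfrak{u} (1)$ the identity $\langle {^\hbar \Phi} (\D^q_{\hbar. \xi} \alpha^{\langle k \rangle}), \zeta \rangle = q! \cdot f^{\langle k-q, q \rangle} (\xi, \zeta)$, where $f^{\langle p, q \rangle} (\xi, \zeta)$ denotes evaluation of the bihomogeneous polynomial at $(\xi, \zeta)$. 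Summing over $k$ (equivalently over $p = k-q \ge 0$) yields
\[ \langle {^\hbar \D^q_{\hbar. \xi} \alpha}, \zeta \rangle = q! \sum_{p=0}^\infty f^{\langle p, q \rangle} (\xi, \zeta). \]

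Next I would identify the right-hand side of the claimed formula with the same expression. Since $f^{\langle p, q \rangle}$ is homogeneous of degree $q$ in the $\mathfrak{u} (1)$-variable, $f^{\langle p, q \rangle} (\xi, t \zeta) = t^q f^{\langle p, q \rangle} (\xi, \zeta)$, whence
\[ ({^\hbar \alpha})^{\langle, \le \ell \rangle} (\xi, t \zeta) = \sum_{q'=0}^\ell t^{q'} \sum_{p=0}^\infty f^{\langle p, q' \rangle} (\xi, \zeta), \]
a genuine polynomial of degree $\le \ell$ in $t$ once the inner sums converge, which they do by the hypothesis $\alpha \in \varepsilon^\ell_T (\mathrm{pt}_{\mathbb{G}_m})$. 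Extracting $\frac{d^q}{dt^q}|_{t=0}$ therefore multiplies the coefficient of $t^q$ by $q!$, producing exactly $q! \sum_p f^{\langle p, q \rangle} (\xi, \zeta)$ and matching the previous display; this is where term-by-term differentiation is legitimate precisely because the expression is polynomial in $t$. This settles the evaluation formula.

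For the isomorphism $\varepsilon^\ell_T (\mathrm{pt}_{\mathbb{G}_m}) \xrightarrow{\sim} \varepsilon^\ell_{\mathfrak{t}} (\mathfrak{u} (1))$, I would observe that the two defining conditions are literally the same condition read on either side of ${^\hbar \Phi}$. By the computation above, transporting the map $\xi \mapsto \D^q_{\hbar. \xi} \alpha$ through ${^\hbar \Phi}: H^{2q}_{\mathbb{G}_m} (\mathrm{pt}) \xrightarrow{\sim} S^q \mathfrak{u} (1)^\vee$ gives precisely $\xi \mapsto q! \sum_p f^{\langle p, q \rangle} (\xi, \cdot) = D^q_{\mathfrak{u} (1)} f^{\langle, \le \ell \rangle}$. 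Thus for each $q \le \ell$ the series $\sum_p \D^q_{\hbar. \xi} (\alpha^{\langle p+q \rangle})$ is uniformly absolutely convergent near $0 \in \mathfrak{t}$ and extends real-analytically to $\mathfrak{t}$ if and only if $D^q_{\mathfrak{u} (1)} f^{\langle, \le \ell \rangle}$ is an element of $C^\omega (\mathfrak{t}, S^q \mathfrak{u} (1)^\vee)$, i.e. if and only if $f^{\langle, \le \ell \rangle} \in \bar{\varepsilon}^\ell_{\mathfrak{t}} (\mathfrak{u} (1))$, which is membership of $f$ in $\varepsilon^\ell_{\mathfrak{t}} (\mathfrak{u} (1))$. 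As ${^\hbar \Phi}$ is already a bijection on the ambient completions, this equivalence of membership conditions yields the asserted isomorphism and simultaneously reidentifies the operators.

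The main obstacle is the bookkeeping around the two a priori different meanings of \emph{real analytic}. The condition defining $\varepsilon^\ell_T$ asks for uniform absolute convergence of the grouped Taylor series near $0$ together with a real-analytic extension to all of $\mathfrak{t}$, whereas $\bar{\varepsilon}^\ell_{\mathfrak{t}} (\mathfrak{u} (1))$ consists of genuine global real-analytic functions identified with formal series by Taylor expansion at $0$. Reconciling these requires noting that the Taylor-expansion map is injective by the identity theorem on the connected space $\mathfrak{t}$, so the grouped series $\sum_p f^{\langle p, q \rangle} (\xi, \cdot)$ determines at most one global real-analytic function: convergence near $0$ furnishes the germ, and the extension hypothesis supplies the global function, so the two definitions coincide. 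Everything else is a routine unwinding of $\rho^T_\xi$, the multiplicativity of ${^\hbar \Phi}$ on the K\"unneth factors, and the homogeneity of $f^{\langle p, q \rangle}$ in its two arguments.
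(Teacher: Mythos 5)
Your proof is correct and follows essentially the same route as the paper's: both compute $\langle {^\hbar \D^q_{\hbar.\xi}\alpha}, \zeta\rangle = q!\sum_{p} {^\hbar \alpha^{\langle p,q\rangle}}(\xi,\zeta)$, use homogeneity in the $\mathfrak{u}(1)$-variable to write $({^\hbar\alpha})^{\langle,\le\ell\rangle}(\xi,t\zeta)$ as a polynomial of degree $\le \ell$ in $t$ with these sums as coefficients, and read off the derivative formula and the equivalence of the two class-$\varepsilon^\ell$ conditions from that single identity. Your additional remarks (term-by-term differentiation being trivial for a polynomial in $t$, and the identity theorem reconciling germ-level convergence with the global real-analytic extension) are details the paper leaves implicit but do not change the argument.
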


\begin{proof}
For $\alpha \in \varepsilon^\ell_T (\mathrm{pt}_{\mathbb{G}_m})$ and $q=0, \ldots, \ell$, we have 
\[ \langle {^\hbar \D_{\hbar. \xi}^q \alpha}, \zeta \rangle = \langle q! \sum_{p=0}^\infty {^\hbar \rho^T_\xi} \alpha^{\langle p, q \rangle}, \zeta \rangle = q! \sum_{p=0}^\infty {^\hbar \alpha^{\langle p, q \rangle}} (\xi, \zeta) \]
around $\xi = 0$. 

On the other hand, since 
\[ ({^\hbar \alpha})^{\langle, \le \ell \rangle} (\xi, t\zeta) = \sum_{q=0}^\ell \sum_{p=0}^\infty {^\hbar \alpha^{\langle p, q \rangle}} (\xi, \zeta). t^q = \sum_{q=0}^\ell \frac{1}{q!} \langle {^\hbar \D_{\hbar. \xi}^q \alpha}. t^q, \zeta \rangle, \]
we have $({^\hbar \alpha})^{\langle, \le \ell \rangle} \in \bar{\varepsilon}^\ell_{\mathfrak{t}} (\mathfrak{u} (1))$ by $\alpha \in \varepsilon^\ell_T (\mathrm{pt}_{\mathbb{G}_m})$, hence ${^\hbar \alpha} \in \varepsilon^\ell_{\mathfrak{t}} (\mathfrak{u} (1))$. 
By this equality, we obtain 
\[ \frac{d^q}{dt^q} \Big{|}_{t=0} ({^\hbar \alpha})^{\langle, \le \ell \rangle} (\xi, t\zeta) =\langle {^\hbar \D_{\hbar. \xi}^q \alpha}, \zeta \rangle. \]
\end{proof}

\subsubsection{Leibniz rule and chain rule}

We prepare elementary formulas on derivatives. 

\begin{prop}[Leibniz rule]
\label{Leibniz rule}
We have the following formal Leibniz rule for $\alpha, \beta \in \hat{H}^{\mathrm{even}}_{T \times G} (B, \mathbb{R})$: 
\begin{equation} 
\hat{\D}^q (\alpha \smile \beta) = \sum_{l=0}^q \binom{q}{l} \hat{\D}^l \alpha \smile \hat{\D}^{q-l} \beta \in \hat{H}_T (\mathrm{pt}) \otimes H^{2q}_G (B, \mathbb{R}). 
\end{equation}
Suppose $\alpha$ and $\beta$ are of class $\varepsilon^\ell$, then so is $\alpha \smile \beta$. 
In particular, $\varepsilon^\ell_T (B_G) \subset \hat{H}^{\mathrm{even}}_{T \times G} (B, \mathbb{R})$ is a subring. 
In this case, we have 
\begin{equation} 
\D^q_{\hbar. \xi} (\alpha \smile \beta) = \sum_{l=0}^q \binom{q}{l} \D^l_{\hbar. \xi} \alpha \smile \D^{q-l}_{\hbar. \xi} \beta \in H^{2q}_G (B, \mathbb{R}). 
\end{equation}
\end{prop}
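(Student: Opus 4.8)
The plan is to first establish the formal identity at the level of $\hat{\D}^q$ by pure bookkeeping with the bigrading, and then promote it to $\D^q_{\hbar. \xi}$ by an absolute-convergence (Cauchy product) argument combined with real-analytic continuation.

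First I would record that the K\"unneth decomposition $H^*_{T \times G} (B, \mathbb{R}) = H^*_T (\mathrm{pt}) \otimes H^*_G (B)$ is an isomorphism of rings, so the cup product respects the bigrading: $(\alpha \smile \beta)^{\langle P, Q \rangle} = \sum_{p+p'=P} \sum_{q+q'=Q} \alpha^{\langle p, q \rangle} \smile \beta^{\langle p', q' \rangle}$. Substituting this into the definition $\hat{\D}^q \gamma = q! \sum_P \gamma^{\langle P, q \rangle}$, reorganizing the sum by $q' = q-l$, and factoring the now-independent sums over the $\mathfrak{t}$-degrees $p, p'$ yields the formal Leibniz rule, since $\sum_p \alpha^{\langle p, l \rangle} = \frac{1}{l!} \hat{\D}^l \alpha$ and $\binom{q}{l} = \frac{q!}{l!(q-l)!}$. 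This step is purely formal, taking place in $\hat{H}_T (\mathrm{pt}) \otimes H^*_G (B)$, and uses no convergence.

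Next, to handle $\D^q_{\hbar. \xi}$, I would exploit that the evaluation map $\rho^T_{\hbar. \xi}$ is multiplicative: on a decomposable element $(f \otimes a) \smile (g \otimes b) = fg \otimes (a \smile b)$ it evaluates the $S^* \mathfrak{t}^\vee$ factor at the point determined by $\hbar \xi$ via ${^\hbar \Phi}$, so $\rho^T_{\hbar. \xi} (u \smile v) = \rho^T_{\hbar. \xi} (u) \smile \rho^T_{\hbar. \xi} (v)$. Applying $\rho^T_{\hbar. \xi}$ term by term to the bigraded expansion of $(\alpha \smile \beta)^{\langle k-q, q \rangle}$ then expresses $\sum_k \D^q_{\hbar. \xi} ((\alpha \smile \beta)^{\langle k \rangle})$ as a finite sum over $l = 0, \dots, q$ of double series $\sum_{p, p'} \rho^T_{\hbar. \xi} (\alpha^{\langle p, l \rangle}) \smile \rho^T_{\hbar. \xi} (\beta^{\langle p', q-l \rangle})$. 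Since $\alpha, \beta$ are of class $\varepsilon^\ell$ and $l, q-l \le q \le \ell$, each factor series $\sum_p \rho^T_{\hbar. \xi} (\alpha^{\langle p, l \rangle})$ converges uniformly absolutely on a neighbourhood of $0 \in \mathfrak{t}$; as the cup product $H^{2l}_G (B) \times H^{2(q-l)}_G (B) \to H^{2q}_G (B)$ is a continuous bilinear map between finite-dimensional normed spaces, the Cauchy product theorem guarantees that each double series converges absolutely and equals the product of the two limits. This is exactly the near-$0$ convergence required to conclude $\alpha \smile \beta \in \varepsilon^\ell_T (B_G)$, and it gives the Leibniz identity for $\D^q_{\hbar. \xi}$ near $0$.

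Finally I would extend the identity to all of $\mathfrak{t}$: for $l \le \ell$ each of $\D^l_{\hbar. \xi} \alpha$ and $\D^{q-l}_{\hbar. \xi} \beta$ is by definition a real-analytic map $\mathfrak{t} \to H^*_G (B)$, so the finite sum $\sum_{l=0}^q \binom{q}{l} \D^l_{\hbar. \xi} \alpha \smile \D^{q-l}_{\hbar. \xi} \beta$ is real-analytic on all of $\mathfrak{t}$ and agrees near $0$ with $\D^q_{\hbar. \xi} (\alpha \smile \beta)$; the identity theorem for real-analytic maps then forces them to coincide on $\mathfrak{t}$. The main obstacle is the middle step: one must carefully justify rearranging the double series into a Cauchy product, which is where the \emph{absolute} (not merely conditional) convergence built into the definition of class $\varepsilon^\ell$ is essential, and where the finiteness of the outer sum over $l$ is what keeps everything convergent.
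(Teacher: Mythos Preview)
Your proposal is correct and follows essentially the same route as the paper: the formal Leibniz rule is obtained by expanding $(\alpha \smile \beta)^{\langle p, q \rangle}$ via the bigrading and regrouping, and the convergence/class-$\varepsilon^\ell$ claim is handled by the paper via Tonelli's theorem with submultiplicative norms on the finite-dimensional $H^{2l}_G(B,\mathbb{R})$, which is exactly your Cauchy-product/continuous-bilinear-map argument in different clothing. Your explicit treatment of the real-analytic continuation step is a detail the paper leaves implicit in the definition of class $\varepsilon^\ell$, but otherwise the arguments coincide.
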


\begin{proof}
The first claim follows directly by 
\begin{align*} 
\hat{\D}^q (\alpha \smile \beta) 
&= q! \sum_{p=0}^\infty (\alpha \smile \beta)^{\langle p, q \rangle} = q! \sum_{p=0}^\infty \sum_{r+s=p} \sum_{l=0}^q \alpha^{\langle r, l \rangle} \smile \beta^{\langle s, q-l \rangle} 
\\
&= \sum_{l=0}^q q! \Big{(} \sum_{r=0}^\infty \alpha^{\langle r, l \rangle} \Big{)} \smile \Big{(} \sum_{s=0}^\infty \beta^{\langle s, q-l \rangle} \Big{)} 
\\
&= \sum_{l=0}^q \frac{q!}{l! (q-l)!} \Big{(} l! \sum_{r=0}^\infty \alpha^{\langle r, l \rangle} \Big{)} \smile \Big{(} (q-l)! \sum_{s=0}^\infty \beta^{\langle s, q-l \rangle} \Big{)}
\\
&= \sum_{l=0}^q \binom{q}{l} \hat{\D}^l \alpha \smile \hat{\D}^{q-l} \beta. 
\end{align*}
The second claim on the absolute convergence follows by Tonelli's theorem as follows. 
Firstly, we take a collection of norms $\{ \| \cdot \|_l \}_{l \le q}$ on $\{ H^{2l}_G (B, \mathbb{R}) \}_{l \le q}$ so that $\| u \smile v \|_q \le \| u \|_l \cdot \| v \|_{q-l}$ for every $u \in H^{2l}_G (B, \mathbb{R})$ and $v \in H^{2(q-l)}_G (B, \mathbb{R})$. 
Then since 
\[ \| {^\hbar \rho^T_\xi} (\alpha^{\langle r, l \rangle} \smile \beta^{\langle s, q-l \rangle}) \|_q \le \| {^\hbar \rho^T_\xi} \alpha^{\langle r, l \rangle} \|_l \cdot \| {^\hbar \rho^T_\xi} \beta^{\langle s, q-l \rangle} \|_{q-l}, \]
we have 
\begin{align*} 
\sum_{p=0}^\infty \| {^\hbar \rho^T_\xi} (\alpha \smile \beta)^{\langle p, q \rangle} \|_q 
&\le \sum_{p=0}^\infty \sum_{r+s=p} \sum_{l=0}^q \| {^\hbar \rho^T_\xi} (\alpha^{\langle r, l \rangle} \smile \beta^{\langle s, q-l \rangle}) \|_q 
\\
&\le \sum_{l=0}^q \sum_{p=0}^\infty \sum_{r+s=p} \| {^\hbar \rho^T_\xi} \alpha^{\langle r, l \rangle} \|_l \cdot \| {^\hbar \rho^T_\xi} \beta^{\langle s, q-l \rangle} \|_{q-l}
\\
&= \sum_{l=0}^q \Big{(} \sum_{r=0}^\infty \| {^\hbar \rho^T_\xi} \alpha^{\langle r, l \rangle} \|_l \Big{)} \Big{(} \sum_{s=0}^\infty \| {^\hbar \rho^T_\xi} \beta^{\langle s, q-l \rangle} \|_{q-l} \Big{)}. 
\end{align*}
\end{proof}

For $k, l \ge 1$, we put 
\[ \Delta^l (k) := \{ \bm{k} = (k_1, \ldots, k_l) \in \mathbb{N}^l ~|~  k_1 + \dotsb + k_l = k \}, \]
where $\mathbb{N} = \{ 1, 2, \ldots \}$. 
We have $\Delta^l (k) = \emptyset$ for $k < l$ and $\Delta^k (k) = \{ (1, 1, \ldots, 1) \}$. 
For $\alpha \in \hat{H}^{\mathrm{even}}_{T \times G} (B, \mathbb{R})$ and for $\bm{k} \in \Delta^l (k)$, we put 
\[ \alpha^{\langle \bm{k} \rangle} := \alpha^{\langle k_1 \rangle} \smile \dotsb \smile \alpha^{\langle k_l \rangle} \in H^{2k}_{T \times G} (B, \mathbb{R}). \]
Then for $\alpha \in \hat{H}^{\mathrm{even}}_{T \times G} (B, \mathbb{R})$ and $l \ge 1$, we have 
\[ (\alpha - \alpha^{\langle 0 \rangle})^l = \Big{(} \sum_{k=1}^\infty \alpha^{\langle k \rangle} \Big{)}^l = \sum_{k=l}^\infty \sum_{\bm{k} \in \Delta^l (k)} \alpha^{\langle \bm{k} \rangle}. \]
In particular, we have 
\[ ((\alpha - \alpha^{\langle 0 \rangle})^l )^{\langle k \rangle} = 
\begin{cases} 
\sum_{\bm{k} \in \Delta^l (k)} \alpha^{\langle \bm{k} \rangle} 
& k \ge l
\\
0 
& k < l
\end{cases} \]
for $k, l \ge 1$. 

Let $h$ be a real analytic function on a neighbourhood $U$ of $\alpha^{\langle 0 \rangle} \in \mathbb{R}$. 
Let $h (x) = \sum_{l=0}^\infty a_l (x- \alpha^{\langle 0 \rangle})^l$ be the Taylor series. 
We define $h (\alpha) \in \hat{H}^{\mathrm{even}}_{T \times G} (B, \mathbb{R})$ by 
\begin{equation} 
\label{formal series of equivariant cohomology class}
h (\alpha)^{\langle k \rangle} := 
\begin{cases}
a_0
& k = 0
\\
\sum_{l=1}^k a_l \sum_{\bm{k} \in \Delta^l (k)} \alpha^{\langle \bm{k} \rangle}
& k \ge 1
\end{cases}. 
\end{equation}
Putting 
\[ \sum_{\bm{k} \in \Delta^0 (k)} \alpha^{\langle \bm{k} \rangle} = 
\begin{cases} 
1 
& k= 0 
\\ 
0 
& k = 1, 2, \ldots 
\end{cases},  \]
we have 
\[ h (\alpha)^{\langle k \rangle} = \sum_{l=0}^k a_l \sum_{\bm{k} \in \Delta^l (k)} \alpha^{\langle \bm{k} \rangle}. \]

We are interested in $h (x) = x^{-1}$ on $U = \mathbb{R} \setminus \{ 0 \}$ and $h (x) = \log x$ on $U = (0, \infty)$. 
At $x= a$, the Taylor series are given by  
\begin{align*} 
x^{-1} 
&= \sum_{l=0}^\infty - \Big{(} \frac{-1}{a} \Big{)}^{l+1} (x - a)^l, 
\\
\log x 
&= \log a + \sum_{l=1}^\infty -\frac{1}{l} \Big{(} \frac{-1}{a} \Big{)}^l (x-a)^l 
\end{align*}

Since the K\"unneth decomposition (\ref{Kunneth}) is preserved by pullback, we have the following. 

\begin{lem}
For $\alpha \in \hat{H}_{T \times G}^{\mathrm{even}} (B, \mathbb{R})$, a group homomorphism $\varphi: G' \to G$ and a $G'$-equivariant continuous map $f: B' \to B$ equivariant with respect to $\varphi$, we have $f^* \varphi^\# h (\alpha) = h (f^* \varphi^\# \alpha)$. 
\end{lem}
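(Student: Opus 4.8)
The plan is to push the composite operator $f^* \varphi^\#$ through the defining formula (\ref{formal series of equivariant cohomology class}) degree by degree, exploiting that it is a graded ring homomorphism fixing the degree-zero part. First I would record three elementary properties of $f^* \varphi^\# \colon H^*_{T \times G} (B, \mathbb{R}) \to H^*_{T \times G'} (B', \mathbb{R})$ (here $\varphi^\#$ being induced by $\mathrm{id}_T \times \varphi$): it preserves the cohomological degree, it is multiplicative with respect to the cup product $\smile$, and it fixes the degree-zero part $\alpha^{\langle 0 \rangle} \in H^0 = \mathbb{R}$ because pullback sends the unit class to the unit class. These are exactly the structural features encoded by the preservation of the K\"unneth decomposition (\ref{Kunneth}) invoked just before the statement.

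From degree-preservation the map commutes with the projection to degree $2k$, so $(f^* \varphi^\# \alpha)^{\langle k \rangle} = f^* \varphi^\# (\alpha^{\langle k \rangle})$ for every $k$; in particular $(f^* \varphi^\# \alpha)^{\langle 0 \rangle} = \alpha^{\langle 0 \rangle}$. This last identity is the crucial consistency point: it guarantees that on both sides $h$ is expanded around the same scalar $\alpha^{\langle 0 \rangle} \in U$, so that the Taylor coefficients $a_l$ appearing in the definition of $h (f^* \varphi^\# \alpha)$ are literally those appearing in $h (\alpha)$, and no reindexing of the Taylor data is needed.

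The computation then closes immediately. Multiplicativity gives $f^* \varphi^\# (\alpha^{\langle \bm{k} \rangle}) = (f^* \varphi^\# \alpha)^{\langle \bm{k} \rangle}$ for each $\bm{k} \in \Delta^l (k)$, whence for $k \ge 1$
\[ h (f^* \varphi^\# \alpha)^{\langle k \rangle} = \sum_{l=0}^k a_l \sum_{\bm{k} \in \Delta^l (k)} (f^* \varphi^\# \alpha)^{\langle \bm{k} \rangle} = f^* \varphi^\# \Big( \sum_{l=0}^k a_l \sum_{\bm{k} \in \Delta^l (k)} \alpha^{\langle \bm{k} \rangle} \Big) = (f^* \varphi^\# h (\alpha))^{\langle k \rangle}, \]
while the degree-zero case reduces to $a_0 = f^* \varphi^\# a_0$. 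Since the two sides agree in every degree, they coincide in $\hat{H}^{\mathrm{even}}_{T \times G'} (B', \mathbb{R})$.

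I do not expect a genuine obstacle: the content is entirely formal, the substance residing in the interchange of $f^* \varphi^\#$ with the total-degree projection and with $\smile$. The only step deserving care is verifying $(f^* \varphi^\# \alpha)^{\langle 0 \rangle} = \alpha^{\langle 0 \rangle}$, as this is what makes $h (f^* \varphi^\# \alpha)$ well-defined with the same coefficients and thereby renders the identity meaningful rather than merely formal.
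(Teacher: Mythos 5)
Your proposal is correct and matches the paper's (essentially unstated) argument: the paper derives the lemma in one line from the fact that pullback preserves the degree decomposition and ring structure, which is exactly what you verify degree by degree. Your added observation that $(f^* \varphi^\# \alpha)^{\langle 0 \rangle} = \alpha^{\langle 0 \rangle}$ (via $H^0_{T \times G} (B, \mathbb{R}) \cong \mathbb{R}$ and unit preservation), so that both sides use the same Taylor coefficients $a_l$, is the right point to make explicit and is consistent with the paper's conventions.
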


\begin{prop}[Chain rule]
\label{chain rule}
We have 
\begin{align*} 
\hat{\D}^0 h (\alpha) 
&= h (\hat{\D}^0 \alpha) \in \hat{H}_T (\mathrm{pt}, \mathbb{R}), 
\\ 
\hat{\D} h (\alpha) 
&= h' (\hat{\D}^0 \alpha) \cdot \hat{\D} \alpha \in \hat{H}_T (\mathrm{pt}) \otimes H^2_G (B, \mathbb{R}). 
\end{align*}
Suppose $\alpha$ is of class $\varepsilon^\ell$ and $\D^0_{\hbar. \xi} \alpha \in U \subset \mathbb{R}$ for every $\xi$, then $h (\alpha)$ is of class $\varepsilon^\ell$ and we have 
\begin{align*} 
\D_{\hbar. \xi}^0 h (\alpha) 
&= h (\D^0_{\hbar. \xi} \alpha) \in \mathbb{R}, 
\\ 
\D_{\hbar. \xi} h (\alpha) 
&= h' (\D^0_{\hbar. \xi} \alpha) \cdot \D_{\hbar. \xi} \alpha \in H^2_G (B, \mathbb{R}). 
\end{align*}
\end{prop}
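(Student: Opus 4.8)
The plan is to dispose of the two \emph{formal} identities first and then bootstrap them to the analytic statement via a single jet homomorphism.

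The $q=0$ and $q=1$ cases of the Leibniz rule (Proposition \ref{Leibniz rule}) say precisely that $\hat{\D}^0$ is a degree-preserving ring homomorphism into $\hat{H}_T(\mathrm{pt})$ and that $\hat{\D}=\hat{\D}^1$ is a derivation over $\hat{\D}^0$, i.e. $\hat{\D}(\gamma\smile\delta)=\hat{\D}\gamma\smile\hat{\D}^0\delta+\hat{\D}^0\gamma\smile\hat{\D}\delta$. From these two algebraic facts the power rules $\hat{\D}^0(\gamma^l)=(\hat{\D}^0\gamma)^l$ and $\hat{\D}(\gamma^l)=l\,(\hat{\D}^0\gamma)^{l-1}\smile\hat{\D}\gamma$ follow by a one-line induction. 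Writing $\beta:=\alpha-\alpha^{\langle 0\rangle}$ and $h(\alpha)=\sum_{l\ge 0}a_l\beta^{l}$, I would apply $\hat{\D}^0$ and $\hat{\D}$ termwise: this is legitimate because both operators are degree-preserving and each total-degree component of $h(\alpha)$ is the finite sum (\ref{formal series of equivariant cohomology class}), so no convergence is involved. Using $\hat{\D}\beta=\hat{\D}\alpha$, $\hat{\D}^0\beta=\hat{\D}^0\alpha-\alpha^{\langle 0\rangle}$ together with the power rules yields the two formal identities, where $h'$ is the formal derivative of the Taylor series.

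For the analytic statement I would package all the evaluation operators into a single jet map. Let $A:=\bigoplus_{q=0}^{\ell}H^{2q}_G(B,\mathbb{R})\,t^q$ be the truncated polynomial algebra ($t^{\ell+1}=0$) and, for a class-$\varepsilon^\ell$ element $\gamma$, set $\Theta_\xi(\gamma):=\sum_{q=0}^{\ell}\tfrac{1}{q!}\D^q_{\hbar.\xi}\gamma\;t^q\in A$. The full Leibniz rule shows that $\Theta_\xi$ is multiplicative on finite products. The element $\Theta_\xi(\alpha)$ has scalar part $g(\xi):=\D^0_{\hbar.\xi}\alpha\in U$ and nilpotent part $N_\xi:=\sum_{q\ge 1}\tfrac{1}{q!}\D^q_{\hbar.\xi}\alpha\,t^q$ with $N_\xi^{\ell+1}=0$. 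The entire analytic statement is then the single identity $\Theta_\xi\big(h(\alpha)\big)=\sum_{m=0}^{\ell}\tfrac{h^{(m)}(g(\xi))}{m!}N_\xi^{m}$ in $A$: reading off the $t^0$ and $t^1$ coefficients gives the two displayed formulas, while the assertion that every coefficient converges and is a real-analytic function of $\xi$ is exactly membership in $\varepsilon^\ell_T(B_G)$. To prove this identity I would first establish it on a neighbourhood of $0\in\mathfrak{t}$. The Tonelli-type bound already used in the proof of Proposition \ref{Leibniz rule}, namely $\|\gamma\smile\delta\|_{q,\xi}\le\sum_{l}\|\gamma\|_{l,\xi}\|\delta\|_{q-l,\xi}$ for the absolute-value sums $\|\gamma\|_{q,\xi}:=\sum_p\|{^\hbar\rho^T_\xi}\gamma^{\langle p,q\rangle}\|_q$, makes the norm polynomial $P_\xi(\gamma):=\sum_{q=0}^\ell\|\gamma\|_{q,\xi}s^q$ submultiplicative, so $\sum_l|a_l|\,\|\beta^l\|_{q,\xi}$ is dominated coefficientwise by $\sum_l|a_l|\,P_\xi(\beta)^l$. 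Since $\beta^{\langle 0\rangle}=0$ and the convergence is uniform near $0$, the constant term $\|\beta\|_{0,\xi}\to 0$ as $\xi\to0$, hence lies inside the radius of convergence of $h$ at $\alpha^{\langle 0\rangle}$ for $\xi$ small; this bounds every $t^q$-coefficient and justifies interchanging the $l$-sum with $\D^q_{\hbar.\xi}$, giving $\Theta_\xi(h(\alpha))=\sum_l a_l\Theta_\xi(\beta)^l$. Expanding $\Theta_\xi(\beta)=(g(\xi)-\alpha^{\langle 0\rangle})+N_\xi$ by the binomial theorem and resumming the derivative series of $h$ produces the claimed nilpotent Taylor formula near $0$.

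It remains to extend to all of $\mathfrak{t}$. By hypothesis $g(\mathfrak{t})\subset U$ and $h$ is real analytic on $U$, so each coefficient $\xi\mapsto\sum_m\tfrac{h^{(m)}(g(\xi))}{m!}N_\xi^{m}$ is a composition of the real-analytic maps $g$, $h^{(m)}$ and the real-analytic maps $\xi\mapsto\D^{q'}_{\hbar.\xi}\alpha$ (real analytic because $\alpha\in\varepsilon^\ell$), hence real analytic on the whole of $\mathfrak{t}$. By the definition of class $\varepsilon^\ell$ (extension of the near-$0$ maps) this shows $h(\alpha)\in\varepsilon^\ell_T(B_G)$, and the two formulas, valid near $0$, persist on all of $\mathfrak{t}$ by uniqueness of real-analytic continuation. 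I expect the main obstacle to be precisely this globalization: a single Taylor series of $h$ at $\alpha^{\langle 0\rangle}$ only controls $\xi$ near $0$, because $g(\xi)$ may leave the disc of convergence of that series, so the identity cannot be obtained by naive power-series manipulation and genuinely requires the analyticity of $h$ on all of $U$ together with the standing hypothesis $\D^0_{\hbar.\xi}\alpha\in U$.
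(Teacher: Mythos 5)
Your proof is correct, and it takes a genuinely different organizational route from the paper's, though the analytic core is the same. The paper proves the two formal identities by direct computation of the components $h(\alpha)^{\langle k,0\rangle}$ and $h(\alpha)^{\langle k-1,1\rangle}$ using the multi-index sets $\Delta^l(k)$, and then treats the analytic statement separately for $q=0$ and $q=1$: in each case it dominates the double series by a Tonelli-type estimate, interchanges sums by Fubini, identifies the limit near $\xi=0$ with $h(\D^0_{\hbar.\xi}\alpha)$ resp.\ $h'(\D^0_{\hbar.\xi}\alpha)\cdot\D_{\hbar.\xi}\alpha$, and concludes membership in $\varepsilon^\ell$ by exactly the real-analytic extension step you single out at the end (so your worry about globalization is well placed, and your resolution coincides with the paper's). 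You instead derive the formal identities from Proposition \ref{Leibniz rule} (viewing $\hat{\D}^0$ as a ring homomorphism and $\hat{\D}$ as a derivation over it, plus the power rule and degreewise-finite termwise application), and you package the analytic part into the single multiplicative jet map $\Theta_\xi$ into the truncated algebra $A$, proving the nilpotent Taylor formula $\Theta_\xi(h(\alpha))=\sum_{m} h^{(m)}(g(\xi))\,N_\xi^m/m!$. This packaging buys two things the paper's written proof does not make explicit: a closed Fa\`a di Bruno-type formula for \emph{all} derivatives $\D^q_{\hbar.\xi}h(\alpha)$, $q\le\ell$, and consequently a uniform proof that $h(\alpha)$ is of class $\varepsilon^\ell$ for arbitrary $\ell$ --- the paper carries out the estimates only for $\ell=0,1$, the orders appearing in the displayed formulas, leaving higher orders implicit. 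What the paper's approach buys in exchange is self-containedness at the level of components: it never needs the full strength of the Leibniz rule, only the explicit $\Delta^l(k)$ combinatorics, so the proposition can be read independently of the preceding one.
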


\begin{proof}
Since $(\alpha_{T \times G}^{\langle \bm{k} \rangle})^{\langle k, 0 \rangle} = \alpha_{T \times G}^{\langle \bm{k}, 0 \rangle} = (\hat{\D}^0 \alpha)_T^{\langle \bm{k} \rangle} \in H^{2k}_T (\mathrm{pt})$, we compute 
\begin{align*}
\hat{\D}^0 h (\alpha) 
&= \sum_{k=0}^\infty h (\alpha)^{\langle k, 0 \rangle} = \sum_{k=0}^\infty \Big{(} \sum_{l=0}^k a_l \sum_{\bm{k} \in \Delta^l (k)} \alpha^{\langle \bm{k} \rangle} \Big{)}^{\langle k, 0 \rangle}
\\
&= \sum_{k=0}^\infty \Big{(} \sum_{l=0}^k a_l \sum_{\bm{k} \in \Delta^l (k)} (\hat{\D}^0 \alpha)^{\langle \bm{k} \rangle} \Big{)} = h (\hat{\D}^0 \alpha)
\end{align*}
in the ring $\hat{H}^{\mathrm{even}}_T (\mathrm{pt}, \mathbb{R})$. 

Similarly, putting 
\[ \sum_{\bm{p} \in \Delta^0 (p)} \alpha^{\langle \bm{p}, 0 \rangle} = 
\begin{cases} 
1 
& p= 0 
\\ 
0 
& p = 1, 2, \ldots 
\end{cases},  \]
we have 
\[ \Big{(} \sum_{\bm{k} \in \Delta^l (k)} \alpha^{\langle \bm{k} \rangle} \Big{)}^{\langle k-1, 1 \rangle} = 
\begin{cases}
l \sum_{\substack{p+r=k-1 \\ p \ge l-1 }} \alpha^{\langle r, 1 \rangle} \Big{(} \sum_{\bm{p} \in \Delta^{l-1} (p)} \alpha^{\langle \bm{p}, 0 \rangle} \Big{)}
& k \ge l
\\
0
& k <  l
\end{cases} \]
in $H^{2k-2}_T (\mathrm{pt}) \otimes H^2_G (B)$ for $k, l \ge 1$, so that 
\[ h (\alpha)^{\langle k-1, 1 \rangle} = \sum_{l=1}^k l a_l \left( \sum_{\substack{p+r=k-1 \\ p \ge l-1}} \alpha^{\langle r, 1 \rangle} \Big{(} \sum_{\bm{p} \in \Delta^{l-1} (p)} \alpha^{\langle \bm{p}, 0 \rangle} \Big{)} \right). \]
Since $h' (x) = \sum_{l=1}^\infty l a_l (x - \alpha^{\langle 0 \rangle})^{l-1}$, we compute 
\begin{align*}
\hat{\D} h (\alpha) 
&= \sum_{k=1}^\infty h (\alpha)^{\langle k-1, 1 \rangle} = \sum_{k=1}^\infty \sum_{l=1}^k l a_l \left( \sum_{\substack{p+r=k-1 \\ p \ge l-1}} \alpha^{\langle r, 1 \rangle} \Big{(} \sum_{\bm{i} \in \Delta^{l-1} (p)} \alpha^{\langle \bm{p}, 0 \rangle} \Big{)} \right)
\\
&= \sum_{l=1}^\infty l a_l \Big{(} \sum_{r=0}^\infty \alpha^{\langle r, 1 \rangle} \Big{)} \Big{(} \sum_{p=l-1}^\infty \sum_{\bm{i} \in \Delta^{l-1} (p)} (\hat{\D}^0 \alpha)^{\langle \bm{p} \rangle} \Big{)}
\\
&= \left( \sum_{p=0}^\infty \Big{(} \sum_{l=1}^{p+1} l a_l \sum_{\bm{p} \in \Delta^{l-1} (p)} (\hat{\D}^0 \alpha)^{\langle \bm{p} \rangle} \Big{)} \right) \Big{(} \sum_{r=0}^\infty \alpha^{\langle r, 1 \rangle} \Big{)}
\\
&= h' (\hat{\D}^0 \alpha) \cdot \hat{\D} \alpha. 
\end{align*}
in $\hat{H}^{\mathrm{even}}_T (\mathrm{pt}) \otimes H^2_G (B)$. 

Now suppose $\alpha$ is of class $\varepsilon^0$ and $\D^0_{\hbar. \xi} \alpha \in U$ for every $\xi \in \mathfrak{t}$. 
To see that $h (\alpha)$ is of class $\varepsilon^0$, we show the following infinite series is uniformly absolute-convergent to $h (\D^0_{\hbar. \xi} \alpha)$ around the origin: 
\[ \sum_{k=0}^\infty {^\hbar \rho^T_\xi} \Big{(} \sum_{l=0}^k a_l \sum_{\bm{k} \in \Delta^l (k)} \alpha^{\langle \bm{k}, 0 \rangle} \Big{)}. \]
Once the convergence is proved around the origin, the limit real analytic function $\D^0_{\hbar. \xi} h (\alpha)$ around the origin extends to the real analytic function $h (\D^0_{\hbar. \xi} \alpha)$ defined on the whole $\mathfrak{t}$, so that we conclude $h (\alpha)$ is of $\varepsilon^0$. 

Since $h$ is real analytic and $\alpha^{\langle 0 \rangle} \in U$, the infinite series $\sum_{l=0}^\infty a_l x^l$ is uniformly absolute-convergent to $h (x - \alpha^{\langle 0 \rangle})$ around $x = 0$. 
Since $\alpha$ is of class $\varepsilon^0$, the infinite series $\sum_{i=0}^\infty {^\hbar \rho^T_\xi} \alpha^{\langle i, 0 \rangle}$ is uniformly absolute-convergent to $\D_{\hbar. \xi} \alpha$ around $\xi = 0$. 
Then for $\xi$ sufficiently close to $0$, we compute 
\begin{align*} 
\sum_{k=0}^\infty \Big{|} {^\hbar \rho^T_\xi} \sum_{l=0}^k a_l \sum_{\bm{k} \in \Delta^l (k)} \alpha^{\langle \bm{k}, 0 \rangle} \Big{|} 
&\le \sum_{k=0}^\infty \sum_{l=0}^k |a_l| \sum_{\bm{k} \in \Delta^l (k)} |{^\hbar \rho^T_\xi} \alpha^{\langle \bm{k}, 0 \rangle}| 
\\
&= \sum_{l=0}^\infty |a_l| \sum_{k=0}^\infty \sum_{\bm{k} \in \Delta^l (k)} |{^\hbar \rho^T_\xi} \alpha^{\langle \bm{k}, 0 \rangle}|
\\
&= \sum_{l=0}^\infty |a_l| \Big{|} \sum_{k=0}^\infty {^\hbar \rho^T_\xi} \alpha^{\langle k, 0 \rangle} \Big{|}^l 
\end{align*}
by Tonelli theorem, which shows the uniform absolute-convergence. 

Now for $k, l \in \mathbb{N}_{\ge 0} = \{ 0, 1, 2, \ldots \}$, we put 
\[ h_\xi (k, l) := 
\begin{cases} 
a_l \sum_{\bm{k} \in \Delta^l (k)} {^\hbar \rho^T_\xi} \alpha^{\langle \bm{k}, 0 \rangle}
& l \le k 
\\ 
0 
& l > k
\end{cases}. \] 
The above argument shows that for small $\xi$, $h_\xi (k, l)$ is integrable on $\mathbb{N}_{\ge 0}^2$. 
Thus we get 
\[ \sum_{k=0}^\infty {^\hbar \rho^T_\xi} \Big{(} \sum_{l=0}^k a_l \sum_{\bm{k} \in \Delta^l (k)} \alpha^{\langle \bm{k}, 0 \rangle} \Big{)} = \sum_{k=0}^\infty \sum_{l=0}^\infty h_\xi (k, l) = \sum_{l=0}^\infty \sum_{k=0}^\infty h_\xi (k, l) = h (\D^0_{\hbar. \xi} \alpha) \]
by Fubini's theorem. 

Next, suppose $\alpha$ is of class $\varepsilon^1$. 
Then we similarly compute 
\begin{align*} 
\sum_{k=1}^\infty 
&\Big{\|} {^\hbar \rho^T_\xi} \sum_{l=1}^k l a_l \Big{(} \sum_{\substack{ p+r=k-1 \\ p \ge l-1 }} \alpha^{\langle r, 1 \rangle} \Big{(} \sum_{\bm{p} \in \Delta^{l-1} (p)} \alpha^{\langle \bm{p}, 0 \rangle} \Big{)} \Big{)} \Big{\|} 
\\
&\le \sum_{k=1}^\infty \sum_{l=1}^k l |a_l| \Big{(} \sum_{\substack{p+r=k-1 \\ p \ge l-1 }} \| {^\hbar \rho^T_\xi} \alpha^{\langle r, 1 \rangle} \| \Big{(} \sum_{\bm{p} \in \Delta^{l-1} (p)} | {^\hbar \rho^T_\xi} \alpha^{\langle \bm{p}, 0 \rangle}| \Big{)} \Big{)}
\\
&= \sum_{l=1}^\infty l |a_l| \Big{(} \sum_{r=0}^\infty \| {^\hbar \rho^T_\xi} \alpha^{\langle r, 1 \rangle} \| \Big{)} \Big{(} \sum_{p=l-1}^\infty \sum_{\bm{p} \in \Delta^{l-1} (p)}  | {^\hbar \rho^T_\xi} \alpha^{\langle \bm{p}, 0 \rangle}| \Big{)}
\\
&= \left( \sum_{l=1}^\infty l |a_l| \Big{(} \sum_{p=1}^\infty | {^\hbar \rho^T_\xi} \alpha^{\langle p, 0 \rangle}| \Big{)}^{l-1} \right) \Big{(} \sum_{r=0}^\infty \| {^\hbar \rho^T_\xi} \alpha^{\langle r, 1 \rangle} \| \Big{)}
\end{align*}
by Tonelli theorem. 
Then similarly as above, by Fubini's theorem, we get 
\[ \sum_{k=1}^\infty {^\hbar \rho^T_\xi} \sum_{l=1}^k l a_l \Big{(} \sum_{\substack{ p+r=k-1 \\ p \ge l-1 }} \alpha^{\langle r, 1 \rangle} \Big{(} \sum_{\bm{p} \in \Delta^{l-1} (p)} \alpha^{\langle \bm{p}, 0 \rangle} \Big{)} \Big{)} = h' (\D_{\hbar. \xi}^0 \alpha) \cdot \D_{\hbar. \xi} \alpha \]
around $\xi = 0$, hence $h (\alpha)$ is of class $\varepsilon^1$ if $\D^0_{\hbar. \xi} \alpha \in U$ for every $\xi$. 
\end{proof}

\subsection{Equivariant calculus}
\label{equivariant calculus}

\subsubsection{On the absolute equivariant intersection $(\alpha. e^L)$}
\label{absolute equivariant intersection theory}

We firstly observe the case $B = \mathrm{pt}$ and $G = \{ 1 \}$. 
In this case, the K\"unneth decomposition (\ref{Kunneth}) is nothing but the identification $\hat{H}^{\mathrm{even}}_{T \times \{ 1 \}} (\mathrm{pt}, \mathbb{R}) = \prod_{p=0}^\infty H^{2p}_T (\mathrm{pt}, \mathbb{R})$, so that we have $\hat{u}^{\langle p, q \rangle} = 0$ for $\hat{u} \in \hat{H}^{\mathrm{even}}_{T \times \{ 1 \}} (\mathrm{pt}, \mathbb{R})$ and $q \ge 1$. 
Thus we have $\hat{\D}^q \hat{u} = 0$ for $q \ge 1$, $\hat{\D}^0 \hat{u} = \hat{u}$ and 
\[ \D^0_{\hbar. \xi} \hat{u} = \sum_{k=0}^\infty {^\hbar \hat{u}^{\langle k \rangle}} (\xi). \]

Let $X$ be a pure $n$-dimensional scheme with a $T$-action. 
For an equivariant locally finite homology class $\alpha \in H^{\mathrm{lf}, T}_{2 p} (X, \mathbb{R})$ and a second equivariant cohomology class $L \in H^2_T (X, \mathbb{R})$, we study the convergence of the above infinite series for $\hat{u} = (\alpha. e^L)$. 

We prepare the following lemma. 
The proof works also for complex spaces. 

\begin{lem}
\label{div-surj}
Let $X$ and $\tilde{X}$ be pure $n$-dimensional $G$-schemes and $f: \tilde{X} \to X$ be a proper surjective $G$-equivariant morphism. 
Then the pushforward maps $f_*: H^{\mathrm{alg}, G}_{2n-2} (\tilde{X}, \mathbb{R}) \to H^{\mathrm{alg}, G}_{2n-2} (X, \mathbb{R})$ and $f_*: H^{\mathrm{lf}, G}_{2n} (\tilde{X}, \mathbb{R}) \to H^{\mathrm{lf}, G}_{2n} (X, \mathbb{R})$ are surjective. 
\end{lem}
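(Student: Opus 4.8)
The plan is to reduce the equivariant assertion to a purely non-equivariant statement about a proper surjective morphism, using the finite-dimensional approximations of the Borel construction that define $H^{\mathrm{alg},G}_\ast$ and $H^{\mathrm{lf},G}_\ast$. Fix the approximation $E=E_mG$ prescribed in the definition of the relevant group (so $m$ is $0$ in the $H^{\mathrm{lf},G}_{2n}$ case and large enough in the $H^{\mathrm{alg},G}_{2n-2}$ case), and form the mixed spaces $X_G:=E\times_G X$ and $\tilde X_G:=E\times_G\tilde X$. Since $G$ acts freely on $E$ these are genuine $\mathbb{C}$-schemes, fibred over $B_mG=E/G$ with fibre $X$ (resp. $\tilde X$), hence both pure of dimension $d:=n+N$ with $N:=\dim_{\mathbb{C}}B_mG$. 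The morphism $f$ mixes to $\bar f\colon\tilde X_G\to X_G$, which is again proper (proper is stable under base change, then descends along the fppf quotient $E\times X\to X_G$) and surjective. By definition $H^{\mathrm{lf},G}_{2n}(X)=H^{\mathrm{lf}}_{2d}(X_G)$ and $H^{\mathrm{alg},G}_{2n-2}(X)=H^{\mathrm{alg}}_{2(d-1)}(X_G)$, and the equivariant pushforward $f_\ast$ is by construction $\bar f_\ast$; so it suffices to prove, non-equivariantly, that $\bar f_\ast$ is surjective in the top locally finite degree and on codimension-one algebraic cycles.

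For the non-equivariant core I would treat both cases uniformly. The group $H^{\mathrm{lf}}_{2d}(X_G)$ is freely spanned by the fundamental classes of the $d$-dimensional irreducible components, while $H^{\mathrm{alg}}_{2(d-1)}(X_G)$ is by definition spanned by the classes of $(d-1)$-dimensional subvarieties; in either case it is enough to hit each $[W]$ for $W\subset X_G$ an irreducible subvariety of dimension $d$ or $d-1$. Given such a $W$ with generic point $\eta_W$, surjectivity of $\bar f$ makes the fibre $\bar f^{-1}(\eta_W)$ a non-empty scheme of finite type over $k(\eta_W)$, so it has a closed point $\tilde\eta$. Let $W':=\overline{\{\tilde\eta\}}^{\,\mathrm{red}}\subset\tilde X_G$. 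The point is that $k(\tilde\eta)$ is a \emph{finite} extension of $k(\eta_W)$ by Zariski's lemma, whence
\[
\dim W'=\operatorname{trdeg}_{\mathbb{C}}k(W')=\operatorname{trdeg}_{\mathbb{C}}k(W)=\dim W,
\]
and $\bar f|_{W'}\colon W'\to W$ is dominant between varieties of equal dimension, i.e. generically finite of degree $d_W:=[k(W'):k(W)]\geq 1$. Therefore $\bar f_\ast[W']=d_W\,[W]$, and since $d_W\neq 0$ in $\mathbb{R}$ we obtain $[W]=\bar f_\ast\big(d_W^{-1}[W']\big)\in\operatorname{im}(\bar f_\ast)$. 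As these classes span the two homology groups, $\bar f_\ast$ is surjective in both degrees, which is exactly the claim.

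The main obstacle is precisely the one hidden in the last paragraph: a priori the fibre of $\bar f$ over $\eta_W$ could be positive-dimensional, so a naively chosen component of $\bar f^{-1}(W)$ dominating $W$ need not have the right dimension, and its pushforward might vanish. The device that overcomes this is to pass to a \emph{closed} point of the fibre over the generic point of $W$ and invoke the Nullstellensatz finiteness of its residue field, which forces the closure $W'$ to be equidimensional over $W$ and hence to push forward to a nonzero multiple of $[W]$. Everything else is bookkeeping: the functoriality of the cycle map and of pushforward with respect to the mixing $X\mapsto X_G$, and the independence of the approximation level $m$ guaranteed by \cite{EG1}. I note that the same argument applies verbatim in the complex-analytic setting, replacing algebraic cycles and schemes by analytic cycles and complex spaces.
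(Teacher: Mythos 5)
Your proposal is correct, but the core of your argument is genuinely different from the paper's. Both proofs begin identically, reducing to the non-equivariant statement on the mixed spaces $E_mG\times_G X$ via properness/surjectivity of the induced map. From there the paper reduces to the case where $X$ and $\tilde X$ are irreducible (matching irreducible components), and for a prime divisor $E\subset X$ runs a dimension count on the irreducible components of $f^{-1}(E)$: if no divisorial component dominated $E$, the image $f(f^{-1}(E))$ would be a finite union of subvarieties of dimension at most $n-2$, contradicting surjectivity; it then concludes $f_*F=m\,E$ with $m>0$, and handles the top-degree case separately via $H^{\mathrm{lf},G}_{2n}(X,\mathbb{Q})=\bigoplus_i\mathbb{Q}[X_i]$. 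You instead run the standard Fulton-style argument (essentially \cite[Lemma 1.7.1]{Ful}): pick a closed point $\tilde\eta$ of the scheme-theoretic fibre over the generic point $\eta_W$ of a spanning subvariety $W$, invoke Zariski's lemma to make $k(\tilde\eta)/k(\eta_W)$ finite, so the closure $W'$ of $\tilde\eta$ has $\dim W'=\dim W$ and maps onto $W$ generically finitely, whence $\bar f_*[W']=d_W[W]$ with $d_W\ge 1$. What your route buys is uniformity and generality: it treats the divisorial and top-degree cases by one mechanism, needs no reduction to irreducible $X$ and $\tilde X$, and in fact proves surjectivity of $f_*$ on $H^{\mathrm{alg},G}_{2k}$ in every degree $k$, not just $k=n-1,n$. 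What the paper's route buys is elementarity: it avoids generic points, residue fields, and the Nullstellensatz, using only a dimension count on components. Both arguments genuinely need $\mathbb{Q}$- or $\mathbb{R}$-coefficients to divide by the multiplicity ($m$ resp. $d_W$), and both rely on the same bookkeeping you flag (compatibility of the cycle map and proper pushforward with the mixing construction, and independence of the approximation level from \cite{EG1}).
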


\begin{proof}
Since the induced morphism $\id \times_G f: E_2 G \times_G \tilde{X} \to E_2 G \times_G X$ is also proper surjective, the claim reduces to the trivial case $G= \{ 1 \}$. 
For each irreducible component $X_i$ of $X$, there is an irreducible component $\tilde{X}_i$ of $\tilde{X}$ with $f (\tilde{X}_i) = X_i$, so we may assume that $X$ and $\tilde{X}$ are irreducible. 
For any prime divisor $E \subset X$, there is a prime divisor component $F$ of $f^{-1} (E) \subsetneq \tilde{X}$ whose image is $E$. 
(If not, then since $f (f^{-1} (E)) = \cup_{F_i \in \mathrm{Irr} (f^{-1} (E))} f (F_i)$ are the union of $(n-2)$-dimensional subvarieties, we get $f (f^{-1} (E)) \neq E$, which contradicts to the surjectivity. )
Then since $f_* F = m. E$ for some integer $m > 0$, we proved the claim for $H^{\mathrm{alg}, G}_{2n-2}$ with $\mathbb{Q}$-coefficient. 
The claim for $H^{\mathrm{lf}, G}_{2n}$ follows by $H^{\mathrm{lf}, G}_{2n} (X, \mathbb{Q}) = \bigoplus_i \mathbb{Q} [X_i]$. 
\end{proof}

Let $f$ be a real analytic function on $\mathbb{R}$ whose Taylor series $\sum_{k=0}^\infty \frac{a_k}{k!} x^k$ is absolutely convergent for every $x \in \mathbb{R}$. 
We call such $f$ \textit{entire real analytic}. 

Now we prove the following. 

\begin{prop}
\label{absolute intersection}
Let $X$ be a pure $n$-dimensional proper scheme with an algebraic $T$-action and $L = L_T \in H^2_T (X, \mathbb{R})$ be a second $T$-equivariant cohomology class on $X$. 
If $\alpha \in H^{\mathrm{lf}, T}_{2n} (X, \mathbb{R})$ or $\alpha \in H^{\mathrm{alg}, T}_{2n-2} (X, \mathbb{R})$, the infinite series 
\[ {^\hbar (\alpha. f (L_T); \xi)} = \sum_{k=0}^\infty \frac{a_k}{k!} {^\hbar (\alpha. L_T^{\cdot k}; \xi)} \in \mathbb{R} \quad  \Big{(} = \D^0_{\hbar. \xi} (\alpha. f (L_T)) \Big{)} \]
is compactly absolutely convergent on $\mathfrak{t}$. 

Suppose $L$ is semiample and big and the $n$-th derivative $f^{(n)}$ is positive. 
Then ${^\hbar (f (L); \xi)} = {^\hbar ([X]. f (L); \xi)}$ is positive. 
\end{prop}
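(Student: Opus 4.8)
The plan is to reduce the positivity to the smooth case by an equivariant resolution together with the projection formula, and there to recognise the equivariant intersection as an honest integral of a strictly positive integrand against a nonnegative volume measure of positive total mass. By linearity in $\alpha$ we may decompose $[X] = \sum_i m_i [X_i]$ into its reduced irreducible components with their geometric multiplicities $m_i > 0$, and since each summand will contribute a nonnegative quantity while the total mass stays positive, it suffices to treat $X$ reduced and irreducible. Then I would take a $T$-equivariant resolution $\beta \colon X' \to X$ with $X'$ smooth and proper, which exists because $X$ is a scheme over $\mathbb{C}$. As $\beta$ is proper birational between pure $n$-dimensional schemes, $\beta_*[X'] = [X]$ in $H^{\mathrm{lf}, T}_{2n}(X, \mathbb{R})$; writing $L' := \beta^* L \in H^2_T(X', \mathbb{R})$, the equivariant projection formula gives ${^\hbar ([X]. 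L^{\cdot k}; \xi)} = {^\hbar ([X']. (L')^{\cdot k}; \xi)}$ for every $k$. Summing against the Taylor coefficients of $f$, where the interchange is licensed by the compact absolute convergence established in the first part of the proposition, this yields ${^\hbar (f (L); \xi)} = {^\hbar ([X']. f (L'); \xi)}$, so it remains to treat the smooth $X'$.

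On $X'$ I would represent $L'_T$ by an equivariant form $\omega' + \hbar \mu'$ and compute the equivariant intersection by equivariant integration. Extracting the part of top form-degree $2n$ from $(\omega' + \hbar \mu'_\xi)^k$ gives ${^\hbar ([X']. (L')^{\cdot k}; \xi)} = \binom{k}{n} \int_{X'} (\omega')^n (\hbar \mu'_\xi)^{k-n}$, and resumming over $k$ (again using absolute convergence, and noting that $f^{(n)}$ is itself entire real analytic since its coefficients differ from those of $f$ only by a polynomial factor in the index) collapses the series to
\[ {^\hbar (f (L); \xi)} = \int_{X'} f^{(n)} (\hbar \mu'_\xi) \, \frac{(\omega')^n}{n!}. \]
Equivalently this is $\int_{\mathfrak{t}^\vee} f^{(n)}(-\hbar \langle x, \xi \rangle) \, d\mathrm{DH}_{(X', L'_T)}(x)$ via the identity $(-\mu')_* ((\omega')^n/n!) = \mathrm{DH}_{(X', L'_T)}$ recorded in the conventions section.

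The positivity is then immediate from the shape of this integral. Since $L$ is semiample, $L' = \beta^* L$ is semiample as well, so $\omega'$ may be chosen semipositive; hence $(\omega')^n \ge 0$ pointwise, and its total mass is $\int_{X'} (\omega')^n/n! = ((L')^{\cdot n}) = (L^{\cdot n})$, which is strictly positive because $L$ is big. As $f^{(n)} > 0$ on all of $\mathbb{R}$, the integrand $f^{(n)}(\hbar \mu'_\xi) (\omega')^n/n!$ is a strictly positive function times a nonnegative measure of positive total mass, so its integral is positive; the nonnegative per-component contributions in the reduction above then sum to a positive number. The only points demanding care are the two termwise interchanges of summation and integration, both covered by the absolute convergence from the first part together with compactness of $X'$, and the fact that the resolution forces us to work with a merely semipositive representative $\omega'$. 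It is precisely the bigness hypothesis, through $(L^{\cdot n}) > 0$, that prevents the volume measure from degenerating to zero and so rescues strict positivity; this is the main obstacle, since over a singular or non-ample base one cannot appeal to a strictly positive K\"ahler form directly.
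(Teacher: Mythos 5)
Your positivity argument is correct and is essentially the paper's own: decompose $[X] = \sum_i m_i [X_i]$ into irreducible components, pass to an equivariant resolution, choose a $T_{\mathrm{cpt}}$-invariant semipositive representative $\omega' + \hbar \mu'$ of the pulled-back equivariant class, and conclude from $f^{(n)} > 0$ together with $(L^{\cdot n}) > 0$ (bigness) that $\int_{X'} f^{(n)} (\hbar \mu'_\xi)\, (\omega')^n / n!$ is positive.

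The genuine gap is that this proves only the second sentence of the proposition. The first sentence --- the compact absolute convergence of $\sum_{k=0}^\infty \frac{a_k}{k!} {^\hbar (\alpha. L_T^{\cdot k}; \xi)}$ for $\alpha \in H^{\mathrm{lf}, T}_{2n} (X, \mathbb{R})$ or $\alpha \in H^{\mathrm{alg}, T}_{2n-2} (X, \mathbb{R})$ --- is precisely what your argument invokes twice (``the interchange is licensed by the compact absolute convergence established in the first part of the proposition''), so as written the proof is circular: this proposition \emph{is} the paper's proof of that convergence statement (Theorem \ref{convergence of absolute equivariant intersection}); there is no earlier result to quote. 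For $\alpha = [X]$ the circularity is easy to repair, since on the compact smooth model the series $\sum_k \frac{a_k}{k!} \binom{k}{n} (\hbar \mu'_\xi)^{k-n}$ converges to $\frac{1}{n!} f^{(n)} (\hbar \mu'_\xi)$ uniformly on $X'$ and locally uniformly in $\xi$ (entireness of $f^{(n)}$ plus boundedness of $\mu'_\xi$), and integration against the fixed form $(\omega')^n$ is continuous. But the divisor case $\alpha \in H^{\mathrm{alg}, T}_{2n-2} (X, \mathbb{R})$ --- the case actually needed later, with $\alpha = \kappa^T_X$ --- is not addressed at all, and your reduction mechanism does not apply to it: locally finite homology classes do not pull back along $\beta$, so to pass to the resolution one must produce $\tilde{\alpha}$ on $X'$ with $\beta_* \tilde{\alpha} = \alpha$ and then apply the projection formula. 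This is exactly the content of the paper's Lemma \ref{div-surj} (surjectivity of $\beta_*$ on $H^{\mathrm{alg}, T}_{2n-2}$ and on $H^{\mathrm{lf}, T}_{2n}$ for proper surjective equivariant morphisms). After lifting, the paper represents $\mathrm{PD}^{X'}_T (\tilde{\alpha})$ by an equivariant $2$-form $A + \hbar \nu$ and shows that $\sum_k \frac{a_k}{k!} \int_{X'} (A + \hbar \nu_\xi) (\Omega + \hbar \mu_\xi)^k$ converges compactly absolutely to $\int_{X'} \big( \frac{1}{(n-1)!} f^{(n-1)} (\mu_{\hbar \xi}) A \wedge \Omega^{n-1} + \frac{1}{n!} \nu_{\hbar \xi} f^{(n)} (\mu_{\hbar \xi}) \Omega^n \big)$; note that this brings in $f^{(n-1)}$ as well as $f^{(n)}$, and in the paper it is packaged through the continuity of the form-to-homology pushforward (Corollary \ref{continuity of push}) and Lemma \ref{Frechet to Banach}. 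Without this half of the argument, the quantities ${^\hbar (\kappa^T_X. e^{L_T}; \xi)}$ appearing throughout the paper are not even well defined.
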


\begin{proof}
Take the irreducible decomposition of the reduction $X^{\mathrm{irr}} \to X^{\mathrm{red}} \to X$ and a $T$-equivariant resolution of singularities $\tilde{X} \to X^{\mathrm{irr}}$. 
Let $\beta: \tilde{X} \to X$ be the composition of these morphisms. 
By Lemma \ref{div-surj}, we can pick an element $\tilde{\alpha} \in H^{\mathrm{alg}, T}_{2n-2} (\tilde{X}, \mathbb{R})$ (resp. $\in H^{\mathrm{lf}, T}_{2n} (\tilde{X}, \mathbb{R})$) with $\beta_* \tilde{\alpha} = \alpha$ for $\alpha \in H^{\mathrm{alg}, T}_{2n-2} (X, \mathbb{R})$ (resp. $\in H^{\mathrm{lf}, T}_{2n} (X, \mathbb{R})$). 
By the projection formula, we have $(\alpha. e^L) = (\tilde{\alpha}. e^{\beta^* L})$ in $\hat{H}_T (\mathrm{pt}, \mathbb{R})$, so that we may assume $X$ is smooth. 

Suppose $\alpha \in H^{\mathrm{lf}, T}_{2n-2} (X, \mathbb{R})$. 
Pick equivariant $2$-forms $A + \hbar \nu$ in ${^\hbar \mathrm{PD}^X_T (\alpha)} \in {^\hbar H^2_{\mathrm{dR}, T_{\mathrm{cpt}}}} (X)$ and $\Omega + \hbar \mu$ in ${^\hbar L} \in {^\hbar H^2_{\mathrm{dR}, T_{\mathrm{cpt}}}} (X)$. 
We compute 
\begin{align*} 
{^\hbar  (\alpha. L^{\cdot k}; \xi)} 
&= \int_X (A + \hbar \nu_\xi) (\Omega + \hbar \mu_\xi)^k
\\
&= \int_X \Big{(} \binom{k}{n-1} \mu_{\hbar \xi}^{k-(n-1)} A \wedge \Omega^{n-1} + \binom{k}{n} \nu_{\hbar \xi} \mu_{\hbar \xi}^{k-n} \Omega^n \Big{)}, 
\end{align*}
where we put $\binom{k}{l} = 0$ when $k < l$. 
Now recall 
\[ f^{(l)} (x) = \frac{d^l}{dx^l} f (x) = \sum_{k=l}^\infty \frac{a_k}{(k-l)!} x^{k-l} = \sum_{j=0}^\infty \frac{a_{l+j}}{j!} x^j. \]
It follows that the infinite series of $2n$-forms on $X$
\[ \sum_{k=0}^\infty \frac{a_k}{k!} \Big{(} \binom{k}{n-1} \mu_{\hbar \xi}^{k-(n-1)} A \wedge \Omega^{n-1} + \binom{k}{n} \nu_{\hbar \xi} \mu_{\hbar \xi}^{k-n} \Omega^n \Big{)} \]
is compactly absolutely convergent on $\mathfrak{t}$ with respect to the $C^l$-norm for every $l \in \mathbb{Z}_{\ge 0}$, to the limit $2n$-form 
\[ \frac{1}{(n-1)!} f^{(n-1)} (\mu_{\hbar \xi}) A \wedge \Omega^{n-1} + \frac{1}{n!} \nu_{\hbar \xi} f^{(n)} (\mu_{\hbar \xi}) \Omega^n. \]
It follows that by Corollary \ref{continuity of push} and Lemma \ref{Frechet to Banach}, the infinite series $\sum_{k=0}^\infty \frac{a_k}{k!} {^\hbar (\alpha. L^{\cdot k}; \xi)}$ is compactly absolutely convergent to the integral 
\[ \int_X \Big{(} \frac{1}{(n-1)!} f^{(n-1)} (\mu_{\hbar \xi}) A \wedge \Omega^{n-1} + \frac{1}{n!} \nu_{\hbar \xi} f^{(n)} (\mu_{\hbar \xi}) \Omega^n \Big{)}, \]
which shows the claim on the convergence. 
If we put 
\[ f (\Omega + \hbar \mu_\xi) = \sum_{k=0}^\infty \frac{a_k}{k!} (\Omega + \hbar \mu_\xi)^k = \sum_{k=0}^\infty \frac{a_k}{k!} \sum_{l=0}^k \binom{k}{l} \mu_{\hbar \xi}^{k-l} \Omega^l = \sum_{l=0}^n \frac{f^{(l)} (\mu_{\hbar \xi})}{l!} \Omega^l, \]
we can express the above integral as 
\[ \int_X (A + \hbar \nu_\xi) f (\Omega + \hbar \mu_\xi). \]
We obtain the claim for $\alpha \in H^{\mathrm{lf}, T}_{2n} (X, \mathbb{R})$ in a similar way. 

To see the positivity, we may assume the reduction $X^{\mathrm{red}}$ is a disjoint union of smooth varieties by a similar argument. 
If $L$ is semiample and big, then we can pick a $2$-form $\Omega$ so that it is semipositive and is strictly positive on some open set of $X$. 
Since $f^{(n)} (\mu_{\hbar \xi})$ is positive, 
\begin{align*} 
{^\hbar (\alpha. e^L; \xi)} 
&= \sum_i \mathrm{PD}^{X^{\mathrm{red}}_i}_T (\alpha|_{X^{\mathrm{red}}_i}) \int_{X^{\mathrm{red}}_i} f (\Omega|_{X^{\mathrm{red}}_i} + \hbar \mu_\xi|_{X^{\mathrm{red}}_i}) 
\\
&= \sum_i \mathrm{PD}^{X^{\mathrm{red}}_i}_T (\alpha|_{X_i^{\mathrm{red}}}) \int_{X^{\mathrm{red}}_i} f^{(n)} (\mu_{\hbar \xi}|_{X^{\mathrm{red}}}) \frac{ \Omega|_{X^{\mathrm{red}}_i}^n }{n!}
\end{align*}
is positive when the Poincare duals $\mathrm{PD}^{X^{\mathrm{red}}_i}_T (\alpha|_{X^{\mathrm{red}}_i}) \in \mathbb{R} \cong H^0_T (X^{\mathrm{red}}_i, \mathbb{R})$ are positive for each irreducible component. 
Here $X^{\mathrm{red}}_i$ are the connected components of the smooth $X^{\mathrm{red}}$. 
This proves the claim on the positivity for $\alpha = [X]$ as the Poincare dual $([X^{\mathrm{red}}]^T \frown)^{-1} ([X]^T)$ of the fundamental class of an irreducible scheme $X$ is the length of $\mathcal{O}_X/\mathcal{O}_{X^{\mathrm{red}}}$ at the generic point and hence is positive. 
\end{proof}

In particular, ${^\hbar (e^L; \xi)}$ is positive. 
It follows that the equivariant cohomology classes $(\alpha. e^{L_T}), (e^{L_T})^{-1}$ and $\log (e^{L_T})$ are of class $\varepsilon^\infty$ by Proposition \ref{chain rule}. 

\begin{rem}
When $X$ is a complex space, we have the same result for $\alpha$ associated to an analytic divisor (as the above lemma holds even in the analytic case), or more generally, for a homology class which can be written as the pushforward of some homology class of a resolution $X' \to X$. 
\end{rem}

We can compare the equivariant self intersection of $L$ with an integration with respect to the Duistermaat--Heckman measure. 
This is well-known for smooth $X$ and is a consequence of the equivariant Riemann--Roch theorem for general $X$. 

\begin{prop}
\label{equivariant intersection and DH measure}
For a $T$-equivariant semiample and big $\mathbb{Q}$-line bundle $L$ on a proper scheme $X$, we have 
\begin{equation}
{^\hbar (e^L; \xi)} = \int_{\mathfrak{t}^\vee} e^{- \langle x, \hbar \xi \rangle} \mathrm{DH}_{(X, L_T)} (x). 
\end{equation}
\end{prop}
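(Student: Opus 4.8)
The plan is to route everything through the equivariant Grothendieck--Riemann--Roch theorem of \cite{EG2}, which is precisely the tool that makes both sides computable for singular $X$. First I would rewrite the right-hand side as a limit. Because $X$ is proper and $L$ is semiample and big, every weight $m$ with $H^0(X, L^{\otimes k})_m \neq 0$ lies in $k \cdot P$ for a fixed bounded polytope $P$, so the rescaled measures $k^{-n}\sum_m \dim H^0(X, L^{\otimes k})_m\, \delta_{k^{-1} m}$ have uniformly compact support and converge weakly to $\mathrm{DH}_{(X, L_T)}$ by definition. Since $x \mapsto e^{-\langle x, \hbar\xi\rangle}$ is continuous, this yields
\[ \int_{\mathfrak{t}^\vee} e^{-\langle x, \hbar\xi\rangle}\, \mathrm{DH}_{(X, L_T)}(x) = \lim_{k\to\infty} k^{-n} \sum_{m\in M} \dim H^0(X, L^{\otimes k})_m \, e^{-\langle m, \hbar\xi\rangle/k}. \]
Next I would replace $\dim H^0$ by the equivariant Euler characteristic $\chi(X, L^{\otimes k})_m = \sum_i (-1)^i \dim H^i(X, L^{\otimes k})_m$; the discrepancy is governed by $\sum_{i>0}\dim H^i(X, L^{\otimes k}) = O(k^{n-1})$, as semiampleness and bigness force the higher cohomology to be subleading, and since the weights $e^{-\langle m, \hbar\xi\rangle/k}$ are uniformly bounded on the compact support, this error is $o(k^n)$ and disappears in the limit.

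The second step is to recognize the weighted Euler characteristic as a cohomological evaluation. Writing $\chi^T(X, L^{\otimes k}) = \sum_m \chi(X, L^{\otimes k})_m \cdot [\mathbb{C}_m] \in R(T)$, the sign computation of the Chern--Weil isomorphism recorded in (\ref{Chern--Weil sign}) and Example \ref{weight and moment value} gives ${^\hbar \Phi}(\mathrm{ch}_T(\mathbb{C}_m))(\xi) = e^{-\hbar\langle m, \xi\rangle}$, so
\[ \sum_{m} \chi(X, L^{\otimes k})_m \, e^{-\langle m, \hbar\xi\rangle/k} = {^\hbar \Phi}\big(\mathrm{ch}_T(\chi^T(X, L^{\otimes k}))\big)(\xi/k). \]
Applying the equivariant Grothendieck--Riemann--Roch theorem to the structure morphism $\pi: X \to \mathrm{pt}$, and using that $\tau^T_{\mathrm{pt}} = \mathrm{ch}_T$ on $R(T)$ together with property (2), I obtain
\[ \mathrm{ch}_T(\chi^T(X, L^{\otimes k})) = \pi_* \tau^T_X(L^{\otimes k}) = \pi_*\big(\tau^T_X(\mathcal{O}_X) \frown e^{k c_{T,1}(L)}\big). \]

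Finally I would extract the leading term in $k$. Decomposing $\tau^T_X(\mathcal{O}_X) = \sum_{p\le n}\tau^T_X(\mathcal{O}_X)_{\langle p\rangle}$ with $\tau^T_X(\mathcal{O}_X)_{\langle n\rangle} = [X]_T$ (property (3), since $X$ is pure $n$-dimensional) and expanding the exponential, each summand $\pi_*(\tau^T_X(\mathcal{O}_X)_{\langle p\rangle} \frown c_{T,1}(L)^{\smile j})$ is a homogeneous polynomial of degree $j-p$ on $\mathfrak{t}$, so its value at $\xi/k$ equals $k^{-(j-p)}$ times its value at $\xi$. Thus the $(p,j)$-contribution to $k^{-n}\,{^\hbar \Phi}(\pi_*(\cdots))(\xi/k)$ carries the factor $k^{p-n}/j!$, and only the $p=n$ terms survive the limit; these assemble into $\sum_{j\ge n}\tfrac{1}{j!}\,{^\hbar([X].L^{\cdot j};\xi)} = {^\hbar(e^L;\xi)}$, the series starting at $j=n$ because ${^\hbar([X].L^{\cdot j};\xi)}$ vanishes for $j<n$ by negative homological degree, with convergence and the termwise limit justified by Proposition \ref{absolute intersection}. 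In the smooth case this recovers the classical identity $\mathrm{DH}_{(X,L_T)} = (-\mu)_*(\omega^n/n!)$, fixing the sign reversal of $\mu$. The main obstacle I anticipate is twofold: pinning down the higher-cohomology estimate uniformly for merely semiample-and-big (non-ample) $L$ on a possibly non-normal $X$, and keeping the sign conventions of the character-to-Chern--Weil translation exactly consistent with the paper's $\hbar$-normalization, since any slip there would flip the very reversal of $\mu$ that this statement is designed to record.
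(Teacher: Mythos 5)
Your proof is correct and takes essentially the same route as the paper: both apply the equivariant Grothendieck--Riemann--Roch theorem to $p\colon X \to \mathrm{pt}$, expand $\tau^T_X(L^{\otimes k}) = \tau^T_X(\mathcal{O}_X) \frown e^{k c_{T,1}(L)}$, and extract the leading $k^n$-coefficient of the evaluation at $k^{-1}\xi$ via homogeneity (the paper packages this as Lemma \ref{scaling of exponential equivariant intersection}), matching it against the weighted dimension count that converges to the Duistermaat--Heckman integral. The only differences are points of added care on your side --- the explicit weak-convergence argument and the $O(k^{n-1})$ control of the gap between $\dim H^0$ and the equivariant Euler characteristic for semiample-and-big $L$ --- which the paper leaves implicit rather than treats differently.
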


\begin{proof}
Since 
\begin{align*} 
{^\hbar (\mathrm{PD}^{\mathrm{pt}}_T \tau_{\mathrm{pt}}^T (p_! L^{\otimes k})) }
&= {^\hbar \mathrm{ch}_T (p_! L^{\otimes k}) }
\\
&= \sum_{\chi \in M} \dim H^0 (X, L^{\otimes k})_\chi e^{{^\hbar c_{T, 1}} (\mathbb{C}_\chi)} 
\\
&= \sum_{\chi \in M} \dim H^0 (X, L^{\otimes k})_\chi e^{- \hbar \chi} \in \hat{S} \mathfrak{t}^\vee, 
\end{align*}
we have 
\[ {^\hbar (\mathrm{PD}^{\mathrm{pt}}_T \tau_{\mathrm{pt}}^T (p_! L^{\otimes k}))} (k^{-1} \xi) = \sum_{m \in M} \dim H^0 (X, L^{\otimes k})_m e^{- \langle k^{-1} m, \hbar \xi \rangle} = k^n \int_{\mathfrak{t}^\vee} e^{-\langle x, \hbar \xi \rangle} \nu_k (x) \]
for the measure 
\[ \nu_k := \frac{1}{k!} \sum_{m \in M} \dim H^0 (X, L^{\otimes k})_m. \delta_{k^{-1} m} \]
on $\mathfrak{t}^\vee$. 
Thus we get 
\[ \int_{\mathfrak{t}^\vee} e^{- \langle x, \hbar \xi \rangle} \nu_k (x) = \lim_{k \to \infty} k^{-n} \cdot {^\hbar ( \mathrm{PD}^{\mathrm{pt}}_T \tau_{\mathrm{pt}}^T (p_! L^{\otimes k}))} (k^{-1} \xi). \]

On the other hand, we have 
\begin{align*}
{^\hbar (\mathrm{PD}^{\mathrm{pt}}_T p_* \tau^T_X (L^{\otimes k}))} (k^{-1} \xi)
&= {^\hbar ( \mathrm{PD}^{\mathrm{pt}}_T p_* (\tau^T_X (\mathcal{O}_X) \frown e^{k c_{T, 1} (L)}))} (k^{-1} \xi)
\\
&= {^\hbar (\tau^T_X (\mathcal{O}_X). e^{k L_T}; k^{-1} \xi )}
\\
&= {^\hbar (e^{L_T}; \xi )} k^n - \frac{1}{2} {^\hbar (\kappa^T_X. e^{L_T}; \xi )} k^{n-1} + O (k^{n-2}), 
\end{align*}
which proves the claim by the equivariant Riemann--Roch theorem $\tau_{\mathrm{pt}}^T (p_! L^{\otimes k}) = p_* \tau^T_X (L^{\otimes k})$. 
Here the last equality is a consequence of the following lemma. 
\end{proof}

\begin{lem}
\label{scaling of exponential equivariant intersection}
For $\alpha \in H^T_{2 k} (X, \mathbb{R})$, $L \in H^2_T (X, \mathbb{R})$ and $\tau \in \mathbb{R}^\times$, we have 
\[ (\alpha. e^{\tau L}; \tau^{-1} \xi) = \tau^k (\alpha. e^L; \xi). \]
\end{lem}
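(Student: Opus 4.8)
The plan is to reduce the identity to the level of the individual monomials $(\alpha.L^{\cdot m})$ and then combine two elementary bookkeeping facts: the multilinearity of the cup product and the homogeneity of the equivariant intersection viewed as a polynomial on $\mathfrak{t}$. Since $(\alpha.e^{L};\xi)=\sum_{m=0}^\infty \tfrac{1}{m!}(\alpha.L^{\cdot m};\xi)$ by definition, it suffices to establish the scaled identity termwise and then sum, the convergence being supplied by Proposition \ref{absolute intersection} in the cases of interest.

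First I would note that the cup product is multilinear, so $(\tau L)^{\smile m}=\tau^m L^{\smile m}$ and hence $(\alpha.(\tau L)^{\cdot m})=\tau^m(\alpha.L^{\cdot m})$ as classes in $H^{\mathrm{lf},T}_{2(k-m)}(\mathrm{pt},\mathbb{R})$. Next I would use the grading: for $\alpha\in H^T_{2k}(X,\mathbb{R})$ the cap product $\alpha\frown L^{\smile m}$ lands in $H^{\mathrm{lf},T}_{2(k-m)}(X,\mathbb{R})$, whose pushforward to a point is identified with a homogeneous polynomial of degree $m-k$ on $\mathfrak{t}$ (this is the degree count recorded in the construction of the equivariant intersection, and is also visible from the integration formula computed in the proof of Proposition \ref{absolute intersection}). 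Consequently the evaluation scales as $(\alpha.L^{\cdot m};\tau^{-1}\xi)=\tau^{-(m-k)}(\alpha.L^{\cdot m};\xi)$.

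Combining the two observations, the power of $\tau$ coming from multilinearity and the power coming from homogeneity multiply to a factor independent of $m$:
\[ (\alpha.(\tau L)^{\cdot m};\tau^{-1}\xi)=\tau^m\cdot\tau^{k-m}(\alpha.L^{\cdot m};\xi)=\tau^k(\alpha.L^{\cdot m};\xi). \]
Dividing by $m!$ and summing over $m$ then yields $(\alpha.e^{\tau L};\tau^{-1}\xi)=\tau^k(\alpha.e^L;\xi)$; the rearrangement is justified because the series converges absolutely by Proposition \ref{absolute intersection}, and in any event the identity holds term by term.

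The computation is entirely formal, so I do not expect any substantive obstacle. The only point demanding care is the exact homogeneity degree $m-k$, in particular the vanishing of every term with $m<k$, which is what makes $(\alpha.L^{\cdot m})$ a genuine homogeneous polynomial rather than a merely formal expression and pins the overall exponent of $\tau$ to $k$ rather than to something $m$-dependent.
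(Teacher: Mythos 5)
Your proof is correct and follows essentially the same route as the paper: expand $e^{\tau L}$, use multilinearity to pull out $\tau^m$, use the homogeneity degree $m-k$ of $(\alpha. L^{\cdot m})$ to get the compensating factor $\tau^{k-m}$, and sum. The paper's proof is exactly this three-step computation in one displayed line; your added remarks on convergence and on the vanishing of terms with $m<k$ are harmless refinements rather than a different argument.
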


\begin{proof}
We compute 
\[ (\alpha. e^{\tau L}; \tau^{-1} \xi) = \sum_{l=0}^\infty \frac{1}{l!} \tau^l (\alpha. L^l; \tau^{-1} \xi) = \sum_{l=0}^\infty \frac{1}{l!} \tau^l \tau^{k-l} (\alpha. L^l; \xi) = \tau^k (\alpha. e^L; \xi). \]
\end{proof}

\begin{cor}[cf. Appendix B in \cite{BHJ1}]
\label{equivariant intersection and DH measure, cor}
For a $T$-equivariant semiample and big $\mathbb{Q}$-line bundle $L$ on a proper scheme $X$, we have 
\begin{equation}
\frac{1}{(n+k)!} {^\hbar (} L^{\cdot n+k}; \xi) = \frac{1}{k!} \int_{\mathfrak{t}^\vee} (-\langle x, \hbar \xi \rangle)^k \mathrm{DH}_{(X, L_T)} (x). 
\end{equation}
\end{cor}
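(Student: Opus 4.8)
The plan is to obtain the Corollary by matching the homogeneous-in-$\xi$ components of the two analytic functions in Proposition \ref{equivariant intersection and DH measure}. The starting point is the identity
\[ {^\hbar (e^{L}; \xi)} = \int_{\mathfrak{t}^\vee} e^{-\langle x, \hbar \xi \rangle}\, \mathrm{DH}_{(X, L_T)}(x), \]
together with the expansion ${^\hbar (e^{L}; \xi)} = \sum_{j \ge 0} \frac{1}{j!}\, {^\hbar (L^{\cdot j}; \xi)}$, which is compactly absolutely convergent by Proposition \ref{absolute intersection}.

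The first step is to record the relevant homogeneity: each ${^\hbar (L^{\cdot j}; \xi)} = {^\hbar ([X]. L^{\cdot j}; \xi)}$ is a homogeneous polynomial of degree $j-n$ in $\xi \in \mathfrak{t}$, vanishing for $j < n$. This is precisely the homogeneity exploited in the proof of Lemma \ref{scaling of exponential equivariant intersection} (there applied with $\alpha = [X] \in H^{\mathrm{lf}, T}_{2n}(X)$, so that $k = n$). Consequently, introducing a scaling parameter $t \in \mathbb{R}$ and replacing $\xi$ by $t\xi$, the left-hand side becomes the entire power series in $t$
\[ {^\hbar (e^{L}; t\xi)} = \sum_{k=0}^\infty \frac{t^k}{(n+k)!}\, {^\hbar (L^{\cdot n+k}; \xi)}, \]
whose convergence for all $t$ is guaranteed by Proposition \ref{absolute intersection}.

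The second step treats the right-hand side. Since $L$ is semiample and big, the measure $\mathrm{DH}_{(X, L_T)}$ is supported on the bounded moment polytope, so all its moments are finite; expanding $e^{-\langle x, \hbar t\xi\rangle} = \sum_{k} t^k (-\langle x, \hbar\xi\rangle)^k/k!$ and integrating term by term — which is legitimate by the compact support and Tonelli's theorem — yields the entire power series
\[ \int_{\mathfrak{t}^\vee} e^{-\langle x, \hbar t\xi\rangle}\, \mathrm{DH}_{(X, L_T)}(x) = \sum_{k=0}^\infty \frac{t^k}{k!} \int_{\mathfrak{t}^\vee} (-\langle x, \hbar\xi\rangle)^k\, \mathrm{DH}_{(X, L_T)}(x). \]
Finally, since these two functions of $t$ agree identically and both are given by convergent power series, their coefficients coincide, and equating the coefficients of $t^k$ gives exactly the asserted identity.

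The step I expect to require the most care is the justification of the termwise integration on the right-hand side, i.e.\ the interchange of the infinite sum and the integral against $\mathrm{DH}_{(X, L_T)}$; this rests on the compact support of the Duistermaat--Heckman measure (so that all moments are finite) and absolute convergence via Tonelli. Everything else — the homogeneity of the intersection polynomials and the uniqueness of Taylor coefficients of a real analytic function — is formal once Proposition \ref{absolute intersection} and Proposition \ref{equivariant intersection and DH measure} are in hand.
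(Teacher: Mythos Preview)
Your proof is correct and follows essentially the same approach as the paper: both substitute $t\xi$ for $\xi$ in the identity of Proposition~\ref{equivariant intersection and DH measure} and extract the degree-$k$ term in $t$. The paper phrases this as taking $\frac{d^k}{dt^k}\big|_{t=0}$ of both sides, while you phrase it as matching Taylor coefficients, but these are the same operation; your version is simply more explicit about the justifications (homogeneity via Lemma~\ref{scaling of exponential equivariant intersection}, compact support of $\mathrm{DH}$ for the termwise integration).
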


\begin{proof}
The claim follows by 
\[ \frac{k!}{(n+k)!} {^\hbar (} L^{\cdot n+k}; \xi) = \frac{d^k}{dt^k}\Big{|}_{t=0} {^\hbar (} e^L; t\xi) = \int_{\mathfrak{t}^\vee} (- \langle x, \hbar \xi \rangle)^k \mathrm{DH}_{(X, L_T)} (x). \]
\end{proof}

\begin{cor}
Let $f$ be an entire real analytic function on $\mathbb{R}$. 
Then we have 
\[ {^\hbar (f (L); \xi)} = \int_{\mathfrak{t}^\vee} f^{(n)} (- \langle x, \hbar \xi \rangle) \mathrm{DH}_{(X, L_T)} (x). \]
\end{cor}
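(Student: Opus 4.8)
The plan is to reduce the claim to Corollary \ref{equivariant intersection and DH measure, cor} by expanding $f$ into its Taylor series and recognizing the resulting sum as the Taylor series of the $n$-th derivative $f^{(n)}$. Writing $f(x) = \sum_{k=0}^\infty \frac{a_k}{k!} x^k$, the left-hand side is by definition ${^\hbar (f(L); \xi)} = \sum_{k=0}^\infty \frac{a_k}{k!} {^\hbar (L^{\cdot k}; \xi)}$, which is compactly absolutely convergent by Proposition \ref{absolute intersection}, so all the rearrangements below are legitimate.

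First I would note that $(L^{\cdot k}) = ([X]. L^{\cdot k})$ lies in $H^{2(k-n)}_T(\mathrm{pt})$, which vanishes for $k < n$ for degree reasons; hence the series may be reindexed by $k = n+j$ with $j \ge 0$. For each such term Corollary \ref{equivariant intersection and DH measure, cor} gives $\frac{1}{(n+j)!} {^\hbar (L^{\cdot n+j}; \xi)} = \frac{1}{j!} \int_{\mathfrak{t}^\vee} (-\langle x, \hbar \xi \rangle)^j \, \mathrm{DH}_{(X, L_T)}(x)$, so the whole series becomes $\sum_{j=0}^\infty \frac{a_{n+j}}{j!} \int_{\mathfrak{t}^\vee} (-\langle x, \hbar \xi \rangle)^j \, \mathrm{DH}_{(X, L_T)}(x)$. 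Recalling from the proof of Proposition \ref{absolute intersection} that $f^{(n)}(y) = \sum_{j=0}^\infty \frac{a_{n+j}}{j!} y^j$, the integrand $f^{(n)}(-\langle x, \hbar \xi \rangle)$ is precisely the pointwise sum of $\sum_j \frac{a_{n+j}}{j!} (-\langle x, \hbar \xi\rangle)^j$, so it remains only to pull the summation inside the integral.

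The single point requiring justification is this interchange of the infinite sum with the integral. Since $X$ is proper, $\mathrm{DH}_{(X, L_T)}$ is a finite measure with compact support (the moment polytope), so $-\langle x, \hbar\xi\rangle$ is bounded on $\mathrm{supp}\,\mathrm{DH}_{(X, L_T)}$, say by some $R = R(\xi) > 0$; because $f$ is entire, $\sum_{j} \frac{|a_{n+j}|}{j!} R^j < \infty$, which dominates the series uniformly on the support. Tonelli's theorem then legitimizes exchanging $\sum_j$ with $\int_{\mathfrak{t}^\vee}$, yielding the stated equality. I do not expect any real obstacle here: the compact support of $\mathrm{DH}_{(X, L_T)}$ together with the entirety of $f$ makes the exchange routine, and all the substantive analytic content has already been absorbed into Proposition \ref{absolute intersection} and Corollary \ref{equivariant intersection and DH measure, cor}.
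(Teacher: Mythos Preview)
Your proof is correct and is exactly the argument the paper leaves implicit: the corollary is stated without proof because it follows immediately from Corollary \ref{equivariant intersection and DH measure, cor} applied term by term to the Taylor series of $f$, together with the routine interchange of sum and integral that you justify. There is nothing to add.
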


The following basic property is applied to reduce Theorem A to Lahdili's result. 

\begin{prop}
\label{projection formula for equivariant intersection}
Let $f$ be a real analytic function on $\mathbb{R}$ whose Taylor series $\sum_{k=0} \frac{a_k}{k!} x^k$ is absolutely convergent for every $x \in \mathbb{R}$. 
Let $\beta: X' \to X$ be a $T$-equivariant morphism of pure $n$-dimensional projective schemes and $L_T$ be a $T$-equivariant $\mathbb{Q}$-line bundle on $X$. 
\begin{enumerate}
\item If $\beta$ is an isomorphism away from a codimension one subscheme of the target $X$, then we have 
\[ {^\hbar (f (\beta^* L_T); \xi)} = {^\hbar (\beta^* f (L_T); \xi)} = {^\hbar (f (L_T); \xi)}. \]

\item If moreover the $(n-1)$-the derivative $f^{(n-1)}$ is positive, $\beta$ is finite away from a codimension two subscheme of the target $X$, and $L$ is semiample \& big, then we have 
\[ {^\hbar (\kappa^T_{X'}. f (\beta^* L_T); \xi)} \le {^\hbar (\kappa^T_X. f (L_T); \xi)}. \]

\item If $\beta$ is an isomorphism away from a codimension two subscheme of the target $X$, and $L$ is semiample \& big, then we have 
\[ {^\hbar (\kappa^T_{X'}. f (\beta^* L_T); \xi)} = {^\hbar (\kappa^T_X. f (L_T); \xi)}. \]
\end{enumerate}
\end{prop}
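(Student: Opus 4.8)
The three parts share a common mechanism: the projection formula for the equivariant intersection. Writing $p \colon X \to \mathrm{pt}$ and $p' = p \circ \beta \colon X' \to \mathrm{pt}$, compatibility of the cap product with equivariant pushforward gives, for every $k$,
\[ {^\hbar (\alpha'. (\beta^* L_T)^{\cdot k}; \xi)} = p_*\bigl( \beta_*(\alpha' \frown \beta^* L_T^{\smile k}) \bigr) = {^\hbar (\beta_* \alpha'. L_T^{\cdot k}; \xi)} \]
for any $\alpha' \in H^{\mathrm{lf}, T}_*(X', \mathbb{R})$, and hence, summing the Taylor series $\sum a_k/k!$ (convergent by Proposition \ref{absolute intersection}),
\[ {^\hbar (\alpha'. f(\beta^* L_T); \xi)} = {^\hbar (\beta_* \alpha'. f(L_T); \xi)}. \]
Thus each part reduces to computing the relevant pushforward, $\beta_*[X']$ or $\beta_* \kappa^T_{X'}$. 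The middle equality in (1), namely $f(\beta^* L_T) = \beta^* f(L_T)$, is immediate since $\beta^*$ is a ring homomorphism commuting with the formal-series construction (\ref{formal series of equivariant cohomology class}). For (1) itself, the hypothesis that $\beta$ is an isomorphism over the complement of a codimension-one subscheme means $\beta$ restricts to an isomorphism over a dense open subset of each component, so that $\beta_*[X'] = [X]$; the displayed identity with $\alpha' = [X']$ then gives both equalities.

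Parts (3) and (2) rest on the behaviour of the equivariant homology Todd class under $\beta$, as recorded in the Example following the equivariant Grothendieck--Riemann--Roch theorem. Since $\kappa^T_X = -2\,cl^T(\tau^T_X(\mathcal{O}_X))_{\langle n-1 \rangle}$ (Definition \ref{equivariant canonical class}) and $cl^T$ commutes with pushforward, I would compute $\beta_* \kappa^T_{X'}$ from $\beta_*(\tau^T_{X'}(\mathcal{O}_{X'}))_{\langle n-1 \rangle} = \tau^T_X(\beta_! [\mathcal{O}_{X'}])_{\langle n-1 \rangle}$, which holds in degree $n-1$ as soon as $\beta_![\mathcal{O}_{X'}] - [\mathcal{O}_X]$ is supported in codimension $\ge 2$. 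When $\beta$ is an isomorphism away from codimension two this whole $K$-theory class is supported in codimension $\ge 2$, so $\beta_* \kappa^T_{X'} = \kappa^T_X$ and (3) follows from the projection formula. When $\beta$ is merely finite away from codimension two, the higher direct images $R^i\beta_*\mathcal{O}_{X'}$ ($i \ge 1$) vanish in codimension one and hence contribute only in degree $\le n-2$; the only codimension-one contribution is from the coherent sheaf $Q = \beta_*\mathcal{O}_{X'}/\mathcal{O}_X$, whose leading cycle $\tau^T_X(Q)_{\langle n-1 \rangle}$ is effective. This gives $\beta_* \kappa^T_{X'} = \kappa^T_X - E$ with $E := 2\,cl^T(\tau^T_X(Q)_{\langle n-1 \rangle})$ an effective codimension-one cycle, so that by the projection formula
\[ {^\hbar (\kappa^T_{X'}. f(\beta^* L_T); \xi)} = {^\hbar (\kappa^T_X. f(L_T); \xi)} - {^\hbar (E. f(L_T); \xi)}. \]

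It then remains to prove ${^\hbar (E. f(L_T); \xi)} \ge 0$, which is where $f^{(n-1)} > 0$ and the positivity of $L$ enter. For each prime component $V$ of $E$ (an $(n-1)$-dimensional subvariety with positive multiplicity) I would pass to a $T$-equivariant resolution of $V$ and apply the integral computation in the proof of Proposition \ref{absolute intersection}: $L$ restricts to a semiample, hence nef, class represented by a semipositive form $\Omega \ge 0$, so that
\[ {^\hbar([V]. f(L_T); \xi)} = \int f^{(n-1)}(\mu_{\hbar\xi})\,\frac{\Omega^{n-1}}{(n-1)!} \ge 0 \]
because $f^{(n-1)} > 0$ and $\Omega^{n-1} \ge 0$. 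Summing over components with their positive multiplicities yields ${^\hbar (E. f(L_T); \xi)} \ge 0$ and hence the inequality (2).

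The main obstacle is the effectivity statement underlying (2): one must show that the codimension-one part of $\tau^T_X(\beta_![\mathcal{O}_{X'}])$ produced by the modification is exactly $-E$ with $E$ effective. This requires the injectivity of $\mathcal{O}_X \to \beta_*\mathcal{O}_{X'}$ (so that $Q$ is a genuine sheaf with nonnegative leading cycle) together with the vanishing of $R^{\ge 1}\beta_*\mathcal{O}_{X'}$ in codimension one, which is precisely the content of the finiteness-in-codimension-one hypothesis. A secondary subtlety is that $L|_V$ is only nef and not necessarily big, so the desired positivity of ${^\hbar(E. f(L_T); \xi)}$ is non-strict and must be extracted from semipositivity of $\Omega$ and $f^{(n-1)} > 0$ rather than from the strict positivity available for the ambient fundamental class in Proposition \ref{absolute intersection}.
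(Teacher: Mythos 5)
Your proposal is correct and follows essentially the same route as the paper: part (1) via $\beta_*[X']^T = [X]^T$ and the equivariant projection formula, and parts (2)--(3) via the equivariant Grothendieck--Riemann--Roch theorem, which yields $\beta_*\kappa^T_{X'} = \kappa^T_X - 2\sum_i m_i [D_i]^T$ with $m_i \ge 0$ the multiplicities of $\beta_*\mathcal{O}_{X'}/\mathcal{O}_X$ along invariant divisors, followed again by the projection formula. Your closing positivity step is in fact slightly more careful than the paper's, which invokes Proposition \ref{absolute intersection} (whose positivity statement nominally requires bigness of the restricted class $L|_{D_i}$), whereas your semipositive-form argument on a resolution of each component gives exactly the non-strict bound ${^\hbar (E. f (L_T); \xi)} \ge 0$ that the inequality in (2) actually requires.
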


\begin{proof}
We have $\beta_* [X']^T = [X]^T$ under the first assumption, so that by the equivariant projection formula, we have 
\[ ([X']^T. (\beta^* L_T)^{\cdot (n+k)}) = (\beta_* [X']^T. L_T^{\cdot (n+k)}) = ([X]^T. L_T^{\cdot (n+k)}) \in H_T^{2k} (\mathrm{pt}), \]
which shows the first claim. 

By the equivariant Grothendieck--Riemann--Roch theorem, we have $\beta_* \tau^T_{X'} (\mathcal{O}_{X'}) = \tau^T_X (\beta_! \mathcal{O}_{X'})$. 
Since $\beta$ is finite away from a codimension two subscheme of $X$, the supports of the higher direct images of $\mathcal{O}_{X'}$ is contained in the codimension two subscheme. 
So we have 
\[ (\tau^T_X (\beta_! \mathcal{O}_{X'}))_{\langle n-1 \rangle} = (\tau^T_X (\beta_* \mathcal{O}_{X'}))_{\langle n-1 \rangle} = (\tau^T_X (\beta_* \mathcal{O}_{X'}/\mathcal{O}_X) + \tau^T_X (\mathcal{O}_X))_{\langle n-1 \rangle}. \]
Since $\beta$ is isomorphism away from a codimension one subscheme of $X$, we have 
\[ (\tau^T_X (\beta_* \mathcal{O}_{X'}/\mathcal{O}_X))_{\langle n-1 \rangle} = \sum_i m_i [D_i]^T, \]
where $m_i \ge 0$ are the multiplicitis of $\beta_* \mathcal{O}_{X'}/\mathcal{O}_X$ along $T$-invarianrt prime divisors $D_i$ contained in the codimension one subscheme. 
It follows that we have 
\[ \beta_* \kappa^T_{X'} = \kappa^T_X - 2 \sum_i m_i [D_i]^T. \]
In particular, under the third assumption, we have $\beta_* \kappa^T_{X'} = \kappa^T_X$, which shows the third claim. 

By the equivariant projection formula, we have 
\begin{align*} 
(\kappa^T_{X'}. (\beta^* L_T)^{\cdot (n+k-1)})
&= (\beta_* \kappa^T_{X'}. L^{\cdot (n+k-1)}) 
\\
&= (\kappa^T_X . L^{\cdot (n+k-1)}) - 2 \sum_i m_i ([D_i]^T . L^{\cdot (n+k-1)}) \in H^{2k}_T (\mathrm{pt}), 
\end{align*}
so that 
\[ {^\hbar (\kappa^T_{X'}. f (\beta^* L_T); \xi)} = {^\hbar (\kappa^T_X. f (L_T); \xi)} - 2 \sum_i m_i {^\hbar (f (L_T|_{D_i}); \xi)}. \]
Then by Proposition \ref{absolute intersection}, $2 \sum_i m_i {^\hbar (f (L_T|_{D_i}); \xi)}$ is positive when $f^{(n-1)} = f^{(\dim D_i)}$ is positive, which shows the second claim.  
\end{proof}

\subsubsection{Equivariant calculus on the relative equivariant intersection $(\alpha. e^{\mathcal{L}})_B$}
\label{relative equivariant intersection theory} 

Now we study equivariant calculus of relative equivariant intersection. 
Let $G$ be an algebraic group, $T$ be an algebraic torus. 
Let $\mathcal{X}$ be a pure dimensional $T \times G$-scheme, $B$ be a smooth $G$-variety with the trivial $T$-action, $\varpi: \mathcal{X} \to B$ be a $T \times G$-equivariant proper morphism. 
Let $\mathcal{L}$ be a $T \times G$-equivariant second cohomology class on $\mathcal{X}$. 
Let $f$ be a real analytic function on $\mathbb{R}$ whose Taylor series $\sum_{k=0} \frac{a_k}{k!} x^k$ is absolutely convergent for every $x \in \mathbb{R}$. 

\begin{thm}
\label{derivative of relative equivariant intersection}
If $\alpha \in H^{\mathrm{lf}, T \times G}_{2\dim \mathcal{X}} (\mathcal{X}, \mathbb{R})$ or $\alpha \in H^{\mathrm{alg}, T \times G}_{2\dim \mathcal{X} -2} (\mathcal{X}, \mathbb{R})$, the relative equivariant intersection $(\alpha. f (\mathcal{L}))_B \in \hat{H}^{\mathrm{even}}_{T \times G} (B, \mathbb{R})$ is of class $\varepsilon^\infty$. (See Definition \ref{class epsilon ell} for \textit{class $\varepsilon^\infty$}. )
\end{thm}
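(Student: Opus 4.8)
The plan is to reduce to smooth $\mathcal{X}$, transport the form-level convergence of Proposition~\ref{absolute intersection} across the proper morphism $\varpi$, and then convert convergence of pushed-forward \emph{currents} into convergence of \emph{homology classes} by the continuity furnished by Theorem~\ref{Hausdorffness of equivariant cohomology}. First I would reduce to the case that $\mathcal{X}$ is smooth. Passing to the reduction, its irreducible decomposition and a $T\times G$-equivariant resolution $\beta\colon\mathcal{X}'\to\mathcal{X}$, Lemma~\ref{div-surj} supplies a lift $\tilde\alpha$ of $\alpha$ with $\beta_*\tilde\alpha=\alpha$ (in $H^{\mathrm{lf},T\times G}_{2\dim\mathcal{X}'}$ resp.\ $H^{\mathrm{alg},T\times G}_{2\dim\mathcal{X}'-2}$). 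The equivariant projection formula gives $(\alpha.\mathcal{L}^{\cdot k})_B=(\tilde\alpha.(\beta^*\mathcal{L})^{\cdot k})_B$ for every $k$, so the two formal series $(\alpha.f(\mathcal{L}))_B$ and $(\tilde\alpha.f(\beta^*\mathcal{L}))_B$ coincide and I may assume $\mathcal{X}$ smooth.

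Next, fix a maximal compact $K\subset G$ and choose $T_{\mathrm{cpt}}\times K$-equivariant forms $A+\hbar\nu$ representing ${}^\hbar\mathrm{PD}(\alpha)$ and $\Omega+\hbar\mu$ representing ${}^\hbar\mathcal{L}$, and for $\xi\in\mathfrak{t}$ set
\[
\varphi_k:=\rho^T_\xi\Big(\tfrac{a_k}{k!}(A+\hbar\nu)\wedge(\Omega+\hbar\mu)^{k}\Big)\in\Omega^*_K(\mathcal{X}).
\]
Since $\Omega^j=0$ for $j>\dim_{\mathbb{C}}\mathcal{X}$, the wedge power truncates, and for each fixed $j$ the scalar series $\sum_k\tfrac{a_k}{k!}\binom{k}{j}\mu_{\hbar\xi}^{\,k-j}$ sums to $\tfrac{1}{j!}f^{(j)}(\mu_{\hbar\xi})$; as $f$ is entire, exactly as in the proof of Proposition~\ref{absolute intersection} the series $\sum_k\varphi_k$ converges in every seminorm of the Fr\'echet space $\Omega^*_K(\mathcal{X})$, uniformly on compacta of $\mathfrak{t}$, to the $\xi$-real-analytic limit $\rho^T_\xi\big((A+\hbar\nu)\wedge f(\Omega+\hbar\mu)\big)$. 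Each $\varphi_k$ is \emph{not} ${}^\hbar d_K$-closed, because $\rho^T_\xi$ destroys closedness where $T$ acts non-trivially (section~\ref{The beauty of the Cartan model}); however the equivariant cap product $(A+\hbar\nu)\wedge(\Omega+\hbar\mu)^{k}$ is closed, so its proper pushforward is a closed equivariant current on $B$, and since $T$ acts trivially on $B$ the evaluation commutes with $\varpi_*$ and preserves closedness there. Hence $\varpi_*\varphi_k$ is a closed $K$-equivariant current representing $\tfrac{a_k}{k!}\,\rho^T_\xi\big({}^\hbar\Phi\big((\alpha.\mathcal{L}^{\cdot k})_B\big)\big)$, so every $\varphi_k$, and the limit $\varphi=\sum_k\varphi_k$, lies in the domain of the form-to-homology pushforward~(\ref{push}).

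Now I would invoke continuity. By Theorem~\ref{Hausdorffness of equivariant cohomology} the space ${}^\hbar H^{\mathrm{cur},K}_*(B)$ is Hausdorff, so Corollary~\ref{continuity of push} makes the pushforward~(\ref{push}) continuous from $\Omega^*_K(\mathcal{X})$ into the finite-dimensional ${}^\hbar H^{\mathrm{cur},K}_*(B)$. As $\sum_k\|\varphi_k\|_l<\infty$ in every seminorm, uniformly near $0\in\mathfrak{t}$, Lemma~\ref{Frechet to Banach} gives that $\sum_k[\varpi_*\varphi_k]$ converges absolutely and uniformly there, with sum $[\varpi_*\varphi]$. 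Transporting back through ${}^\hbar\Phi^{-1}$ and the equivariant Poincar\'e duality of the smooth $B$, and splitting off the homogeneous degree $2q$ by $t_q$, this says precisely that $\sum_k\tfrac{a_k}{k!}\D^q_{\hbar.\xi}(\alpha.\mathcal{L}^{\cdot k})_B$ converges uniformly absolutely near $0$ for every $q$. Because the form-level limit is real-analytic in $\xi$ on all of $\mathfrak{t}$ and the pushforward is a fixed continuous linear map, each such sum extends to a real-analytic map $\mathfrak{t}\to H^{2q}_G(B,\mathbb{R})$; thus $(\alpha.f(\mathcal{L}))_B$ is of class $\varepsilon^\infty$.

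I expect the single genuine obstacle to be the passage from convergence of the pushed-forward currents $\varpi_*\varphi_k$ to convergence of their homology classes: the form-to-current pushforward is plainly continuous, but the quotient onto homology need not be, and since the integrands $\varphi_k$ are genuinely non-closed nothing can be salvaged on the cohomology of $\mathcal{X}$ alone. The whole argument therefore rests on the Hausdorffness of the quotient topology on ${}^\hbar H^{\mathrm{cur},K}_*(B)$, which is exactly what legitimizes interchanging the infinite sum with $\varpi_*$.
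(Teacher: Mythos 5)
Your proposal is correct and follows essentially the same route as the paper's proof: reduction to smooth $\mathcal{X}$ via Lemma \ref{div-surj} and the projection formula, Fr\'echet-space convergence of the Cartan-model form series, closedness of the pushed-forward currents (via commutation of $\rho^T_\xi$ with $\varpi_*$ and triviality of the $T$-action on $B$), and then Corollary \ref{continuity of push} together with Lemma \ref{Frechet to Banach} to pass to convergence in ${^\hbar H^{\mathrm{cur},K}_*}(B)$. The only difference is organizational: the paper splits $\mu = \mu^T + \mu^K$ and computes each bidegree component $(\,\cdot\,)^{\langle p,q\rangle}$ explicitly before summing over $p$ for each fixed $q$, whereas you evaluate the full product first and project by $t_q$ at the end, which amounts to the same computation.
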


\begin{proof}
We put $n := \dim \mathcal{X} - \dim B$. 
Let $K$ be a maximal compact subgroup of $G$. 
As in the proof of Proposition \ref{absolute intersection}, we may assume that $\mathcal{X}$ is smooth and $\varpi: \mathcal{X} \to B$ is a smooth $K$-equivariant map (not necessarily a submersion). 
Pick an equivariant $2$-from $\Omega + \hbar \mu = \Omega + \hbar (\mu^T + \mu^K)$ in $\mathcal{L}$. 
We firstly compute $(\mathcal{L}^{\smile k})_B^{\langle p, q \rangle}$. 
Compairing the degree, we have $(\mathcal{L}^{\smile k})_B^{\langle p, q \rangle} = 0$ when $k-n \neq p+q$. 
For $k= n+p+q$, we compute 
\begin{align*} 
{^\hbar \Phi} (\mathcal{L}^{\smile n+p+q})_B^{\langle p, q \rangle} 
&= (\mathrm{PD}^B_{T \times K} [\varpi_* (\Omega + \hbar \mu)^{n+p+q}])^{\langle p, q \rangle} 
\\
&= \sum_{l=n}^{n+p+q} \binom{n+p+q}{l} (\mathrm{PD}^B_{T \times K} [ \varpi_* ((\hbar \mu)^{n+p+q-l} \Omega^l)])^{\langle p, q \rangle} 
\\
&= \sum_{l=n}^{n+q} \binom{n+p+q}{n+p+q-l} \mathrm{PD}^B_{T \times K} \left[ \binom{n+p+q-l}{p} \varpi_* ((\hbar \mu^T)^p (\hbar \mu^K)^{n+q-l} \Omega^l) \right] 
\\
&= \mathrm{PD}^B_{T \times K} \left[ \binom{n+p+q}{n+q} \varpi_* ((\hbar \mu^T)^p \sum_{l=n}^{n+q} \binom{n+k}{l} (\hbar \mu^K)^{n+q-l} \Omega^l) \right]
\\
&= \binom{n+p+q}{n+q} \mathrm{PD}^B_{T \times K} [\varpi_* ((\hbar \mu^T)^p (\Omega + \hbar \mu^K)^{n+q})], 
\end{align*}
so that we obtain 
\begin{align*} 
{^\hbar \rho^T_\xi} ((f (\mathcal{L}))_B^{\langle p, q \rangle}) 
&= \frac{a_{n+p+q}}{(n+p+q)!} {^\hbar \rho^T_\xi} ((\mathcal{L}^{\smile n+p+q})_B^{\langle p, q \rangle}) 
\\
&= \frac{1}{(n+q)!} \mathrm{PD}^B_K [\varpi_* (\frac{a_{n+q+p}}{p!} (\mu_{\hbar \xi}^T)^p (\Omega + \hbar \mu^K)^{n+q})]. 
\end{align*}

We pick a collection of seminorms $\{ \| \cdot \|_\ell \}_{\ell \in \mathbb{Z}_{\ge 0}}$ on $ \Omega^{2(n+q)}_K (\mathcal{X})$ so that it defines the Fr\'echet structure of $ \Omega^{2(n+q)}_K (\mathcal{X})$. 
For instance, we may put $\| \varphi \|_\ell := \| \varphi|_{D_\ell} \|_{C^\ell}$ using an exhaustion $\{ D_\ell \subset \mathcal{X} \}_{\ell \in \mathbb{Z}_{\ge 0}}$ by compact sets. 
Now we easily see the infinite series of $K$-equivariant forms 
\[ \sum_{p=0}^\infty \frac{a_{n+q+p}}{p!} (\mu_{\hbar \xi}^T)^p (\Omega + \hbar \mu^K)^{n+q} \]
is locally unifromly absolutely-convergent on $\mathfrak{t}$ with respect to the seminorm $\| \cdot \|_\ell$ for each $\ell \in \mathbb{Z}_{\ge 0}$, to the limit $K$-equivariant form 
\[ f^{(n+q)} (\mu^T_{\hbar \xi}) (\Omega + \hbar \mu^K)^{n+q}. \]
Since $\varpi_* (\frac{a_{n+q+p}}{p!} (\mu_{\hbar \xi}^T)^p (\Omega + \hbar \mu^K)^{n+k})$ are $K$-equivariantly closed forms, the infinite series of $2k$-th $K$-equivariant cohomology classes on $B$
\[ \sum_{p=0}^\infty {^\hbar \rho^T_\xi} ((f (\mathcal{L}))_B^{\langle p, q \rangle}) = \frac{1}{(n+q)!} \sum_{p=0}^\infty \mathrm{PD}^B_K [ \varpi_* (\frac{a_{n+q+p}}{p!} (\mu^T_{\hbar \xi})^p (\Omega + \hbar \mu^K)^{n+q}) ] \]
is compactly absolutely convergent on $\mathfrak{t}$ by Corollary \ref{continuity of push} and Lemma \ref{Frechet to Banach}. 
The limit equivariant cohomology class is given by 
\[ \frac{1}{(n+q)!} \mathrm{PD}^B_K [\varpi_* (f^{(n+q)} (\mu^T_{\hbar \xi}) (\Omega + \hbar \mu^K)^{n+q})]. \]

Similarly, for $\alpha \in H^{\mathrm{lf}, T \times G}_{2 \dim \mathcal{X} - 2} (\mathcal{X}, \mathbb{R})$, we pick an equivariant $2$-form $A + \hbar \nu = A + \hbar (\nu^T + \nu^K)$ in the second equivariant cohomology class $\mathrm{PD}^B_{T \times K} ({^\hbar \alpha})$. 
We compute 
\begin{align*} 
{^\hbar \Phi} (\alpha. f (\mathcal{L}))_B^{\langle p, q \rangle} 
&= \frac{a_{n+p+q-1}}{(n+p+q-1)!} (\mathrm{PD}^B_{T \times K} \left[ \varpi_* \big{(} (A + \hbar \nu) (\Omega + \hbar \mu)^{n+p+q-1} \big{)} \right])^{\langle p, q \rangle}
\\
&= 
\begin{cases}
\frac{1}{(n+q)!} \mathrm{PD}^B_K [\varpi_* \big{(} (n+q) a_{n+q-1} (A + \hbar \nu^K) (\Omega + \hbar \mu^K)^{n+q-1} \big{)}]
& p=0
\\ 
\frac{1}{(n+q)!} \mathrm{PD}^B_K [\varpi_* \big{(} \hbar \nu^T \frac{a_{(n+q)+(p-1)}}{(p-1)!} (\hbar \mu^T)^{p-1} (\Omega + \hbar \mu^K)^{n+q} 
&
\\
\qquad \qquad + (n+q) \frac{a_{(n+q-1)+p}}{p!} (\hbar \mu^T)^p (A + \hbar \nu^K) (\Omega + \hbar \mu^K)^{n+q-1} \big{)} ]
& p \ge 1. 
\end{cases}
\end{align*}
By the same argument as above, we obtain that the infinite series 
\begin{align*}
\sum_{p=0}^\infty {^\hbar \rho^T_\xi} (\alpha. f (\mathcal{L}))^{\langle p, q \rangle}_B
&= \frac{\mathrm{PD}^B_K}{(n+q)!} \sum_{p=0}^\infty \Big{[} \varpi_* \Big{(} \nu^T_{\hbar \xi} \frac{a_{n+q+p}}{p!} (\mu^T_{\hbar \xi})^p (\Omega + \hbar \mu^K)^{n+q} 
\\
&\qquad \qquad \qquad + (n+q) \frac{a_{n+q-1 +p}}{p!} (\mu_{\hbar \xi}^T)^p (A + \hbar \nu^K)(\Omega + \hbar \mu^K)^{n+q-1} \Big{)} \Big{]}
\end{align*}
is compactly absolutely convergent on $\mathfrak{t}$ to the following equivariant cohomology class in ${^\hbar H^{2q}_{\mathrm{dR}, K}} (B)$: 
\[ \frac{\mathrm{PD}^B_K}{(n+q)!} \Big{[} \varpi_* \Big{(} \nu^T_{\hbar \xi} f^{(n+q)} (\mu^T_{\hbar \xi}) (\Omega + \hbar \mu^K)^{n+q} + (n+q) f^{(n+q-1)} (\mu_{\hbar \xi}^T) (A + \hbar \nu^K)(\Omega + \hbar \mu^K)^{n+q-1} \Big{)} \Big{]}, \]
which proves the claim. 
\end{proof}

From the proof, we obtain the following expressions: 
\begin{align} 
\label{differential of relative equivariant intersection eq 1}
\D^q_{\hbar. \xi} (f (\mathcal{L})) 
&= \frac{\mathrm{PD}^B_G {^\hbar \Phi^{-1}}}{(n+q)!} [\varpi_* (f^{(n+q)} (\mu^T_{\hbar\xi}) (\Omega + \hbar \mu^K)^{n+q})] 
\end{align}
and
\begin{align}
\label{differential of relative equivariant intersection eq 2}
\D^q_{\hbar. \xi} 
&(\alpha. f (\mathcal{L})) 
\\ \notag
&= \frac{\mathrm{PD}^B_G {^\hbar \Phi^{-1}}}{(n+q)!}  \Big{[} \varpi_* \Big{(} \nu^T_{\hbar\xi} f^{(n+q)} (\mu^T_{\hbar\xi}) (\Omega + \hbar \mu^K)^{n+q}
\\ \notag
&\qquad + (n+q) f^{(n+q-1)} (\mu^T_{\hbar\xi}) (A + \hbar \nu^K)(\Omega + \hbar \mu^K)^{n+q-1} \Big{)} \Big{]}. 
\end{align}

\subsubsection{$\mu$-character and its derivative}

\begin{lem}
Suppose $\varpi: \mathcal{X} \to B$ is moreover flat and $\mathcal{L}$ is in the equivariant Neron--Severi group $NS_{T \times G} (\mathcal{X}, \mathbb{R})$. 
Let $\mathcal{L}_b \in H^2_T (\mathcal{X}_b, \mathbb{R})$ be the restriction to the fibre $\mathcal{X}_b$ of a point $b \in B$. 
Then we have the following equalities
\begin{gather}
\D^0_{\hbar. \xi} (f (\mathcal{L}))_B = {^\hbar (f (\mathcal{L}_b); \xi)} \in \mathbb{R}, 
\\
\D^0_{\hbar. \xi} (\kappa_{\mathcal{X}/B}. f (\mathcal{L}))_B = {^\hbar (\kappa_{\mathcal{X}_b}. f (\mathcal{L}_b); \xi)} \in \mathbb{R}. 
\end{gather}
\end{lem}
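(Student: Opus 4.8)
The plan is to compute the degree-zero part $\D^0_{\hbar.\xi}$ by restricting the family to a single fibre, using the flat base change already packaged in Corollary \ref{base change}. Both displayed equalities will follow from one observation about how $\D^0_{\hbar.\xi}$ interacts with the K\"unneth decomposition, combined with the base change of the relative equivariant intersections along a point of $B$.

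First I would record what $\D^0_{\hbar.\xi}$ extracts. Fix a closed point $b \in B$; since $T$ acts trivially on $B$, the inclusion $i_b \colon \mathrm{pt} \to B$ is $T$-equivariant, and I let $\varphi \colon T \hookrightarrow T \times G$ be the subgroup inclusion. For $\alpha \in \hat{H}^{\mathrm{even}}_{T \times G}(B,\mathbb{R})$ with K\"unneth components $\alpha^{\langle p, q \rangle} \in H^{2p}_T(\mathrm{pt}) \otimes H^{2q}_G(B,\mathbb{R})$, the map $\varphi^\#$ discards the $G$-equivariance and $i_b^*$ then pulls back along $b$; since $i_b^* \colon H^{2q}(B) \to H^{2q}(\mathrm{pt})$ vanishes for $q > 0$ and is the canonical isomorphism for $q = 0$ (here I use that $B$ is connected, so $H^0_G(B,\mathbb{R}) = \mathbb{R}$), I obtain $i_b^* \varphi^\# \alpha = \sum_{p} \alpha^{\langle p, 0 \rangle}$ in $\hat{H}^{\mathrm{even}}_T(\mathrm{pt})$. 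Comparing with the definition of $\D^0_{\hbar.\xi}$ this gives, for every $\alpha$ of class $\varepsilon^0$,
\[ \D^0_{\hbar.\xi}\alpha = {^\hbar (i_b^*\varphi^\# \alpha; \xi)}, \]
the evaluation at $\xi$ of the absolute class $i_b^* \varphi^\# \alpha$, valid near $\xi = 0$ and extended by real analyticity.

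Next I would identify these absolute classes. The base change of $\varpi \colon \mathcal{X} \to B$ along $i_b$ is the fibre $\mathcal{X}_b$, which is pure $n$-dimensional by flatness ($n := \dim \mathcal{X} - \dim B$), with $\kappa^T_{\mathcal{X}_b / \mathrm{pt}} = \kappa^T_{\mathcal{X}_b}$ and with $\mathcal{L}$ restricting to $\mathcal{L}_b$. Applying Corollary \ref{base change} with the homomorphism $\varphi$ and the morphism $i_b$ yields, in each fixed degree,
\[ i_b^*\varphi^\# (\kappa^{T\times G}_{\mathcal{X}/B}. \mathcal{L}^{\cdot(n+p-1)})_B = (\kappa^T_{\mathcal{X}_b}. \mathcal{L}_b^{\cdot(n+p-1)}) \in H^{2p}_T(\mathrm{pt}), \]
and the same equivariant Grothendieck--Riemann--Roch argument applied to the top-degree component $\tau^{T\times G}_{\mathcal{X}}(\mathcal{O}_{\mathcal{X}})_{\langle \dim \mathcal{X}\rangle} = [\mathcal{X}]$ gives $i_b^*\varphi^\#(\mathcal{L}^{\cdot(n+p)})_B = (\mathcal{L}_b^{\cdot(n+p)})$. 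Since $f$ is entire real analytic, summing these fibrewise identities against the coefficients $a_{n+p}/(n+p)!$ and $a_{n+p-1}/(n+p-1)!$ — the absolute convergence being the class $\varepsilon^\infty$ conclusion of Theorem \ref{derivative of relative equivariant intersection} — produces $i_b^*\varphi^\#(f(\mathcal{L}))_B = f(\mathcal{L}_b)$ and $i_b^*\varphi^\#(\kappa_{\mathcal{X}/B}. f(\mathcal{L}))_B = (\kappa_{\mathcal{X}_b}. f(\mathcal{L}_b))$ in $\hat{H}^{\mathrm{even}}_T(\mathrm{pt})$.

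Combining the two steps gives both displayed equalities near $\xi = 0$. To upgrade to all of $\mathfrak{t}$, I would observe that the right-hand sides ${^\hbar(f(\mathcal{L}_b);\xi)}$ and ${^\hbar(\kappa_{\mathcal{X}_b}. f(\mathcal{L}_b);\xi)}$ are, by Proposition \ref{absolute intersection}, real analytic functions defined on all of $\mathfrak{t}$ whose power series at the origin coincide with the local series defining $\D^0_{\hbar.\xi}$; since $\D^0_{\hbar.\xi}$ of a class $\varepsilon^\infty$ element is by definition the real analytic extension of that local series, the equalities hold on all of $\mathfrak{t}$. The point that needs care is that $b$ need not be a $G$-fixed point — which is precisely why one first applies $\varphi^\#$ to forget the $G$-action and only afterwards restricts along the automatically $T$-fixed point $b$ — together with the identification $\kappa^T_{\mathcal{X}_b/\mathrm{pt}} = \kappa^T_{\mathcal{X}_b}$, which is exactly where the flatness of $\varpi$ is used.
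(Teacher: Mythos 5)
Your proposal is correct and follows essentially the same route as the paper's proof: restrict along $i_b$ after forgetting the $G$-action, identify the resulting absolute classes on the fibre via Corollary \ref{base change} and equivariant Grothendieck--Riemann--Roch, and conclude using the convergence/analyticity statements of Theorem \ref{derivative of relative equivariant intersection} and Proposition \ref{absolute intersection}. You merely unpack what the paper leaves implicit — the K\"unneth-component description of $\D^0_{\hbar.\xi}$ and the role of $\varphi^\#$ in handling a non-$G$-fixed point $b$ — which is a faithful elaboration, not a different argument.
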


\begin{proof}
By Corollary \ref{base change} and Proposition \ref{absolute intersection} and Theorem \ref{derivative of relative equivariant intersection}, we have 
\[ i_b^* \D^0_{\hbar. \xi} (\kappa_{\mathcal{X}/B}. f (\mathcal{L}))_B = \D^0_{\hbar. \xi} (\kappa_{\mathcal{X}_b}. f (\mathcal{L}_b)) = {^\hbar (\kappa_{\mathcal{X}_b}. f (\mathcal{L}_b); \xi)}. \]
Similarly, by the equivariant Grothendieck--Riemann--Roch theorem, we have 
\[ i_b^* \D^0_{\hbar. \xi} (f (\mathcal{L}))_B = \D^0_{\hbar. \xi} (f (\mathcal{L}_b)) = {^\hbar (f (\mathcal{L}_b); \xi)}. \]
\end{proof}

Now we obtain the characteristic class $\D_{\hbar. \xi} \cmu^\lambda_{T \times G} (\mathcal{X}/B, \mathcal{L})$ in Theorem B. 

\begin{prop} 
The following equivariant cohomology classes in $\hat{H}^{\mathrm{even}}_{T \times G} (B, \mathbb{R})$ defined in Definition \ref{mu-character definition} 
\[ \cmu_{T \times G} (\mathcal{X}/B, \mathcal{L}), \quad \bm{\check{\sigma}}_{T \times G} (\mathcal{X}/B, \mathcal{L}), \quad \cmu^\lambda_{T \times G} (\mathcal{X}/B, \mathcal{L}) \] 
are of class $\varepsilon^\infty$. 
The differentials 
\[ \D_{\hbar. \xi} \cmu_{T \times G} (\mathcal{X}/B, \mathcal{L}), \quad \D_{\hbar. \xi} \bm{\check{\sigma}}_{T \times G} (\mathcal{X}/B, \mathcal{L}) \in H^2_G (B, \mathbb{R}) \]
are given by 
\begin{gather*}
2 \pi \frac{ \D_{\hbar. \xi} (\kappa^{T \times G}_{\mathcal{X}/B}. e^{\mathcal{L}_{T \times G}})_B \cdot {^\hbar (e^{\mathcal{L}_b}; \xi)} - {^\hbar  (\kappa^T_{\mathcal{X}_b}. e^{\mathcal{L}_b}; \xi)} \cdot \D_{\hbar. \xi} (e^{\mathcal{L}_{T \times G}})_B }{{^\hbar (e^{\mathcal{L}_b}; \xi)}^2}, 
\\ 
\frac{ \D_{\hbar. \xi} (\mathcal{L}_{T \times G}. e^{\mathcal{L}_{T \times G}})_B \cdot {^\hbar (e^{\mathcal{L}_b}; \xi)} - {^\hbar (\mathcal{L}_b. e^{\mathcal{L}_b}; \xi)} \cdot \D_{\hbar. \xi} (e^{\mathcal{L}_{T \times G}})_B }{{^\hbar (e^{\mathcal{L}_b}; \xi)}^2} - \frac{\D_{\hbar. \xi} (e^{\mathcal{L}_{T \times G}})_B}{{^\hbar (e^{\mathcal{L}_b}; \xi)}}, 
\end{gather*}
respectively, and we have 
\[ \D_{\hbar. \xi} \cmu^\lambda_{T \times G} (\mathcal{X}/B, \mathcal{L}) = \D_{\hbar. \xi} \cmu_{T \times G} (\mathcal{X}/B, \mathcal{L}) + \lambda \D_{\hbar. \xi} \bm{\check{\sigma}}_{T \times G} (\mathcal{X}/B, \mathcal{L}). \]
\end{prop}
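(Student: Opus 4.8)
The plan is to assemble the statement from the closure properties of the class $\varepsilon^\infty$ established in Propositions \ref{Leibniz rule} and \ref{chain rule}, feeding in the convergence result Theorem \ref{derivative of relative equivariant intersection} for the building blocks. First I would record that the three relative intersections
\[ (e^{\mathcal{L}_{T \times G}})_B, \quad (\mathcal{L}_{T \times G}. e^{\mathcal{L}_{T \times G}})_B, \quad (\kappa^{T \times G}_{\mathcal{X}/B}. e^{\mathcal{L}_{T \times G}})_B \]
are all of class $\varepsilon^\infty$. The first two arise as $([\mathcal{X}]. f (\mathcal{L}))_B$ for the entire real analytic functions $f (x) = e^x$ and $f (x) = x e^x$, with $[\mathcal{X}] \in H^{\mathrm{lf}, T \times G}_{2 \dim \mathcal{X}} (\mathcal{X}, \mathbb{R})$, so they are $\varepsilon^\infty$ by Theorem \ref{derivative of relative equivariant intersection}; the third takes $\alpha = \kappa^{T \times G}_{\mathcal{X}/B}$, which lies in $H^{\mathrm{alg}, T \times G}_{2 \dim \mathcal{X} - 2} (\mathcal{X}, \mathbb{R})$ by its construction via the cycle map $cl^{T \times G}$ from the equivariant Chow group in Definition \ref{equivariant canonical class}, together with $f (x) = e^x$.

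Next I would verify the positivity needed to invert $(e^{\mathcal{L}_{T \times G}})_B$ and to take its logarithm. By the preceding Lemma, $\D^0_{\hbar. \xi} (e^{\mathcal{L}})_B = {^\hbar (e^{\mathcal{L}_b}; \xi)}$, and since $\mathcal{L}$ is relatively semiample and big the fibre class $\mathcal{L}_b$ is semiample and big, so Proposition \ref{absolute intersection} gives ${^\hbar (e^{\mathcal{L}_b}; \xi)} > 0$ for \emph{every} $\xi \in \mathfrak{t}$. Thus $\D^0_{\hbar. \xi} (e^{\mathcal{L}})_B$ lies in $U = (0, \infty)$ for all $\xi$, which is exactly the hypothesis of the chain rule. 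Applying Proposition \ref{chain rule} with $h (x) = x^{-1}$ and with $h (x) = \log x$ then shows that $(e^{\mathcal{L}_{T \times G}})_B^{-1}$ and $\log (e^{\mathcal{L}_{T \times G}})_B$ are of class $\varepsilon^\infty$. The subring property of $\varepsilon^\infty_T (B_G)$ (Proposition \ref{Leibniz rule}) immediately yields that the products and sums defining $\cmu_{T \times G} (\mathcal{X}/B, \mathcal{L})$, $\bm{\check{\sigma}}_{T \times G} (\mathcal{X}/B, \mathcal{L})$ and $\cmu^\lambda_{T \times G} (\mathcal{X}/B, \mathcal{L})$ are all of class $\varepsilon^\infty$.

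For the explicit differentials I would combine the $q = 1$ Leibniz rule $\D_{\hbar. \xi} (\alpha \smile \beta) = \D^0_{\hbar. \xi} \alpha \smile \D_{\hbar. \xi} \beta + \D_{\hbar. \xi} \alpha \smile \D^0_{\hbar. \xi} \beta$ with the chain rule $\D_{\hbar. \xi} h (\gamma) = h' (\D^0_{\hbar. \xi} \gamma) \cdot \D_{\hbar. \xi} \gamma$, reading off the $\D^0$-values from the preceding Lemma as ${^\hbar (\kappa_{\mathcal{X}_b}. e^{\mathcal{L}_b}; \xi)}$, ${^\hbar (\mathcal{L}_b. e^{\mathcal{L}_b}; \xi)}$ and ${^\hbar (e^{\mathcal{L}_b}; \xi)}$. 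Writing $E = (e^{\mathcal{L}_{T \times G}})_B$ and using $h (x) = x^{-1}$, $h' (x) = -x^{-2}$, substitution into the Leibniz rule for $(\kappa^{T \times G}_{\mathcal{X}/B}. e^{\mathcal{L}_{T \times G}})_B \cdot E^{-1}$ (resp. $(\mathcal{L}_{T \times G}. e^{\mathcal{L}_{T \times G}})_B \cdot E^{-1}$) and collecting over the common denominator ${^\hbar (e^{\mathcal{L}_b}; \xi)}^2$ produces the first two stated quotient formulas; for $\bm{\check{\sigma}}$ the extra summand $-\log E$ contributes, via $h (x) = \log x$, the term $-\D_{\hbar. \xi} (e^{\mathcal{L}_{T \times G}})_B / {^\hbar (e^{\mathcal{L}_b}; \xi)}$. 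Finally the decomposition $\D_{\hbar. \xi} \cmu^\lambda_{T \times G} = \D_{\hbar. \xi} \cmu_{T \times G} + \lambda \D_{\hbar. \xi} \bm{\check{\sigma}}_{T \times G}$ is immediate from the $\mathbb{R}$-linearity of the operator $\D_{\hbar. \xi}$ on $\varepsilon^\infty_T (B_G)$.

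The computation is entirely formal once the inputs are in place, and I expect no genuine obstacle beyond bookkeeping: the substantive analytic content, namely the compact absolute convergence of the relative intersections and the Hausdorffness of the equivariant current homology underlying it, has already been discharged in Theorem \ref{derivative of relative equivariant intersection}. The one point requiring care is that the chain rule must be valid on all of $\mathfrak{t}$, not merely near the origin; this is precisely why the \emph{global} positivity of ${^\hbar (e^{\mathcal{L}_b}; \xi)}$ supplied by Proposition \ref{absolute intersection}, rather than mere nonvanishing near $0 \in \mathfrak{t}$, is the essential ingredient guaranteeing that $h (x) = x^{-1}$ and $h (x) = \log x$ are applicable throughout.
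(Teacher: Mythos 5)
Your proposal is correct and follows exactly the route of the paper's own (very terse) proof, which simply cites Theorem \ref{derivative of relative equivariant intersection}, the Leibniz rule, the chain rule, and the preceding lemma identifying the $\D^0_{\hbar.\xi}$-values with fibrewise intersections. You have merely made explicit the bookkeeping the paper leaves implicit, including the one genuinely necessary check — global positivity of ${^\hbar (e^{\mathcal{L}_b}; \xi)}$ via Proposition \ref{absolute intersection} so that $h(x)=x^{-1}$ and $h(x)=\log x$ apply on all of $\mathfrak{t}$ — which is indeed the correct justification.
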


\begin{proof}
The claim follows by Theorem \ref{derivative of relative equivariant intersection}, Proposition \ref{Leibniz rule}, Proposition \ref{chain rule} and the above lemma. 
\end{proof}

In particular, we obtain Theorem B (4). 
By Corollary \ref{base change} and the base change properties of $\D_{\hbar. \xi} \alpha$ and $f (\alpha)$, we obtain Theorem B (1). 

\begin{rem}
In the construction of the characteristic class $\D_{\hbar. \xi} \cmu^\lambda_{T \times G} (\mathcal{X}/B, \mathcal{L})$, we assume the smoothness of the base $B$ in order to ensure the Poincar\'e duality between the equivariant cohomology $H^2_G (B, \mathbb{R})$ and the equivariant locally finite homology $H^{\mathrm{lf}, G}_{2\dim B -2} (B, \mathbb{R})$. 
Thus there is a room for extending our result to some singular bases satisfying the Poincar\'e duality (or perhaps to general singular bases by using other cohomology theory for which we can establish equivariant calculus as we have developed for equivariant singular / deRham cohomology). 

As another viewpoint, our construction works even for families of almost complex manifolds. 
In this case, $\mathcal{L}$ is just a $T \times G$-equivariant cohomology class. 
As a consequence, we can define $\D_{\hbar. \xi} \cmu^\lambda_{T \times G} (\mathcal{X}/B, \mathcal{L})$ for a Kuranishi family of $T$-polarized manifold with a singular base $B$ by pulling back the equivariant cohomology class $\D_{\hbar. \xi} \cmu^\lambda_{T \times G} (\tilde{\mathcal{X}}/\tilde{B}, \tilde{\mathcal{L}}) \in H^2 (\tilde{B}, \mathbb{R})$ associated to the Kuranishi slice $B \subset \tilde{B} \to \mathcal{J}$ which appeared in the construction of the Kuranishi family. 
Since the Kuranishi family is a local complete family, we can construct the characteristic class for any local family by pulling back the cohomology class for the Kuranishi family. 
The author speculates this idea allows us to construct the characteristic class $\D_{\hbar. \xi} \cmu^\lambda_{T \times G} (\mathcal{X}/B, \mathcal{L})$ for a general singular base $B$ by gluing these characteristic classes in some canonical way. 

It is preferable for this aim that we realize the characteristic class $\D_{\hbar. \xi} \cmu^\lambda_{T \times G} (\mathcal{X}/B, \mathcal{L})$ as some geometric object whose category forms a stack (namely, has a natural criterion for the descent of objects), like the CM line bundle. 
Such a geometric realization is also important when descending it to the moduli space. 
In fact, since in general the moduli space does not admit a universal family, our Thoerem B constructs nothing on the moduli space, at present. 

We note the cohomology class $\D_{\hbar. \xi} \cmu^\lambda_{T \times G} (\mathcal{X}/B, \mathcal{L})$ deforms continuously (and non-linearly) in $H^2_G (B, \mathbb{R})$ as $\xi$ varies, so it does not make sense to realize it as a complex line bundle on $B$. 
Real line bundle on $B$ endowed with pluri-harmonic transition functions may serve as such geometric object, but the actual construction is out of the author's consideration at the moment. 
\end{rem}

\subsubsection{$\mu$-Futaki inavriant is the weight of $\D_{\hbar. \xi} \cmu^\lambda_{T \times G}$}

Now we compare $\D_{\hbar. \xi} \cmu^\lambda_{T \times \mathbb{G}_m} (\mathcal{X}/\mathbb{A}^1, \mathcal{L})$ for $T$-equivariant test configuration and the $\mu$-Futaki invariant $\cFut^\lambda_{\hbar. \xi} (\mathcal{L}, \mathcal{L})$. 
This follows by the following localization formula. 

\begin{prop}
Let $f$ be an entire real analytic function on $\mathbb{R}$. 
For any test configuration $(\mathcal{X}/\mathbb{A}^1, \mathcal{L})$, we have 
\begin{align*} 
\D_{\hbar. \xi} (f (\mathcal{L}_{T \times \mathbb{G}_m}) )_{\mathbb{A}^1} 
&= - {^\hbar (f (\bar{\mathcal{L}}_T); \xi)}. \bm{x}, 
\\
\D_{\hbar. \xi} (\kappa_{\mathcal{X}/\mathbb{A}^1}^{T \times \mathbb{G}_m}. f (\mathcal{L}_{T \times \mathbb{G}_m}))_{\mathbb{A}^1} 
&= - {^\hbar (\kappa_{\bar{\mathcal{X}}/\mathbb{P}^1}^T. f (\bar{\mathcal{L}}_T); \xi)}. \bm{x}, 
\end{align*}
where $\bm{x} \in H^2_{\mathbb{G}_m} (\mathbb{A}^1, \mathbb{Z})$ is the generator corresponding to $c_1 (\mathcal{O} (-1)) \in H^2 (\mathbb{C}P^\infty, \mathbb{Z})$. 
\end{prop}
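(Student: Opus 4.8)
The plan is to reduce the non‑compact base $\mathbb{A}^1$ to the compact base $\mathbb{P}^1$ via the compactification $(\bar{\mathcal{X}},\bar{\mathcal{L}})$, and then to run the localization formula of Example \ref{localization formula} exactly as in the concluding step of the Odaka--Wang Lemma. Write $u := \D_{\hbar.\xi}(f(\bar{\mathcal{L}}_{T\times\mathbb{G}_m}))_{\mathbb{P}^1}\in H^2_{\mathbb{G}_m}(\mathbb{P}^1,\mathbb{R})$, which makes sense because $(f(\bar{\mathcal{L}}))_{\mathbb{P}^1}$ is of class $\varepsilon^\infty$ by Theorem \ref{derivative of relative equivariant intersection}. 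Since $\bar\varpi^{-1}(\mathbb{A}^1)=\mathcal{X}$ and homology pushforward commutes with restriction to the preimage of an open set, the relative intersection over $\mathbb{A}^1$ is the open restriction $j^*$ of the one over $\mathbb{P}^1$; by the naturality of $\D_{\hbar.\xi}$ under the open immersion $j:\mathbb{A}^1\hookrightarrow\mathbb{P}^1$ one gets $\D_{\hbar.\xi}(f(\mathcal{L}_{T\times\mathbb{G}_m}))_{\mathbb{A}^1}=j^* u$. As $\mathbb{A}^1$ equivariantly retracts onto its origin, $i_0^*$ induces an isomorphism $H^2_{\mathbb{G}_m}(\mathbb{A}^1,\mathbb{R})\xrightarrow{\sim} H^2_{\mathbb{G}_m}(\mathrm{pt},\mathbb{R})=\mathbb{R}.\bm{x}$, and under this identification $\D_{\hbar.\xi}(f(\mathcal{L}))_{\mathbb{A}^1}=i_0^* u$.

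Next I would apply the localization formula $\int_{\mathbb{P}^1}u.\bm{x}=i_{0_-}^* u-i_0^* u$ of Example \ref{localization formula}, so that $i_0^* u=i_{0_-}^* u-\int_{\mathbb{P}^1}u.\bm{x}$. The term $i_{0_-}^* u$ vanishes: by naturality of $\D_{\hbar.\xi}$ together with the base change of the relative intersection (Corollary \ref{base change}) along the $\mathbb{G}_m$-fixed inclusion $i_{0_-}$, we have $i_{0_-}^* u=\D_{\hbar.\xi}(f(\bar{\mathcal{L}}|_{\bar{\mathcal{X}}_{0_-}}))$. The fibre $\bar{\mathcal{X}}_{0_-}$ is the copy of $X$ coming from the product part $X\times\mathbb{A}^1_-$, on which $\mathbb{G}_m$ acts trivially and $\bar{\mathcal{L}}|_{\bar{\mathcal{X}}_{0_-}}=L_T$ has $\mathbb{G}_m$-weight zero. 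Hence $f(\bar{\mathcal{L}}|_{\bar{\mathcal{X}}_{0_-}})$ has no component of positive $\mathbb{G}_m$-degree, so its $\langle p,1\rangle$-parts all vanish and $\D_{\hbar.\xi}=\D^1_{\hbar.\xi}$ returns $0$.

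It then remains to identify $\int_{\mathbb{P}^1}u.\bm{x}$ with the absolute $T$-equivariant intersection ${}^\hbar(f(\bar{\mathcal{L}}_T);\xi)$. Here I would first reduce to the case $\bar{\mathcal{X}}$ smooth by an equivariant resolution together with the projection formula (Proposition \ref{projection formula for equivariant intersection}), which also legitimizes the use of the relative canonical class in the second identity. On the smooth model I would represent $u$ by the Cartan form of formula (\ref{differential of relative equivariant intersection eq 1}) and compute, by transitivity of the pushforward $\pi_*\circ\bar\varpi_*=(\pi\circ\bar\varpi)_*$ and the projection formula,
\[ \int_{\mathbb{P}^1}u.\bm{x}=\tfrac{1}{(n+1)!}\int_{\bar{\mathcal{X}}}f^{(n+1)}(\mu^T_{\hbar\xi})(\Omega+\hbar\mu^{U(1)})^{n+1}\wedge\bar\varpi^*\bm{x}, \]
an equivariant integral over $\bar{\mathcal{X}}$ whose $\bm{x}$-coefficient, after the base-direction Stokes computation of Example \ref{localization formula}, reduces to $\tfrac{1}{(n+1)!}\int_{\bar{\mathcal{X}}}f^{(n+1)}(\mu^T_{\hbar\xi})\,\Omega^{n+1}={}^\hbar(f(\bar{\mathcal{L}}_T);\xi)$ by Proposition \ref{absolute intersection}. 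Combining the three steps gives $\D_{\hbar.\xi}(f(\mathcal{L}))_{\mathbb{A}^1}=i_0^* u=-{}^\hbar(f(\bar{\mathcal{L}}_T);\xi).\bm{x}$, and the identical argument with formula (\ref{differential of relative equivariant intersection eq 2}) in place of (\ref{differential of relative equivariant intersection eq 1}) and $\kappa^{T\times\mathbb{G}_m}_{\mathcal{X}/\mathbb{A}^1}$ in place of $[\mathcal{X}]$ yields the second formula.

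The hard part will be this last identification $\int_{\mathbb{P}^1}u.\bm{x}={}^\hbar(f(\bar{\mathcal{L}}_T);\xi)$. The subtlety is that the Cartan integrand $f^{(n+1)}(\mu^T_{\hbar\xi})(\Omega+\hbar\mu^{U(1)})^{n+1}$ is \emph{not} $U(1)$-equivariantly closed (only its fibre pushforward is), so one cannot naively discard the $\mu^{U(1)}$-contributions; one must check, matching the Stokes computation of Example \ref{localization formula} performed in the base direction, that precisely the $T$-data survives and that the overall sign is the claimed $-$. This bookkeeping of the $\mathbb{G}_m$-moment map and of the sign coming from the localization formula is where the argument requires the most care.
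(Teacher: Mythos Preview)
Your overall strategy---compactify to $(\bar{\mathcal X},\bar{\mathcal L})$, apply the localization formula on $\mathbb{P}^1$, kill the $0_-$ contribution via base change, and identify the remaining term---is exactly the paper's. The difference lies in how you execute the last identification and in the order of operations. The paper works \emph{term-by-term} with the monomial intersections $(\bar{\mathcal L}_{T\times\mathbb G_m}^{\cdot(n+i+1)})_{\mathbb P^1}$ \emph{before} applying $\D_{\hbar.\xi}$: by transitivity of pushforward one has $\int_{\mathbb P^1}(\bar{\mathcal L}_{T\times\mathbb G_m}^{\cdot(n+i+1)})_{\mathbb P^1}=(\bar{\mathcal L}_{T\times\mathbb G_m}^{\cdot(n+i+1)})$ as an absolute $T\times\mathbb G_m$-equivariant intersection, and then the simple K\"unneth observation $\big((\bar{\mathcal L}_{T\times\mathbb G_m}^{\cdot(n+i+1)})\otimes\bm x\big)^{\langle i,1\rangle}=(\bar{\mathcal L}_T^{\cdot(n+i+1)})^{\langle i\rangle}\otimes\bm x$ (since $\bm x$ has $T$-degree $0$ and $\mathbb G_m$-degree $1$) does the job. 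Summing with the Taylor coefficients and evaluating at $\xi$ is then immediate, and no resolution or Cartan representative is needed.

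Your route---apply $\D_{\hbar.\xi}$ first, then resolve $\bar{\mathcal X}$ and compute $\int_{\mathbb P^1}u$ via the explicit Cartan current of (\ref{differential of relative equivariant intersection eq 1})---also works, but you are overcomplicating it. The displayed formula with $\wedge\,\bar\varpi^*\bm x$ is malformed ($\bm x$ is a class, not a form), and there is no further ``base-direction Stokes'' to perform: once you know $u$ is represented by the closed $U(1)$-equivariant current $\bar\varpi_*\big(f^{(n+1)}(\mu^T_{\hbar\xi})(\Omega+\hbar\mu^{U(1)})^{n+1}\big)$, transitivity of current pushforward gives $\int_{\mathbb P^1}u=\tfrac{1}{(n+1)!}\int_{\bar{\mathcal X}}f^{(n+1)}(\mu^T_{\hbar\xi})(\Omega+\hbar\mu^{U(1)})^{n+1}$, and since $\bar{\mathcal X}$ is real $2(n{+}1)$-dimensional, only the top-form part $\Omega^{n+1}$ contributes; the $\mu^{U(1)}$-terms drop out automatically and no closedness of the integrand on $\bar{\mathcal X}$ is required. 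Your worry in the final paragraph is therefore unfounded---the step you flag as hardest is in fact the easiest.
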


\begin{proof}
Since $j^*_{0_-} \bar{\mathcal{L}}_{T \times \mathbb{G}_m} = L_T \in NS_{T \times \mathbb{G}_m} (X, \mathbb{R})$, we have 
\[ i_{0_-}^* (\bar{\mathcal{L}}_{T \times \mathbb{G}_m}^{\cdot (n+i+1)})_{\mathbb{P}^1} = ((j_{0_-}^* \bar{\mathcal{L}}_{T \times \mathbb{G}_m})^{\cdot (n+i+1)}) = (L_T^{\cdot (n+i+1)}) \in H^{2i+2}_{T \times \mathbb{G}_m} (\mathrm{pt}, \mathbb{R}) \]
and 
\[ i_{0_-}^* (\kappa_{\bar{\mathcal{X}} / \mathbb{P}^1}^{T \times \mathbb{G}_m} . \bar{\mathcal{L}}_{T \times \mathbb{G}_m}^{\cdot (n+i)})_{\mathbb{P}^1} = (\kappa^T_X . L_T^{\cdot (n+i)}) \in H^{2i+2}_{T \times \mathbb{G}_m} (\mathrm{pt}, \mathbb{R}) \]
by Corollary \ref{base change}. 
In particular, $(i_{0_-}^* (\bar{\mathcal{L}}_{T \times \mathbb{G}_m}^{\cdot (n+i+1)})_{\mathbb{P}^1})^{\langle i, 1 \rangle}$ and $(i_{0_-}^* (\kappa_{\bar{\mathcal{X}} / \mathbb{P}^1 }^{T \times \mathbb{G}_m} . \bar{\mathcal{L}}_{T \times \mathbb{G}_m}^{\cdot (n+i)})_{\mathbb{P}^1})^{\langle i, 1 \rangle}$ are zero. 
Thus by the localization formula (see Example \ref{localization formula}), we compute 
\begin{align*}
((\mathcal{L}_{T \times \mathbb{G}_m}^{\cdot (n+i+1)})_{\mathbb{A}^1})^{\langle i, 1 \rangle} 
&= (i_0^* (\mathcal{L}_{T \times \mathbb{G}_m}^{\cdot (n+i+1)})_{\mathbb{A}^1})^{\langle i, 1 \rangle} 
= - (i_{0_-}^* (\bar{\mathcal{L}}_{T \times \mathbb{G}_m}^{\cdot (n+i+1)})_{\mathbb{P}^1} - i_0^* (\bar{\mathcal{L}}_{T \times \mathbb{G}_m}^{\cdot (n+i+1)})_{\mathbb{P}^1})^{\langle i, 1 \rangle}
\\
&= - ((\bar{\mathcal{L}}_{T \times \mathbb{G}_m}^{\cdot (n+i+1)}) \otimes \bm{x})^{\langle i, 1 \rangle} 
= - (\bar{\mathcal{L}}_{T}^{\cdot (n+i+1)})^{\langle i \rangle} \otimes \bm{x}
\end{align*}
and 
\begin{align*}
((\kappa_{\mathcal{X}/\mathbb{A}^1}^{T \times \mathbb{G}_m} . \mathcal{L}_{T \times \mathbb{G}_m}^{\cdot (n+i)})_{\mathbb{A}^1})^{\langle i, 1 \rangle} 
&= (i_0^* (\kappa_{\mathcal{X}/\mathbb{A}^1}^{T \times \mathbb{G}_m} . \mathcal{L}_{T \times \mathbb{G}_m}^{\cdot (n+i)})_{\mathbb{A}^1})^{\langle i, 1 \rangle} 
\\
&= - (i_{0_-}^* (\kappa_{\mathcal{X}/\mathbb{A}^1}^{T \times \mathbb{G}_m} . \bar{\mathcal{L}}_{T \times \mathbb{G}_m}^{\cdot (n+i)})_{\mathbb{P}^1} - i_0^* (\kappa_{\mathcal{X}/\mathbb{A}^1}^{T \times \mathbb{G}_m} . \bar{\mathcal{L}}_{T \times \mathbb{G}_m}^{\cdot (n+i)})_{\mathbb{P}^1})^{\langle i, 1 \rangle}
\\
&= - ((\kappa_{\bar{\mathcal{X}}/\mathbb{P}^1}^{T \times \mathbb{G}_m}. \bar{\mathcal{L}}_{T \times \mathbb{G}_m}^{\cdot (n+i)}) \otimes \bm{x})^{\langle i, 1 \rangle} 
= - (\kappa_{\bar{\mathcal{X}}/\mathbb{P}^1}^{T}. \bar{\mathcal{L}}_{T}^{\cdot (n+i)})^{\langle i \rangle} \otimes \bm{x}. 
\end{align*} 
Thus we get 
\[ \D_{\hbar. \xi} (\mathcal{L}_{T \times \mathbb{G}_m}^{\cdot (n+i+1)})_{\mathbb{A}^1} = - {^\hbar (\bar{\mathcal{L}}_{T}^{\cdot (n+i+1)};\xi)}. \bm{x} \]
and 
\[ \D_{\hbar. \xi} (\kappa_{\mathcal{X}/\mathbb{A}^1}^{T \times \mathbb{G}_m}. \mathcal{L}_{T \times \mathbb{G}_m}^{\cdot (n+i)})_{\mathbb{A}^1} = - {^\hbar (\kappa_{\mathcal{X}/\mathbb{A}^1}^{T}. \bar{\mathcal{L}}_{T}^{\cdot (n+i)}; \xi)}. \bm{x} \]
as all the equivariant intersections we treat here are real analytic on $\xi$. 
\end{proof}

As a corollary, we obtain the following. 

\begin{cor}
We have 
\[ \D_{\hbar. \xi} \cmu^\lambda (\mathcal{X}/\mathbb{A}^1, \mathcal{L}) = - \cFut^\lambda_{\hbar. \xi} (\mathcal{X}, \mathcal{L}). \bm{x}. \]
We can compare this with (\ref{mu to Fut}) by the following paraphrase 
\[ \langle {^\hbar \D_{\hbar. \xi}} \cmu^\lambda (\mathcal{X}/\mathbb{A}^1, \mathcal{L}) ,\eta \rangle = - \hbar \cdot \cFut^\lambda_{\hbar. \xi} (\mathcal{X}, \mathcal{L}). \]
\end{cor}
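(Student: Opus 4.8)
The plan is to combine the two ingredients already established: the localization formulas of the preceding Proposition, which compute the relative derivatives $\D_{\hbar. \xi}(\,\cdot\,)_{\mathbb{A}^1}$ over the base $\mathbb{A}^1$ in terms of the absolute $T$-equivariant intersections on the compactification $(\bar{\mathcal{X}}, \bar{\mathcal{L}})$, together with the explicit formulas for $\D_{\hbar. \xi}\cmu_{T \times G}(\mathcal{X}/B, \mathcal{L})$ and $\D_{\hbar. \xi}\bm{\check{\sigma}}_{T \times G}(\mathcal{X}/B, \mathcal{L})$ from the Proposition on $\mu$-character and its derivative. Specializing to $G = \mathbb{G}_m$, $B = \mathbb{A}^1$, the only task is to substitute the former into the latter and recognize the result as $-\cFut^\lambda_{\hbar. \xi}(\mathcal{X}, \mathcal{L}) . \bm{x}$.

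More precisely, first I would note that the fibre $\mathcal{X}_b$ over a point $b \in \mathbb{A}^1 \setminus \{0\}$ is identified with $X$ through the trivialization, so that $\mathcal{L}_b = L_T$ and $\kappa^T_{\mathcal{X}_b} = \kappa^T_X$; consequently the scalar quantities ${^\hbar (e^{\mathcal{L}_b}; \xi)}$, ${^\hbar (\mathcal{L}_b. e^{\mathcal{L}_b}; \xi)}$ and ${^\hbar (\kappa^T_{\mathcal{X}_b}. e^{\mathcal{L}_b}; \xi)}$ occurring in the $\mu$-character derivative become ${^\hbar (e^{L_T}; \xi)}$, ${^\hbar (L_T. e^{L_T}; \xi)}$ and ${^\hbar (\kappa^T_X. e^{L_T}; \xi)}$. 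Next I apply the localization formulas with the entire functions $f(x) = e^x$ and $f(x) = x e^x$ (legitimate since the classes in play are of class $\varepsilon^\infty$ by Theorem \ref{derivative of relative equivariant intersection}), which give $\D_{\hbar. \xi}(e^{\mathcal{L}_{T \times \mathbb{G}_m}})_{\mathbb{A}^1} = -{^\hbar (e^{\bar{\mathcal{L}}_T}; \xi)}. \bm{x}$, then $\D_{\hbar. \xi}(\mathcal{L}_{T \times \mathbb{G}_m}. e^{\mathcal{L}_{T \times \mathbb{G}_m}})_{\mathbb{A}^1} = -{^\hbar (\bar{\mathcal{L}}_T. e^{\bar{\mathcal{L}}_T}; \xi)}. \bm{x}$, and finally $\D_{\hbar. \xi}(\kappa^{T \times \mathbb{G}_m}_{\mathcal{X}/\mathbb{A}^1}. e^{\mathcal{L}_{T \times \mathbb{G}_m}})_{\mathbb{A}^1} = -{^\hbar (\kappa^T_{\bar{\mathcal{X}}/\mathbb{P}^1}. e^{\bar{\mathcal{L}}_T}; \xi)}. \bm{x}$. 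Substituting these into the displayed formulas for $\D_{\hbar. \xi}\cmu_{T \times \mathbb{G}_m}$ and $\D_{\hbar. \xi}\bm{\check{\sigma}}_{T \times \mathbb{G}_m}$, a common factor $\bm{x}$ factors out of every term, and the surviving scalar is term by term the negative of the first line of the definition of $\cFut^\lambda_{\hbar. \xi}(\mathcal{X}, \mathcal{L})$ (resp. the negative of the bracketed expression in its second line). Adding $\lambda$ times the $\bm{\check{\sigma}}$-contribution then yields $\D_{\hbar. \xi}\cmu^\lambda(\mathcal{X}/\mathbb{A}^1, \mathcal{L}) = -\cFut^\lambda_{\hbar. \xi}(\mathcal{X}, \mathcal{L}). \bm{x}$.

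For the paraphrase I would apply the Chern--Weil isomorphism ${^\hbar \Phi}$ and pair with $\eta \in \mathfrak{u}(1)$. Using ${^\hbar \Phi}(\bm{x}) = \hbar \eta^\vee$ from (\ref{Chern--Weil sign}) together with $\langle \eta^\vee, \eta \rangle = 1$, the factor $\bm{x}$ contributes $\langle {^\hbar \Phi}(\bm{x}), \eta \rangle = \hbar$, giving $\langle {^\hbar \D_{\hbar. \xi}}\cmu^\lambda(\mathcal{X}/\mathbb{A}^1, \mathcal{L}), \eta \rangle = -\hbar \cdot \cFut^\lambda_{\hbar. \xi}(\mathcal{X}, \mathcal{L})$, which matches (\ref{mu to Fut}).

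The computation is essentially mechanical, and there is no analytic obstacle: convergence of all the infinite series is guaranteed by the class $\varepsilon^\infty$ assertion of Theorem \ref{derivative of relative equivariant intersection}. The one genuinely delicate point is sign bookkeeping. I expect the main subtlety to be verifying that the minus signs produced by the localization formula — reflecting that $\mathbb{A}^1$ is glued at the $0$-end of $\mathbb{P}^1$, so that the relevant difference is $i^*_{0_-} - i^*_0$ — combine correctly with the structure of the two difference quotients; in particular the term $-\D_{\hbar. \xi}(e^{\mathcal{L}_{T \times \mathbb{G}_m}})_{\mathbb{A}^1} / {^\hbar (e^{L_T}; \xi)}$ in $\D_{\hbar. \xi}\bm{\check{\sigma}}$ must reproduce the sign of the $-{^\hbar (e^{\bar{\mathcal{L}}_T}; \xi)}/{^\hbar (e^{L_T}; \xi)}$ summand inside the bracket of $\cFut^\lambda_{\hbar. \xi}$, and the normalization ${^\hbar \Phi}(\bm{x}) = \hbar \eta^\vee$ must produce exactly the factor $\hbar$ in the paraphrase. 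Carefully tracking these signs is all the proof requires.
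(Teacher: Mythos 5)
Your proposal is correct and follows essentially the same route as the paper: the corollary is stated there as an immediate consequence of the localization proposition (applied with $f(x)=e^x$ and $f(x)=xe^x$) substituted into the explicit formulas for $\D_{\hbar.\xi}\cmu_{T\times G}$ and $\D_{\hbar.\xi}\bm{\check{\sigma}}_{T\times G}$, with the fibre terms identified via the trivialization over $\mathbb{A}^1\setminus\{0\}$, exactly as you do. Your sign bookkeeping, including the factor $\hbar$ from ${^\hbar\Phi}(\bm{x})=\hbar\eta^\vee$ in the paraphrase, matches the paper's conventions.
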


In particular, we obtain the following by the property (\ref{mu to Fut}) of the functional ${^\hbar \cmu^\lambda}$ and by Proposition \ref{derivative}. 

\begin{cor}
\label{algebraic mu-Futaki and differential mu-Futaki}
For the product configuration $(\mathcal{X}, \mathcal{L})$ associated to a lift $L_{T \times \mathbb{G}_m}$ of a one parameter subgroup $\Lambda: \mathbb{G}_m \to \mathrm{Aut}_T (X)$, we have 
\[ \hbar \cdot \cFut^\lambda_{\hbar. \xi} (\mathcal{X}, \mathcal{L}) = {^\hbar \cFut^\lambda_\xi} (\eta) \]
for ${^\hbar \cFut^\lambda_\xi} (\eta)$ defined by (\ref{mu-Futaki invariant}) in the introduction. 
\end{cor}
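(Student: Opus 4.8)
The plan is to read the identity off the corollary established immediately above, which already gives
\[ \langle {^\hbar \D_{\hbar. \xi}} \cmu^\lambda (\mathcal{X}/\mathbb{A}^1, \mathcal{L}), \eta \rangle = - \hbar \cdot \cFut^\lambda_{\hbar. \xi} (\mathcal{X}, \mathcal{L}). \]
Thus the assertion reduces to showing $\langle {^\hbar \D_{\hbar. \xi}} \cmu^\lambda (\mathcal{X}/\mathbb{A}^1, \mathcal{L}), \eta \rangle = - {^\hbar \cFut^\lambda_\xi} (\eta)$, i.e. to recognizing this pairing as the differential of the $\mu$-volume functional. The whole task is therefore to relate the relative derivative $\D_{\hbar. \xi} \cmu^\lambda (\mathcal{X}/\mathbb{A}^1, \mathcal{L})$ to that differential through the central fibre of the product configuration.

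First I would restrict to the central fibre. The origin $0 \in \mathbb{A}^1$ is a $\mathbb{G}_m$-fixed point and $\mathbb{A}^1$ is $\mathbb{G}_m$-equivariantly contractible onto it, so $i_0^*: H^2_{\mathbb{G}_m} (\mathbb{A}^1, \mathbb{R}) \to H^2_{\mathbb{G}_m} (\mathrm{pt}, \mathbb{R})$ is an isomorphism compatible with the pairing against $\eta$. For the product configuration the central fibre is $X$ carrying the $\mathbb{G}_m$-action induced by $\Lambda$, with $\mathcal{L}_0 = L_{T \times \mathbb{G}_m}$. Applying the base change / naturality (Corollary \ref{base change}, together with the compatibility of $\D_{\hbar. \xi}$ with pullback) along $i_0$ identifies
\[ i_0^* \D_{\hbar. \xi} \cmu^\lambda_{T \times \mathbb{G}_m} (\mathcal{X}/\mathbb{A}^1, \mathcal{L}) = \D_{\hbar. \xi} \cmu^\lambda_{T \times \mathbb{G}_m} (X, L_{T \times \mathbb{G}_m}), \]
the derivative of the absolute $\mu$-character (\ref{absolute mu character}) of $X$ for the full torus $T \times \mathbb{G}_m$ acting through $\Lambda$.

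Next I would invoke Proposition \ref{derivative}. The absolute $\mu$-character $\cmu^\lambda_{T \times \mathbb{G}_m} (X, L_{T \times \mathbb{G}_m})$ is of class $\varepsilon^\infty$, and under the Chern--Weil isomorphism its image is precisely the functional ${^\hbar \cmu^\lambda_{(X, \omega)}}$, now regarded as a real analytic function on $\mathfrak{t} \times \mathfrak{u} (1)$ as observed in section \ref{Equivariant cohomological interpretation}. Proposition \ref{derivative} then equates the pairing $\langle {^\hbar \D_{\hbar. \xi}} \cmu^\lambda_{T \times \mathbb{G}_m} (X, L_{T \times \mathbb{G}_m}), \eta \rangle$ with the directional derivative $\frac{d}{dt}|_{t=0} {^\hbar \cmu^\lambda_{(X, \omega)}} (\xi + t\eta)$, where $\eta \in \mathfrak{u} (1)$ is the generator whose fundamental vector field on $X$ is that of $\Lambda$. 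Finally property (\ref{mu to Fut}) identifies this directional derivative with $- {^\hbar \cFut^\lambda_\xi} (\eta)$, and chaining the three equalities yields $\hbar \cdot \cFut^\lambda_{\hbar. \xi} (\mathcal{X}, \mathcal{L}) = {^\hbar \cFut^\lambda_\xi} (\eta)$.

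The main obstacle I anticipate is bookkeeping rather than any deep difficulty: one must check that the moment-map normalizations match, so that the $\mathbb{G}_m$-generator $\eta$ of the test configuration corresponds, under the function-theoretic identification above, to the same vector entering the differential-geometric formula (\ref{mu-Futaki invariant}) for ${^\hbar \cFut^\lambda_\xi} (\eta)$, and that the equality of ${^\hbar \cmu^\lambda_{T \times \mathbb{G}_m} (X, L_{T \times \mathbb{G}_m})}$ with ${^\hbar \cmu^\lambda_{(X, \omega)}}$ on the enlarged Lie algebra is literally the content of section \ref{Equivariant cohomological interpretation}. Independence of the invariant from the chosen lift is then automatic, since the right hand side ${^\hbar \cFut^\lambda_\xi} (\eta)$ manifestly depends only on the $\mathbb{G}_m$-action, consistently with the shift-invariance of $\cFut^\lambda_{\hbar. \xi}$ already recorded.
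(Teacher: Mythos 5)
Your proposal is correct and follows essentially the same route as the paper: the paper's own (very terse) proof is precisely the combination of the preceding corollary's paraphrase $\langle {^\hbar \D_{\hbar. \xi}} \cmu^\lambda (\mathcal{X}/\mathbb{A}^1, \mathcal{L}), \eta \rangle = - \hbar \cdot \cFut^\lambda_{\hbar. \xi} (\mathcal{X}, \mathcal{L})$ with Proposition \ref{derivative} and the property (\ref{mu to Fut}), which is exactly your chain of three equalities. Your intermediate step of restricting to the central fibre via $i_0^*$ and Corollary \ref{base change} to land on the absolute $\mu$-character of $(X, L_{T \times \mathbb{G}_m})$ is the bookkeeping the paper leaves implicit (via section \ref{Equivariant cohomological interpretation}), filled in correctly.
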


\subsubsection{Relation to CM line bundle}

Now we show the rest property (3) of the characteristic class $\mathfrak{D}_{\hbar. \xi} \cmu^\lambda (\mathcal{X}/B, \mathcal{L})$ in Theorem B. 

\begin{proof}[Proof of Theorem B (3)]
Recall the definition of the CM $\mathbb{Q}$-line bundle $\mathrm{CM} (\mathcal{X}/B, \mathcal{L})$: we put 
\[ \mathrm{CM} (\mathcal{X}/B, \mathcal{L}) := \lambda_{n+1}^{\otimes \frac{n}{n+1} \frac{(-K_X. L^{\cdot (n-1)})}{(L^{\cdot n})}} \otimes (\lambda_{n+1}^{\otimes n} \otimes \lambda_n^{\otimes (-2)}) \]
using line bundles $\lambda_i$ over $B$ appeared in the Knudsen--Mumford expansion 
\[ \det (\varpi_! (\mathcal{L}^{\otimes k})) \cong \lambda_{n+1}^{\binom{k}{n+1}} \otimes \lambda_n^{\binom{k}{n}} \otimes \dotsb \otimes \lambda_0 \quad (k \gg 0). \]
So we have 
\begin{align*} 
c_{G, 1} (\det (\varpi_! (\mathcal{L}^{\otimes k}))) 
&= \binom{k}{n+1} c_{G, 1} (\lambda_{n+1}) + \binom{k}{n} c_{G, 1} (\lambda_n) + \dotsb + c_{G, 1} (\lambda_0) 
\\
&= \frac{k^{n+1}}{(n+1)!} c_{G, 1} (\lambda_{n+1}) - \frac{1}{2} \frac{k^n}{n!} (n c_{G, 1} (\lambda_{n+1}) - 2 c_{G, 1} (\lambda_n)) + O (k^{n-1}). 
\end{align*}

On the other hand, by the equivariant Grothendieck--Riemann--Roch theorem, we have $\tau^G_B (\varpi_! (\mathcal{L}^{\otimes k})) = \varpi_* \tau^G_{\mathcal{X}} (\mathcal{L}^{\otimes k})$. 
Since $\mathcal{L}$ is relatively ample, $\varpi_! \mathcal{L}^{\otimes k}$ is a vector bundle for $k \gg 0$, so that we have 
\begin{align*}
(\mathrm{PD}^B_G \tau^G_B (\varpi_! \mathcal{L}^{\otimes k}))^{\langle 1 \rangle} = (\mathrm{PD}^B_G \tau^G_B (\mathcal{O}_B) \frown \mathrm{ch}_G (\varpi_! \mathcal{L}^{\otimes k}))^{\langle 1 \rangle} = c_{G, 1} (\varpi_! \mathcal{L}^{\otimes k}) - \frac{1}{2} \mathrm{rk} (\varpi_! \mathcal{L}^{\otimes k}) K_B^G
\end{align*}
and 
\begin{align*}
&(\mathrm{PD}^B_G \varpi_* \tau^G_{\mathcal{X}} (\mathcal{L}^{\otimes k}))^{\langle 1 \rangle} 
\\
&\quad = \mathrm{PD}^B_G (\varpi_* (\tau^G_{\mathcal{X}} (\mathcal{O}_{\mathcal{X}}) \frown e^{k c_{G, 1} (\mathcal{L})}))_{\langle \dim_{\mathbb{C}} B -1 \rangle}
\\
&\quad = k^{n+1}. (e^{\mathcal{L}_G})_B^{\langle 1 \rangle} - \frac{1}{2} k^n. (\kappa_{\mathcal{X}}^G. e^{\mathcal{L}_G})_B^{\langle 1 \rangle} + O (k^{n-1})
\\
&\quad = k^{n+1} \D_{\hbar. 0} (e^{\mathcal{L}})_B - \frac{1}{2} k^n \D_{\hbar. 0} (\kappa_{\mathcal{X}/B}. e^{\mathcal{L}})_B + O (k^{n-1}). 
\end{align*}
Therefore, we get 
\begin{align*} 
c_{G, 1} (\det (\varpi_! (\mathcal{L}^{\otimes k}))) = k^{n+1} \D_{\hbar. 0} (e^{\mathcal{L}})_B - \frac{1}{2} k^n \D_{\hbar. 0} (\kappa_{\mathcal{X}/B}. e^{\mathcal{L}})_B + O (k^{n-1}). 
\end{align*}

Comparing these expressions, we obtain 
\begin{align*} 
c_{G, 1} (\mathrm{CM} (\mathcal{X}/B, \mathcal{L})) 
&= \frac{n}{n+1} \frac{(-K_X. L^{\cdot (n-1)})}{(L^{\cdot n})} (n+1)! \cdot \D_{\hbar. 0} (e^{\mathcal{L}})_B + n! \cdot \D_{\hbar. 0} (\kappa_{\mathcal{X}/B}. e^{\mathcal{L}})_B 
\\
&= (L^{\cdot n}) \frac{\D_{\hbar. 0} (\kappa_{\mathcal{X}/B}. e^{\mathcal{L}})_B \cdot {^\hbar (e^{\mathcal{L}_b}; 0)} - {^\hbar (\kappa_{\mathcal{X}_b}^T. e^{\mathcal{L}_b}; 0)} \cdot \D_{\hbar. 0} (e^{\mathcal{L}})_B }{ {^\hbar (e^{\mathcal{L}_b}; 0)}^2 }
\\
&= \frac{(L^{\cdot n})}{2\pi} \D_{\hbar. 0} \cmu_G (\mathcal{X}/B, \mathcal{L}) 
\end{align*}
by ${^\hbar (e^{\mathcal{L}_b}; 0)} = \frac{1}{n!} (L^{\cdot n})$ and ${^\hbar (\kappa^T_{\mathcal{X}_b}. e^{\mathcal{L}_b}; 0)} = \frac{1}{(n-1)!} (K_X. L^{\cdot (n-1)})$. 

The independece of $\lambda$ follows from $\D_{\hbar. 0} (\mathcal{L}_G. e^{\mathcal{L}_G})_B = \frac{1}{n!} \D_{\hbar. 0} (\mathcal{L}_G^{\cdot (n+1)})_B$, $\D_{\hbar. 0} (e^{\mathcal{L}_G})_B = \frac{1}{(n+1)!} \D_{\hbar. 0} (\mathcal{L}_G^{\cdot (n+1)})_B$ and ${^\hbar (\mathcal{L}_b. e^{\mathcal{L}_b}; 0)} = \frac{1}{(n-1)!} (L^{\cdot n})$: 
\begin{align*} 
\D_{\hbar. 0} \bm{\check{\sigma}}_G
&= \frac{ \D_{\hbar. 0} (\mathcal{L}_{T \times G}. e^{\mathcal{L}_{T \times G}})_B \cdot {^\hbar (e^{\mathcal{L}_b}; 0)}  - {^\hbar (\mathcal{L}_b. e^{\mathcal{L}_b}; 0)} \cdot \D_{\hbar. 0} (e^{\mathcal{L}_{T \times G}})_B }{{^\hbar (e^{\mathcal{L}_b}; 0)}^2} - \frac{\D_{\hbar. 0} (e^{\mathcal{L}_{T \times G}})_B}{{^\hbar (e^{\mathcal{L}_b}; 0)}} 
\\
&= \frac{1}{(L^{\cdot n})} \Big{(} \D_{\hbar. 0} (\mathcal{L}^{\cdot (n+1)}_G)_B - \frac{n}{n+1} \D_{\hbar. 0} (\mathcal{L}_G^{\cdot (n+1)}) - \frac{1}{n+1} \D_{\hbar. 0} (\mathcal{L}_G^{\cdot (n+1)}) \Big{)}
=0.
\end{align*}
\end{proof}

\subsection{$\mu$K-stability of polarized schemes}

\subsubsection{Relation to established Futaki invariants}

Here we check that our definition of $\mu$-Futaki invariant is compatible with the following established notions. 

Recall the Duistermaat--Heckman measure $\mathrm{DH}_{(\mathcal{X}, \mathcal{L}_{T \times \mathbb{G}_m})}$ on $\mathfrak{t}^\vee \times \mathbb{R}$ is given by 
\[ \mathrm{DH}_{(\mathcal{X}, \mathcal{L}_{T \times \mathbb{G}_m})} := \lim_{k \to \infty} k^{-n} \sum_{(m, l) \in M \times \mathbb{Z}} \dim H^0 (\mathcal{X}_0, \mathcal{L}|_{\mathcal{X}_0})_{(m, l)} \delta_{k^{-1} (m, l)}. \]

\begin{defin}
Let $(X, L)$ be a (semi)polarized scheme and $(\mathcal{X}, \mathcal{L})$ be a test configuration of $(X, L)$. 
The following Futaki invariants are studied in the literatures. 
\begin{enumerate}

\item \textit{Modified Futaki invariant} for the modified K-stability of $\mathbb{Q}$-Fano variety (cf. \cite{BW, Xio} and \cite{WZZ}): Let $(X, L_T) = (X, -\lambda K_X^T)$ be a $\mathbb{Q}$-Fano variety ($\lambda > 0$) with a torus $T$ action. 
For a $T$-equivariant normal test configuration $(\mathcal{X}, \mathcal{L})$ with $\mathbb{Q}$-Gorenstein $\mathcal{X}$ and $\mathcal{L}_{T \times \mathbb{G}_m} = - K_{\mathcal{X}/\mathbb{A}^1}^{T \times \mathbb{G}_m}$, we put 
\begin{equation}
\mathrm{MFut}_{\hbar. \xi} (\mathcal{X}, \mathcal{L}) := - \int_{\mathfrak{t}^\vee \times \mathbb{R}} t e^{- \langle x, \hbar \xi \rangle} \mathrm{DH}_{(\mathcal{X}, \mathcal{L})} (x, t) 
\end{equation}
for $\xi \in \mathfrak{t}$. 

When the central fibre $\mathcal{X}_0$ is a $\mathbb{Q}$-Fano variety, we have the following expression: 
\[ 2  \mathrm{MFut}_{-2. \xi} (\mathcal{X}, \mathcal{L}) = - \int_{\mathcal{X}_0} \theta_\eta e^{\theta_\xi} \omega^n = \int_{\mathcal{X}_0} \eta^J (h - \theta_\xi) e^{\theta_\xi} \omega^n \]
by a suitably regular K\"ahler metric $\omega$ on $\mathcal{X}_0$, where $h$ denotes a Ricci potential: $\sqrt{-1} \partial \bar{\partial} h = \mathrm{Ric} (\omega) - \omega$. 
This is the expression in \cite{BW, Xio}. 
Here the $\bar{\partial}$-Hamiltonian potential $\theta$ is normalized so that $[\beta^* \omega + \beta^* \theta] = - \beta^* ({^{-2} K_{\mathcal{X}_0}^{T \times U (1)}}) \in {^{-2} H^2_{T \times U (1)}} (\tilde{\mathcal{X}}_0, \mathbb{R})$ for some/any resolution $\beta: \tilde{\mathcal{X}}_0 \to \mathcal{X}_0$. 
This normalization is equivalent to the equation $\bar{\Box} \theta_\eta - \eta^J h = \theta_\eta$. 

\item \textit{Weighted Futaki invariant} of smooth test configuration (cf. \cite{Lah}) for weighted K-stability: Let $v$ and $w$ be smooth positive functions on the moment polytope $P := \mu^\omega (X) \subset \mathfrak{t}^\vee$. 
For a $T$-equivariant smooth test configuration $(\mathcal{X}, \mathcal{L}_T)$ with ample $\bar{\mathcal{L}}$, we pick a K\"ahler form $\Omega$ in $\bar{\mathcal{L}}$ and the moment map $\mu^\Omega: \bar{\mathcal{X}} \to \mathfrak{t}^\vee$ so that $[\Omega + \hbar \mu^\Omega] = {^\hbar \bar{\mathcal{L}}_T}$. 
Then we put 
\begin{align} 
\mathcal{F}_{v, w} (\mathcal{X}, \mathcal{L}) 
&= - \frac{1}{n+1} \int_{\bar{\mathcal{X}}} \Big{(} s_v (\Omega) - \frac{\int_X s_v (\omega) \omega^n}{\int_X (w \circ \mu^\omega) \omega^n} (w \circ \mu^\Omega) \Big{)} \Omega^{n+1} 
\\ \notag
&\qquad \qquad + 4\pi \int_X (v \circ \mu^\omega) \omega^n. 
\end{align}
\end{enumerate}
\end{defin}

We consider the following variant of weighted Futaki invariant for $T$-equivariant smooth test configuration with ample $\bar{\mathcal{L}}$: 
\begin{align} 
\mathcal{F}_{\hbar. \xi}^\lambda (\mathcal{X}, \mathcal{L})
&:= \mathcal{F}_{e^{\langle x, \hbar \xi \rangle}, e^{\langle x, \hbar \xi \rangle}} (\mathcal{X}, \mathcal{L}) + \frac{\lambda}{n+1} \int_{\bar{\mathcal{X}}} \big{(} \hbar \mu^\Omega_{\xi} - \frac{\int_X \hbar \mu^\omega_{\xi} e^{\hbar \mu^\omega_{\xi}} \omega^n}{\int_X e^{\hbar \mu^\omega_{\xi}} \omega^n} \big{)} e^{\hbar \mu^\Omega_{\xi}} \Omega^{n+1} 
\\ \notag
&=- \frac{1}{n+1} \int_{\bar{\mathcal{X}}} \big{(} {^\hbar \check{s}^\lambda_\xi} (\Omega) - {^\hbar \bar{s}^\lambda_\xi} (\omega) \big{)} e^{\hbar \mu^\Omega_\xi} \Omega^{n+1} +4\pi \int_X e^{\hbar \mu^\omega_\xi} \omega^n.
\end{align}
Though it seems not explicitly claimed in \cite{Lah} for this variant case, we easily see by basic computations in \cite{Lah} that the above invariant coincides the slope of $\mu^\lambda_\xi$-Mabuchi functional (cf. \cite{Ino2}) along a smooth subgeodesic ray subordinate to a test configuration. 
Indeed, this follows by the equivariant Stokes theorem. 
Since a $\mu^\lambda_\xi$-cscK metric is an extremal weighted cscK metric (in our formulation, extremal vector $\zeta$ is imposed to be proportional to the weight vector $\xi$: $\zeta = \lambda \xi$), we have the boundedness of $\mu^\lambda_\xi$-Mabuchi functional if there exists a $\mu^\lambda_\xi$-cscK metric as proved in \cite{Lah}. 
As a consequence, a polarized manifold is $\mu^\lambda_\xi$K-semistable with respect to \textit{smooth test configurations with ample $\bar{\mathcal{L}}$} if there exists a $\mu^\lambda_\xi$-cscK metric. 

\begin{prop}
\label{equivariant intersection on test configuration and DH measure}
For a test configuration $(\mathcal{X}, \mathcal{L})$, we have 
\[ {^\hbar (e^{\bar{\mathcal{L}}_T}; \xi)} = \int_{\mathfrak{t}^\vee \times \mathbb{R}} t e^{-\langle x, \hbar \xi \rangle} \mathrm{DH}_{(\mathcal{X}, \mathcal{L}_{T \times \mathbb{G}_m})} (x, t). \]
\end{prop}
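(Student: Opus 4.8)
The plan is to reduce the statement to two facts already available: the Duistermaat--Heckman formula of Proposition \ref{equivariant intersection and DH measure} applied to the central fibre, and the localization identity of the preceding proposition which expresses ${^\hbar (e^{\bar{\mathcal{L}}_T}; \xi)}$ through the derivation $\D_{\hbar. \xi}$ of the relative intersection over $\mathbb{A}^1$. The bridge between them is that the $\mathbb{G}_m$-direction of the \emph{full} $T \times \mathbb{G}_m$-equivariant intersection on $\mathcal{X}_0$ records exactly the first moment in $t$ of $\mathrm{DH}_{(\mathcal{X}, \mathcal{L}_{T \times \mathbb{G}_m})}$.

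First I would apply Proposition \ref{equivariant intersection and DH measure} to the proper scheme $\mathcal{X}_0$ with the semiample and big restriction $\mathcal{L}|_{\mathcal{X}_0}$ and the full torus $T \times \mathbb{G}_m$, identifying $\mathfrak{u}(1)^\vee$ with $\mathbb{R}$ so that $\mathfrak{t}^\vee \times \mathbb{R} = (\mathfrak{t} \times \mathfrak{u}(1))^\vee$. Writing $\mathcal{L}_0$ for $\mathcal{L}|_{\mathcal{X}_0}$ regarded as a class in $H^2_{T \times \mathbb{G}_m}(\mathcal{X}_0, \mathbb{R})$, this gives
\[ {^\hbar (e^{\mathcal{L}_0}; (\xi, \zeta))} = \int_{\mathfrak{t}^\vee \times \mathbb{R}} e^{- \langle x, \hbar \xi \rangle - \hbar \zeta t} \, \mathrm{DH}_{(\mathcal{X}, \mathcal{L}_{T \times \mathbb{G}_m})} (x, t), \]
where I use that $\mathrm{DH}_{(\mathcal{X}_0, \mathcal{L}|_{\mathcal{X}_0})}$ and $\mathrm{DH}_{(\mathcal{X}, \mathcal{L}_{T \times \mathbb{G}_m})}$ are the same measure, both being defined through the $T \times \mathbb{G}_m$-weight decomposition of $H^0(\mathcal{X}_0, \mathcal{L}^{\otimes k}|_{\mathcal{X}_0})$. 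Differentiating in $\zeta$ at $\zeta = 0$ brings down a factor $-\hbar t$, so the right-hand side of the proposition equals $-\hbar^{-1} \partial_\zeta|_{\zeta = 0} {^\hbar (e^{\mathcal{L}_0}; (\xi, \zeta))}$. By Proposition \ref{derivative} this $\zeta$-derivative is precisely $\langle {^\hbar \D_{\hbar. \xi} (e^{\mathcal{L}_0})}, \eta \rangle$.

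Next I would identify $\D_{\hbar. \xi}(e^{\mathcal{L}_0})$ with the restriction to the central fibre of the relative intersection over $\mathbb{A}^1$. Since $\mathbb{A}^1$ is $\mathbb{G}_m$-equivariantly contractible to $0$, the restriction $i_0^*$ is an isomorphism on $T \times \mathbb{G}_m$-equivariant cohomology, and by the base-change property (Corollary \ref{base change}) one has $i_0^* (e^{\mathcal{L}_{T \times \mathbb{G}_m}})_{\mathbb{A}^1} = (e^{\mathcal{L}_0})$; naturality of $\D_{\hbar. \xi}$ together with $i_0^* \bm{x} = \bm{x}$ then gives $\D_{\hbar. \xi}(e^{\mathcal{L}_0}) = i_0^* \D_{\hbar. \xi}(e^{\mathcal{L}_{T \times \mathbb{G}_m}})_{\mathbb{A}^1}$ (both classes being of class $\varepsilon^\infty$ by Theorem \ref{derivative of relative equivariant intersection} and Proposition \ref{absolute intersection}). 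The preceding localization proposition computes the right-hand side as $-{^\hbar (e^{\bar{\mathcal{L}}_T}; \xi)} . \bm{x}$, so applying ${^\hbar \Phi}$ and the Chern--Weil normalization ${^\hbar \Phi}(\bm{x}) = \hbar \eta^\vee$ yields $\langle {^\hbar \D_{\hbar. \xi}(e^{\mathcal{L}_0})}, \eta \rangle = -\hbar \, {^\hbar (e^{\bar{\mathcal{L}}_T}; \xi)}$. Combining with the previous paragraph gives the claim after dividing by $-\hbar$.

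The heavy lifting — convergence, the equivariant Riemann--Roch input, and the localization identity — is supplied by the cited results, so the only real work is bookkeeping. The point requiring care, as the paper repeatedly stresses, is the chain of sign conventions: the reversal of the moment map in the definition of $\mathrm{DH}$, the factor ${^\hbar \Phi}(\bm{x}) = \hbar \eta^\vee$, and the sign in the localization formula $\int_{\mathbb{P}^1} u. \bm{x} = i_{0_-}^* u - i_0^* u$ must all be tracked so that the two appearances of $-\hbar$ cancel correctly. I expect this sign matching, rather than any analytic difficulty, to be the main obstacle.
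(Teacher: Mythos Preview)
Your proposal is correct and follows essentially the same route as the paper: apply Proposition~\ref{equivariant intersection and DH measure} to the central fibre with the full $T \times \mathbb{G}_m$-action, differentiate in the $\mathbb{G}_m$-direction via Proposition~\ref{derivative}, and use the preceding localization identity $\D_{\hbar.\xi}(e^{\mathcal{L}_0}) = -{^\hbar(e^{\bar{\mathcal{L}}_T};\xi)}.\bm{x}$ together with ${^\hbar\Phi}(\bm{x}) = \hbar\eta^\vee$ to match the two sides. Your extra step making the identification $\D_{\hbar.\xi}(e^{\mathcal{L}_0}) = i_0^*\D_{\hbar.\xi}(e^{\mathcal{L}_{T\times\mathbb{G}_m}})_{\mathbb{A}^1}$ explicit via base change is a harmless elaboration of what the paper leaves implicit.
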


\begin{proof}
Since ${^\hbar (e^{\mathcal{L}_{T \times \mathbb{G}_m}|_{\mathcal{X}_0}}; (\xi, \rho \eta))} = \int_{\mathfrak{t}^\vee \times \mathbb{R}} e^{-(\langle x, \hbar \xi \rangle + t \hbar \rho)} \mathrm{DH}_{(\mathcal{X}, \mathcal{L})} (x, t)$ and $\D_{\hbar. \xi} (e^{\mathcal{L}_{T \times \mathbb{G}_m}|_{\mathcal{X}_0}}) = - {^\hbar (e^{\bar{\mathcal{L}}_T}; \xi)}. \bm{x}$, we compute 
\begin{align*} 
{^\hbar (e^{\bar{\mathcal{L}}_T}; \xi)}. \hbar 
&= - \langle {^\hbar \D_{\hbar. \xi}} (e^{\mathcal{L}_{T \times \mathbb{G}_m}|_{\mathcal{X}_0}}), \eta \rangle
\\ 
&= - \frac{d}{d\rho}\Big{|}_{\rho = 0} \int_{\mathfrak{t}^\vee \times \mathbb{R}} e^{-(\langle x, \hbar \xi \rangle + t \hbar \rho)} \mathrm{DH}_{(\mathcal{X}, \mathcal{L})} (x, t) 
\\
&= \hbar \int_{\mathfrak{t}^\vee \times \mathbb{R}} t e^{-\langle x, \hbar \xi \rangle} \mathrm{DH}_{(\mathcal{X}, \mathcal{L})} (x, t) 
\end{align*}
by Proposition \ref{derivative}. 
\end{proof}

\begin{cor}
\label{equivariant intersection on test configuration and DH measure, cor}
For a test configuration $(\mathcal{X}, \mathcal{L})$, we have 
\begin{equation}
\frac{1}{(n+1+k)!} {^\hbar (} \bar{\mathcal{L}}_T^{\cdot n+1+k}; \xi) = \frac{1}{k!} \int_{\mathfrak{t}^\vee \times \mathbb{R}} t (- \langle x, \hbar \xi \rangle)^k \mathrm{DH}_{(\mathcal{X}, \mathcal{L}_{T \times \mathbb{G}_m})} (x, t). 
\end{equation}
\end{cor}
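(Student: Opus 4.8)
The plan is to reproduce, almost verbatim, the proof of Corollary \ref{equivariant intersection and DH measure, cor}, now carried out on the compactified total space $\bar{\mathcal{X}}$ (which has pure dimension $n+1$) and with Proposition \ref{equivariant intersection on test configuration and DH measure} playing the role that Proposition \ref{equivariant intersection and DH measure} played there. Concretely, I would isolate the degree $2k$ part of the equivariant self-intersection ${^\hbar (e^{\bar{\mathcal{L}}_T}; \xi)}$ by differentiating $k$ times in a scaling parameter, and then compare the two available expressions for this self-intersection.

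Set $g(s) := {^\hbar (e^{\bar{\mathcal{L}}_T}; s\xi)}$ for $s \in \mathbb{R}$. First I would read off the Taylor coefficients of $g$ from the series ${^\hbar (e^{\bar{\mathcal{L}}_T}; s\xi)} = \sum_{j=0}^\infty \frac{1}{j!} {^\hbar (\bar{\mathcal{L}}_T^{\cdot j}; s\xi)}$. Since $\bar{\mathcal{X}}$ is proper of pure dimension $n+1$, the class ${^\hbar (\bar{\mathcal{L}}_T^{\cdot j})}$ is identified with a homogeneous polynomial of degree $j-(n+1)$ on $\mathfrak{t}$ (in particular it vanishes for $j < n+1$), so ${^\hbar (\bar{\mathcal{L}}_T^{\cdot j}; s\xi)} = s^{j-(n+1)} {^\hbar (\bar{\mathcal{L}}_T^{\cdot j}; \xi)}$, consistently with Lemma \ref{scaling of exponential equivariant intersection}. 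Hence $g(s) = \sum_{j \ge n+1} \frac{s^{j-(n+1)}}{j!} {^\hbar (\bar{\mathcal{L}}_T^{\cdot j}; \xi)}$ is a genuine power series in $s$, and extracting the coefficient of $s^k$ (the term $j = n+1+k$) yields
\[ \frac{d^k}{ds^k}\Big|_{s=0} g(s) = \frac{k!}{(n+1+k)!} {^\hbar (\bar{\mathcal{L}}_T^{\cdot n+1+k}; \xi)}. \]

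On the other hand, Proposition \ref{equivariant intersection on test configuration and DH measure} applied at $s\xi$ gives $g(s) = \int_{\mathfrak{t}^\vee \times \mathbb{R}} t\, e^{-\langle x, \hbar s\xi \rangle} \mathrm{DH}_{(\mathcal{X}, \mathcal{L}_{T \times \mathbb{G}_m})}(x, t)$. Since the Duistermaat--Heckman measure has compact support, I may differentiate under the integral sign; doing so $k$ times at $s = 0$ pulls down a factor $(-\langle x, \hbar\xi \rangle)^k$ and produces
\[ \frac{d^k}{ds^k}\Big|_{s=0} g(s) = \int_{\mathfrak{t}^\vee \times \mathbb{R}} t\, (-\langle x, \hbar\xi \rangle)^k \mathrm{DH}_{(\mathcal{X}, \mathcal{L}_{T \times \mathbb{G}_m})}(x, t). \]
Equating the two expressions for $g^{(k)}(0)$ and dividing by $k!$ gives exactly the asserted identity.

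There is essentially no hard step: the substance lies entirely in the already-established Proposition \ref{equivariant intersection on test configuration and DH measure} and in the homogeneity of the equivariant intersections. The only points deserving a word of care are the term-by-term differentiability of the series defining $g$ (guaranteed by the compact convergence of Proposition \ref{absolute intersection} applied to the proper scheme $\bar{\mathcal{X}}$ and the class $\bar{\mathcal{L}}_T \in H^2_T(\bar{\mathcal{X}}, \mathbb{R})$, cf. Theorem \ref{convergence of absolute equivariant intersection}), the differentiation under the integral sign (justified by the compact support of $\mathrm{DH}_{(\mathcal{X}, \mathcal{L}_{T \times \mathbb{G}_m})}$), and keeping the scaling scalar $s$ notationally distinct from the fibre-weight coordinate $t$ on $\mathfrak{t}^\vee \times \mathbb{R}$.
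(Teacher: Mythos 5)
Your proposal is correct and is essentially identical to the paper's own (one-line) proof: both compute $\frac{d^k}{ds^k}\big|_{s=0}\,{^\hbar(e^{\bar{\mathcal{L}}_T}; s\xi)}$ in two ways, once via the homogeneity of the terms ${^\hbar(\bar{\mathcal{L}}_T^{\cdot j};\xi)}$ (degree $j-(n+1)$, vanishing for $j<n+1$) and once via Proposition \ref{equivariant intersection on test configuration and DH measure} with differentiation under the integral. The only difference is that you spell out the justifications (term-by-term differentiability, compact support of the DH measure) that the paper leaves implicit.
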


\begin{proof}
The claim follows by 
\[ \frac{k!}{(n+1+k)!} {^\hbar (} \bar{\mathcal{L}}_T^{\cdot n+1+k}; \xi) = \frac{d^k}{ds^k}\Big{|}_{s=0} {^\hbar (} e^{\bar{\mathcal{L}}_T}; s \xi) = \int_{\mathfrak{t}^\vee \times \mathbb{R}} t (- \langle x, \hbar \xi \rangle)^k \mathrm{DH}_{(\mathcal{X}, \mathcal{L}_{T \times \mathbb{G}_m})} (x, t). \]
\end{proof}

\begin{cor}
Let $f$ be an entire real analytic function on $\mathbb{R}$. 
Then we have 
\[ {^\hbar (f (\bar{\mathcal{L}}_T); \xi)} = \int_{\mathfrak{t}^\vee \times \mathbb{R}} t f^{(n+1)} (- \langle x, \hbar \xi \rangle) \mathrm{DH}_{(\mathcal{X}, \mathcal{L}_{T \times \mathbb{G}_m})} (x, t) \]
\end{cor}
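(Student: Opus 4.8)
The plan is to reduce the statement to Corollary \ref{equivariant intersection on test configuration and DH measure, cor} by expanding $f$ in its Taylor series and summing term by term. Write $f(x) = \sum_{k=0}^\infty \frac{a_k}{k!} x^k$, which by hypothesis converges absolutely for every $x \in \mathbb{R}$. By the definition of ${^\hbar (f (\bar{\mathcal{L}}_T); \xi)}$ together with Proposition \ref{absolute intersection} (applied to the proper pure $(n+1)$-dimensional scheme $\bar{\mathcal{X}}$, the fundamental class $[\bar{\mathcal{X}}]^T$ and the class $\bar{\mathcal{L}}_T$, with $n$ replaced by $n+1$), this series is compactly absolutely convergent in $\xi$, so I may freely manipulate it.

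First I would observe that ${^\hbar (\bar{\mathcal{L}}_T^{\cdot k}; \xi)}$ vanishes for $k \le n$ by degree reasons: since $\bar{\mathcal{X}}$ is pure $(n+1)$-dimensional, the class $(\bar{\mathcal{L}}_T^{\cdot k}) = ([\bar{\mathcal{X}}]^T. \bar{\mathcal{L}}_T^{\cdot k})$ lies in $H^{\mathrm{lf}, T}_{2(n+1) - 2k}(\mathrm{pt})$, and under the Cartan identification ${^\hbar H^{\mathrm{cur}, K}_p}(\mathrm{pt}) = (S^{-p}\mathfrak{t}^\vee)^{T_{\mathrm{cpt}}} = 0$ for $p > 0$ this is zero whenever $k < n+1$. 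Hence only the terms $k \ge n+1$ survive, and reindexing $k = n+1+j$ gives
\[ {^\hbar (f (\bar{\mathcal{L}}_T); \xi)} = \sum_{j=0}^\infty \frac{a_{n+1+j}}{(n+1+j)!} {^\hbar (\bar{\mathcal{L}}_T^{\cdot n+1+j}; \xi)}. \]
Next I would substitute Corollary \ref{equivariant intersection on test configuration and DH measure, cor}, which identifies each coefficient $\frac{1}{(n+1+j)!} {^\hbar (\bar{\mathcal{L}}_T^{\cdot n+1+j}; \xi)}$ with $\frac{1}{j!}\int_{\mathfrak{t}^\vee \times \mathbb{R}} t (-\langle x, \hbar \xi \rangle)^j \mathrm{DH}_{(\mathcal{X}, \mathcal{L}_{T \times \mathbb{G}_m})}$, yielding
\[ {^\hbar (f (\bar{\mathcal{L}}_T); \xi)} = \sum_{j=0}^\infty \frac{a_{n+1+j}}{j!} \int_{\mathfrak{t}^\vee \times \mathbb{R}} t (-\langle x, \hbar \xi \rangle)^j \mathrm{DH}_{(\mathcal{X}, \mathcal{L}_{T \times \mathbb{G}_m})}. \]
Recalling the Taylor expansion $f^{(n+1)}(y) = \sum_{j=0}^\infty \frac{a_{n+1+j}}{j!} y^j$, the claim now amounts to interchanging the summation and the integration.

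The only genuine point — and the sole place where analysis enters — is justifying this interchange; this is the step I expect to be the (mild) obstacle. I would argue that, since $\bar{\mathcal{X}}$ is proper, the Duistermaat--Heckman measure $\mathrm{DH}_{(\mathcal{X}, \mathcal{L}_{T \times \mathbb{G}_m})}$ has compact support $S \subset \mathfrak{t}^\vee \times \mathbb{R}$. For fixed $\xi$, the quantity $-\langle x, \hbar \xi \rangle$ ranges over a compact subset of $\mathbb{R}$ as $(x,t)$ ranges over $S$, and on this compact subset the Taylor series of the entire function $f^{(n+1)}$ converges absolutely and uniformly; moreover $t$ is bounded on $S$. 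Consequently $\sum_{j=0}^\infty \frac{|a_{n+1+j}|}{j!} \int_S |t|\, |\langle x, \hbar \xi \rangle|^j\, \mathrm{DH} < \infty$, so Fubini--Tonelli legitimizes the exchange and gives
\[ {^\hbar (f (\bar{\mathcal{L}}_T); \xi)} = \int_{\mathfrak{t}^\vee \times \mathbb{R}} t f^{(n+1)} (-\langle x, \hbar \xi \rangle)\, \mathrm{DH}_{(\mathcal{X}, \mathcal{L}_{T \times \mathbb{G}_m})}. \]
This is the same mechanism used in the absolute corollary following Corollary \ref{equivariant intersection and DH measure, cor}, only with $\bar{\mathcal{X}}$, the shift $n \mapsto n+1$, and the extra weight $t$ in place of the unweighted integrand; so no new difficulty arises beyond the compact support of the DH measure, which is what renders the whole argument routine.
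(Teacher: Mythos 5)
Your proposal is correct and is precisely the argument the paper leaves implicit: the statement is presented as an immediate consequence of Corollary \ref{equivariant intersection on test configuration and DH measure, cor}, obtained by expanding $f$ in its Taylor series, discarding the terms of degree $\le n$ (which vanish since $H^{\mathrm{lf}, T}_{p}(\mathrm{pt},\mathbb{R})=0$ for $p>0$), and summing term by term. Your justification of the sum--integral interchange via the compact support of $\mathrm{DH}_{(\mathcal{X}, \mathcal{L}_{T \times \mathbb{G}_m})}$ and Fubini--Tonelli correctly supplies the one analytic detail the paper omits.
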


Now we compare them with our $\mu$-Futaki inavriant. 

\begin{prop}
\label{comparison}
We can compare the $\mu$-Futaki invariant with these established Futaki invariants as follows. 
\begin{enumerate}
\item When $(X, L)$ is a normal polarized variety, we have 
\begin{equation*}
\cFut_{\hbar. 0}^\lambda (\mathcal{X}, \mathcal{L}) = \frac{2\pi}{(L^{\cdot n})} \mathrm{DF} (\mathcal{X}, \mathcal{L})
\end{equation*} 
for every normal test configuration $(\mathcal{X}, \mathcal{L})$ of $(X, L)$. 

\item When $X$ is a $\mathbb{Q}$-Fano variety and $L = -\lambda^{-1} K_X$ for $\lambda > 0$, we have 
\begin{equation*}
\cFut^{2\pi \lambda}_{\hbar. \xi} (\mathcal{X}, \mathcal{L}) = \frac{2\pi \lambda}{\int_{\mathfrak{t}^\vee} e^{- \langle x, \hbar \xi \rangle} \mathrm{DH} (x)} \mathrm{MFut}_{\hbar. \xi} (\mathcal{X}, \mathcal{L}) 
\end{equation*} 
for every $T$-equivariant test configuration $(\mathcal{X}, \mathcal{L})$ of $(X, L)$ with $\mathbb{Q}$-Gorenstein $\mathcal{X}$ and $\mathcal{L} = -\lambda^{-1} K_{\mathcal{X}/\mathbb{A}^1}$. 

\item Let $f, g$ be entire real analytic functions on $\mathbb{R}$. 
We put $\tilde{v} = f^{(n)},  \tilde{w} = g^{(n+1)}$ and $v = \tilde{v} (\langle \cdot, \hbar \xi \rangle), w = \tilde{w} (\langle \cdot, \hbar \xi \rangle)$. 
Then for a smooth $(X, L)$ and a smooth $T$-equivariant test configuration $(\mathcal{X}, \mathcal{L})$ with ample $\bar{\mathcal{L}}$, we have 
\begin{equation} 
\label{equivariant intersection formula of weighted Futaki}
\frac{1}{2\pi n!} \mathcal{F}_{v, w} (\mathcal{X}, \mathcal{L}) = {^\hbar (K^T_{\bar{\mathcal{X}}/\mathbb{P}^1}. f (\bar{\mathcal{L}}); \xi)} - \frac{ {^\hbar (K^T_X. f' (L); \xi)} }{ {^\hbar (g' (L); \xi)} } {^\hbar (g (\bar{\mathcal{L}}); \xi)}, 
\end{equation}
In particular, we have 
\begin{equation*}
\cFut^\lambda_{\hbar. \xi} (\mathcal{X}, \mathcal{L}) = \frac{1}{\int_X e^{\hbar \mu^\omega_{\xi}} \omega^n} \mathcal{F}_{\hbar. \xi}^\lambda (\mathcal{X}, \mathcal{L}). 
\end{equation*} 
\end{enumerate}
\end{prop}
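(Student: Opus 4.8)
The plan is to treat the three comparisons separately, reducing each to algebraic manipulations of the equivariant intersections in the definition of $\cFut^\lambda_{\hbar.\xi}$ together with the Duistermaat--Heckman dictionary of Propositions \ref{equivariant intersection and DH measure} and \ref{equivariant intersection on test configuration and DH measure}. For (1) I would put $\xi = 0$ and replace every equivariant intersection by its degree-$0$ component, an ordinary intersection number: ${^\hbar (e^{L_T}; 0)} = (L^{\cdot n})/n!$, ${^\hbar (\kappa_X^T. e^{L_T}; 0)} = (K_X. L^{\cdot (n-1)})/(n-1)!$ (using $\kappa_X = K_X$ for normal $X$), and likewise for the $\bar{\mathcal{X}}$-terms. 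The elementary point is that the bracket multiplying $\lambda$ vanishes identically at $\xi = 0$, since after cancelling factorials both of its summands equal $(\bar{\mathcal{L}}^{\cdot (n+1)})/\big((n+1)(L^{\cdot n})\big)$. What remains, once the combinatorial factor $\tfrac{n}{n+1}$ is collected, is exactly $\tfrac{2\pi}{(L^{\cdot n})}$ times the Odaka--Wang intersection expression of $\mathrm{DF}(\mathcal{X},\mathcal{L})$ recalled in the introduction.

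For (2) I would feed the Fano normalisation into the same formula. The relations $L = -\lambda^{-1} K_X$ and $\mathcal{L} = -\lambda^{-1} K_{\mathcal{X}/\mathbb{A}^1}$, via $\kappa_X = K_X$ and the $\mathbb{Q}$-Gorenstein hypothesis, give the equivariant identities $\kappa_X^T = -\lambda\,(L_T \frown [X]^T)$ and $\kappa^T_{\bar{\mathcal{X}}/\mathbb{P}^1} = -\lambda\,(\bar{\mathcal{L}}_T \frown [\bar{\mathcal{X}}]^T)$. Substituting these into $\cFut^{2\pi\lambda}_{\hbar.\xi}$ (whose weight parameter is $2\pi\lambda$) turns the curvature term into $-2\pi\lambda$ times the first summand of the $\lambda$-bracket, so these cancel and leave $-2\pi\lambda\,{^\hbar (e^{\bar{\mathcal{L}}_T};\xi)}/{^\hbar (e^{L_T};\xi)}$. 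I would then read off ${^\hbar (e^{\bar{\mathcal{L}}_T};\xi)} = -\mathrm{MFut}_{\hbar.\xi}(\mathcal{X},\mathcal{L})$ from Proposition \ref{equivariant intersection on test configuration and DH measure} and ${^\hbar (e^{L_T};\xi)} = \int_{\mathfrak{t}^\vee} e^{-\langle x,\hbar\xi\rangle}\,\mathrm{DH}$ from Proposition \ref{equivariant intersection and DH measure}, which is exactly the asserted proportionality.

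Part (3) is the substantial one, and I would first prove the intersection formula (\ref{equivariant intersection formula of weighted Futaki}) on the smooth $\bar{\mathcal{X}}$ with ample $\bar{\mathcal{L}}$, computing all intersections by integrating equivariant forms. Representing $\bar{\mathcal{L}}$ by $\Omega + \hbar\mu$ and $K^T_{\bar{\mathcal{X}}}$ by $-\tfrac{1}{2\pi}(\Ric(\Omega) + \hbar\bar{\Box}\mu)$ as in Example \ref{Equivariant Chern class of canonical bundle}, the explicit formula of Proposition \ref{absolute intersection} together with $s(\Omega)\Omega^{n+1} = (n+1)\,\Ric(\Omega)\wedge\Omega^n$ expresses ${^\hbar (K^T_{\bar{\mathcal{X}}}. f(\bar{\mathcal{L}});\xi)}$ as $-\tfrac{1}{2\pi(n+1)!}\int_{\bar{\mathcal{X}}}\big(s(\Omega)\,f^{(n)}(\mu_{\hbar\xi}) + \bar{\Box}\mu_{\hbar\xi}\,f^{(n+1)}(\mu_{\hbar\xi})\big)\Omega^{n+1}$ (with $\tilde v = f^{(n)}$). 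The simplified weighted-scalar-curvature formula writes $s_v(\Omega) = s(\Omega)f^{(n)} + \Delta\mu_{\hbar\xi}\,f^{(n+1)} + \tfrac12 J(\hbar\xi)(\mu_{\hbar\xi})f^{(n+2)}$, and an integration-by-parts identity replaces the last two terms by $\bar{\Box}\mu_{\hbar\xi}\,f^{(n+1)}$ under $\int_{\bar{\mathcal{X}}}(\cdot)\Omega^{n+1}$, so that $\int_{\bar{\mathcal{X}}}s_v(\Omega)\Omega^{n+1} = -2\pi(n+1)!\,{^\hbar (K^T_{\bar{\mathcal{X}}}. f(\bar{\mathcal{L}});\xi)}$. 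Passing from the absolute $K^T_{\bar{\mathcal{X}}}$ to the relative $K^T_{\bar{\mathcal{X}}/\mathbb{P}^1}$ produces, through the projection formula for $\varpi^* K_{\mathbb{P}^1}$ and $\int_{\mathbb{P}^1} c_1(\mathbb{P}^1) = 2$, precisely the term $-4\pi\int_X (v\circ\mu^\omega)\omega^n$ cancelling the constant in $\mathcal{F}_{v,w}$. The parallel $X$-computations give $\int_X s_v(\omega)\omega^n = -2\pi n!\,{^\hbar (K^T_X. f'(L);\xi)}$, $\int_X (w\circ\mu^\omega)\omega^n = n!\,{^\hbar (g'(L);\xi)}$ and $\int_{\bar{\mathcal{X}}}(w\circ\mu^\Omega)\Omega^{n+1} = (n+1)!\,{^\hbar (g(\bar{\mathcal{L}});\xi)}$; assembling the three pieces with the prefactor $-\tfrac{1}{n+1}$ yields (\ref{equivariant intersection formula of weighted Futaki}). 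The final identity $\cFut^\lambda_{\hbar.\xi} = \big(\int_X e^{\hbar\mu^\omega_\xi}\omega^n\big)^{-1}\mathcal{F}^\lambda_{\hbar.\xi}$ then follows by specialising $\tilde v = \tilde w = e^{\hbar\,\cdot}$, using $\int_X e^{\hbar\mu^\omega_\xi}\omega^n = n!\,{^\hbar (e^{L_T};\xi)}$, and treating the extra $\lambda$-term of $\mathcal{F}^\lambda_{\hbar.\xi}$ by the same dictionary, where $\int_{\bar{\mathcal{X}}}\mu_{\hbar\xi}e^{\mu_{\hbar\xi}}\Omega^{n+1}$ recovers the $\bm{\check{\sigma}}$-combination and the normalising constant supplies the $-\,{^\hbar (e^{\bar{\mathcal{L}}_T};\xi)}/{^\hbar (e^{L_T};\xi)}$ summand.

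The hard part will be the integration-by-parts step in (3): one must check that, integrated against $\Omega^{n+1}$ (resp.\ $\omega^n$), the first- and second-order moment-map contributions of the simplified $s_v$ combine into $\bar{\Box}\mu_{\hbar\xi}\,f^{(n+1)}$, i.e.\ that $\int_{\bar{\mathcal{X}}}\big((\Delta - \bar{\Box})\mu_{\hbar\xi}\,f^{(n+1)} + \tfrac12 J(\hbar\xi)(\mu_{\hbar\xi})f^{(n+2)}\big)\Omega^{n+1} = 0$. This is exactly the equivariant Stokes identity underpinning the slope formula for the weighted Mabuchi functional, and it is the sole analytic input; the rest is bookkeeping of factorials and the cohomological dictionary. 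A secondary point demanding care is the consistent tracking of the scaling $\mu_{\hbar\xi} = \hbar\mu_\xi$ and of the sign conventions fixed in section \ref{sign convention}, since the numerical constants $2\pi$, $4\pi$ and the relative-versus-absolute canonical class must line up exactly.
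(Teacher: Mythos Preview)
Your proposal is correct and follows essentially the same approach as the paper's proof: both parts (1) and (2) are handled by the identical substitutions and cancellations you describe, and part (3) proceeds exactly as you outline—compute the intersections by integrating equivariant forms, match $(s\,f^{(n)}+\bar{\Box}\mu_{\hbar\xi}f^{(n+1)})$ with $s_v$, treat the $\varpi^*K_{\mathbb{P}^1}$ correction via the projection/localization identity, and then specialise to $\tilde v=\tilde w=e^{\hbar\,\cdot}$. The only difference is expository: the paper simply asserts the equality $\int_X(s\,f^{(n-1)}+\bar{\Box}\mu_{\hbar\xi}f^{(n)})\omega^n=\int_X s_{f^{(n-1)}(\langle\cdot,\hbar\xi\rangle)}\,\omega^n$ without comment, whereas you correctly isolate the underlying integration-by-parts identity as the one analytic step to be checked.
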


\begin{proof}~

(1) We compute $\cFut^\lambda_{\hbar. 0} (\mathcal{X}, \mathcal{L})$ as 
\begin{align*}
&2 \pi \frac{ {^\hbar (\kappa_{\bar{\mathcal{X}}/\mathbb{P}^1}. e^{\bar{\mathcal{L}}}; 0)} \cdot {^\hbar (e^L; 0)}  - {^\hbar (\kappa_X. e^L; 0)} \cdot {^\hbar (e^{\bar{\mathcal{L}}}; 0)} }{{^\hbar (e^L; 0)}^2}
\\
&\qquad \qquad + \lambda \left[ \frac{ {^\hbar (\bar{\mathcal{L}}. e^{\bar{\mathcal{L}}}; 0)} \cdot {^\hbar (e^L; 0)} - {^\hbar (L. e^L; 0)} \cdot {^\hbar (e^{\bar{\mathcal{L}}}; 0)} }{{^\hbar (e^L; 0)}^2} - \frac{{^\hbar (e^{\bar{\mathcal{L}}}; 0)}}{{^\hbar (e^L; 0)}} \right] 
\\
&\quad = 2 \pi \left( \frac{n!}{(L^{\cdot n})} \right)^2 \left( \frac{1}{n!} (K_{\bar{\mathcal{X}}/\mathbb{P}^1}. \bar{\mathcal{L}}^{\cdot n}) \cdot \frac{1}{n!} (L^{\cdot n}) - \frac{1}{(n-1)!} (K_X. L^{\cdot (n-1)}) \cdot \frac{1}{(n+1)!} (\bar{\mathcal{L}}^{\cdot (n+1)}) \right) 
\\
&\qquad \qquad + \lambda \left( \frac{n!}{(L^{\cdot n})} \right)^2 \left( \frac{(\bar{\mathcal{L}}^{\cdot (n+1)})}{n!} \cdot \frac{(L^{\cdot n})}{n!} - \frac{(L^{\cdot n})}{(n-1)!} \cdot \frac{(\bar{\mathcal{L}}^{\cdot (n+1)})}{(n+1)!}  - \frac{(L^{\cdot n})}{n!} \frac{(\bar{\mathcal{L}}^{\cdot (n+1)})}{(n+1)!} \right)
\\
&\quad = \frac{2 \pi}{(L^{\cdot n})} \left( (K_{\bar{\mathcal{X}}/\mathbb{P}^1}. \bar{\mathcal{L}}^{\cdot n}) - \frac{n}{n+1} \frac{(K_X. L^{\cdot (n-1)})}{(L^{\cdot n})} (\bar{\mathcal{L}}^{\cdot (n+1)}) \right) = \frac{2\pi}{(L^{\cdot n})} \mathrm{DF} (\mathcal{X}, \mathcal{L}). 
\end{align*}

(2) We compute $\cFut^{2 \pi \lambda}_{\hbar. \xi} (\mathcal{X}, \mathcal{L})$ as 
\begin{align*}
&2 \pi \frac{ {^\hbar (\kappa_{\bar{\mathcal{X}}/\mathbb{P}^1}. e^{\bar{\mathcal{L}}}; \xi)} \cdot {^\hbar (e^L; \xi)}  - {^\hbar (\kappa_X. e^L; \xi)} \cdot {^\hbar (e^{\bar{\mathcal{L}}}; \xi)} }{{^\hbar (e^L; \xi)}^2}
\\
&\qquad + 2 \pi \lambda \left[ \frac{ {^\hbar (\bar{\mathcal{L}}. e^{\bar{\mathcal{L}}}; \xi)} \cdot {^\hbar (e^L; \xi)} - {^\hbar (L. e^L; \xi)} \cdot {^\hbar (e^{\bar{\mathcal{L}}}; \xi)} }{{^\hbar (e^L; \xi)}^2} - \frac{{^\hbar (e^{\bar{\mathcal{L}}}; \xi)}}{{^\hbar (e^L; \xi)}} \right] 
\\
&\quad = - 2\pi \lambda \frac{ {^\hbar (e^{\bar{\mathcal{L}}}; \xi)} }{ {^\hbar (e^L; \xi)} }
= - 2\pi \lambda \int_{\mathfrak{t}^\vee \times \mathbb{R}} t e^{- \langle x, \hbar \xi \rangle} \mathrm{DH}_{(\mathcal{X}, \mathcal{L})} (x, t) \Big{/} \int_{\mathfrak{t}^\vee} e^{-\langle x, \hbar \xi \rangle} \mathrm{DH}_{(X, L)} (x). 
\end{align*}

(3) For a $T$-polarized smooth variety $(X, L)$ and an entire real anlaytic function $f$ on $\mathbb{R}$, we compute 
\begin{align*}
{^\hbar (K^T_X. f (L); \xi)} 
&= - \frac{1}{2\pi} \int_X (\mathrm{Ric} (\omega) + \bar{\Box} \mu_{\hbar \xi}) f (\omega+ \mu_{\hbar \xi})
\\
&= - \frac{1}{2\pi} \int_X (s (\omega) f^{(n-1)} (\mu_{\hbar \xi}) + \bar{\Box} \mu_{\hbar \xi} f^{(n)} (\mu_{\hbar \xi})) \omega^n/n! 
\\
&= - \frac{1}{2\pi} \int_X s_{f^{(n -1)} (\langle \cdot, \hbar \xi \rangle)} \omega^n/n!
\end{align*}
and 
\begin{align*}
{^\hbar (f (L); \xi)}
&= \int_X f (\omega + \mu_{\hbar \xi}) = \int_X f^{(n)} (\mu_{\hbar \xi}) \omega^n/n!. 
\end{align*}

The equality (\ref{equivariant intersection formula of weighted Futaki}) follows by 
\begin{align*} 
{^\hbar (\varpi^* K_{\mathbb{P}^1}. f (\bar{\mathcal{L}}); \xi)} 
&= \mathfrak{D}_{\hbar. \xi} \int_{\mathbb{P}^1} K_{\mathbb{P}^1} \smile \varpi_* (f (\bar{\mathcal{L}}))
\\
&= \int_{\mathbb{P}^1} c_1 (\mathcal{O} (-2)) \cdot i^*_{0_-} \mathfrak{D}_{\hbar. \xi} \varpi_* (f (\bar{\mathcal{L}}))
\\
&= -2 {^\hbar (f (L); \xi)}. 
\end{align*}

To see the equality on $\mathcal{F}_{\hbar .\xi}^\lambda$, it suffices to check 
\[ \int_{\bar{\mathcal{X}}} \hbar \mu^\Omega_{\xi} e^{\hbar \mu^\Omega_{\xi}} \frac{\Omega^{n+1}}{(n+1)!} = {^\hbar (\bar{\mathcal{L}}. e^{\bar{\mathcal{L}}}; \xi)} - {^\hbar (e^{\bar{\mathcal{L}}}; \xi)} \]
and 
\[ \frac{\int_X \hbar \mu^\omega_{\xi} e^{\hbar \mu^\omega_{\xi}} \omega^n}{\int_X e^{\hbar \mu^\omega_{\xi}} \omega^n} \cdot \int_{\bar{\mathcal{X}}}  e^{\hbar \mu^\Omega_{\xi}} \frac{\Omega^{n+1}}{(n+1)!} = \frac{ {^\hbar (L. e^L; \xi)} }{{^\hbar (e^L; \xi)} } {^\hbar (e^{\bar{\mathcal{L}}}; \xi)}, \]
which we already know by the above computation. 

\end{proof}

Now we show the following connection with the relative Donaldson--Futaki invariant. 

\begin{thm}
For general test configuration $(\mathcal{X}, \mathcal{L})$, we have 
\[ \lim_{\lambda \to -\infty} \cFut^\lambda_{\xi_\lambda} (\mathcal{X}, \mathcal{L}) = \cFut^\dagger_{\xi_{\mathrm{ext}}} (\mathcal{X}, \mathcal{L}). \]
\end{thm}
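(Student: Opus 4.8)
The plan is to isolate the $\lambda$-dependence of $\cFut^\lambda_{\hbar.\xi}$ and to exploit the two asymptotic inputs imported from \cite{Ino2}: as $\lambda\to-\infty$ the minimizing vector satisfies $\xi_\lambda\to 0$ while $\lambda\xi_\lambda$ converges (to $\xi_{\mathrm{ext}}$, up to the normalization fixed there). First I would write
\[ \cFut^\lambda_{\hbar.\xi}(\mathcal{X},\mathcal{L}) = T(\xi) + \lambda\, G(\xi), \]
where $T(\xi)$ is the $\lambda$-independent first line of the definition and $G(\xi)$ is the bracketed coefficient of $\lambda$. Every building block ${^\hbar(\alpha.e^{L_T};\xi)}$ and ${^\hbar(\alpha.e^{\bar{\mathcal{L}}_T};\xi)}$ is a real-analytic function of $\xi\in\mathfrak{t}$ by Proposition \ref{absolute intersection} and Theorem \ref{derivative of relative equivariant intersection}, and the denominator ${^\hbar(e^{L_T};\xi)}$ is everywhere strictly positive by the positivity clause of Proposition \ref{absolute intersection}; hence $T$ and $G$ are real-analytic near $0$.

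For the $\lambda$-independent part, continuity gives $T(\xi_\lambda)\to T(0)$, and $T(0)=\frac{2\pi}{(L^{\cdot n})}\mathrm{DF}(\mathcal{X},\mathcal{L})$ by Proposition \ref{comparison}(1) (equivalently, the $\lambda$-independence of $\cFut^\lambda_{\hbar.0}$ established there). The heart of the matter is the $\lambda$-linear part, and here the decisive observation is that $G(0)=0$. This is exactly the vanishing of the $\lambda$-bracket at the origin, which follows from the elementary degree identities ${^\hbar(\bar{\mathcal{L}}_T.e^{\bar{\mathcal{L}}_T};0)}=(n+1){^\hbar(e^{\bar{\mathcal{L}}_T};0)}$ and ${^\hbar(L_T.e^{L_T};0)}=n\,{^\hbar(e^{L_T};0)}$ (the same cancellation that underlies Proposition \ref{comparison}(1)). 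Because $G(0)=0$, I can Taylor expand $G(\xi_\lambda)=D_0G(\xi_\lambda)+O(|\xi_\lambda|^2)$, where $D_0G$ is the directional derivative of $G$ at the origin, a linear functional on $\mathfrak{t}$. Then
\[ \lambda\,G(\xi_\lambda)=D_0G(\lambda\xi_\lambda)+\lambda\cdot O(|\xi_\lambda|^2), \]
and since $\lambda\xi_\lambda$ converges while $|\xi_\lambda|=O(|\lambda|^{-1})$ forces $\lambda|\xi_\lambda|^2\to 0$, the remainder dies and $\lambda\,G(\xi_\lambda)\to D_0G(\lim_\lambda\lambda\xi_\lambda)$ by continuity of the linear functional $D_0 G$.

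It remains to identify $D_0 G$. Using the scaling Lemma \ref{scaling of exponential equivariant intersection} to express ${^\hbar(L_T.e^{L_T};\xi)}$ and ${^\hbar(\bar{\mathcal{L}}_T.e^{\bar{\mathcal{L}}_T};\xi)}$ through ${^\hbar(e^{L_T};\xi)}$, ${^\hbar(e^{\bar{\mathcal{L}}_T};\xi)}$ and their first derivatives, a short quotient-rule computation (simplified by $G(0)=0$, so only the first-order numerator survives) reduces $D_0G(\zeta)$ to a constant multiple of
\[ \int_{\mathfrak{t}^\vee\times\mathbb{R}} t\,\big(\langle x,\hbar\zeta\rangle-\underline{\langle x,\hbar\zeta\rangle}\big)\,\mathrm{DH}_{(\mathcal{X},\mathcal{L}_{T\times\mathbb{G}_m})}, \]
after rewriting the equivariant self-intersections as Duistermaat–Heckman integrals via Propositions \ref{equivariant intersection and DH measure} and \ref{equivariant intersection on test configuration and DH measure} and Corollaries \ref{equivariant intersection and DH measure, cor} and \ref{equivariant intersection on test configuration and DH measure, cor}. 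This is precisely the $\xi$-linear part of the relative Donaldson–Futaki invariant in the Duistermaat–Heckman form recorded in the introduction, so adding back $T(0)=\frac{2\pi}{(L^{\cdot n})}\mathrm{DF}$ recovers $\cFut^\dagger_{\hbar.\zeta}$ with $\zeta=\lim_\lambda\lambda\xi_\lambda$. The main obstacle is this last identification: one must match the universal constant produced by $D_0 G$ against the normalization of $\xi_{\mathrm{ext}}$ in \cite[Theorem D(1)]{Ino2} (the precise multiple relating $\lim_\lambda\lambda\xi_\lambda$ to $\xi_{\mathrm{ext}}$), which is exactly calibrated there so that $\cFut^\dagger_{\hbar.\xi_{\mathrm{ext}}}$ is the resulting limit. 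Beyond this bookkeeping the only analytic input is the uniform control of the quadratic Taylor remainder of $G$ near $0$, guaranteed by the real-analyticity from Proposition \ref{absolute intersection} and Theorem \ref{derivative of relative equivariant intersection}.
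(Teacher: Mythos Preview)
Your proposal is correct and follows essentially the same route as the paper. The paper reparametrizes by $\tau=\lambda^{-1}$ and computes $\lim_{\lambda\to-\infty}\lambda\,G(\xi_\lambda)=\frac{d}{d\tau}\big|_{\tau=-0}G(\xi^\tau)$ (which by the chain rule is exactly your $D_0G(\xi_{\mathrm{ext}})$), then identifies this derivative by writing out the first two Taylor coefficients of ${^\hbar(e^{L_T};\xi)}$, ${^\hbar(e^{\bar{\mathcal{L}}_T};\xi)}$, ${^\hbar(L_T.e^{L_T};\xi)}$, ${^\hbar(\bar{\mathcal{L}}_T.e^{\bar{\mathcal{L}}_T};\xi)}$ and combining them by the quotient rule into the equivariant-intersection form of $\cFut^\dagger$ displayed in the introduction; your identification via the Duistermaat--Heckman integrals is the same computation expressed through Corollaries~\ref{equivariant intersection and DH measure, cor} and~\ref{equivariant intersection on test configuration and DH measure, cor}.
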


\begin{proof}
Recall for each $\lambda \ll 0$, we have a unique vector $\xi_\lambda$ for which $\cFut^\lambda_{\xi_\lambda}$ vanishes for product configurations and we have $\lambda \xi_\lambda \to \xi_{\mathrm{ext}}$ for which $\cFut^\dagger_{\xi_{\mathrm{ext}}}$ vanishes for product configurations. 
In particular, we have $\xi_\lambda \to 0$ as $\lambda \to -\infty$, so that we compute 
\[ \frac{ {^\hbar (\kappa_{\bar{\mathcal{X}}/\mathbb{P}^1}^T. e^{\bar{\mathcal{L}}_T}; \xi_\lambda)} \cdot {^\hbar (e^{L_T}; \xi_\lambda)}  - {^\hbar (\kappa_X^T. e^{L_T}; \xi_\lambda)} \cdot {^\hbar (e^{\bar{\mathcal{L}}_T}; \xi_\lambda)} }{{^\hbar (e^{L_T}; \xi_\lambda)}^2}  \to \frac{1}{(L^{\cdot n})} \mathrm{DF} (\mathcal{X}, \mathcal{L}). \]
On the other hand, for $\xi^\tau = \xi_{\tau^{-1}}$, we have $(d\xi^\tau/d\tau)|_{\tau=-0} = \lim_{\tau \to -0} \tau^{-1} \xi^\tau = \xi_{\mathrm{ext}}$ and  
\[ {^\hbar (\alpha. e^L; \xi^\tau)} = \tau^{- \mathrm{rk} \alpha} \cdot {^\hbar (\alpha. e^{\tau L}; \tau^{-1} \xi^\tau)} \]
by Lemma \ref{scaling of exponential equivariant intersection}. 

Since 
\begin{align*} 
{^\hbar (e^L; \xi)} 
&= \frac{1}{n!} (L^{\cdot n}) + \frac{1}{(n+1)!} {^\hbar (L^{\cdot n+1}; \xi)} + \dotsb 
\\
{^\hbar (e^{\bar{\mathcal{L}}}; \xi)} 
&= \frac{1}{(n+1)!} (\bar{\mathcal{L}}^{\cdot n+1}) + \frac{1}{(n+2)!}  {^\hbar (\bar{\mathcal{L}}^{\cdot n+2}; \xi)} + \dotsb 
\\
{^\hbar (L. e^L; \xi)} 
&= \frac{1}{(n-1)!} (L^{\cdot n}) + \frac{1}{n!} (L^{\cdot n+1}; \xi) + \dotsb 
\\
{^\hbar (\bar{\mathcal{L}}. e^{\bar{\mathcal{L}}}; \xi)} 
&= \frac{1}{n!} (\bar{\mathcal{L}}^{\cdot n+1}) + \frac{1}{(n+1)!} {^\hbar (\bar{\mathcal{L}}^{\cdot n+2}; \xi)} + \dotsb 
\end{align*}
gives the Taylor series, we compute 
\begin{align*} 
\frac{d}{d\tau}\Big{|}_{\tau = -0} \frac{ {^\hbar (\bar{\mathcal{L}}. e^{\bar{\mathcal{L}}}; \xi^\tau)} }{ {^\hbar (e^L; \xi^\tau)} } 
&= \frac{1}{n+1} \frac{ {^\hbar (\bar{\mathcal{L}}^{\cdot n+2}; \xi_{\mathrm{ext}})} \cdot (L^{\cdot n}) - (\bar{\mathcal{L}}^{\cdot n+1}) \cdot {^\hbar (L^{\cdot n+1}; \xi_{\mathrm{ext}})} }{ (L^{\cdot n})^2 }
\\
\frac{d}{d\tau}\Big{|}_{\tau = -0} \frac{ {^\hbar (L. e^L; \xi^\tau)} }{ {^\hbar (e^L; \xi^\tau)} }
&= \frac{1}{n+1} \frac{{^\hbar (L^{\cdot n+1}; \xi_{\mathrm{ext}})} }{(L^{\cdot n})}
\\
\frac{d}{d\tau}\Big{|}_{\tau = -0} \frac{ {^\hbar (e^{\bar{\mathcal{L}}}; \xi^\tau)} }{ {^\hbar (e^L; \xi^\tau)} } 
&=\frac{ \frac{1}{(n+1) (n+2)}  {^\hbar (\bar{\mathcal{L}}^{\cdot n+2}; \xi_{\mathrm{ext}})} \cdot (L^{\cdot n}) - \frac{1}{(n+1)^2} (\bar{\mathcal{L}}^{\cdot n+1}) \cdot {^\hbar (L^{\cdot n+1}; \xi_{\mathrm{ext}})} }{ (L^{\cdot n})^2 }.
\end{align*}
It follows that 
\begin{align*} 
\lim_{\lambda \to -\infty}
& \lambda \frac{ {^\hbar (\bar{\mathcal{L}}_T. e^{\bar{\mathcal{L}}_T}; \xi_\lambda)} \cdot {^\hbar (e^{L_T}; \xi_\lambda)} - {^\hbar (L_T. e^{L_T}; \xi_\lambda)} \cdot {^\hbar (e^{\bar{\mathcal{L}}_T}; \xi_\lambda)} }{{^\hbar (e^{L_T};\xi_\lambda )}^2}  
\\
&= \frac{d}{d\tau}\Big{|}_{\tau = - 0} \left[ \frac{ {^\hbar (\bar{\mathcal{L}}_T. e^{\bar{\mathcal{L}}_T}; \xi^\tau)} \cdot {^\hbar (e^{L_T}; \xi^\tau)} - {^\hbar (L_T. e^{L_T}; \xi^\tau)} \cdot {^\hbar (e^{\bar{\mathcal{L}}_T}; \xi^\tau)} }{{^\hbar (e^{L_T};\xi^\tau)}^2} - \frac{{^\hbar (e^{\bar{\mathcal{L}}_T}; \xi^\tau)}}{{^\hbar (e^{L_T}; \xi^\tau)}} \right]
\\
&= \frac{1}{n+1} \frac{ {^\hbar (\bar{\mathcal{L}}^{\cdot n+2}; \xi_{\mathrm{ext}})} \cdot (L^{\cdot n}) - (\bar{\mathcal{L}}^{\cdot n+1}) \cdot {^\hbar (L^{\cdot n+1}; \xi_{\mathrm{ext}})} }{ (L^{\cdot n})^2 }
\\
&\qquad + \frac{1}{(n+1)^2} \frac{{^\hbar (L^{\cdot n+1}; \xi_{\mathrm{ext}})} }{(L^{\cdot n})} \cdot \frac{(\bar{\mathcal{L}}^{\cdot n+1})}{(L^{\cdot n})} 
\\
&\qquad \quad - \frac{ \frac{1}{n+2}  {^\hbar (\bar{\mathcal{L}}^{\cdot n+2}; \xi_{\mathrm{ext}})} \cdot (L^{\cdot n}) - \frac{1}{n+1} (\bar{\mathcal{L}}^{\cdot n+1}) \cdot {^\hbar (L^{\cdot n+1}; \xi_{\mathrm{ext}})} }{ (L^{\cdot n})^2 }
\\
&= \frac{1}{(n+1)(n+2)} \frac{ {^\hbar (\bar{\mathcal{L}}^{\cdot n+2}; \xi_{\mathrm{ext}})} }{ (L^{\cdot n}) } + \frac{1}{(n+1)^2} \frac{{^\hbar (L^{\cdot n+1}; \xi_{\mathrm{ext}})} }{(L^{\cdot n})} \cdot \frac{(\bar{\mathcal{L}}^{\cdot n+1})}{(L^{\cdot n})}, 
\end{align*}
which proves the claim. 
\end{proof}

\subsubsection{Basic property of $\mu$K-stability}

Now we are ready to apply the results in section \ref{absolute equivariant intersection theory}. 

\begin{thm}
\label{fundamental lemma}
~
\begin{enumerate}
\item A $T$-polarized normal variety $(X, L)$ is $\mu^\lambda_\xi$K-semistable (resp. $\mu^\lambda_\xi$K-polystable, $\mu^\lambda_\xi$K-stable) with respect to general test configurations iff it is $\mu^\lambda_\xi$K-semistable (resp. $\mu^\lambda_\xi$K-polystable, $\mu^\lambda_\xi$K-stable) with respect to normal ample test configurations. 

\item A $T$-polarized manifold $(X, L)$ is $\mu^\lambda_\xi$K-semistable with respect to general test configurations iff it is $\mu^\lambda_\xi$K-semistable with respect to smooth ample test configurations with reduced central fibre. 
\end{enumerate}
\end{thm}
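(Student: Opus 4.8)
The plan is to prove both equivalences by the same mechanism. In each case one direction is immediate: a normal ample (resp.\ smooth ample with reduced central fibre) test configuration is in particular a general test configuration, so $\mu^\lambda_\xi$K-semistability with respect to general test configurations restricts at once to the smaller class. The whole content lies in the converse, for which I would produce, from an arbitrary test configuration $(\mathcal{X},\mathcal{L})$, a test configuration in the required subclass whose $\check{\mu}^\lambda_{\hbar.\xi}$-Futaki invariant is no larger. The three tools driving every reduction are: the equivariant projection formula (Proposition \ref{projection formula for equivariant intersection}) applied with the entire function $f=\exp$, all of whose derivatives are positive so that the sign hypotheses $f^{(n-1)},f^{(n)}>0$ hold automatically; the invariance of $\cFut^\lambda_{\hbar.\xi}$ under the twist $\mathcal{L}\mapsto\mathcal{L}+c.[\mathcal{X}_0]_{\mathbb{G}_m}$; and equivariant resolution and base change.

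For part (1) I would run the reduction in two steps, normalization followed by passage to the relatively ample model. At the normalization $\nu\colon\bar{\mathcal{X}}^\nu\to\bar{\mathcal{X}}$ one has $\nu_*[\bar{\mathcal{X}}^\nu]=[\bar{\mathcal{X}}]$, so projection formula (1) keeps the $\bar{\mathcal{L}}$-terms ${^\hbar(e^{\bar{\mathcal{L}}_T};\xi)}$ and ${^\hbar(\bar{\mathcal{L}}_T.e^{\bar{\mathcal{L}}_T};\xi)}$ fixed; since $\nu$ is finite and an isomorphism away from the codimension $\ge 1$ non-normal locus, projection formula (2) gives ${^\hbar(\kappa^T_{\bar{\mathcal{X}}^\nu/\mathbb{P}^1}.e^{\bar{\mathcal{L}}^\nu_T};\xi)}\le{^\hbar(\kappa^T_{\bar{\mathcal{X}}/\mathbb{P}^1}.e^{\bar{\mathcal{L}}_T};\xi)}$. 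As the relative-canonical term enters $\cFut^\lambda_{\hbar.\xi}$ with the positive weight $2\pi/{^\hbar(e^{L_T};\xi)}$ and all remaining terms are unchanged, normalization does not increase $\cFut^\lambda_{\hbar.\xi}$ for any $\lambda$. The semiample big class $\bar{\mathcal{L}}^\nu$ then defines a relatively ample model $\phi\colon\bar{\mathcal{X}}^\nu\to\bar{\mathcal{X}}^{\mathrm{amp}}$ with $\bar{\mathcal{L}}^\nu=\phi^*\bar{\mathcal{L}}^{\mathrm{amp}}$; since $\phi$ contracts only $\bar{\mathcal{L}}$-trivial loci and any contracted divisor maps to codimension $\ge 2$, the map $\phi$ is an isomorphism in codimension one, so projection formula (1) and (3) yield equality of all terms. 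Thus the passage to the normal ample model leaves $\cFut^\lambda_{\hbar.\xi}$ unchanged, and the semistable statement follows. For polystability and stability I would invoke the remark that for normal $X$ the equivalence condition may be replaced by pluripotential equivalence: both operations stay inside the pluripotential equivalence class and carry product configurations to product configurations, so a vanishing invariant on the reduced model forces the original configuration to be equivalent to a product.

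For part (2) two further modifications are needed to reach a \emph{smooth} configuration with \emph{reduced} central fibre. First, equivariant semistable reduction — base change along $t\mapsto t^d$ followed by normalization — forces the central fibre to be reduced and multiplies $\cFut^\lambda_{\hbar.\xi}$ by the positive constant $d$, a scaling read off from the Duistermaat--Heckman formulas (Corollary \ref{equivariant intersection on test configuration and DH measure, cor} together with Lemma \ref{scaling of exponential equivariant intersection}); in particular the sign is preserved. Second, an equivariant log resolution $\mu\colon\tilde{\mathcal{X}}\to\mathcal{X}$ smooths the total space, and since the exceptional divisors are $\mu$-exceptional they vanish under pushforward, so the intersection numbers against the \emph{pulled-back} class $\mu^*\bar{\mathcal{L}}$ are unchanged. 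The residual difficulty is that $\mu^*\bar{\mathcal{L}}$ is only relatively nef and big; to recover ampleness I would perturb to $\mu^*\bar{\mathcal{L}}-\epsilon E$, with $E$ a $\mu$-exceptional divisor making the class relatively ample for small rational $\epsilon>0$, and then let $\epsilon\to 0$, using that $\cFut^\lambda_{\hbar.\xi}$ depends real-analytically on the class (Theorem B (4) and Proposition \ref{absolute intersection}) and that rational classes are dense. Each perturbed $(\tilde{\mathcal{X}},\mu^*\bar{\mathcal{L}}-\epsilon E)$ is a smooth ample test configuration with reduced central fibre, so its invariant is $\ge 0$ by hypothesis, and the limit yields $\cFut^\lambda_{\hbar.\xi}(\mathcal{X},\mathcal{L})\ge 0$.

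I expect the main obstacle to be precisely this smooth-versus-ample tension in part (2): resolution destroys ampleness whereas the ample model can reintroduce singularities, so the argument must route through the perturbation-and-limit step, which requires (i) continuity of $\cFut^\lambda_{\hbar.\xi}$ on $NS_T(\cdot,\mathbb{R})$, (ii) that the perturbed class genuinely remains a reduced-central-fibre smooth ample $\mathbb{Q}$-test configuration for small rational $\epsilon$, and (iii) explicit control of the base-change scaling factor. A secondary but unavoidable bookkeeping point is verifying, at each birational modification, the exact codimension hypotheses of Proposition \ref{projection formula for equivariant intersection} — in particular that normalization and the ample-model contraction are isomorphisms in codimension one — so that the inequalities and equalities invoked above are justified.
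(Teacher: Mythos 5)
Part (1) of your proposal is correct and is essentially the paper's own argument: both reductions are driven by Proposition \ref{projection formula for equivariant intersection} with $f=\exp$, the only difference being the order (the paper contracts to the ample model first, using that this morphism is an isomorphism away from a codimension two subscheme of the total space, and normalizes second), and your handling of polystability via pluripotential equivalence fills in a point the paper leaves implicit. One bookkeeping item: Proposition \ref{projection formula for equivariant intersection} (2) requires the class to be semiample and big on the compactification $\bar{\mathcal{X}}$, not just relatively so, so before normalizing you must actually deploy the twist $\mathcal{L}\mapsto\mathcal{L}+c.[\mathcal{X}_0]_{\mathbb{G}_m}$ that you list among your tools but never use.

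Part (2) has a genuine gap, and it is not the smooth-versus-ample tension you flagged but a smooth-versus-reduced-fibre tension. You make the central fibre reduced by base change and normalization \emph{first}, and only afterwards take an equivariant log resolution. But a log resolution can destroy reducedness: its centers lie inside the central fibre, and the exceptional divisor over a center $Z$ enters the new central fibre with multiplicity $\mathrm{ord}_Z(\varpi^*t)$, which is $\ge 2$ whenever the fibre is singular along $Z$ (blow up the node of the reduced central fibre of $\{xy=t\}$ and the exceptional curve appears with multiplicity two). So after your second step the total space is smooth but the fibre is in general no longer reduced, and iterating base change, normalization and resolution until both properties hold simultaneously is exactly the content of the semistable reduction theorem, not a formal concatenation. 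This is why the paper resolves \emph{first} and then base changes and normalizes, citing \cite{KKMS} precisely at the point where smoothness and reducedness must be achieved together; the ampleness issue, by contrast, is fixed by the $\epsilon$-perturbation in both your argument and the paper's.

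The second gap is the justification of the scaling under base change. The Duistermaat--Heckman formulas (Corollary \ref{equivariant intersection on test configuration and DH measure, cor}) express only the $\bar{\mathcal{L}}$-self-intersection terms; they say nothing about ${^\hbar (\kappa^T_{\bar{\mathcal{X}}/\mathbb{P}^1}. e^{\bar{\mathcal{L}}_T}; \xi)}$, so they cannot produce the factor $d$ for the full invariant. The paper instead uses the naturality of the characteristic class $\D_{\hbar. \xi} \cmu^\lambda_{T\times\mathbb{G}_m}$ under the $\varphi_d$-equivariant base change morphism together with $m_d^*\varphi_d^\#\bm{x} = d.\bm{x}$, which handles all terms at once. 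Moreover, once the normalization is composed in, the correct statement is the inequality $\cFut^\lambda_{\hbar.\xi}(\tilde{\mathcal{X}}^\nu_d, \nu_d^*\mathcal{L}) \le d.\cFut^\lambda_{\hbar.\xi}(\mathcal{X},\mathcal{L})$ coming from Proposition \ref{projection formula for equivariant intersection} (2), not the equality you assert: the normalization can strictly decrease the canonical term. The inequality happens to point in the direction your argument needs, so the conclusion survives, but your claim that the invariant is exactly multiplied by $d$, and your mechanism for proving it, are both incorrect. (A last minor point: the continuity needed in the $\epsilon\to 0$ step is continuity in the polarization class, which follows from the integral representation in the proof of Proposition \ref{absolute intersection}, not from the regularity in $\xi$ of Theorem B (4).)
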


\begin{proof}
Pick a semiample test configuration $(\mathcal{X}, \mathcal{L})$ of $(X, L)$. 
As $L$ is ample and $\mathcal{L}$ is relatively semiample, we have a unique ample test configuration $(\mathcal{X}^{\mathrm{amp}}, \mathcal{L}^{\mathrm{amp}})$ of the same $(X, L)$ associated to $(\mathcal{X}, \mathcal{L})$ as in \cite[Definition 2.16]{BHJ1}. 
The associated morphism $\mu: \mathcal{X} \to \mathcal{X}^{\mathrm{amp}}$ is an isomorphism away from a codimension one subscheme of the central fibre, which is a codimension two subscheme of the total space. 
It follows that $\cFut^\lambda_{\hbar. \xi} (\mathcal{X}^{\mathrm{amp}}, \mathcal{L}^{\mathrm{amp}}) = \cFut^\lambda_{\hbar. \xi} (\mathcal{X}, \mathcal{L})$ by Proposition \ref{projection formula for equivariant intersection}. 
Thus we may assume $\mathcal{L}$ is relatively ample. 
By the above remark, we may further assume that $\mathcal{L}$ is ample. 
We apply Proposition \ref{projection formula for equivariant intersection} (2) to the normalization $\nu: \mathcal{X}^\nu \to \mathcal{X}$ and obtain the first claim. 

Now, we may assume $(\mathcal{X}, \mathcal{L})$ is a normal ample test configuration to prove the second claim. 
Since $\mathcal{X}$ is normal, there is a $T \times \mathbb{G}_m$-equivariant resolution $\beta: \tilde{\mathcal{X}} \to \mathcal{X}$ of singularities which is isomorphism away from a codimension two subscheme in $\mathcal{X}$. 
Then we have $\cFut^\lambda_{\hbar. \xi} (\tilde{\mathcal{X}}, \beta^* \mathcal{L}) = \cFut^\lambda_{\hbar. \xi} (\mathcal{X}, \mathcal{L})$ by Proposition \ref{projection formula for equivariant intersection}. 

By the reduced fibre theorem, there is a positive integer $d$ such that the normalized base-change $\nu_d: \tilde{\mathcal{X}}_d^\nu \to \tilde{\mathcal{X}}$ along the morphism $m_d (z) = z^d: \mathbb{A}^1 \to \mathbb{A}^1$ has the reduced central fibre. 
Let $\mu_d: \tilde{\mathcal{X}}_d \to \tilde{\mathcal{X}}$ be the (non-normalized) base change morphism along $m_d$. 
We note these morphisms are all equivariant with respect to the group homomorphism $\varphi_d: \mathbb{G}_m \to \mathbb{G}_m: \tau \mapsto \tau^d$. 
Since $\D_{\hbar. \xi} \cmu^\lambda_{T \times \mathbb{G}_m} (\tilde{\mathcal{X}}_d/\mathbb{A}^1, \mu_d^* \beta^* \mathcal{L}) = m_d^* \varphi_d^\# \D_{\hbar. \xi} \cmu^\lambda_{T \times \mathbb{G}_m} (\mathcal{X}/\mathbb{A}^1, \mathcal{L})$ and $\mu_d^* \varphi_d^\# \bm{x} = d. \bm{x}$, we compute 
\begin{align*} 
\cFut^\lambda_{\hbar. \xi} (\tilde{\mathcal{X}}_d, \mu_d^* \beta^* \mathcal{L}). \bm{x} 
&= - \D_{\hbar. \xi} \cmu^\lambda_{T \times \mathbb{G}_m} (\tilde{\mathcal{X}}_d/\mathbb{A}^1, \mu_d^* \beta^* \mathcal{L}) 
\\
&= - m_d^* \varphi_d^\# \D_{\hbar. \xi} \cmu^\lambda_{T \times \mathbb{G}_m} (\tilde{\mathcal{X}}/\mathbb{A}^1, \beta^* \mathcal{L}) = d. \cFut^\lambda_{\hbar. \xi} (\mathcal{X}, \mathcal{L}). \bm{x} 
\end{align*}
in $H^2_{\mathbb{G}_m} (\mathbb{A}^1, \mathbb{R})$. 
Thus we obtain $\cFut^\lambda_{\hbar. \xi} (\tilde{\mathcal{X}}_d^\nu, \nu_d^* \beta^* \mathcal{L}) \le d. \cFut^\lambda_{\hbar. \xi} (\mathcal{X}, \mathcal{L})$ by Proposition \ref{projection formula for equivariant intersection} (2). 

Since the resolution of singularities can be obtained by a sequence of blowing-ups (though this seems not true for complex analytic space, the following claim is still valid as noted in \cite{DR}), 
there is an effective exceptional $\beta$-anti-ample divisor $\Sigma$ supporting precisely on $\mathrm{Exc} (\beta)$, so that $\tilde{\mathcal{L}}_\epsilon := \nu_d^* (\beta^* \mathcal{L} - \epsilon \Sigma^{T \times \mathbb{G}_m})$ is ample for every small $\epsilon > 0$. 
Therefore we get a smooth test configuration $(\tilde{\mathcal{X}}_d^\nu, \tilde{\mathcal{L}}_\epsilon)$ with reduced central fibre and ample $\tilde{\mathcal{L}}_\epsilon$ (cf. \cite[Chapter IV, Section 3]{KKMS}). 
Since $\tilde{\mathcal{L}}_\epsilon \to \nu_d^* \beta^* \mathcal{L}$ in $H^2_{\mathbb{G}_m} (\tilde{\mathcal{X}}_d^\nu, \mathbb{R})$ as $\epsilon \to 0$, we have $d^{-1} \cFut^\lambda_{\hbar. \xi} (\tilde{\mathcal{X}}_d^\nu, \tilde{\mathcal{L}}_\epsilon) \to \cFut^\lambda_{\hbar. \xi} (\mathcal{X}, \mathcal{L})$ from the integration expression of equivariant intersection in the proof of Proposition \ref{absolute intersection}. 
Now suppose $(X, L)$ is $\mu$K-semistable with respect to smooth ample test configurations with reduced central fibre. 
Then since $d^{-1} \cFut^\lambda_{\hbar. \xi} (\tilde{\mathcal{X}}_d, \tilde{\mathcal{L}}_\epsilon) \ge 0$, we get $\cFut^\lambda_{\hbar. \xi} (\mathcal{X}, \mathcal{L}) \ge 0$ for every normal test configuration $(\mathcal{X}, \mathcal{L})$. 
Thus $(X, L)$ is $\mu$K-semistable with respect to general test configurations. 
The reverse implication is obvious. 
\end{proof}

The above theorem reduces Theorem A to Lahdili's result \cite[Theorem 2]{Lah}. 
Thus we obtain the following. 

\begin{cor}
If a smooth K\"ahler manifold $(X, L)$ admits a $\mu^\lambda_\xi$-cscK metric, then $(X, L)$ is $\mu^\lambda_\xi$K-semistable. 
\end{cor}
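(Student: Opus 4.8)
The plan is to deduce the corollary from Theorem A, which itself reduces to Lahdili's boundedness theorem via the intersection-theoretic machinery developed above. Since $(X,L)$ is a smooth polarized manifold, I would first invoke Theorem \ref{fundamental lemma} (2): it suffices to verify $\cFut^\lambda_{\hbar.\xi}(\mathcal{X},\mathcal{L}) \geq 0$ for every smooth $T$-equivariant test configuration $(\mathcal{X},\mathcal{L})$ with reduced central fibre and ample $\bar{\mathcal{L}}$, rather than for arbitrary general test configurations. This reduction is the step that genuinely uses the equivariant calculus: it rests on the projection formula and the inequality recorded in Proposition \ref{projection formula for equivariant intersection}, on the naturality (base change) of $\D_{\hbar.\xi}\cmu^\lambda_{T\times\mathbb{G}_m}$ under the ramified base change $m_d$, and on the continuity of the relative equivariant intersection under the degeneration $\tilde{\mathcal{L}}_\epsilon \to \nu_d^*\beta^*\mathcal{L}$ as $\epsilon \to 0$.

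Next, for such a smooth test configuration with ample $\bar{\mathcal{L}}$, I would apply Proposition \ref{comparison} (3), which identifies
\[ \cFut^\lambda_{\hbar.\xi}(\mathcal{X},\mathcal{L}) = \frac{1}{\int_X e^{\hbar\mu^\omega_\xi}\omega^n}\, \mathcal{F}^\lambda_{\hbar.\xi}(\mathcal{X},\mathcal{L}). \]
The prefactor is strictly positive, so the sign of the algebro-geometric $\mu$-Futaki invariant agrees with that of the differential-geometric weighted Futaki invariant $\mathcal{F}^\lambda_{\hbar.\xi}$ considered by Lahdili. This converts the problem into a statement about an energy slope.

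I would then invoke the slope identity: by the equivariant Stokes theorem together with the standard computations in \cite{Lah}, $\mathcal{F}^\lambda_{\hbar.\xi}(\mathcal{X},\mathcal{L})$ equals the asymptotic slope of the $\mu^\lambda_\xi$-Mabuchi functional along a smooth subgeodesic ray subordinate to $(\mathcal{X},\mathcal{L})$. A $\mu^\lambda_\xi$-cscK metric is an extremal weighted cscK metric whose extremal vector $\zeta=\lambda\xi$ is constrained to be proportional to the weight vector, so Lahdili's boundedness theorem applies: the existence of a $\mu^\lambda_\xi$-cscK metric in $c_1(L)$ forces the $\mu^\lambda_\xi$-Mabuchi functional to be bounded below, whence its slope is non-negative. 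Therefore $\mathcal{F}^\lambda_{\hbar.\xi}(\mathcal{X},\mathcal{L})\geq 0$, and hence $\cFut^\lambda_{\hbar.\xi}(\mathcal{X},\mathcal{L})\geq 0$, for every smooth test configuration with reduced central fibre and ample $\bar{\mathcal{L}}$; by the first step this yields $\mu^\lambda_\xi$K-semistability with respect to all general test configurations.

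The main obstacle is the first step, the reduction from general to smooth test configurations. The convergence and continuity of the exponential relative equivariant intersection $(\alpha.e^{\mathcal{L}})_B$ is precisely what must be established before Proposition \ref{comparison} and the comparison with $\mathcal{F}^\lambda_{\hbar.\xi}$ can even be formulated, and it is here that the Hausdorffness of equivariant current homology and the continuity of the form-to-homology pushforward (Corollary \ref{continuity of push}) enter. By contrast, the remaining analytic input, namely the boundedness of the weighted Mabuchi functional at a critical metric, is imported wholesale from \cite{Lah} and is not reproved here.
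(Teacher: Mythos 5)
Your proposal is correct and follows essentially the same route as the paper: reduction to smooth ample test configurations with reduced central fibre via Theorem \ref{fundamental lemma} (2), identification of $\cFut^\lambda_{\hbar. \xi}$ with Lahdili's weighted Futaki invariant $\mathcal{F}^\lambda_{\hbar. \xi}$ up to a positive factor via Proposition \ref{comparison} (3), and the slope formula plus the boundedness of the $\mu^\lambda_\xi$-Mabuchi functional imported wholesale from \cite{Lah}. This is precisely the paper's own argument, so there is nothing to add.
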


\begin{rem}
As in \cite[Proposition 4]{Lah}, we can express $\mu^\lambda_\xi$-Futaki invariant of toric test configurations of a polarized toric variety by an integration on polytope, similarly to the usual Futaki invariant. 
For the toric test configuration $(\mathcal{X}_q, \mathcal{L}_q)$ associated to a convex piecewise linear function $q$ on the polytope $P$, we have 
\[ {^\hbar \bar{s}^\lambda_\xi} = 2\pi \frac{\int_{\partial P} e^{- \langle x, \hbar \xi \rangle} d\sigma}{\int_P e^{- \langle x, \hbar \xi \rangle} d\mu} + \lambda \frac{\int_P \langle x, \hbar \xi \rangle e^{- \langle x, \hbar \xi \rangle} d\mu}{\int_P e^{- \langle x, \hbar \xi \rangle} d\mu} \]
and
\[ \cFut^\lambda_{\hbar. \xi} (\mathcal{X}_q, \mathcal{L}_q) = 2\pi \frac{\int_{\partial P} q e^{- \langle x, \hbar \xi \rangle} d\sigma}{\int_P e^{- \langle x, \hbar \xi \rangle} d\mu} + \lambda \frac{\int_P \langle x, \hbar \xi \rangle q e^{- \langle x, \hbar \xi \rangle} d\mu}{\int_P e^{- \langle x, \hbar \xi \rangle} d\mu} - {^\hbar \bar{s}^\lambda_\xi} \frac{\int_P q e^{- \langle x, \hbar \xi \rangle} d\mu}{\int_P e^{- \langle x, \hbar \xi \rangle} d\mu} \]
for some uniform positive constant $c$. 
The expression holds also for singular $(X, L)$, in which case we modify the $\mu^\lambda_\xi$-Futaki invariant by replacing $K_{\bar{\mathcal{X}}/\mathbb{P}^1}$ with $K_{\bar{\mathcal{X}}/\mathbb{P}^1}^{\log}$. 
\end{rem}

\section{Appendix: Equivariant cohomology and the Cartan model}
\label{section: Appendix}

Here we (re)arrange background materials on equivariant cohomology and equivariant locally finite homology to fix our notations and sign conventions in equivariant cohomology. 
The sign arrangement is crucial when computing the right sign of ($\mu$-)Futaki invariant via equivariant cohomology. 

We also briefly explain some advantage of Cartan model, which employs differential forms as its chains. 
While there is an analogous equivariant theory for Chow group, which works also for schemes  not even over arbitrary characteristic field but also over $\mathbb{Z}$, we prefer to use the singular / de Rham cohomology with $\mathbb{R}$-coefficient to benefit from the Cartan model when proving the convergence of some sequences in equivariant cohomology. 

\subsection{Equivariant singular cohomology \& locally finite homology}
\label{section: Equivariant singular cohomology and locally finite homology}

\subsubsection{Equivariant singular cohomology}

We firstly review equivariant \textit{singular} cohomology and locally finite homology as these work also for singular spaces. 
Let $X$ be a topological space with a continuous right action of a topological group $G$. 
Using a classifying principal bundle $EG \to BG$ of $G$, which can be realized for instance by Milnor construction, the \textit{$G$-equivariant singular cohomology} $H^*_G (X, \mathbb{Z})$ is defined to be the singular cohomology of the Borel construction $X_G := EG \times_G X = (EG \times X) /G = \{ [p, x] ~|~ [p, x] = [p g, x g], ~\forall g \in G \}$: for $k = 0, 1, \ldots$, we put 
\begin{equation} 
\label{Borel construction of equivariant cohomology}
H^k_G (X, \mathbb{Z}) := H^k (X_G, \mathbb{Z}). 
\end{equation}
For another choice of the classifying space $E' G \to B' G$, we have a classifying homotopy equivalence $B' G \to BG, BG \to B' G$, by which we get a natural isomorphism $H^* (X_G, \mathbb{Z}) \cong H^* (X'_G, \mathbb{Z})$. 
We usually define the equivariant cohomology in this way among other possible candidates for `equivariant cohomology' in order to ensure the equivariant homotopy invariance. 

Let $Y$ be another topological space with a continuous action of a topological group $H$, $\varphi: H \to G$ be a topological group morphism and $f: Y \to X$ be a $H$-equivariant continuous map, i.e. $f (x. g) = f (x). \varphi (g)$ for $g \in H$. 
Then we have a pullback map $f^* \varphi^\#: H^*_G (X, \mathbb{Z}) \to H^*_H (Y, \mathbb{Z})$. 

In general $EG$ is infinite dimensional, so we usually have $H_G^k (X, \mathbb{Z}) \neq 0$ for infinitely many $k \ge 0$ even if $X$ is finite dimensional. 
We put $\hat{H}^{\mathrm{even}}_G (X, \mathbb{Z}) := \prod_{k=0}^\infty H^{2k}_G (X, \mathbb{Z})$ and denote by $\alpha^{\langle k \rangle} \in H^{2k}_G (X, \mathbb{Z})$ the degree $2k$-part for an element $\alpha \in \hat{H}_G^{\mathrm{even}} (X, \mathbb{Z})$. 
When the action is free, we have a natural isomorphism $H^*_G (X, \mathbb{Z}) \cong H^* (X/G, \mathbb{Z})$, so that $H^k_G (X, \mathbb{Z}) = 0$ for $k \ge \dim (X/G)$ in this case. 

For an almost connected locally compact group $G$ (i.e. the quotient $G/G^0$ by the identity component is compact), we have a maximal compact subgroup $\varphi: K \hookrightarrow G$ by Iwasawa's theorem. 
Since it admits a $K$-equivariant deformation retract $H_t: G \to K$, the natural map $\tilde{\varphi} :EK \to EK \times_K G \to EG$ induced from the homotopical universality of $BG$ is a $K$-equivariant homotopy equivalence. 
Thus we obtain the induced isomorphism $\varphi^\#: H^*_G (X, \mathbb{Z}) \xrightarrow{\sim} H^*_K (X, \mathbb{Z})$. 
As this isomorphism is independent of the choice of the maximal compact subgroup $\varphi$, we often identify these two equivariant cohomologies. 

For a $G$-equivariant complex vector bundle $E \to X$, we have the associated descent vector bundle $E_G := EG \times_G E$ over $X_G$. 
We define the \textit{equivariant Chern class} $c_G (E) \in H^{\mathrm{even}}_G (X, \mathbb{Z})$ to be the Chern class of $E_G$: $c_* (E_G) \in H^* (X_G, \mathbb{Z}) = H^*_G (X, \mathbb{Z})$. 
The \textit{equivariant Chern charatcer} $\mathrm{ch}_G (L)$ of a $G$-equivariant complex line bundle $L$ is defined as 
\begin{equation} 
\mathrm{ch}_G (L) := e^{c_{G, 1} (L)} = \sum_{p=0}^\infty \frac{1}{p!} (c_{G, 1} (L))^{\smile p}, 
\end{equation}
which lives in the formal product $\hat{H}^{\mathrm{even}}_G (X, \mathbb{Z}) := \prod_{k=0}^\infty H^{2k}_G (X, \mathbb{Z})$.

\subsubsection{Equivariant locally finite homology}
\label{section: equivariant locally finite homology}

\subsubsection*{$\bullet$ Locally finite homology}
For a locally compact Hausdorff space $X$, the \textit{locally finite homology} $H^{\mathrm{lf}}_* (X, \mathbb{Z})$ is defined to be the homology of the chain complex $C^{\mathrm{lf}}_*$ of the locally finite chains, i.e. 
\[ C^{\mathrm{lf}}_k := \Big{\{} \sigma: \mathrm{Map} (\Delta^k, X) \to \mathbb{Z} ~\Big{|}~  \begin{matrix} \forall K \subset X: \text{ compact set } 
\\
\# \{ c \in \sigma^{-1} (\mathbb{Z} \setminus \{ 0 \}) ~|~ c^{-1} (K) \neq \emptyset \} < \infty 
\end{matrix} \Big{\}}, \]
where $\mathrm{Map} (\Delta^k, X)$ denotes the set of continuous maps. 
We usually denote its chain by a formal expression $\sum_{c \in \mathrm{Map} (\Delta^k, X)} \sigma (c). c$. 
The boundary map $\partial: C^{\mathrm{lf}}_k \to C^{\mathrm{lf}}_{k-1}$ is given similarly as the usual homology. 
The locally finite homology $H^{\mathrm{lf}}_* (-, \mathbb{Z})$ gives a covariant functor from the category of locally compact Hausdorff spaces with \textit{proper continuous maps} to the category of $\mathbb{Z}$-modules. 
The functor is not a homotopy functor, but only invariant under \textit{proper homotopy}. 
For example, $H^{\mathrm{lf}}_k (X, \mathbb{Z}) \ncong H^{\mathrm{lf}}_k (X \times \mathbb{R}^q, \mathbb{Z})$ while we have $H^{\mathrm{lf}}_k (X, \mathbb{Z}) \cong H^{\mathrm{lf}}_{k+q} (X \times \mathbb{R}^q, \mathbb{Z})$. 

We have the cap product: 
\begin{equation}
\frown: H^{\mathrm{lf}}_k (X, \mathbb{Z}) \otimes H^l (X, \mathbb{Z}) \to H^{\mathrm{lf}}_{k-l} (X, \mathbb{Z}), 
\end{equation}
by which the anti-graded module $H^{\mathrm{lf}}_{-*} (X, \mathbb{Z})$ becomes a module over the algebra $(H^* (X, \mathbb{Z}), \smile)$. 
We have the following projection formula: 
\begin{equation}
f_* (\sigma \frown f^* \varphi) = f_* \sigma \frown \varphi 
\end{equation}
for every proper continuous map $f: X \to Y$ and $\sigma \in H^{\mathrm{lf}}_k (X, \mathbb{Z})$, $\varphi \in H^l (Y, \mathbb{Z})$. 

When $X$ is a real $n$-dimensional connected topological oriented manifold, we have an orientation preserving triangulation $\sum_{\alpha \in A} \Delta_\alpha^n$ of $X$ and obtain a generator $[X] \in H^{\mathrm{lf}}_n (X, \mathbb{Z})$ called the \textit{fundamental class} of $X$, independent of the choice of the triangulation. 
The map $([X] \frown \cdot): H^k (X, \mathbb{Z}) \to H^{\mathrm{lf}}_{n-k} (X, \mathbb{Z})$ gives an isomorphism of $\mathbb{Z}$-modules for each $q \in \mathbb{Z}$. 
We denote its inverse $([X] \frown \cdot)^{-1}$ by $\mathrm{PD}^X: H^{\mathrm{lf}}_k (X, \mathbb{Z}) \to H^{n-k} (X, \mathbb{Z})$.

\subsubsection*{$\bullet$ Equivariant locally finite homology}

The following are key properties for the well-definedness of the equivariant version of locally finte homology: 
\begin{itemize}
\item For any closed subset $Y \subset X$, we have a long exact sequence 
\begin{align} 
\label{lf exact sequence}
\dotsb \to H^{\mathrm{lf}}_k (Y, \mathbb{Z}) \to H^{\mathrm{lf}}_k (X, \mathbb{Z}) \to H^{\mathrm{lf}}_k (X \setminus Y, \mathbb{Z}) \to H^{\mathrm{lf}}_{k-1} (Y, \mathbb{Z}) \to \dotsb. 
\end{align}
In particular, when $\dim Y < d$, we have the isomorphism $H^{\mathrm{lf}}_k (X, \mathbb{Z}) \cong H^{\mathrm{lf}}_k (X \setminus Y, \mathbb{Z})$ for $k > d$. 

\item For a real vector bundle $\pi: E \to X$ of real rank $r$, we have an isomorphism 
\begin{align}
\pi^*: H^{\mathrm{lf}}_* (X, \mathbb{Z}) \xrightarrow{\sim} H^{\mathrm{lf}}_{*+r} (E, \mathbb{Z}). 
\end{align}
\end{itemize}

Now we introduce the equivariant version of locally finite homology (cf. \cite{EG1}). 
Let $X$ be a real $n$-dimensional locally compact space with a continuous action of an almost connected Lie group $G$. 
For an almost connected Lie group $G$, we have a collection of principal $G$-bundles $\{ E_l G \to B_l G \}_{l \in \mathbb{N}}$ which enjoys the following properties: 
\begin{enumerate}
\item For each $l \in \mathbb{N}$, $E_l G$ is $G$-equivariantly proper homotopy equivalent to a $G$-invariant Zariski open set $V^\circ$ of a complex $G$-representation $V$ satisfying $\dim_{\mathbb{R}} (V \setminus V^\circ) > l+1$. 

\item The group $G$ acts on $E_l G$ freely and $E_l G \to B_l G$ is the quotient. 
\end{enumerate}
We call such a collection a \textit{finite dimensional approximation of classifying bundle} $EG \to BG$. 

For $l' \ge l$, $E_{l'} G \to B_{l'} G$ can serve as $E_l G \to B_l G$. 
For a maximal compact group $K \subset G$, $E_l G \to E_l G/K$ can serve as $E_l K \to B_l K$. 
For example, when $G = \mathbb{G}_m$, $E_l \mathbb{G}_m := \mathbb{C}^{l+1} \setminus \{ 0 \}$ endowed with the diagonal $\mathbb{G}_m$-action gives such a finite dimensional approximation; $B_l \mathbb{G}_m = \mathbb{C}P^l$. 
For the maximal compact group $K = U (1)$, we have $E_l U (1) = E_l \mathbb{G}_m$ and $B_l U (1) = \mathbb{C}P^l \times \mathbb{R}_{>0}$. 

We put $X^k_G := E_{n-k} G \times_G X$. 
For $k' \le k$, $X^{k'}_G$ serve as $X^k_G$. 
Using such a finite dimensional approximation, we define the \textit{$G$-equivariant locally finite homology} $H^{\mathrm{lf}, G}_k (X, \mathbb{Z})$ of degree $k \in \mathbb{Z}$ (negative degree allowed) by 
\begin{equation} 
H^{\mathrm{lf}, G}_k (X, \mathbb{Z}) := H^{\mathrm{lf}}_{\dim_\mathbb{R} (X^k_G/X) + k} (X^k_G, \mathbb{Z}), 
\end{equation}
where we put $\dim_\mathbb{R} (X^k_G/X) := \dim_\mathbb{R} X^k_G - \dim_\mathbb{R} X = \dim_\mathbb{R} B_{n-k} G$. 

For example, since $\mathrm{pt}^k_{\mathbb{G}_m} = \mathbb{C}P^{-k}$ we have 
\[ H^{\mathrm{lf}, \mathbb{G}_m}_k (\mathrm{pt}, \mathbb{Z}) = H^{\mathrm{lf}}_{-2k + k} (\mathbb{C}P^{-k}, \mathbb{Z}) = 
\begin{cases} 
0 & k > 0 \text{ or } k \text{ odd } 
\\ 
\mathbb{Z} & k \le 0 \text{ and } k \text{ even } 
\end{cases}. \]

By the key properties of locally finite homology, we can show the above construction is independent of the choice of a finite dimensional approximation of classifying space similarly as \cite{EG1}. 
For a $G$-equivariant proper continuous map $f: X \to Y$, we have the proper pushforward $f_*: H^{\mathrm{lf}, G}_k (X) \to H^{\mathrm{lf}, G}_k (Y)$ induced from the map $f: E_{n-k} G \times_G X \to E_{n-k} G \times_G Y$ with $n = \max \{ \dim X, \dim Y \}$. 

For a maximal compact group $\varphi: K \subset G$, $\tilde{\varphi}: X^k_K := E_{n-k} G \times_K X \to X^k_G$ is a fibre bundle with contractible fibres $G/K \cong \mathfrak{g}/\mathfrak{k}$. 
Then since $\dim_\mathbb{R} (X^k_K/X) = \dim_\mathbb{R} (G/K) + \dim_\mathbb{R} (X^k_G/X)$, the following gives an isomorphism 
\[ \varphi^\#: H^{\mathrm{lf}, G}_k (X, \mathbb{Z}) = H^{\mathrm{lf}}_{\dim_\mathbb{R} (X^k_G/X) + k} (X^k_G, \mathbb{Z}) \xrightarrow{\tilde{\varphi}^*} H^{\mathrm{lf}}_{\dim_\mathbb{R} (X^k_K/X) + k} (X^k_K, \mathbb{Z}) = H^{\mathrm{lf}, K}_k (X, \mathbb{Z}) \]
by the above key property of locally finite homology. 

Since we have $H^l_G (X, \mathbb{Z}) \cong H^l (E_m G \times_G X, \mathbb{Z})$ for $m \ge l$, we have the equivariant cap product: 
\begin{align}
\frown: H^{\mathrm{lf}, G}_k (X, \mathbb{Z}) \otimes H^l_G (X, \mathbb{Z}) \to H^{\mathrm{lf}, G}_{k-l} (X, \mathbb{Z}), 
\end{align}
which is also independent of the choice of the finite dimensional approximation. 
The projection formula for the usual case yields the equivariant version of the projection formula.

\subsubsection*{$\bullet$ Equivariant cycle map}

Let $X$ be a pure $n$-dimensional complex space with a holomorphic $G$-action. 
The $k$-th \textit{$G$-equivariant Chow group} $A^G_k (X)$ is defined as the $(\dim_{\mathbb{C}} (X^{2k}_G/X) + k)$-th Chow group of $X^{2k}_G$, for which we use the Zariski open set $E_l G = V^\circ \subset V$ . 
It is independent of the choice of the finite dimensional approximation $E_{2(n-k)} G \to B_{2(n-k)} G$ of the classifying bundle as shown in \cite{EG1}. 
A \textit{$G$-equivariant $k$-cycle} on a pure $n$-dimensional complex space $X$ with a $G$-action means a $\dim_{\mathbb{C}} (X^{2k}_G/X) + k$-cycle on $X^{2k}_G = E_{2(n-k)} G \times_G X$. 
The $G$-equivariant cycle map 
\begin{equation} 
\label{equivariant cycle map}
cl^G: A^G_k (X) \to H^{\mathrm{lf}, G}_{2k} (X, \mathbb{Z}) 
\end{equation}
is given by the usual cycle map $cl: A_{\dim_{\mathbb{C}} (X^{2k}_G/X) + k} (X^{2k}_G) \to H^{2(\dim_{\mathbb{C}} (X^{2k}_G/X) + k)} (X^{2k}_G, \mathbb{Z})$ where we assign the fundamental class of the regular part for each irreducible cycle. 

A $G$-invariant $k$-cycle $C$ on $X$ gives a $G$-equivariant $k$-cycle by $E_{2(n-k)} G \times_G C \subset E_{2(n-k)} G \times_G X$. 
We denote the corresponding class in $H_{2k}^G (X, \mathbb{Z})$ as $[C]^G$. 
Since $X$ itself is $G$-invariant, we always have the equivariant fundamental class $[X]^G$: 
\begin{equation} 
\label{cas fundamental class}
[X]^G := \sum_{i \in I} m_i [X_i^\circ]^G \in H^{\mathrm{lf}, G}_{2n} (X, \mathbb{Z}), 
\end{equation}
where $X_i^\circ$ denotes the regular part of the reduction $X^{\mathrm{red}}_i$, and $m_i$ denotes the length of $\mathcal{O}_{X_i}/\mathcal{O}_{X_i^{\mathrm{red}}}$ at a general point of the the irreducible component $X_i \subset X$.

When $X$ is smooth, the map $([X]^G \frown \cdot): H^k_G (X, \mathbb{Z}) \to H^{\mathrm{lf}, G}_{2n-k} (X, \mathbb{Z})$ gives an isomorphism for each $k \in \mathbb{Z}$. 
Since we can take $E_l G$ as a manifold, this follows by the usual Poincare duality. 
We denote its inverse $([X]^G \frown \cdot)^{-1}$ by 
\[ \mathrm{PD}^X_G: H^{\mathrm{lf}, G}_k (X, \mathbb{Z}) \to H^{2n-k}_G (X, \mathbb{Z}). \]
For a proper map $f: X \to Y$, we abbreviate $\mathrm{PD}^Y_G \circ f_*$ as $f_\diamond$ when it makes sense, while we write $f_* \circ ([X]^G \frown \cdot)$ and $\mathrm{PD}^Y_G \circ f_* \circ ([X]^G \frown \cdot)$ by the same symbol $f_*$. 

\subsection{Equivariant deRham cohomology \& current homology}
\label{section: Cartan model of equivariant cohomology and locally finite homology}

\subsubsection{Cartan model of equivariant cohomology}

Now we turn to the Cartan model. 
The Cartan model of equivariant cohomology behaves well when the action is proper, in which case we have a slice of the action. 
Let $X$ be a smooth manifold with a smooth action of a \textit{compact} Lie group $K$ and $\mathfrak{k}$ be the Lie algebra of $K$. 
Put $C^{p, q} := S^p \mathfrak{k}^\vee \otimes \Omega^{q-p} (X)$. 
Identifying elements of the symmetric product $S^p \mathfrak{k}^\vee$ with the degree $p$-homogeneous polynomial maps on $\mathfrak{k}$, we regard $C^{p,q}$ the space of $p$-homogeneous polynomial maps from $\mathfrak{k}$ to $\Omega^{q-p} (X)$. 
Consider the subspace consisting of such maps which are $K$-equivariant with respect to the adjoint right action on $\mathfrak{k}$ and the induced right action on $\Omega^* (X)$: 
\begin{equation}
C^{p, q}_K := (S^p \mathfrak{k}^\vee \otimes \Omega^{q-p} (X) )^K. 
\end{equation}
For an element $\varphi = \sum_i P_i \otimes \varphi_i \in C^{p, q}_K$ and $\xi \in \mathfrak{k}$, we put $\varphi_\xi = \sum_i P_i (\xi) \cdot \varphi_i$. 
Then $C^{p,q}_K$ becomes a double complex by giving the differentials $d: C^{p, q} \to C^{p, q+1}$, ${^\hbar \delta}: C^{p, q} \to C^{p+1, q}$ by 
\begin{equation} 
(d \varphi)_\xi = d (\varphi_\xi), \quad ({^\hbar \delta} \varphi)_\xi = i_{\hbar \xi^\#} (\varphi_\xi). 
\end{equation}
Indeed, we have $((d {^\hbar \delta} + {^\hbar \delta} d) \varphi)_\xi = L_{\hbar \xi^\#} (\varphi_\xi) = \varphi_{\hbar [\xi, \xi]} = 0$ as the map $\xi \mapsto \varphi_\xi$ is $K$-equivariant. 
The \textit{Cartan model ${^\hbar H_{\mathrm{dR}, K}^*} (X)$ of equivariant cohomology} is defined to be the cohomology of the total complex 
\[ (\Omega_K^* (X), {^\hbar d_K}) := (\bigoplus_{p+q = *} C^{p,q}_K, d + {^\hbar \delta}) = \bigoplus_{2i+j = *} (S^i \mathfrak{k}^\vee \otimes \Omega^j (X))^K \] 
of the double complex $C^{p, q}_K$. 
We call elements of $\Omega_K^k (X)$ \textit{$K$-equivariant $k$-forms}. 

This cohomology ${^\hbar H_{\mathrm{dR}, K}^*} (X)$ is known to be naturally isomorphic to the equivariant cohomology $H_K^* (X, \mathbb{R})$ for any (non-compact) $X$ and compact Lie group $K$ (cf. \cite[Section 2.5 and 4.2]{GS}). 
It follows that for a almost connected Lie group $G$ and a maximal compact subgroup $G$, we have a natural isomorphism: 
\[ {^\hbar \Phi}: H_G^* (X, \mathbb{R}) \to {^\hbar H_{\mathrm{dR}, K}^*} (X). \]

We have a chain-level pullback map $f^*: \Omega_K^k (Y) \to \Omega_K^k (X)$ along a $K$-equivariant smooth map $f: X \to Y$ which induces a map $f^*: {^\hbar H_{\mathrm{dR}, K}^k} (Y) \to {^\hbar H_{\mathrm{dR}, K}^k} (X)$ and is compatible with $f^*: H_K^k (Y, \mathbb{R}) \to H_K^k (X, \mathbb{R})$. 
We also have a chain-level cup product $\wedge: \Omega_K^k (X) \otimes \Omega_K^l (X) \to \Omega_K^{k+l} (X)$ which induces a map $\wedge: {^\hbar H_{\mathrm{dR}, K}^k} (X) \otimes {^\hbar H_{\mathrm{dR}, K}^l} (X) \to {^\hbar H_{\mathrm{dR}, K}^{k+l}} (X)$. 
This is compatible with the cup product $\smile: H_G^k (X, \mathbb{R}) \otimes H_G^l (X, \mathbb{R}) \to H_G^{k+l} (X, \mathbb{R})$ under the isomorphism ${^\hbar \Phi}: H_G^* (X, \mathbb{R}) \to {^\hbar H^*_{\mathrm{dR}, K}} (X)$. 

When $X$ is a smooth $n$-dimensional oriented compact manifold, we have an integration map $\int_X: \Omega^{n+k}_K (X) \to S^{k/2} \mathfrak{k}^\vee$ for even $k \ge 0$, which is given by the integration of the component in $S^{k/2} \mathfrak{k}^\vee \otimes \Omega^n (X)$. 

\begin{eg}
A ${^\hbar d_K}$-closed equivariant 2-form is a pair (formal sum) $(\omega, \hbar \mu) = \omega + \hbar \mu$ of a $K$-invariant 2-form $\omega$ and a $K$-equivariant smooth map $\mu: X \to \mathfrak{k}^\vee$ satisfying $-d\mu_\xi = i_{\xi^\#} \omega$ for every $\xi \in \mathfrak{k}$. 
On a $2n$-dimensional $X$, the integration $\int_X (\omega+\mu)^{n+k}$ of the equivariant $2(n+k)$-form $(\omega + \mu)^{n+k}$ is by definition the intgeration $\binom{n+k}{k} \int_X \mu^k \omega^n \in S^k \mathfrak{k}^\vee$, which gives a degree $k$ homogeneous function on $\mathfrak{k}$. 
Similarly for instance we can identify the integration $\int_X e^{\omega + \hbar \mu} = \int_X e^{\hbar \mu} \omega^n$ a real analytic function on $\mathfrak{k}$, which gives an element of the formal completion $\hat{S} \mathfrak{k}^\vee = \prod_{k=0}^\infty S^k \mathfrak{k}^\vee$. 
\end{eg}

\subsubsection*{$\bullet$ The uniqueness of equivariant Chern class}

Here we give the proof of Lemma \ref{equivariant Chern uniqueness}. 

\begin{proof}[Proof of Lemma \ref{equivariant Chern uniqueness}]
We show the uniqueness of the equivariant Chern class. 
We may assume the Lie group $G$ and the manifold $M$ is compact by the properties (1) and (2) of $\tilde{c}$. 
Moreover, for a fixed $M$ and $N$, taking large $l > \dim M, \dim N$, we may assume the property (1) holds for $C^l$ maps. 
Endow $TM$ and $L$ with $K$-invariant smooth metrics, then we get a unitary representation $L^2_k (M, L)$ of $K$. 
By Peter--Weyl theorem, for every $s \in L^2_k (M, L)$ and $\epsilon > 0$, there are $\tilde{s} \in L^2_k (M, L)$ and a finite dimensional $K$-invariant subspace $V$ such that $\| s - \tilde{s} \| < \epsilon$ and $\tilde{s} \in V$. 
Take a finite collection of smooth sections $(s_i)$ of $L$ so that it has no common zero. 
Then for $k \gg n/2$, by perturbing $s_i$ in $L^2_k$ as above, we obtain a collection of $C^l$ sections $(\tilde{s}_i)$ and a finite dimensional $K$-invariant subspace $V \subset C^0 (M, L)$ such that $(\tilde{s}_i)$ has no common zero and $\tilde{s}_i \in V$. 
Thus we obtain a $K$-equivariant $C^l$ classifying map $f: M \to \mathbb{P} (V)$. 
Since $V$ is a unitary representation of $K$, we have a group homomorphism $K \to U (V)$. 
By construction, the $K$-equivariant line bundle $L$ is the pullback of the $U (V)$-equivariant line bundle $\mathcal{O} (1)$. 
Therefore it suffices to show $\tilde{c} (\mathcal{O} (1)_{U (V)}) = c_{U (V), 1} (\mathcal{O} (1))$. 
By the group isomorphism $U (V) \cong SU (V) \times U (1)$, we can further reduce the equality to $\tilde{c} (\mathcal{O} (1)_{SU (V)}) = c_{SU (V), 1} (\mathcal{O} (1))$ and $\tilde{c} (\mathcal{O} (1)_{U (1)}) = c_{U (1), 1} (\mathcal{O} (1))$. 
As for $SU (V)$, since we have $(\mathfrak{su}^\vee)^{SU} = 0$, the forgetful map $H^2_{SU (V)} (\mathbb{P} (V), \mathbb{R}) \to H^2 (\mathbb{P} (V), \mathbb{R})$ is injective, so that we must have $\tilde{c} (\mathcal{O} (1)_{SU (V)}) = c_{SU (V), 1} (\mathcal{O} (1))$. 
On the other hand, the kernel $\mathfrak{u} (1)^\vee$ of $H^2_{U (1)} (\mathbb{P} (V), \mathbb{R}) \to H^2 (\mathbb{P} (V), \mathbb{R})$ maps injectively into $H^2_{U (1)} (\mathrm{pt}, \mathbb{R})$ by the pullback along a map $i: \mathrm{pt} \to \mathbb{P} (V)$. 
Therefore, it suffices to show $\tilde{c} (\mathbb{C}_{U (1)}) = c_{U (1), 1} (\mathbb{C})$ for the pullback $\mathbb{C} =i^* \mathcal{O} (1)$, which is endowed with the scaling $U (1)$-action. 
There is a group homomorphism $\varphi: U (1) \to SU (2)$ and a map $j: \mathrm{pt} \to \mathbb{P}^1 \circlearrowleft SU (2)$ to a $U (1)$-invariant point such that the $U (1)$-equivariant line bundle $j^* \mathcal{O} (1)$ over the point is $\mathbb{C}$ endowed with the scaling action. 
Since $\tilde{c} (\mathcal{O} (1)_{SU (2)}) = c_{SU (2), 1} (\mathcal{O} (1))$, we obtain $\tilde{c} (\mathbb{C}_{U (1)}) = j^* \varphi^\# \tilde{c} (\mathcal{O} (1)_{SU (2)}) = j^* \varphi^\# c_{SU (2), 1} (\mathcal{O} (1)) = c_{U (1), 1} (\mathbb{C})$, which proves the claim. 
\end{proof}

\subsubsection{Cartan model of equivariant locally finite homology (current homology)}
\label{section: Cartan model of equivariant locally finite homology (current homology)}

\subsubsection*{$\bullet$ Current homology}

Next we consider the dual construction, which corresponds to the equivariant locally finite homology. 
Firstly we review the current homology. 
Let $X$ be a connected $n$-dimensional smooth manifold. 
For a compact set $D \subset X$, let $\Omega^k_D$ denote the space of smooth $p$-forms supported on $D$ with the $C^\infty$-topology. 
We denote by $\mathcal{D}^k (X)$ the space of compactly supported smooth $k$-forms on $X$ endowed with the weakest topology which makes the natural inclusions $\Omega^k_D (X) \hookrightarrow \mathcal{D}^k (X)$ continuous for all compact sets $D \subset X$. 
Then $\mathcal{D}^k (X)$ is an LF-space. 
Let $\mathcal{D}'_k (X)$ denote the space of continuous linear functionals on $\mathcal{D}^k (X)$. 
We have a boundary map $\partial: \mathcal{D}'_k (X) \to \mathcal{D}'_{k-1} (X)$ adjoint to the differential $d: \mathcal{D}_{k-1} (X) \to \mathcal{D}_k (X)$ (with an appropriate sign) and get the homology group $H^{\mathrm{cur}}_* (X)$ of this complex $(\mathcal{D}'_* (X), \partial)$. 

A smooth $p$-chain $c: \Delta^k \to X$ defines an element of $\mathcal{D}'_k (X)$ by the integration $\varphi \mapsto \int_{\Delta^k} c^* \varphi$ and this gives a linear map $C^{\mathrm{lf}}_k \to \mathcal{D}'_k (X)$. 
It is known by \cite{deR} that the homology $H^{\mathrm{cur}}_* (X)$ is isomorphic to the locally finite homology $H^{\mathrm{lf}}_* (X, \mathbb{R})$ via the map $C^{\mathrm{lf}}_k \to \mathcal{D}'_k (X)$ given as above. 

For a proper smooth map $f: X \to Y$, we have a chain-level pushforward map $f_*: \mathcal{D}'_k (X) \to \mathcal{D}'_k (Y)$ adjoint to the proper pullback $f^*: \mathcal{D}^k (Y) \to \mathcal{D}^k (X)$. 
This induces the pushforward map $f_*: H^{\mathrm{cur}}_k (X) \to H^{\mathrm{cur}}_k (Y)$. 
The cap product 
\[ \frown: H^{\mathrm{cur}}_k (X) \otimes H_{\mathrm{dR}}^l (X) \to H^{\mathrm{cur}}_{k-l} (X) \]
is induced from the chain-level map 
\[ \mathcal{D}'_k (X) \otimes \Omega^l (X) \to \mathcal{D}'_{k-l} (X): \sigma \otimes \varphi \mapsto \sigma (\varphi \wedge \cdot). \]
When $X$ is oriented and the action is orientation preserving, the closed current $\int_X: \mathcal{D}^n (X) \to \mathbb{R}$ gives the fundamental class $[X] \in H^{\mathrm{cur}}_n (X)$. 
All of these constructions are compatible with those counterpart of the locally finite homology $H^{\mathrm{lf}}_* (X, \mathbb{R})$. 

\subsubsection*{$\bullet$ Equivariant current homology}

In the equivariant setup, we consider the double complex 
\begin{align}
C^K_{p, q} := (S^{-p} \mathfrak{k}^\vee \otimes \mathcal{D}'_{q-p} (X))^K
\end{align}
with the differentials ${^\hbar \delta}: C^K_{p, q} \to C^K_{p-1, q}$, $\partial: C^K_{p, q} \to C^K_{p, q-1}$ defined by 
\begin{align*} 
\langle ({^\hbar \delta} \sigma)_\xi, \varphi \rangle 
'&:= (-1)^{n-(q-p)} \langle \sigma_\xi, i_{\hbar \xi^\#} \varphi \rangle, 
\\
\langle (\partial \sigma)_\xi, \varphi \rangle 
&:= (-1)^{n-(q-p)} \langle \sigma_\xi, d\varphi \rangle. 
\end{align*}
Here we put $S^{-p} \mathfrak{k}^\vee = 0$ for $p > 0$. 
These are compatible with $(C_K^{p, q}, \delta, d)$ under the natural inclusion $C_K^{p, q} \hookrightarrow C^K_{-p, n-q}$ for oriented $X$. 
We define the \textit{Cartan model ${^\hbar H^{\mathrm{cur}, K}_k} (X)$ of equivariant current homology} to be the homology of the total complex $(\mathcal{D}')^K_k (X) := \bigoplus_{p+q = k} C^K_{p, q} = \bigoplus_{2i+ (n-j)=n-k} (S^i \mathfrak{k}^\vee \otimes \mathcal{D}'_j (X))^K$ with the differential $\partial + {^\hbar \delta}$. 
For a $K$-equivariant proper smooth map $f: X \to Y$, we have a chain-level pushforward map $f_*: (\mathcal{D}')^K_k (X) \to (\mathcal{D}')^K_k (Y)$, which induces the pushforward map $f_*: {^\hbar H^{\mathrm{cur}, K}_k} (X) \to {^\hbar H^{\mathrm{cur}, K}_k} (Y)$. 

The equivariant cap product $\frown: {^\hbar H^{\mathrm{cur}, K}_k} (X) \otimes {^\hbar H_{\mathrm{dR}, K}^l} (X) \to {^\hbar H^{\mathrm{cur}, K}_{k-l}} (X)$ and the equivariant fundamental class $[X]^K \in {^\hbar H^{\mathrm{cur}, K}_n} (X)$ are given similarly as the non-equivariant case and are compatible with those of locally finite homology. 
We also have the evaluation map $\rho^T_\xi: S^p (\mathfrak{t} \times \mathfrak{k})^\vee \otimes \mathcal{D}'_{q+p} (X) \to \bigoplus_{0 \le r \le p} S^r \mathfrak{k}^\vee \otimes \mathcal{D}'_{q+p} (X)$. 

When $X$ is oriented, the inclusion $C_K^{p, q} \hookrightarrow C^K_{-p, n-q}$ gives the isomorphism $([X]^K \frown \cdot): {^\hbar H_{\mathrm{dR}, K}^k} (X) \to {^\hbar H^{\mathrm{cur}, K}_{n-k}} (X)$. 
We can check this using the spectral sequence associated to the double complexes $C_K^{p,q}$ and $\check{C}_K^{p,q} := C^K_{-p, n-q}$ (cf. \cite[Section 10.10 and 6.5]{GS}).

\end{document}